\documentclass[11pt, a4paper]{article}
\title{Long-time dynamics for the Kelvin-Helmholtz equations\\
close to circular vortex sheets}

\author{
   Federico Murgante\thanks{Università Statale di Milano, Italy. \href{mailto:federico.murgante@unimi.it}{\texttt{federico.murgante@unimi.it}}}
    \and   
  Emeric Roulley\thanks{Scuola Internazionale Superiore di Studi Avanzati (SISSA), Trieste, Italy. \href{mailto:eroulley@sissa.it}{\texttt{eroulley@sissa.it}}}
    \and
    Stefano Scrobogna\thanks{Università degli Studi di Trieste, Dipartimento di Matematica, Informatica e Geoscienze, Italy. \href{mailto:stefano.scrobogna@units.it}{\texttt{stefano.scrobogna@units.it}}}
}
\date{}

\usepackage[a4paper]{geometry}
\geometry{
 a4paper,
 total={160mm,257mm},
 left=20mm,
 right=20mm,
 top=20mm,
 bottom=20mm
 }

\usepackage{tipa}
\usepackage{empheq}
\usepackage{times}
\usepackage{stmaryrd}
\usepackage{fourier}
\usepackage[T1]{fontenc}
\usepackage{amssymb, amsmath,  bm, mathrsfs}
\usepackage{amsfonts}
\usepackage{extarrows}
\usepackage[english]{babel}
\usepackage{amssymb,amsthm,math}
\usepackage{mathtools}
\DeclareMathAlphabet{\mathcal}{OMS}{cmsy}{m}{n}
\DeclareFontFamily{U}{mathc}{}
\DeclareFontShape{U}{mathc}{m}{it}
{<->x*[1.03] mathc10}{}
\DeclareMathAlphabet{\mathscr}{U}{mathc}{m}{it}

\DeclareMathAlphabet{\mathpzc}{OT1}{pzc}{m}{it}

\usepackage{hyperref}
\usepackage[capitalise]{cleveref}
\usepackage{cite}
\allowdisplaybreaks[1]
\usepackage{xfrac}
\usepackage[utf8]{inputenc}
\usepackage[T1]{fontenc}
\usepackage{enumerate}
\usepackage{accents}
\usepackage{bbm}
\usepackage{thmtools}
\usepackage{todonotes}

\usepackage{upgreek}
\usepackage{tikz}
\usetikzlibrary{arrows, automata, backgrounds, shadows, patterns, calc, hobby, plotmarks, shapes}
\usetikzlibrary{shapes.misc}

\tikzset{cross/.style={cross out, draw=black, minimum size=2*(#1-\pgflinewidth), inner sep=0pt, outer sep=0pt},
cross/.default={1pt}}
\allowdisplaybreaks

\makeatletter
\@ifpackageloaded{stix}{%
}{%
  \DeclareFontEncoding{LS2}{}{\noaccents@}
  \DeclareFontSubstitution{LS2}{stix}{m}{n}
  \DeclareSymbolFont{stix@largesymbols}{LS2}{stixex}{m}{n}
  \SetSymbolFont{stix@largesymbols}{bold}{LS2}{stixex}{b}{n}
  \DeclareMathDelimiter{\lBrace}{\mathopen} {stix@largesymbols}{"E8}%
                                            {stix@largesymbols}{"0E}
  \DeclareMathDelimiter{\rBrace}{\mathclose}{stix@largesymbols}{"E9}%
                                            {stix@largesymbols}{"0F}
}
\makeatother

\DeclareSymbolFontAlphabet{\amsmathbb}{AMSb}%


\usepackage{listings}
\usepackage{color}

\definecolor{dkgreen}{rgb}{0,0.6,0}
\definecolor{gray}{rgb}{0.5,0.5,0.5}
\definecolor{mauve}{rgb}{0.58,0,0.82}

\lstset{frame=tb,
  language=Java,
  aboveskip=3mm,
  belowskip=3mm,
  showstringspaces=false,
  columns=flexible,
  basicstyle={\small\ttfamily},
  numbers=none,
  numberstyle=\tiny\color{gray},
  keywordstyle=\color{blue},
  commentstyle=\color{dkgreen},
  stringstyle=\color{mauve},
  breaklines=true,
  breakatwhitespace=true,
  tabsize=3
}

\makeatletter
\def\maketag@@@#1{\hbox{\m@th\normalfont\normalsize#1}}
\makeatother

\newcommand{\mats}[1]{\pare{#1}^{2\times2}}
\newcommand{\kap}{\gamma}

\newcommand{\dd}{\textnormal{d}}

\DeclareMathOperator{\sgn}{sgn}

\newcommand{\pare}[1]{\left( #1 \right)}
\newcommand{\bra}[1]{\left[ #1 \right]}
\newcommand{\bbra}[1]{\left\llbracket #1 \right\rrbracket}

\newcommand{\angles}[1]{\left\langle #1 \right\rangle}

\newcommand{\av}[1]{\left| #1 \right|}
\newcommand{\pbra}[2]{\set{ #1 \big. , \  #2 }}

\newcommand{\xfrac}[2]{\left. #1 \middle/ #2 \right. }
\newcommand{\set}[1]{\left\{ #1 \right\}}
\newcommand{\system}[1]{\left\{ #1 \right.}

\newcommand{\ceil}[1]{\left\lceil #1 \right\rceil}
\DeclareMathOperator{\OpBWPlain}{Op^{BW}}
\newcommand{\OpBW}[1]{{\OpBWPlain\left(#1\right)}}

\DeclareMathOperator{\OpvecPlain}{Op^{BW}_{vec}}
\newcommand{\Opvec}[1]{\OpvecPlain\left(#1\right)}

\newcommand{\pv}{\textnormal{p.v.}}

\newcommand{\Id}{\textnormal{Id}}
 
\newcommand{\JC}{{\bm J}_\mathbb{C}}
\newcommand{\EC}{{\bm E}_\mathbb{C}}

\newcommand{\defeq}{\triangleq}
\newcommand{\eqdef}{\triangleq}
\newcommand{\Cast}[2]{C^{#1}_\ast \pare{I; \dot H^{#2}\pare{\mathbb{T};\C^2}}}
\newcommand{\BallR}[2]{B^{#1}_{#2, \R}\pare{I;\epsilon_0}}
\newcommand{\Ball}[2]{B^{#1}_{#2}\pare{I;\epsilon_0}}

\def\fint{\mathop{\,\rlap{--}\!\!\!\int}\nolimits}
\newcommand{\intT}{\int_{\mathbb{T}}}
\newcommand{\vect}[2]{  \begin{bmatrix} #1 \\ #2 \end{bmatrix}  }

\renewcommand{\Re}{\textnormal{Re}}
\newcommand{\X}{{\mathfrak X }}
\renewcommand{\Im}{\textnormal{Im}}

\newcommand{\RN}[1]{%
  \textup{\uppercase\expandafter{\romannumeral#1}}%
}

\newcommand{\KF}{K\cF}
\newcommand{\KR}{K\cR}
\newcommand{\KM}{K\mathcal{M}}

\newcommand{\sG}{\mathsf{G}}

\newcommand{\sH}{\mathsf{H}}
\newcommand{\sK}{\mathsf{K}}
\newcommand{\sJ}{\mathsf{J}}

\newcommand{\sM}{\mathsf{M}}

\newcommand{\sX}{\mathsf{X}}

\newcommand{\st}{\mathsf{t}}

\newcommand{\ii}{{\rm i}}

\newcommand{\psc}[2]{\left\langle  #1 \ \middle\vert \ #2 \right\rangle}



\newcommand{\ee}{\end{equation}}
\newcommand{\vr}{\varrho}
\renewcommand{\upgamma}{{\mathtt{b}}}
\newcommand{\ov}{\overline}
\def\wt{\widetilde}

\newcommand{\x}{\xi}
\newcommand{\pa}{\partial}
\newcommand{\mM}{\mathcal{M}}
\newcommand{\mR}{\mathcal{R}}
\newcommand{\Gt}[2]{{\tilde{\Gamma}^{#1}_{#2}}}

\newcommand{\Gr}[2]{{{\Gamma}^{#1}_{#2}[\epsilon_0]}}
\newcommand{\vOmega}{\bm{\omega}_{\kap,\upgamma}}
\newcommand{\Br}[2]{{B^{#1}_{{s_0}_{#2}}(I;\epsilon_0)}}
\newcommand{\Lcal}{{\mathcal L}}

\newcommand{\Rcal}{{\mathcal R}}
\newcommand{\sg}[3]{{{\Sigma\Gamma}^{#1}_{#2}[\epsilon_0,#3]}}
\newcommand{\sr}[3]{{{\Sigma\Rcal}^{#1}_{#2}[\epsilon_0 ,#3]}}


\theoremstyle{theorem}
\newtheorem{theorem}{Theorem}[section]
\newtheorem*{maintheorem}{Main Theorem}

\crefname{maintheorem}{Main Theorem}{Main Theorems}
\newtheorem*{theorem*}{Theorem}
\newtheorem{prop}[theorem]{Proposition}
\newtheorem{proposition}[theorem]{Proposition}
\newtheorem{lemma}[theorem]{Lemma}

\theoremstyle{definition}
\newtheorem{definition}[theorem]{Definition}

\newtheorem{rem}[theorem]{Remark}
\newtheorem{remark}[theorem]{Remark}
\newtheorem{notation}[theorem]{Notation}

\newtheorem{step}{Step}
\makeatletter
\@addtoreset{step}{subsection}
\makeatother

\makeatletter
\@addtoreset{substep}{step}
\makeatother
\newtheorem{proofpart}{Part}
\makeatletter
\@addtoreset{proofpart}{subsection}
\makeatother

\makeatletter
\@addtoreset{proofsubpart}{part}
\makeatother

\numberwithin{equation}{section}

\newcommand{\vOpbw}[1]{\Opvec{#1}}



\begin{document}

\maketitle
\noindent
\begin{abstract}

We consider the Kelvin-Helmholtz system describing the evolution of a vortex-sheet near the circular stationary solution. Answering previous numerical conjectures in the 90s physics literature, we prove an almost global existence result for small-amplitude solutions. We first establish the existence of a linear stability threshold for the Weber number, which represents the ratio between the square of the background velocity jump and the surface tension. Then, we prove that for almost all values of the Weber number below this threshold any small solution lives for almost all times, remaining close to the equilibrium. Our analysis reveals a remarkable stabilization phenomenon: the presence of both non-zero background  velocity jump and capillarity effects enables to prevent nonlinear instability phenomena, despite the inherently unstable nature of the classical Kelvin-Helmholtz problem. This long-time existence would not be achievable in a setting where capillarity alone provides linear stabilization, without the richer modulation induced by the velocity jump. Our proof exploits the Hamiltonian nature of the equations. Specifically, we employ Hamiltonian Birkhoff normal form techniques for quasi-linear systems together with a general approach for paralinearization of non-linear singular integral operators. This approach allows us to control resonances and quasi-resonances at arbitrary order, ensuring the desired long-time stability result.

\end{abstract}

{\small
{\it Keywords:} Kelvin-Helmholtz, vortex sheets, paradifferential calculus, 
Birkhoff normal form.}

	{\small\tableofcontents}

	\allowdisplaybreaks

\section{Presentation of the problem and main result}

The Kelvin-Helmholtz (KH) equations is a classic of fluid dynamics, modeling the intricate behavior of vortex sheets at the interface between fluids with different velocities. Since their introduction by Lord Kelvin and Hermann von Helmholtz in the nineteenth century \cite{Helmholtz1868,Helmholtz1889,Thomson1871,Thomson1880}, these equations have provided crucial insights into fundamental hydrodynamic phenomena, from the formation of ocean waves to atmospheric turbulence. The classic KH problem addresses the instability of a plane vortex sheet, where the jump in tangential velocity across an interface drives the system linearly, generating the well-known instability that defines Kelvin-Helmholtz phenomena. Of particular interest is the interplay between background  velocity jump ($\upgamma$) and capillarity effects ($\kap$), which together determine critical stability thresholds and equilibrium states. This work explores the mathematical structures emerging from these interactions, with special focus on the Weber number $\beta\triangleq\upgamma^2/\kap$, which naturally emerges in the equilibrium spectrum and governs the system's stabilization properties. \\

	We consider a planar Euler system for two irrotational fluids with same density (constant equal to $1$) separated by an interface $\Gamma(t)$ homeomorphic to a circle and parametrized by $\mathpzc{z}(t,\cdot):\mathbb{T}\rightarrow\mathbb{R}^2.$ This interface divides the plane into two open components $\Omega^{\pm}(t)$ with $\Omega^{-}(t)$ bounded and $\Omega^{+}(t)$ unbounded. Given  two functions $ f^{\pm}:\Omega^{\pm}\pare{t}\to \mathbb{R} $ we define
\begin{align*}
\bbra{ f^{\pm} } \defeq f^--f^+.
\end{align*} The evolutionary system is thus composed of the following equations 
	\begin{equation}\label{Euler}
		\begin{cases}
			u_t^{\pm}+u^{\pm}\cdot\nabla u^{\pm}+\nabla p^{\pm}=0, & \textnormal{in }\Omega^{\pm}(t),\\
			\big(\mathpzc{z}_t-u^{\pm}|_{\Gamma(t)}\big)\cdot \mathpzc{z}_{x}=0,& \textnormal{at }\Gamma(t),\\
			\left. \bbra{ p^\pm }\right|_{\Gamma(t)}=\kap \mathpzc{k}\pare{\mathpzc{z}}, & \textnormal{at }\Gamma(t),\\
			u^{+}(t,{\bm x}) \to 0, & \text{as } |{\bm x}|\to +\infty,\\
			\nabla\cdot u^{\pm}=0, & \textnormal{in }\Omega^{\pm}(t),\\
			\nabla^{\perp}\cdot u^{\pm}=0, & \textnormal{in }\Omega^{\pm}(t).
			
		\end{cases}
	\end{equation}
	In the above set of equations, the quantities $u^{\pm},p^{\pm}$ are respectively the velocity field and pressure inside the domain $\Omega^{\pm}.$ The parameter $\kap \geqslant 0$ is the surface tension coefficient and $\mathpzc{k}\pare{\mathpzc{z}}$ is the curvature defined by
	$$\mathpzc{k}\pare{\mathpzc{z}}\defeq
	-\frac{\mathpzc{z}_{x}^{\perp}\cdot \mathpzc{z}_{xx}}{|\mathpzc{z}_x|^3}\cdot$$
	The last equation in \eqref{Euler} implies that the vorticity distribution $\boldsymbol{\omega}$ is localized on the curve $\Gamma(t)$ at time $t,$ namely
	\begin{align}\label{eq:vorticty_Dirac}
		\boldsymbol{\omega}(t,{\bm x})=\omega(t,x)\delta\big({\bm x}-\mathpzc{z}(t,x)\big),
        &&
        \omega\defeq \bbra{u^\pm}\cdot\mathpzc{z}_x,
        &&
        {\bm x}\in\mathbb{R}^2,\quad x\in\mathbb{T}.
	\end{align}
In the case in which
\begin{align}
\mathpzc{z}\pare{t, x} = r(t,x) \  \vect{\cos \pare{x+\Omega t}}{\sin \pare{x+\Omega t}} , 
&&
\pare{t, x,\Omega}\in I\times \mathbb{T}\times \mathbb{R},&& r(t,x)\defeq\sqrt{ 1 +2\eta\pare{t, x}}, \label{z:R}
\end{align}
where $I$ is a given interval of time, the system \eqref{Euler} was recast, in \cite{MRS},   as the {\it Contour Dynamic Equation (CDE)}
\begin{equation}
\label{eq:KH2}
\system{
\begin{aligned}
& \eta_t = \Omega\eta_{x}-\frac{1}{2} \mathpzc{H}\pare{\eta} \omega,
\\
&\omega_t =
 \Omega\omega_{x}-\pare{ \frac{\omega}{2}   \mathpzc{D} _0\pare{\eta}\omega }_x - \kap \pare{\mathpzc{K}\pare{\eta}}_x,
\end{aligned}
} 
\end{equation}
with
\begin{equation}
\label{eq:integral_operators_etaomega}
\begin{aligned}
 & \mathpzc{H}\pare{\eta}\omega \triangleq \intT \frac{\eta_x \pare{x} \bra{1 - \sqrt{\frac{1+ 2 \eta\pare{y}}{1+2\eta\pare{x}}}\cos\pare{x-y}  } + \sqrt{1+2\eta\pare{x}}\sqrt{1+2\eta\pare{y}}\sin\pare{x-y} }{1 + \eta \pare{x} + \eta\pare{y} - \sqrt{1+2\eta\pare{x}}\sqrt{1+2\eta\pare{y}}\cos \pare{x-y} } \ \omega\pare{y}\dd y , 
 \\
 &\mathpzc{D}_0\pare{\eta} \omega \triangleq
 \intT \frac{1 - \sqrt{\frac{1+ 2 \eta\pare{y}}{1+2\eta\pare{x}}}\cos\pare{x-y}  }{1 + \eta \pare{x} + \eta\pare{y} - \sqrt{1+2\eta\pare{x}}\sqrt{1+2\eta\pare{y}}\cos \pare{x-y} } \ \omega\pare{y}\dd y, \\
& \mathpzc{K}\pare{\eta} \triangleq \frac{\eta_{xx}-(1+2\eta)-3\left(\frac{\eta_x}{\sqrt{1+2\eta}}\right)^2}{\left(1+2\eta+\left(\frac{\eta_x}{\sqrt{1+2\eta}}\right)^2\right)^{\frac{3}{2}}}\cdot
\end{aligned}
\end{equation}
Throughout the document, we use the notation
$$\int_{\mathbb{T}}f(x)\dd x\triangleq\frac{1}{2\pi} \pv \int_{-\pi}^{\pi}f(x)\dd x.$$
We refer the reader to \cite{MRS} for a derivation of \eqref{eq:KH2} from \eqref{Euler}.
We also warn the reader with the change of notation for the surface tension and mean vorticity with respect to \cite{MRS}. Also, here we write the system in a rotating frame with angular velocity $\Omega$ but one can easily follow the changes. An explicit computation shows that
\begin{align}\label{eq:stationary_VS}
\text{for any }  \upgamma\in \mathbb{R} , \quad\pare{\eta, \omega} = \pare{0, \upgamma}  
 \quad
 \text{is a solution of \eqref{eq:KH2}}. 
\end{align}
Let us define the background  velocity jump
\begin{equation*}
\upgamma \triangleq \int_{\mathbb{T}} \omega\pare{x}\dd x, 
\end{equation*}
which is time independent according to \eqref{eq:KH2}. We can define the invertible change of variables
\begin{align}\label{eq:omega_psi}
\psi_x \triangleq \omega - \upgamma, 
&&
\psi = \partial_x^{-1}  \pare{ \omega - \upgamma }.
\end{align}
The change of variables \eqref{eq:omega_psi} allows us to determine the evolution equation for $ \psi $ modulo a real, time-dependent constant, which is 
\begin{equation}\label{eq:psi_constant}
\psi_t=-\frac{\psi_x+\upgamma}{2}\mathpzc{D}_0\pare{\eta}\bra{\psi_x+\upgamma}-\kap\mathpzc{K}\pare{\eta} + \mathpzc{c} \pare{t}. 
\end{equation}
Since the system \eqref{eq:KH2} depends only on $\pare{\eta,\psi_x}$, the projection onto the zero-th mode in \eqref{eq:psi_constant}, which includes the constant  $ \mathpzc{c} \pare{t}$, does not influence  its dynamics. Therefore, we disregard   $\mathpzc{c} \pare{t}$ and we impose that $ \psi $ belongs to the homogeneous Sobolev space $ \dot{H}^s\pare{\mathbb{T};\mathbb{R}} \defeq \xfrac{H^s\pare{\mathbb{T};\mathbb{R}}}{\mathbb{R}} $, where $H^s\pare{\mathbb{T};\mathbb{R}}$ denotes the classical Sobolev space of periodic real-valued functions, see Section \ref{sec:preliminaries}. We can now rewrite the system \eqref{eq:KH2} in terms of the variables $ \pare{\eta, \psi} $ using \eqref{eq:psi_constant} and get
\begin{equation}\label{eq:KH3}
		\left\lbrace
		\begin{aligned}
			&\eta_t=\Omega\eta_x-\frac{1}{2} \mathpzc{H}\pare{\eta}\bra{\psi_x +\upgamma },\\
			&\psi_t=\Omega\psi_x-\pare{ \frac{\psi_x+\upgamma}{2}\mathpzc{D}_0\pare{\eta}\bra{\psi_x+\upgamma}}-\kap\mathpzc{K}\pare{\eta}.
		\end{aligned} 
		\right. 
	\end{equation}
     Notice that 
     \begin{equation}
     \label{integal rep H}
	\begin{aligned}
		\mathpzc{H}(\eta)[\omega](x)&=2\,\textnormal{BR}(\mathpzc{z})[\omega]\cdot \mathpzc{z}_{x}^{\perp}(x) \\
		&=2\int_{\mathbb{T}}\frac{(\mathpzc{z}(x)-\mathpzc{z}(y))^{\perp}\cdot \mathpzc{z}_{x}^{\perp}(x)}{|\mathpzc{z}(x)-\mathpzc{z}(y)|^2}\omega(y)\textnormal{d} y \\
		&=2\int_{\mathbb{T}}\frac{(\mathpzc{z}(x)-\mathpzc{z}(y))\cdot \mathpzc{z}_{x}(x)}{|\mathpzc{z}(x)-\mathpzc{z}(y)|^2}\omega(y)\textnormal{d} y \\
		&=\int_{\mathbb{T}}\partial_{x}\Big[\log\big(|\mathpzc{z}(x)-\mathpzc{z}(y)|^2\big)\Big]\omega(y)\textnormal{d} y.
	\end{aligned}
    \end{equation}
	Therefore, the first equation of \eqref{eq:KH3} preserves the average and the natural phase space for  $\eta $ is 
    $$ H^s_0\pare{\mathbb{T};\mathbb{R}}\defeq\set{\eta\in H^s(\T;\R)\quad\textnormal{s.t.}\quad \int_\T \eta(x)\, \di x =0}.$$

 In the absence of capillarity $(\gamma=0)$, it is now understood that the problem is ill-posed in Sobolev regularity \cite{CO1989,Lebeau2002,Wu2006}, but it admits weak solutions \cite{Delort1991}, while one has to require at least analytic regularity on the initial data in order to have a satisfactory local well-posedness theory \cite{SS1985,SSBF1981,HH2003}. It is also well-established \cite[Chap. 9.3]{MB2002} that the presence of a background velocity jump $(\upgamma)$ across the interface drives linear instability in the system, generating the classical Kelvin-Helmholtz phenomena. Conversely, surface tension $(\gamma)$ exerts a stabilizing effect at the linear level, enabling local-in-time solutions to the full nonlinear equations \cite{Ambrose2003,Ambrose2007,AM2007, CCS2008,Lannes2013_ARMA}.
When stabilization is induced solely by capillarity, the existence time is limited to $T\sim\epsilon^{-1}$, where $\epsilon$ represents the magnitude of the initial datum. Besides, in the physics literature \cite{HLS1991} the authors perform several numerical simulations for \eqref{eq:KH2} varying the Weber number $We$ which corresponds, up to a period factor of $2\pi$, with the  parameter $$\beta\triangleq\frac{\upgamma^2}{\kap}$$
that captures the interplay between the previous two mentioned opposite phenomena. In particular, despite the slightly different geometry considered, it is numerically conjectured in \cite{HLS1991} that the behavior of the system below a critical Weber number, which is very close to \eqref{constraint beta}, "{\it ...is
quite predictable by linear theory, even over long times...}", \cite[p. 1939]{HLS1991}. The aim of our work is to give a rigorous mathematical proof of this numerical conjecture. To do so, we exploit the Hamiltonian nature of the quasi-linear system \eqref{eq:KH3}. Conversely to the classical formulations using the Dirichlet-Neumann operator, here the system is quite explicit and related to singular integral operators, see \eqref{eq:integral_operators_etaomega}.
By introducing a novel framework that combines the Hamiltonian Birkhoff normal form procedure for quasi-linear Hamiltonian systems \cite{BMM2022} with a generalization of the paralinearization method for singular integral operators developed in \cite{BCGS2023}, we prove that the lifespan extends to  
$
T\sim \epsilon^{-\pare{N+1}}
$
for any $N\in\mathbb{N}$. This result enables us to establish stability way beyond the classical local lifespan results of \cite{Ambrose2003, Ambrose2007, AM2007, CCS2008, Lannes2013_ARMA}, leading to what we refer to as {\it almost global well-posedness}. The precise statement is given in the following theorem.

\begin{theorem}[\textbf{Almost global existence of nearly circular vortex-sheets}]\label{thm:main}
    Let
    \begin{equation}\label{constraint beta}
    0<\beta_1<\beta_2<4\pare{2+\sqrt{3}}.
    \end{equation} 
    There exists a zero measure set $\mathcal{B}\subset\bra{\beta_1,\beta_2}$ such that for any values $\kap\in(0,\infty)$ of the surface tension and $\upgamma\in\mathbb{R}$ of the background velocity jump with 
    $$\frac{\upgamma^2}{\kap}\in\bra{\beta_1,\beta_2}\setminus\mathcal{B},$$
    for any $N\in\mathbb{N},$ there exists $s_0>0$ such that for any $s\geqslant s_0$ there exist $\varepsilon_0,c,C>0,$ such that for any $0<\varepsilon<\varepsilon_0$ and any initial datum
    $$\pare{\eta_0,\psi_0}\in H_0^{s+\frac{1}{4}}\pare{\mathbb{T}; \R }\times\dot{H}^{s-\frac{1}{4}}\pare{\mathbb{T};\R},\qquad\textnormal{with}\qquad\norm{\eta_0}_{H_0^{s+\frac{1}{4}}\pare{\mathbb{T};\R}}+\norm{\psi_0}_{\dot{H}^{s-\frac{1}{4}}\pare{\mathbb{T};\R}}\leqslant\varepsilon,$$
    the system \eqref{eq:KH3} admits a unique classical solution
    $$\pare{\eta,\psi}\in C^{0}\pare{\bra{-T_{\varepsilon},T_{\varepsilon}}
    ; \ H_0^{s+\frac{1}{4}}\pare{\mathbb{T};\R}\times\dot{H}^{s-\frac{1}{4}}\pare{\mathbb{T};\R}},\qquad T_{\varepsilon}\geqslant c\varepsilon^{-\pare{N+1}},$$
    with initial datum $\pare{\eta_0,\psi_0}$ and size
    $$\sup_{t\in[-T_{\varepsilon},T_{\varepsilon}]}\pare{\norm{\eta\pare{t,\cdot}}_{H_0^{s+\frac{1}{4}}\pare{\mathbb{T};\R}}+\norm{\psi\pare{t,\cdot}}_{\dot{H}^{s-\frac{1}{4}}\pare{\mathbb{T};\R}}}\leqslant C\varepsilon.$$
\end{theorem}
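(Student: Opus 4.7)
The plan is to combine three ingredients: a sharp linear spectral analysis of \eqref{eq:KH3} around the circular equilibrium, a paralinearization of the singular integral operators $\mathpzc{H}(\eta)$, $\mathpzc{D}_0(\eta)$ and the curvature $\mathpzc{K}(\eta)$ compatible with the Hamiltonian structure, and finally a Hamiltonian Birkhoff normal form procedure in the spirit of \cite{BMM2022} that eliminates non-resonant nonlinear terms up to order $N+1$. The starting step is to linearize \eqref{eq:KH3} at $(\eta,\psi)=(0,0)$ with background jump $\upgamma$: on each Fourier mode $n\in\mathbb{Z}\setminus\{0\}$ one obtains a $2\times 2$ matrix whose eigenvalues are an explicit function of $|n|$, $\kap$ and $\upgamma$. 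By tuning the rotating frame speed $\Omega$ suitably and requiring all such eigenvalues to be purely imaginary, a short quadratic analysis in $|n|^2$ yields precisely the constraint \eqref{constraint beta}. This determines the linear stability threshold and isolates the countable family of dispersion frequencies $\omega_n(\beta)$ that will govern all subsequent small divisor estimates.

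The second, and most delicate, step is the paralinearization of \eqref{eq:KH3}. Since $\mathpzc{H}(\eta)$ and $\mathpzc{D}_0(\eta)$ are \emph{nonlinear} singular integral operators in $\eta$, not standard composition operators, I would not apply a Bony-type theorem directly but rather develop an extension of the strategy of \cite{BCGS2023}: expand the kernels in \eqref{eq:integral_operators_etaomega} and in \eqref{integal rep H} in a homogeneous Taylor series in $\eta$, then paralinearize each multilinear term while carefully tracking its symbolic order, its degree of homogeneity and its Hamiltonian symmetries. This yields a paradifferential system of the form $\partial_t U=\ii\,\OpBW{A(U;x,\xi)}U+R(U)U$, where $A$ is a self-adjoint symbol of order $3/2$ whose principal part is a constant-coefficient matrix (thanks to the circular geometry), and $R(U)$ is a smoothing remainder of arbitrarily high degree of homogeneity in the unknowns. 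A block-diagonalization by a complex good unknown of Bogoliubov type then reduces $A$ to a diagonal symbol with eigenvalues $\pm\omega_n(\beta)$.

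Once the system is in this normal paradifferential form, I would run $N$ steps of Hamiltonian Birkhoff normal form using the paradifferential flow transformations of \cite{BMM2022}. At each step the homological equation involves divisors of the form $\omega_{n_1}\pm\cdots\pm\omega_{n_k}$, and controlling them requires isolating a zero-measure set $\mathcal{B}\subset[\beta_1,\beta_2]$ of Weber numbers for which these combinations may fail to be bounded below by a suitable power of the highest index; for $\beta\in[\beta_1,\beta_2]\setminus\mathcal{B}$ the required Melnikov-type condition is verified by a classical Borel--Cantelli and transversality argument based on the explicit form of $\omega_n(\beta)$. The main obstacle, I expect, will be precisely the compatibility between the Hamiltonian paradifferential Birkhoff normal form, which is tailored to constant-coefficient paradifferential Hamiltonians, and the singular integral operators $\mathpzc{H}$ and $\mathpzc{D}_0$, whose paralinearization must preserve the Hamiltonian and reality structures modulo smoothing at every order of homogeneity; exploiting integration by parts in the logarithmic representation \eqref{integal rep H} should be crucial to generate the needed cancellations. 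Once the normalized equation contains only resonant terms of degree $\leqslant N+1$ plus a smoothing remainder of degree $\geqslant N+2$, a standard energy estimate on the norm $\|(\eta,\psi)\|_{H^{s+1/4}_0\times\dot H^{s-1/4}}$ closed by a bootstrap argument yields the lifespan $T_\varepsilon\gtrsim\varepsilon^{-(N+1)}$ and the size bound stated in Theorem \ref{thm:main}.
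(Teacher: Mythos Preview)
Your outline captures the broad architecture correctly, but there are two genuine gaps that would make the argument fail as written.

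\textbf{The principal symbol is not constant-coefficient.} You claim that after paralinearization the principal part of $A$ is a constant-coefficient matrix ``thanks to the circular geometry''. This is false: the paralinearization \eqref{eq:KH4} produces symbols such as $\kap(1+\mathtt{f}(\eta;x))(|\xi|^2-1)$ and $w_\upgamma(\eta,\psi;x)|\xi|$ that depend on $x$ through the unknowns. The paper devotes all of Section~\ref{sec:diag_cc} to removing this $x$-dependence: first an Alinhac good unknown (Lemma~\ref{lem:GA}) kills the unbounded off-diagonal order-one term ${\bm B}_\upgamma$; then a Bogoliubov-type flow (Lemma~\ref{lem:block_diagonalization}) diagonalizes at order $3/2$; then a paracomposition reduces the leading coefficient to a time-dependent constant $(1+\mathpzc{v}(U;t))$ (Lemma~\ref{lem:reduction_highest_order}); finally an iterative descent (Lemmata~\ref{lem:diagonalization_arbitrary_order} and \ref{lem:conjcost}) makes all positive-order symbols $x$-independent up to smoothing. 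Skipping this and going straight to Birkhoff normal form would leave $x$-dependent symbols in the homological equations, and the Fourier-multiplier generators you need would not solve them.

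\textbf{The Darboux corrector and the super-action structure are missing.} The flows used in the constant-coefficient reduction are only \emph{linearly} symplectic up to homogeneity $N$, so the reduced system \eqref{eq:constcoeff} is no longer Hamiltonian. The paper restores the Hamiltonian structure via a Darboux symplectic correction (Proposition~\ref{darboux:0}, applying \cite[Theorem~7.1]{BMM2022}); without it you cannot run a \emph{Hamiltonian} Birkhoff normal form. More importantly, your last paragraph says the normalized equation ``contains only resonant terms'' and then asserts the energy estimate. But resonant terms are not automatically harmless: the parity $\omega_{\kap,\upgamma}(j)=\omega_{\kap,\upgamma}(-j)$ forces non-trivial resonances coupling mode $j$ to mode $-j$ (case (ii) in the introduction). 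The paper shows that, after the Darboux correction, the surviving resonant monomials are \emph{super-action preserving} (Definitions~\ref{sapsym}--\ref{sapham}), and it is precisely the Hamiltonian structure together with Lemma~\ref{lemma:Poissonbra} that makes these Poisson-commute with every $J_n=|z_n|^2+|z_{-n}|^2$, hence contribute nothing to $\tfrac{d}{dt}\|Z\|_s^2$. Your sketch has no mechanism to neutralize these resonant couplings, so the bootstrap would not close.

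Two smaller remarks: the paralinearization in the paper does not Taylor-expand the kernels in $\eta$ but in the convolution variable $z$ (see \eqref{eq:Taylor_expansion_kernels0} and Proposition~\ref{prop:reminders_integral_operator}); and the non-resonance is obtained via the Delort--Szeftel sub-analytic theorem (Theorem~\ref{Del-S}), with a delicate treatment of the degenerate pair $(3,5)$ where $\omega_{\kap,\upgamma}(5)=\sqrt{5}\,\omega_{\kap,\upgamma}(3)$, rather than a direct Borel--Cantelli argument.
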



\begin{remark}
Let us make the following remarks about the previous theorem.
\begin{enumerate}

 \item  \emph{The almost-global well-posedness result of \Cref{thm:main} \emph{cannot} be achieved in settings where capillarity serves only a stabilizing parameter}, i.e. when $\beta=0$. The Weber number $\beta$, which emerges naturally in the equilibrium spectrum $\set{\omega_{\kap,\upgamma}(j)}_{j \in \mathbb{Z}^*}$ (see \eqref{eqspec intro}), captures the interplay between capillarity stabilization and Kelvin-Helmholtz instability. Remarkably, the incorporation of the (physically relevant) interplay between background velocity jump and capillarity is the key factor that generates stability, in particular we can pass from a Sobolev ill-posed problem ($\gamma=0$) to a almost-global well-posed one when $\gamma$ is arbitrarily small and $\upgamma^2$ is comparable. The Weber number $\beta$ modulates the linear frequencies $\omega_{\kap,\upgamma}(j)$ in a non-trivial fashion and  enables us to exclude resonances 
$$
\omega_{\kap,\upgamma}(j_1)\pm\ldots \pm  \omega_{\kap,\upgamma}(j_N)\not=0
$$
taking $ \beta $ outside a suitable zero-measure resonant set $ \mathcal{B}$. The non-resonance condition is a fundamental requirement for implementing the Hamiltonian Birkhoff normal form procedure in PDEs \cite{BMM2022,BDGS2007,BG2006,Bam2003, BFLM2024, FG2024, BFM2024, IP2019, BFG2020, BC2024, BMP2019, FGI2023, BFG2021}. 

\item Referring again to the work \cite{HLS1991} the authors notice, at page 1939, that
\begin{quote}
    \footnotesize
    We had hoped to see some repartition of energy from the
$k=1$ mode to smaller scales over large times. However, for
$We=10.0$ only a very slow increase is observed, if any, of
the width of the active spatial spectrum. 
\end{quote}
This unexpected localization phenomenon is consistent with our analysis. Indeed we prove that, after a suitable change of variables, each Fourier mode $u_j$ can exchange energy only with $u_{-j}$ for very long times, as we shall explain later. This is a consequence of the non-resonance conditions and the Hamiltonian structure of the equation which gives the conservation of super-actions (see \eqref{sa_intro}).

    \item 
    We identify a threshold for the  linear stability in \eqref{constraint beta}. 
    
    This constraint is imposed to ensure that the spectrum of the linearized equation at the trivial state $\pare{\eta,\psi}=\pare{0,0}$ is purely imaginary (see Section \ref{sec linop}), a necessary condition for implementing the Hamiltonian Birkhoff normal form argument. While this condition can be relaxed by restricting the phase space to $\mathbf{m}$-fold solutions for sufficiently large $\mathbf{m}$, such a restriction significantly narrows the class of admissible solutions. For more details, we refer the reader to Section \ref{sec linop} and \cite{MRS}. Although one could adapt the following analysis by verifying the $\mathbf{m}$-fold preserving properties of the transformations along the scheme, following a similar approach to \cite{HHR23}.

    \item The parameter $\Omega$ in \eqref{eq:KH2} represents the speed of rotation of the reference frame. Since the Kelvin-Helmholtz problem is invariant under rotations, $\Omega$ can be chosen arbitrarily without altering the shape of the solutions. We choose $\Omega$ as in \eqref{eq:Omega_def}. This choice simplifies some computations and does not affect generality.

    \item Global-in-time solutions with specific structures can be constructed, as demonstrated in \cite{MRS},  where we identified families of globally defined, uniformly rotating solutions. We also refer to \cite{WC2000,MRS,GPY2022,PS2020,CQZ2023a,CQZ2023b,MS2024,MNS2025, BJL2024,BLS2025} for the construction of families of steady solutions in slightly different settings. However, it remains unclear whether the almost global existence solutions stated in Theorem \ref{thm:main} are actually global in time. The quasi-linear structure of the Kelvin-Helmholtz system, combined with the absence of dispersion due to the periodic boundary conditions, makes the global existence of the Cauchy problem currently out of reach.
\end{enumerate}
    
\end{remark}

\paragraph{Ideas of the proof}
While the Dirichlet-Neumann operator approach pioneered by Zakharov, Craig, and Sulem in \cite{Zakharov1968,CS1993} has become standard for Water-Waves problems-and also employed to the two phase setting \cite{Lannes2013_ARMA}- we employ an alternative formulation. Following previous works such as \cite{CCG2010,CCFGG2013}, we utilize the Birkhoff-Roth integral operator formulation, which exploits the Dirac-$\delta$ structure of the vorticity  \eqref{eq:vorticty_Dirac} in conjunction with the Biot-Savart law to express the KH equations as a CDE. A crucial aspect of our approach is establishing that the KH equations thus derived possess a \emph{Hamiltonian} structure (\Cref{sec HAM}), which is the following
$$\vect{\eta_t}{\psi_t}=\bm{J}\nabla H(\eta,\psi),\qquad\bm{J}=\begin{bmatrix}
    0 & -1\\
    1 & 0
\end{bmatrix},$$
where the Hamiltonian is related to the pseudo kinetic energy $\mathpzc{E}_{\upgamma}$, the length $\mathpzc{L}$ of the free boundary and the angular momentum $\mathpzc{M}$ through
$$
H(\eta,\psi)\triangleq
\mathpzc{E}_{\upgamma}(\eta,\psi)+\kap\mathpzc{L}(\eta)+\Omega\mathpzc{M}(\eta,\psi).
$$
In different geometrical contexts the Hamiltonian structure of the KH system was already presented by Benjamin-Bridges \cite{BB1997_1,BB1997_2}. Once this Hamiltonian formulation is established, we are methodologically committed to working with the specific equations that arise from it, as any deviation would compromise the Hamiltonian property, which is essential for our analysis.
This Hamiltonian structure is a fundamental requirement for obtaining the almost global well-posedness result \Cref{thm:main} using the Hamiltonian Birkhoff normal form of \cite{BMM2022}. Since we are looking for a stability result near the trivial state $\pare{\eta,\psi}=(0,0),$ a quantity of interest is the linearization at this stationary solution. The linearized KH system there writes
$$\vect{\eta_t}{\psi_t} = {\bm L}_{\kap, \upgamma} \pare{D}\vect{\eta}{\psi} , \qquad
{\bm L}_{\kap, \upgamma}\pare{\xi}\defeq
\bra{
\begin{array}{cc}
0 & -\frac{\av{\xi}}{2}
\\[2mm]
{ \kap \av{\xi}^2 - \frac{\upgamma^2}{2} \av{\xi} - \pare{\kap-\upgamma^2}  } & 0
\end{array}
}.$$
The associated spectrum is given by $\lambda_{\gamma,\upgamma}^{\pm}(\xi)=\pm\ii\omega_{\gamma,\upgamma}\pare{\xi},$ with
\begin{align}\label{eqspec intro}
\omega_{\kap,\upgamma}\pare{\xi}=\sqrt{\frac{\kap\av{\xi}}{2}}\sqrt{\av{\xi}^2-\frac{\beta}{2}\av{\xi}+\beta-1},\qquad
\beta\triangleq\frac{\upgamma^2}{\kap}\cdot
\end{align}
Here we see appearing the parameter $\beta$ that modulates the equilibrium frequencies. This parameter is homogeneous to a wave number (inverse of a length). The modulation is fundamental for avoiding resonances later in the Hamiltonian Birkhoff normal form. Let us mention that the modulation of the linear frequencies by an external or geometrical parameter has been used to avoid resonances and construct quasi-periodic solutions for fluid models, see \cite{BBHM2018,BFM2021,BFM2021_2,BM2020,BHM2022,HHM2021,HHR23,HR25,HR26,R23}. Observe that $\omega_{\kap,\upgamma}(\xi)$ is real for any $|\xi|\geqslant1$ provided that 
$$0<\beta<4\pare{2+\sqrt{3}}.$$
There emerges our linear stability threshold.This means that one gets linear stability for typically small oscillations at small scales where the stabilizing effects of the surface tension are dominant.  
Also notice that the asymptotic of the linear frequencies is superlinear, namely as $\av{\xi}\to \infty$
$$\omega_{\kap,\upgamma}(\xi)\sim\sqrt{\frac{\kap}{2}}\av{\xi}^{\frac{3}{2}}.$$
Our purpose is to prove a nonlinear stability result near the circular interface corresponding to $\pare{\eta,\psi}=(0,0).$ To do so, we are able to obtain a suitable energy estimate of the form, for any $N\in\mathbb{N},$
\begin{align}\label{energy estim intro}
\norm{\pare{\eta,\psi}\pare{t}}_s^2 \leqslant C\pare{s}  \pare{\norm{\pare{\eta,\psi}\pare{0}}_s^2 
+ \int_0^t\norm{\pare{\eta,\psi}\pare{\tau}}_{s_0}^{N+1}\norm{\pare{\eta,\psi}\pare{\tau}}_s^2 \dd \tau }, 
&&
\forall 0 < t < T,
\end{align}
where we used the notation
$$\norm{\pare{\eta,\psi}}_{s}\triangleq\norm{\eta}_{H_0^{s+\frac{1}{4}}\pare{\mathbb{T};\R}}+\norm{\psi}_{\dot{H}^{s-\frac{1}{4}}\pare{\mathbb{T};\R}}.$$
Then, a bootstrap argument allows us to get an existence time of the form $T\geqslant c\varepsilon^{-N-1}$ where $\varepsilon$ is the size of the initial datum. Notice that the above estimate is highly non-trivial for two reasons: 
\begin{enumerate}
    \item The right-hand side contains the same number of derivatives as the left-hand side, which is particularly delicate given the quasi-linear nature of the equations.
    \item \label{homo:item} The integral term exhibits high homogeneity, a non-trivial property considering the quadratic nonlinearity of the equations.
\end{enumerate}
To address the first obstacle, we perform a paralinearization of the Kelvin-Helmholtz system \eqref{eq:KH3} together with a paradifferential reduction procedure to remove the space dependance in the positive order part, which allows to remain at the same level of regularity. The paralinearization result is given in \Cref{prop:paralinearizationKH} and writes as follows
\begin{equation}\label{eq:KH4_intro}
    \vect{\eta_t}{\psi_t} =\OpBW{{\bm Q}_{\kap, \upgamma}\pare{\eta,\psi; x, \xi} + {\bm B}_\upgamma \pare{\eta,\psi; x}\av{\xi} - \ii  V_\upgamma \pare{\eta, \psi;x}\Id_{\R^2}\ \xi + {\bm A}_{\bra{0}} \pare{\eta,\psi; x, \xi}   } \vect{\eta}{\psi}
 + {\bm R}\pare{\eta, \psi} \vect{\eta}{\psi},
 \end{equation}
where $\OpBW{\cdot}$ denotes the Bony-Weyl quantization in \eqref{BonyWeyl}. Each term of positive order has an explicit formula with
\begin{equation}
    \label{eq:terminacci}
\begin{aligned}
    {\bm Q}_{\kap, \upgamma}\pare{\eta,\psi; x, \xi}
&\triangleq
\bra{
\begin{array}{cc}
0 & -\frac{\av{\xi}}{2} \\
\kap\pare{1+\mathtt{f}\pare{\eta;x}} \pare{\av{\xi}^2-1} -\pare{ \frac{\upgamma^2}{2} + w_\upgamma\pare{\eta, \psi;x} }\av{\xi}  + \frac{\upgamma^2}{\pare{1+2\eta}} & 0
\end{array}
},\\
\mathtt{f}\pare{\eta;x}&\triangleq \pare{\frac{1+2\eta}{\pare{1+2\eta}^{2}+\eta_x^2}}^{\frac{3}{2}}-1\qquad w_\upgamma\pare{\eta, \psi;x}\triangleq\frac{1}{2}\pare{\pare{\pare{\psi_x+\upgamma}\frac{1+2\eta}{\pare{1+2\eta}^2+\eta_x^2}}^2-\upgamma^2},\\
{\bm B}_\upgamma\pare{\eta,\psi; x}
&\triangleq
\frac{1}{2}
\bra{
\begin{array}{cc}
B_\upgamma\pare{\eta, \psi; x} & 0 \\
B_\upgamma^2\pare{\eta, \psi; x}    & -B_\upgamma\pare{\eta, \psi; x} 
\end{array}
},\qquad B_{\upgamma}\pare{\eta,\psi;x}\triangleq\pare{\psi_x+\upgamma}\frac{2\eta_x}{\pare{1+2\eta}^2+\eta_x^2},\\
V_\upgamma\pare{\eta, \psi;x}&\defeq\frac{1}{2}\mathpzc{D}_0\pare{\eta}\bra{\upgamma + \psi_x} -\frac{\upgamma}{2}\cdot
\end{aligned}
\end{equation}
The matrix operator ${\bm A}_{[0]}$ is of order zero and ${\bm R}$ is a regularizing matrix operator up to a sufficiently large order. We believe that this paralinearization result is itself of interest for maybe future purposes. In our equations, the Dirichlet-Neumann operator does not explicitly appear, contrary to what occurs in the one-phase flat Water-Waves problem, see \cite{CS1993} and \cite[Chap. 1]{Lannes2013}. Instead, the equation derived in \eqref{eq:KH_Hamiltonian} features nonlinearities of convolution type with nonlinear singular convolution kernels. This fundamental difference in structure necessitates the development of a specialized paralinearization technique adapted to these convolution operators—a technique we develop in this work (\Cref{sec:paralinearization}) building on the previous work of the last author and collaborators in \cite{BCGS2023} for the less challenging case of $ \alpha$-SQG patches. Let us now expose our method to paralinearize the KH system. The key insight of our approach is that terms that resist standard paralinearization techniques (i.e. paraproducts, Bony paralinearization formula and composition of paradifferential operators) share a common structure of convolution type in the form
\begin{equation*}
H \pare{\eta} g \pare{x}\defeq
\pv \int_{-\pi}^{\pi} K \pare{\eta;x, z} \frac{g\pare{x-z}}{z} \dd z ,
\end{equation*}
where the function $K \pare{\eta;x, z}$ exhibits regularity at $z=0$ comparable to $\eta$. By Taylor expanding the function $z\mapsto K \pare{\eta;x, z}$ at $z=0$ and applying paraproduct expansions, we derive
\begin{subequations}
\begin{align}
\label{eq:Hideas1}
H \pare{\eta} g \pare{x} = & \  \sum_{\mathsf{j}=0}^\mathsf{J} \OpBW{K_\mathsf{j}\pare{\eta;x}} \ \pv \int_{-\pi}^{\pi} z^{\mathsf{j}-1} g\pare{x-z} \dd z
\\
\label{eq:Hideas2}
& \ + \int_{-\pi}^{\pi} \OpBW{R\pare{\eta;x, z}} g\pare{x-z}\dd z + \text{ l.o.t.},
\end{align}
\end{subequations}
with $J\in\mathbb{N}$, for any $j\in\{0,...,J\},$ $K_j$ being a $z$-independant function of $x$ and $R$ being a remainder satisfying $\av{R\pare{\eta:x, z}}=\mathcal{O}\pare{z^{\mathsf{J}+1}}$. This transformation reduces our analysis to two specific categories of terms, namely
\begin{itemize}
    \item \emph{Terms in the right-hand side of \eqref{eq:Hideas1}:} Classical theory (see \cite[p. 355]{Stein1993}) establishes that for any $\mathsf{j}\in\{0,...,\mathsf{J}\},$ 
    $$\pv \int_{-\pi}^{\pi} z^{\mathsf{j}-1} g\pare{x-z} \dd z = m_\mathsf{j}\pare{D} g,$$
    where $m_\mathsf{j}$ represents a Fourier multiplier of order $j$. Thus, through composition theorems for paradifferential operators, we obtain
\begin{equation*}
\OpBW{K_\mathsf{j}\pare{\eta;x}} \ \pv \int_{-\pi}^{\pi} z^{\mathsf{j}-1} g\pare{x-z} \dd z = \OpBW{K_\mathsf{j}\pare{\eta;x}m_\mathsf{j}\pare{\xi}} g + \text{ bounded terms}.
\end{equation*}
\item \emph{Terms in \eqref{eq:Hideas2}:} We leverage the decay properties of $R$ as $z\to 0$ to establish that these terms constitute paradifferential operators of order $-\pare{\mathsf{J}+1}$ modulo smoothing operators, as detailed in \cref{prop:reminders_integral_operator}.
\end{itemize}
The methodology outlined above is elaborated in detail in \Cref{sec:paralinearization} and formalized in \Cref{prop:paralinearizationKH}, representing one of the manuscript's principal contributions. Our work demonstrates that effective paralinearization is possible even when using the Birkhoff-Roth formulation rather than the Dirichlet-Neumann operator approach. The recovered paradifferential structure in \eqref{eq:KH4_intro} exhibits similarities with pure-capillarity one-phase Water-Waves equations, which allows us to derive several established results concerning vortex sheets, including the necessity of capillarity for system stabilization (cf. \cite{Ambrose2003}). Furthermore, we identify purely nonlinear, unstable terms characteristic of the KH equations  (cf. the term $w_\upgamma$ in \eqref{eq:terminacci}, which is non-nil even when $\upgamma =0$), highlighting the enhanced instability of KH compared to Water-Waves systems. For further analysis of these distinctive unstable terms, we direct the interested reader to \Cref{rem:unstable_terms_KH}.\\
Once the paralinearization obtained, our goal is to remove the $x$-dependence of the positive order terms in order to run an energy estimate in the same Sobolev space, namely without loss of derivatives. This is done through a classical paradifferential reduction procedure requiring to reformulate the problem with complex variables. Defining the complex coordinates
\begin{align*}
 U= \vect{u}{\bar u },&&  u \defeq \mathsf{m}_{\kap, \upgamma}(D)^{-1} \eta+ \ii \mathsf{m}_{\kap, \upgamma}(D) \psi, &&\mathsf{m}_{\kap, \upgamma}\pare{{\xi}}\defeq
\sqrt{\frac{{\av{\xi}}}{2\omega_{\kap, \upgamma}\pare{{\xi}}}}, 
 \end{align*}
the paralinearized KH system is equivalent to the complex Hamiltonian system 
\begin{equation}\label{complex_intro}
U_t=   {\bm J}_\mathbb{C} \  \OpBW{{\bm A}_{\frac{3}{2}}\pare{U;x} \  \omega_{\kap, \upgamma}\pare{ \xi }  + {\bm A}_{1}\pare{U;x, \xi} + {\bm A}_{\frac{1}{2}}\pare{U;x } \  \av{\xi}^{\frac{1}{2}} + {\bm A}_{\bra{0}} \pare{U;x, \xi}} U +{\bm R}\pare{U} U. 
\end{equation}
In the above system, each ${\bm A}_{m}$ corresponds a matrix of $x$-dependent symbols of order $m$, while $\bm R $ is a matrix of regularizing operators up to any fixed order. Then we follow the Hamiltonian method developed by the first author and collaborators \cite{BMM2022} (see also \cite{BD2018,BMM2,FI, BCGS2023, MMS24, MM2024} for non Hamiltonian approach) which consists to perform a series of transformations:
\begin{enumerate}[\bf i]
    \item we first perform an \textit{Alinhac Good Unknown} transformation—a nilpotent matrix-valued paradifferential change of variable introduced in \cite{Lannes2005,AM2009}. This transformation eliminates the unbounded terms ${\bm B}_\upgamma$ in \eqref{eq:terminacci}, which constitute the only unbounded contributions in the one-phase gravity water wave system, thus proving essential for developing local well-posedness theory in the pure gravity setting;

    \item we then \textit{diagonalize} and \textit{reduce to constant coefficients} the resulting system at arbitrary order, modulo smoothing operators (whose regularizing effects depend on the initial data's regularity). This technique, initially developed in \cite{BD2018}, has become standard for implementing normal-form techniques in quasi-linear systems \cite{MMS24,BFP2018,BMM2022}. The method involves conjugation with flows generated by paradifferential operators, where generators are selected based on desired cancellations. We reduce the equation to a diagonal, paradifferential constant-coefficient form by iterative application on the degrees of the paradifferential operators.

\end{enumerate} 
At the end of such procedure we are able to define a transformed, equivalent (in Sobolev) variable
\begin{align*}
W \defeq \bm{B}(U)U, & & W = \vect{w}{\bar{w}} \,
\end{align*}
that satisfies a \emph{constant-coefficient}, \emph{scalar} equation given by
\begin{equation}\label{eq:constcoeff_intro} W_t = \Opvec{\ii \bigg( \big(1+\mathpzc{v}(U;t)\big) \omega_{\kap, \upgamma}(\xi) + \mathpzc{V}_\upgamma(U;t)\xi + \mathpzc{b}_{\frac{1}{2}}(U;t) |\xi|^{\frac{1}{2}} + \mathpzc{b}_{0}(U;t,\xi) \bigg) } W + \pmb{\mathsf{R}}(U;t)W,
\end{equation}
up to a smoothing remainder $\pmb{\mathsf{R}}$ (see \Cref{prop:constantcoeff} and \eqref{def Opvec} for the definition of $\Opvec{\cdot}$). The transformed equation \eqref{eq:constcoeff_intro} has the crucial property that its para-differential part is in constant-coefficient form. To overcome the second obstacle \Cref{homo:item}, we aim to implement a Hamiltonian Birkhoff normal form up to homogeneity degree $N$. However, unlike the original complex system \eqref{complex_intro}, \eqref{eq:constcoeff_intro} no longer possesses the fundamental Hamiltonian structure. This structure is essential to ensure that certain \emph{non-trivial} resonant terms do not contribute to energy estimates, as explained below. Recovering this structure is the purpose of the Darboux symplectic corrector as designed in \cite{BMM2022}. To understand why this correction is needed, we first examine the role of non-resonance conditions. The non-resonance conditions in \Cref{sec:nonreso} ensure the exclusion of resonances, meaning that  
\begin{equation*}  
\sigma_1\omega_{\kap,\upgamma}(j_1) + \dots + \sigma_N\omega_{\kap,\upgamma}(j_N) \neq 0  
\end{equation*}  
unless the indices $(\sigma_1, \dots, \sigma_N) \in \{\pm\}^N$ and $(j_1, \dots, j_N) \in \mathbb{Z}^N$ are \emph{super-action preserving} (see \Cref{def:SAPindex}). This can happen when $N = 2p$ is even, with  
\[
\sigma_1 = \dots = \sigma_p = +, \qquad \sigma_{p+1} = \dots = \sigma_{2p} = -,
\]  
and either 
\begin{align*}
\text{(i)}\ j_\ell = j_{p+\ell} \qquad  \text{or } \qquad  \text{(ii)} \ j_\ell = -j_{p+\ell}.
\end{align*}
Case (i) corresponds to \emph{trivial resonances}. The associated monomials in the vector fields of \eqref{eq:constcoeff_intro} take the form  
\[
|u_{j_1}|^2 \dots |u_{j_{p-1}}|^2 u_{j_p} e^{\ii j_p x}.
\]  
Proving that these terms do not contribute to Sobolev energy estimates is typically straightforward, as it suffices to show that their coefficients are purely imaginary. In contrast, case (ii) involves monomials of the form  
\[
 u_{j_1} \overline{u_{-j_1}} \dots u_{j_{p-1}} \overline{u_{-j_{p-1}}} u_{j_p} e^{-\ii j_p x}.
\]  
These terms couple different Fourier modes, making it more challenging to show that they do not affect energy estimates. However, if the vector field possesses the strong algebraic property of being Hamiltonian, these monomials—called \emph{super-action preserving}—automatically admit infinitely many conservation laws, known as \emph{super-actions}. Specifically, for any $ n \in \mathbb{N} $, the quantities  
\be \label{sa_intro}
J_n(u) \triangleq |u_n|^2 + |u_{-n}|^2
\ee
are conserved. As a consequence, a Hamiltonian, super-action preserving vector field remains transparent to any Sobolev energy estimate.  
However, the system \eqref{eq:constcoeff_intro} for $W$
lacks this Hamiltonian structure, preventing direct application of these conservation laws. To overcome this issue, we introduce the Darboux symplectic correction, as detailed in \Cref{darboux:0}, restoring the necessary structure and allowing us to exploit these properties effectively. Since the map $\bB(U)$ satisfies the hypotheses of \cite[Theorem 7.1]{BMM2022}, we apply it to obtain a new constant-coefficient equation:  

\begin{equation} \label{teo62_intro}
\begin{aligned}
\pa_tZ_0 &=  \ii \vOmega(D)Z_0 +\Opvec{\ii (\mathpzc{d}_{\frac32})_{\leqslant N}(Z_0;\xi)+\ii (\mathpzc{d}_{\frac32})_{>N}(U;t,\xi) }Z_0  + \pmb{R}_{\leqslant N}(Z_0)Z_0+ \pmb{R}_{>N}(U;t)U.
\end{aligned}
\end{equation}  
This equation is Hamiltonian up to homogeneity $N$, meaning that  

\[
\Opvec{\ii (\mathpzc{d}_{\frac32})_{\leqslant N}(Z_0;\xi) }Z_0  + \pmb{R}_{\leqslant N}(Z_0)Z_0= \bm J_\C \nabla H_{\leqslant N}
\]  
for some real Hamiltonian function $ H_{\leqslant N} $. At this point, we begin the algorithmic procedure of reducing the degrees of homogeneity (see \Cref{birkfinalone}).  
The final outcome is a super-action preserving Hamiltonian equation of the form  
\begin{equation} \label{final:eq_intro}
\begin{aligned}
\pa_tZ &= {\ii \vOmega(D)} Z +  \bm J_\C \nabla H^{(\tSAP)}_{\frac32}(Z)+\bm J_\C \nabla H^{(\tSAP)}_{-\vr}(Z)+\Opvec{ \ii (\mathpzc{d}_{\frac32})_{>N}(U;t,\xi)}Z+ \pmb{R}_{>N}(U;t)U.
\end{aligned}
\end{equation}  
Since the super-action preserving Hamiltonian terms ${\ii \vOmega(D)} Z$, $\bm J_\C \nabla H^{(\tSAP)}_{\frac32}(Z)$, and $\bm J_\C \nabla H^{(\tSAP)}_{-\vr}(Z)$ do not contribute to the energy estimate, we obtain, for small solutions $\| Z\| \sim \| U\|_s \lesssim \varepsilon$, the energy bound  
\[
\frac{\di}{\di t} \| Z\|_s^2 \lesssim \varepsilon^{N+3},
\]  
which allows us to prove \Cref{thm:main}.

\subparagraph{Structure of the manuscript} The Hamiltonian formulation can be found in \Cref{sec HAM} while the non-resonance conditions are proved in \Cref{sec:nonreso}.
The paralinearization of the system \eqref{eq:KH3} is  carried out in Section \ref{sec:paralinearization} and the final result is stated in  \Cref{prop:paralinearizationKH}. The next step is to reformulate the results using the complex notation, see Section \ref{sec:complex}. Then, we implement a reducibility procedure to get rid of the space dependence of the positive order part. This part is now rather classical and the corresponding final result is given in Proposition \ref{prop:constantcoeff}. With this in hand, one can perform the Hamiltonian Birkhoff normal form. It is done in Section \ref{sec:BNF} and requires non-resonance conditions for frequency vectors composed with the equilibrium spectrum. Such conditions are checked in Section \ref{sec:nonreso} and are the reasons for the introduction of the zero measure set $\mathcal{B}$.\\

\textbf{Aknowledgments :} FM is supported by the ERC STARTING GRANT 2021 ”Hamiltonian Dynamics, Normal Forms and Water Waves” (HamDyWater-Wavesa), Project Number: 101039762. ER is supported by PRIN 2020 ”Hamiltonian and Dispersive PDEs” project number: 2020XB3EFL. SS is supported by PRIN 2022 ”Turbulent effects vs Stability in Equations from Oceanography” (TESEO), project number: 2022HSSYPN.

\section{Hamiltonian structure and non-resonance conditions}
Here we highlight the Hamiltonian nature of the system \eqref{eq:KH3}. Then, we study the associated linearization at the trivial solution $\pare{\eta,\psi}=\pare{0,0}$ and discuss the non-resonance property of the corresponding eigenvalues. This latter fact is crucial for implementing the Birkhoff normal form in Section \ref{riduzione_e_stima}.
\subsection{Derivation of the Hamiltonian formulation}\label{sec HAM}
Let us now exhibit the Hamiltonian nature of the Kelvin-Helmholtz system \eqref{eq:KH3}.
\begin{prop}\label{prop Ham KH}
	The system \eqref{eq:KH3} is Hamiltonian. More precisely, let us consider
	\begin{equation}\label{def Hamiltonian VS}
		H(\eta,\psi)\triangleq
\mathpzc{E}_{\upgamma}(\eta,\psi)+\kap\mathpzc{L}(\eta)+\Omega\mathpzc{M}(\eta,\psi),
	\end{equation}
	where $\mathpzc{E}_{\upgamma}(\eta,\psi),$ $\mathpzc{L}(\eta)$ and $\mathpzc{M}(\eta,\psi)$ are the pseudo
    kinetic energy, the length of the free boundary and the angular momentum, respectively defined by 
	\begin{align}\label{cinetica}
		\mathpzc{E}_{\upgamma}(\eta,\psi)&\triangleq
-\frac{1}{4}\int_{\mathbb{T}}\int_{\mathbb{T}}\big(\psi_{x}(x)+\upgamma\big)\big(\psi_{x}(y)+\upgamma\big)\log\big(|\mathpzc{z}(x)-\mathpzc{z}(y)|^2\big)\dd y \dd x,\\
		\mathpzc{L}(\eta)&\triangleq
\int_{\mathbb{T}}|\mathpzc{z}_x(x)|\dd x,\\
\mathpzc{M}(\eta,\psi)&\triangleq\int_{\mathbb{T}}\psi_x(x)\eta(x)\dd x.
	\end{align}
	Then, the equations \eqref{eq:KH3} are equivalent to
	\begin{align}\label{eq:KH_Hamiltonian}
	\vect{\eta_t}{\psi_t} =\vect{-\nabla_\psi H(\eta,\psi) }{\nabla_\eta H(\eta,\psi)}= {\bm J} \  \nabla H \pare{\eta, \psi}, 
\qquad
	{\bm J}\defeq
	\begin{bmatrix}
	0&-1\\1&0
	\end{bmatrix}.
	\end{align}
    In addition, the Hamiltonian $H$ is resversible and invariant under translations, namely defining the transformations
    $$\mathtt{S}\begin{bmatrix}
        \eta\\
        \psi
    \end{bmatrix}(x)=\begin{bmatrix}
        \eta\\
        -\psi
    \end{bmatrix}(-x),\qquad\st_\varsigma\begin{bmatrix}
        \eta\\
        \psi
    \end{bmatrix}(x)=\begin{bmatrix}
        \eta\\
        \psi
    \end{bmatrix}(x+\varsigma),$$
    we have
    \begin{equation}\label{rev+invtrans}
        H\circ\mathtt{S}=H=H\circ\st_\varsigma,\qquad\forall \varsigma\in\mathbb{T}.
    \end{equation}
\end{prop}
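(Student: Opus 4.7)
The plan is to compute the variational gradients $\nabla_\eta H$ and $\nabla_\psi H$ against \eqref{def Hamiltonian VS} term-by-term, recognize the resulting expressions as the right-hand sides of \eqref{eq:KH3}, and then verify the symmetry properties \eqref{rev+invtrans} by direct changes of variables.

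For $\nabla_\psi H$, the length $\mathpzc{L}$ gives no contribution, a single integration by parts yields $\nabla_\psi \mathpzc{M} = -\eta_x$, and the variation of $\mathpzc{E}_\upgamma$ in $\psi$—after symmetrizing the double integral in $x\leftrightarrow y$ and integrating by parts in $x$—produces an integrand of the form $(\psi_x(y)+\upgamma)\,\partial_x\log|\mathpzc{z}(x)-\mathpzc{z}(y)|^2$. By \eqref{integal rep H} this is precisely the kernel of $\mathpzc{H}(\eta)$, so $\nabla_\psi\mathpzc{E}_\upgamma = \tfrac{1}{2}\mathpzc{H}(\eta)[\psi_x+\upgamma]$. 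Assembling the three pieces and invoking $\eta_t = -\nabla_\psi H$ reproduces the first equation of \eqref{eq:KH3}.

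For $\nabla_\eta H$, the angular momentum gives $\nabla_\eta\mathpzc{M} = \psi_x$. The gradient of the length follows from the Euler--Lagrange equation applied to the integrand $|\mathpzc{z}_x|=\sqrt{(1+2\eta)+\eta_x^2/(1+2\eta)}$; alternatively one computes $\partial_x(\mathpzc{z}_x/|\mathpzc{z}_x|) = -\mathpzc{k}(\mathpzc{z})|\mathpzc{z}_x|\mathbf{n}$ and uses $\delta\mathpzc{z}(x) = \delta\eta(x)\,r(x)^{-1}(\cos x,\sin x)$ with $r=\sqrt{1+2\eta}$, yielding, after a direct algebraic manipulation that identifies $\mathpzc{k}(\mathpzc{z})$ with $\mathpzc{K}(\eta)$ from \eqref{eq:integral_operators_etaomega}, the equality $\nabla_\eta\mathpzc{L} = -\mathpzc{K}(\eta)$. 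The main obstacle is the variation of $\mathpzc{E}_\upgamma$ in $\eta$, since the logarithmic kernel depends on $\eta$ through both end-points $\mathpzc{z}(x)$, $\mathpzc{z}(y)$: using the same ansatz for $\delta\mathpzc{z}$ and symmetrizing $x\leftrightarrow y$, the variation reduces to a single integral with kernel
\[
\frac{(\mathpzc{z}(x)-\mathpzc{z}(y))\cdot r(x)^{-1}(\cos x,\sin x)}{|\mathpzc{z}(x)-\mathpzc{z}(y)|^2}.
\]
The pointwise identities
\[
|\mathpzc{z}(x)-\mathpzc{z}(y)|^2 = 2\bigl(1+\eta(x)+\eta(y)-\sqrt{1+2\eta(x)}\sqrt{1+2\eta(y)}\cos(x-y)\bigr),
\]
\[
(\mathpzc{z}(x)-\mathpzc{z}(y))\cdot r(x)^{-1}(\cos x,\sin x) = 1 - \sqrt{\tfrac{1+2\eta(y)}{1+2\eta(x)}}\cos(x-y),
\]
match this kernel exactly with that of $\mathpzc{D}_0$ in \eqref{eq:integral_operators_etaomega}, giving $\nabla_\eta\mathpzc{E}_\upgamma = -\tfrac{1}{2}(\psi_x+\upgamma)\,\mathpzc{D}_0(\eta)[\psi_x+\upgamma]$. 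Summing the three contributions and invoking $\psi_t = \nabla_\eta H$ reproduces the second equation of \eqref{eq:KH3}.

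For \eqref{rev+invtrans}, the $\mathtt{S}$-transform sends $\eta \mapsto \eta(-\cdot)$ and $\psi\mapsto -\psi(-\cdot)$, and a short calculation shows that the derivative transforms as $\psi_x \mapsto \psi_x(-\cdot)$. Since only the rotation-invariant combinations $r(x)^2+r(y)^2-2r(x)r(y)\cos(x-y)$ and $r_x^2+r^2$ enter $|\mathpzc{z}(x)-\mathpzc{z}(y)|^2$ and $|\mathpzc{z}_x|^2$ respectively, and $\cos$ is even, the substitutions $x\mapsto -x$, $y\mapsto -y$ leave all three functionals $\mathpzc{E}_\upgamma$, $\mathpzc{L}$, $\mathpzc{M}$ invariant. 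Translation invariance under $\st_\varsigma$ follows analogously from the substitution $x\mapsto x+\varsigma$, $y\mapsto y+\varsigma$, which preserves the $x-y$ dependence of the kernels.
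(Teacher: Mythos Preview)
Your proposal is correct and follows essentially the same strategy as the paper: compute $\nabla_\psi$ and $\nabla_\eta$ of each of the three pieces $\mathpzc{E}_\upgamma$, $\mathpzc{L}$, $\mathpzc{M}$ separately, identify the kernels with $\mathpzc{H}(\eta)$, $\mathpzc{D}_0(\eta)$, $\mathpzc{K}(\eta)$ from \eqref{eq:integral_operators_etaomega}, and verify the invariances by the substitutions $x\mapsto -x$ and $x\mapsto x+\varsigma$. The only cosmetic difference is in the computation of $\nabla_\eta\mathpzc{L}$: the paper passes to the auxiliary variable $h=\sqrt{1+2\eta}-1$, differentiates $\widetilde{\mathpzc{L}}(h)=\int\sqrt{h_x^2+(1+h)^2}\,\dd x$ directly, and then uses the chain rule $\nabla_\eta h=(1+h)^{-1}$, whereas you invoke the Frenet-type identity $\partial_x(\mathpzc{z}_x/|\mathpzc{z}_x|)=-\mathpzc{k}(\mathpzc{z})|\mathpzc{z}_x|\mathbf{n}$ together with $\delta\mathpzc{z}=\delta\eta\,r^{-1}(\cos x,\sin x)$; both routes land on $\nabla_\eta\mathpzc{L}=-\mathpzc{K}(\eta)$ after the algebraic identification $\mathpzc{k}(\mathpzc{z})=\mathpzc{K}(\eta)$.
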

\begin{proof}
	$\blacktriangleright$ \textit{The pseudo kinetic part:}\\
We compute the variation of $\mathpzc{E}_{\upgamma}(\eta,\psi)$ with respect to $\psi$. Using integration by parts and \eqref{integal rep H}, we get 
\begin{align}
    \dd_{\psi}\mathpzc{E}_{\upgamma}(\eta,\psi)[\hat{\psi}]&=\frac{1}{2}\int_{\mathbb{T}}\left(\int_{\mathbb{T}}\big(\psi_{y}(y)+\upgamma\big)\pa_x\log\big(|\mathpzc{z}(x)-\mathpzc{z}(y)|^2\big)\dd y \right)\hat \psi(x)\dd x \notag\\
    &= \frac{1}{2}\int_{\mathbb{T}}\mathpzc{H}(\eta)[\partial_{x}\psi+\upgamma](x)\hat{\psi}(x)\dd x\label{nuovona}
\end{align}
	From \eqref{nuovona}, we deduce
	\begin{equation}\label{grad E psi}
		\nabla_{\psi}\mathpzc{E}_{\upgamma}(\eta,\psi)=\frac{1}{2}\mathpzc{H}(\eta)[\partial_{x}\psi]+\frac{\upgamma}{2}\mathpzc{H}(\eta)[1].
	\end{equation}
	Now we turn to the differentiation of $\mathpzc{E}_{\upgamma}(\eta,\psi)$ with respect to $\eta$. Recall the notation in \eqref{z:R}.
	Differentiating \eqref{cinetica} with respect to $\eta$, we infer
	$$\dd_{\eta}\mathpzc{E}_{\upgamma}(\eta,\psi)[\hat{\eta}]=-\frac{1}{2}\int_{\mathbb{T}}\int_{\mathbb{T}}\big(\psi_{x}(x)+\upgamma\big)\big(\psi_{x}(y)+\upgamma\big)\frac{\hat{\eta}(x)+\hat{\eta}(y)-\big(\hat{\eta}(x)\frac{r(y)}{r(x)}+\hat{\eta}(y)\frac{r(x)}{r(y)}\big)\cos(x-y)}{|\mathpzc{z}(x)-\mathpzc{z}(y)|^2}\dd x \dd y .$$
	By symmetry, we can reduce this expression to
	\begin{align*}
		\dd_{\eta}\mathpzc{E}_{\upgamma}(\eta,\psi)[\hat{\eta}]&=-\int_{\mathbb{T}}\big(\psi_{x}(x)+\upgamma\big)\left(\int_{\mathbb{T}}\big(\psi_{x}(y)+\upgamma\big)\frac{1-\frac{r(y)}{r(x)}\cos(x-y)}{|\mathpzc{z}(x)-\mathpzc{z}(y)|^2}\hat{\eta}(x) \dd y \right)\dd x\\
		&=-\int_{\mathbb{T}}\frac{1}{2}\big(\psi_{x}(x)+\upgamma\big)\mathpzc{D}_0(\eta)[\partial_{x}\psi+\upgamma](x)\hat{\eta}(x)\dd x,
	\end{align*}
	which implies in turn
	\begin{equation}\label{grad E eta}
		\nabla_{\eta}\mathpzc{E}_{\upgamma}(\eta,\psi)=-\frac{\psi_{x}+\upgamma}{2}\mathpzc{D}_0(\eta)[\partial_{x}\psi+\upgamma].
	\end{equation}
	$\blacktriangleright$ \textit{The length part:}\\
	Recall that $\mathpzc{z}(x)=\big(1+h(x)\big)e^{\ii x}$ with $h=\sqrt{1+2\eta}-1.$ Therefore,
	$$\mathpzc{z}_{x}(x)=\Big(h_{x}(x)+\ii\big(1+h(x)\big)\Big)e^{\ii x},\qquad|\mathpzc{z}_{x}(x)|^2=h_{x}^{2}(x)+\big(1+h(x)\big)^2.$$
	Thus, 
	$$\mathpzc{L}(\eta)=\int_{\mathbb{T}}\sqrt{h_{x}^{2}(x)+\big(1+h(x)\big)^2}\dd x\triangleq \hspace{3pt}\widetilde{\hspace{-3pt} \mathpzc{L}}(h).$$
	Differentiating, we obtain
	$$d_{h}\hspace{3pt}\widetilde{\hspace{-3pt} \mathpzc{L}}(h)[\hat{h}]=\int_{\mathbb{T}}\frac{h_{x}(x)\hat{h}_{x}(x)+\big(1+h(x)\big)\hat{h}(x)}{\sqrt{h_{x}^{2}(x)+\big(1+h(x)\big)^2}}\dd x.$$
	Integrating by parts, we get
	\begin{align*}
		d_{h}\hspace{3pt}\widetilde{\hspace{-3pt} \mathpzc{L}}(h)[\hat{h}]&=\int_{\mathbb{T}}\left[\frac{\big(1+h(x)\big)}{\sqrt{h_{x}^{2}(x)+\big(1+h(x)\big)^2}}-\partial_{x}\left(\frac{h_{x}(x)}{\sqrt{h_{x}^{2}(x)+\big(1+h(x)\big)^2}}\right)\right]\hat{h}(x)\dd x\\
		&=-\int_{\mathbb{T}}\big(1+h(x)\big)\frac{\big(1+h(x)\big)\big(h_{xx}(x)-1-h(x)\big)-2h_{x}^2(x)}{\left(h_{x}^{2}(x)+\big(1+h(x)\big)^2\right)^{\frac{3}{2}}}\hat{h}(x)\dd x.
	\end{align*}
It is easy to see that
$$\frac{(1+h)(h_{xx}-1-h)-2h_{x}^2}{\big(h_{x}^{2}+(1+h)^2\big)^{\frac{3}{2}}}=\frac{\eta_{xx}-(1+2\eta)-3\left(\frac{\eta_x}{\sqrt{1+2\eta}}\right)^2}{\left(1+2\eta+\left(\frac{\eta_x}{\sqrt{1+2\eta}}\right)^2\right)^{\frac{3}{2}}}=\mathpzc{K}(\eta).$$
 As a consequence,
	$$\nabla_{h}\hspace{3pt}\widetilde{\hspace{-3pt} \mathpzc{L}}(h)=-(1+h)\mathpzc{K}(\eta).$$
	Applying the chain rule, we deduce that
	\begin{equation}\label{grad A}
		\nabla_{\eta}\mathpzc{L}(\eta)=\nabla_{h}\hspace{3pt}\widetilde{\hspace{-3pt} \mathpzc{L}}(h)\cdot\nabla_{\eta}h=-(1+h)\mathpzc{K}(\eta)\cdot\frac{1}{1+h}=-\mathpzc{K}(\eta).
	\end{equation}
    $\blacktriangleright$ \textit{The momentum part:}\\
	One readily has
	$$d_{\eta}\mathpzc{M}(\eta,\psi)[\hat{\eta}]=\int_{\mathbb{T}}\psi_{x}(x)\hat{\eta}(x)dx$$
	and, via integration by parts
	$$d_{\psi}\mathpzc{M}(\eta,\psi)[\hat{\psi}]=\int_{\mathbb{T}}\hat{\psi}_{x}(x)\eta(x)dx=-\int_{\mathbb{T}}\eta_{x}(x)\hat{\psi}(x)\dd x.$$
	Hence,
	\begin{equation}\label{grad M}
		\nabla_{\eta}\mathpzc{M}(\eta,\psi)=\psi_{x},\qquad\nabla_{\psi}\mathpzc{M}(\eta,\psi)=-\eta_{x}.
	\end{equation}
Gathering \eqref{def Hamiltonian VS}, \eqref{grad E psi} and \eqref{grad M}, we get
\begin{align}\label{grad H psi}
	\nabla_{\psi}H(\eta,\psi)&=\nabla_{\psi}\mathpzc{E}_{\upgamma}(\eta,\psi)+\Omega\nabla_{\psi}\mathpzc{M}(\eta,\psi)\nonumber\\
	&=-\Omega\eta_{x}+\frac{1}{2}\mathpzc{H}(\eta)[\partial_x\psi+\upgamma].
\end{align}
Putting together \eqref{def Hamiltonian VS}, \eqref{grad E eta}, \eqref{grad A} and \eqref{grad M} yields
\begin{align}\label{grad H eta}
	\nabla_{\eta}H(\eta,\psi)&=\nabla_{\eta}\mathpzc{E}_{\upgamma}(\eta,\psi)+\kap\nabla_{\eta}\mathpzc{A}(\eta)+\Omega\nabla_{\eta}\mathpzc{M}(\eta,\psi)\nonumber\\
	&=\Omega\psi_{x}-\frac{\psi_{x}+\upgamma}{2}\mathpzc{D}_0(\eta)[\partial_{x}\psi+\upgamma]-\kap\mathpzc{K}(\eta).
\end{align}
Comparing \eqref{eq:KH3} with \eqref{grad H psi} and \eqref{grad H eta} concludes the desired result.\\
$\blacktriangleright$ \textit{Invariances :} The properties \eqref{rev+invtrans} are easily obtained by changes of variables $x\mapsto-x$ and $x\mapsto x+\varsigma$. This ends the proof of Proposition \ref{prop Ham KH}.
\end{proof}
\subsection{Analysis of the linearization of \eqref{eq:KH3}}\label{sec linop}
\begin{definition}\label{def:Fourier_mult}
Let $ m\in \R $, we define the space of Fourier multipliers of order $ m $, $ \tilde{\Gamma}^m_0 $,  as the space of smooth functions from $ \R \setminus\{0\}$ to $ \C $ of the form $ \xi\mapsto a\pare{\xi} $ such that
\begin{align*}
\av{\partial_\xi^\alpha a \pare{\xi} }\leqslant C_\beta \angles{\xi}^{m-\alpha},  \  
&& \forall \alpha \in \N, \av{\xi}\geqslant 1/2. 
\end{align*}
\end{definition}
\noindent Following \cite{MRS}, the linearization of \eqref{eq:KH3} around $ \pare{\eta, \psi}=\pare{0, 0} $ is given by
\begin{equation}\label{eq:linearized_system}
\system{
\begin{aligned}
& \eta_t = \pare{\Omega- \frac{\upgamma}{2}} \eta_x - \frac{\av{D}}{2} \psi,\\
& \psi_t = \pare{ \kap \av{D}^2 - \frac{\upgamma^2}{2} \av{D} - \pare{\kap-\upgamma^2}  }\ \eta
+\pare{\Omega-\frac{\upgamma}{2}} \psi_x.
\end{aligned}
}
\end{equation}
Namely, we can write \eqref{eq:linearized_system} as 
\begin{align}\label{eq:linearized_system_matrix}
\vect{\eta_t}{\psi_t} = {\bm L}_{\kap, \upgamma} \pare{D}\vect{\eta}{\psi} , 
&&
{\bm L}_{\kap, \upgamma}\pare{\xi}\defeq
\bra{
\begin{array}{cc}
\ii \pare{\Omega- \frac{\upgamma}{2}}\xi & -\frac{\av{\xi}}{2}
\\[2mm]
{ \kap \av{\xi}^2 - \frac{\upgamma^2}{2} \av{\xi} - \pare{\kap-\upgamma^2}  } & \ii \pare{\Omega- \frac{\upgamma}{2}} \xi 
\end{array}
}.
\end{align}
The eigenvalues of $ {\bm L}_{\kap, \upgamma}\pare{\xi} $ are given by
\begin{equation}\label{eq:lambdas}
 \lambda^\pm_{\kap, \upgamma}\pare{\xi}\triangleq
\ii \pare{\Omega- \frac{\upgamma}{2}} \xi \pm  \, \sqrt{-\frac{\av{\xi}}{2} \pare{ \kap \av{\xi}^2 - \frac{\upgamma^2}{2} \av{\xi} - \pare{\kap-\upgamma^2}  } }
\ \in \tilde{\Gamma}^{3/2}_0 \  
. 
\end{equation}
We want the eigenvalues in \eqref{eq:lambdas} to be purely imaginary, this happens if and only if
\begin{equation}\label{eq:positivity_sqrt}
 \kap \av{\xi}^2 - \frac{\upgamma^2}{2} \av{\xi} - \pare{\kap-\upgamma^2} > 0, 
\end{equation}
The condition \eqref{eq:positivity_sqrt} is satisfied for any $|\xi|\geqslant 1$ if
\begin{equation*}
    \beta\defeq\frac{\upgamma^2}{\kap} \in [0,\beta_{+}),\qquad\beta_{+}\triangleq4(2+\sqrt{3}) \approx 14,928... 
\end{equation*}
Indeed, for $|\xi|\in [ 1,2] $, one has 
$$
 \kap \av{\xi}^2 - \frac{\upgamma^2}{2} \av{\xi} - \pare{\kap-\upgamma^2} \geqslant \min\left\{ \frac{\upgamma^2}{2}, 2\kap  \right\} >0
$$
 for any $ \beta>0$. While, for $ |\xi|>2 $, it reduces to 
\be \label{nuova:cond}
\beta < \min_{|\xi|>2 } \left(\frac{ |\xi|^2-1}{\frac{|\xi|}{2}-1}\right)=4(2+\sqrt{3}). 
\ee
\begin{remark}
We consider the restriction in \eqref{eq:positivity_sqrt} for $ |\xi|\geqslant 1 $. If we further restrict to $ \xi \in \Z^*$, we can improve $ \beta_+ $ from $ 4(2+\sqrt{3}) $ to $ \beta_+ = 15 $. However, in \Cref{sec:nonreso}, we extend the function $ \lambda^\pm_{\kap, \upgamma}(\xi) $ to apply the Delort-Szeftel \Cref{Del-S}. We believe that the argument in \Cref{prop dioph} could be modified to maintain the slightly less restrictive condition $ \beta_+ = 15 $. Nevertheless, to preserve the simplicity of our approach, we do not pursue this further analysis.

\end{remark}
 Then we obtain that
\begin{align}
\label{eq:disp_relation}
\lambda^\pm_{\kap, \upgamma}\pare{\xi}=
\ii\pare{\Omega- \frac{\upgamma}{2}} \xi \pm \ii \ \omega_{\kap, \upgamma}\pare{{\xi}}, 
&&
\omega_{\kap, \upgamma}\pare{{\xi}}\triangleq
\sqrt{\frac{\av{\xi}}{2} \pare{ \kap\pare{  \av{\xi}^2 -1 } - \upgamma^2 \pare{ \frac{\av{\xi}}{2} -1 } } } \in \tilde{\Gamma}^{3/2}_0 \ . 
\end{align}
Notice that we can expand $ \omega_{\kap, \upgamma}\pare{{\xi}} $ and obtain that
\begin{align}\label{eq:expansion_omega}
\omega_{\kap, \upgamma}\pare{{\xi}} 
=
\sqrt{\frac{\kap}{2}} \ \av{\xi}^{\frac{3}{2}} - \frac{1}{\sqrt{2\kap}} \pare{\frac{\upgamma}{2}}^2 \ \av{\xi}^{\frac{1}{2}} 
+\omega _{\kap, \upgamma;-\frac{1}{2}}\pare{{\xi}}, 
&&
\omega _{\kap, \upgamma;-\frac{1}{2}}\pare{{\xi}}\in\tilde{\Gamma}^{-\frac{1}{2}}_0. 
\end{align}
In the sequel, we make the following natural choice for $\Omega$
\begin{equation}\label{eq:Omega_def}
\Omega\triangleq\frac{\upgamma}{2}\cdot
\end{equation}

\begin{rem}
Already at linear level standard computations show that in order to close energy estimates for the system \eqref{eq:linearized_system} we need a discrepancy in regularity between $ \eta $ and $ \psi $, namely we can close the energy estimates on \eqref{eq:linearized_system} in the case in which $ \eta\in H^{s + \frac{1}{4}}_0\pare{\mathbb{T};\mathbb{R}} $ and $ \psi\in \dot{H}^{s-\frac{1}{4}}\pare{\mathbb{T};\mathbb{R}} $, such regularity gap shall persist at nonlinear level as well. This is not unexpected, and the same behavior is present for the one-phase water waves problem, both at linear and nonlinear level, cf. \cite{ABZ2014}.
\end{rem}

\subsection{Non-resonance conditions}\label{sec:nonreso}
In this subsection, we study the non-resonances between the frequencies. This is needed in the application of the normal form algorithm performed in Section \ref{sec:BNF}. Specifically, we prove that there are no resonances between linear frequencies, except for the \emph{super-action-preserving} ones, as defined below.
\begin{definition}[$\tSAP$ multi-index]\label{def:SAPindex}
A multi-index 
$(\alpha, \beta) \in \N^{\Z^*} \times \N^{\Z^*} $ 
is {\em super-action preserving } if 
\begin{equation} \label{sap}
\alpha_n + \alpha_{-n} = \beta_n + \beta_{-n} \ , \qquad \forall n \in \N \, .
\end{equation}
\end{definition}
A super-action preserving multi-index $(\alpha,\beta) $  satisfies $ |\alpha| = |\beta|$ where
$ |\alpha | \triangleq  \sum_{j \in \Z^*} \alpha_j  $.
If a multi-index $(\alpha, \beta) \in  \N^{\Z^*} \times \N^{\Z^*} $ is not super-action preserving, then the set 
\be\label{ennonegotico}
\mathfrak N(\alpha, \beta) \triangleq 	\Big\{ n \in \N \quad\textnormal{s.t.}\quad 
\alpha_{n} + \alpha_{-n} - \beta_n  -\beta_{-n} \neq 0 \Big\}  
\ee
is not empty 
and, since 
$$\mathfrak N(\alpha, \beta) \subset\Big\{ n \in \N \quad\textnormal{s.t.}\quad 
\alpha_{n} + \alpha_{-n} +\beta_n  +\beta_{-n} \neq 0 \Big\},$$ 
its cardinality satisfies
\be\label{cardNab}
 | \mathfrak N(\alpha, \beta) | \leqslant |\alpha + \beta| = |\alpha | + | \beta | \, . 
\ee

The main result of the present section is the following:
\begin{proposition}\label{nres}
    Let $M\in\mathbb{N}^*$ and $0<\beta_1<\beta_2<4\pare{2+\sqrt{3}}.$ Then, there exist $\tau,\delta>0$  and a zero measure set $\mathcal{B}\subset\bra{\beta_1,\beta_2}$  such that for any $\beta\in\bra{\beta_1,\beta_2}\setminus\mathcal{B}$ the following holds: there is $ \nu>0$ such that for any multi-index $\pare{\alpha,\alpha'}\in(\mathbb{N}^*)^{\mathbb{Z}^*}\times(\mathbb{N}^*)^{\mathbb{Z}^*}$ of length $\av{\alpha+\alpha'}\leqslant M,$ which is not super-action preserving (in the sense of Definition \ref{def:SAPindex}), one has
    $$\av{\vec{\omega}_{\kap,\upgamma}\cdot\pare{\alpha-\alpha'}}\geqslant\frac{\nu}{\pare{\displaystyle\max_{j\in\textnormal{supp}\pare{\alpha\cup\alpha'}}|j|}^{\tau}},$$
    where
    $$\vec{\omega}_{\kap,\upgamma}\triangleq\pare{\omega_{\kap,\upgamma}\pare{{j}}}_{j\in\mathbb{Z}^*}.$$
\end{proposition}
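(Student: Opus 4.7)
The plan is to reduce Proposition \ref{nres} to a non-vanishing plus quantitative lower bound for a single real-analytic function of $\beta$, and then to conclude by a Borel--Cantelli-type summation over the countable family of multi-indices.

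First, using $\omega_{\kap,\upgamma}(j)=\omega_{\kap,\upgamma}(-j)$, setting $c_n\triangleq (\alpha_n+\alpha_{-n})-(\alpha'_n+\alpha'_{-n})$ for $n\geqslant 1$ yields
\[
\vec\omega_{\kap,\upgamma}\cdot(\alpha-\alpha')=\sum_{n\geqslant 1}c_n\,\omega_{\kap,\upgamma}(n),\qquad \sum_n|c_n|\leqslant M,
\]
and the non-SAP hypothesis is equivalent to $(c_n)_{n\geqslant 1}\not\equiv 0$. Writing $\omega_{\kap,\upgamma}(n)=\sqrt{\kap/2}\,\Omega_\beta(n)$ with $\Omega_\beta(n)\triangleq\sqrt{a_n-b_n\beta}$, $a_n\triangleq n(n^2-1)$, $b_n\triangleq n(n-2)/2$, the prefactor $\sqrt{\kap/2}$ can be absorbed into the constants; the task becomes a lower bound for $L_c(\beta)\triangleq \sum_n c_n\Omega_\beta(n)$ on $[\beta_1,\beta_2]$. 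Every $\Omega_\beta(n)$ is real-analytic on $I\triangleq(0,4(2+\sqrt 3))$ since for $n\geqslant 3$ the branch point $\beta_n^*=a_n/b_n=2(n^2-1)/(n-2)$ satisfies $\beta_n^*\geqslant 15>\beta_2$, while $\Omega_\beta(1)=\sqrt{\beta/2}$ has its branch at $0$ and $\Omega_\beta(2)=\sqrt 6$ is constant.

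The heart of the argument is to show $L_c\not\equiv 0$ on $I$ whenever $(c_n)\not\equiv 0$. A short computation shows that $\beta_n^*=\beta_m^*$ with $3\leqslant n<m$ is equivalent to $(n-2)(m-2)=3$, whose only solution is $(n,m)=(3,5)$; one verifies directly that $\Omega_\beta(5)^2=5\,\Omega_\beta(3)^2$, hence $\Omega_\beta(5)=\sqrt 5\,\Omega_\beta(3)$. After grouping this pair, all remaining branch points are pairwise distinct. Assuming $L_c\equiv 0$ and analytically continuing to $\mathbb{C}$ minus the branch set, a monodromy argument consisting in continuing $L_c$ around a small loop encircling a single isolated $\beta_n^*$ forces $c_n=0$ for each such $n$; the loop around $\beta_1^*=0$ gives $c_1=0$; the loop around $16$ yields $c_3+\sqrt 5\,c_5=0$, and since $c_3,c_5\in\mathbb{Z}$ and $\sqrt 5\notin\mathbb{Q}$ this gives $c_3=c_5=0$. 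The residual identity $c_2\sqrt 6\equiv 0$ then forces $c_2=0$, contradicting $(c_n)\not\equiv 0$.

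For the quantitative step, the real-analyticity and non-vanishing of $L_c$ combined with a Delort--Szeftel-type Wronskian estimate (extending $n\mapsto\omega_{\kap,\upgamma}(n)$ to a function of a real variable and using a Gram/Vandermonde bound for $\{\partial_\beta^j\Omega_\beta(n_\ell)\}_{j,\ell\leqslant k}$ at distinct integers $n_\ell\leqslant N$) produce constants $k=k(M)$ and $\tau_0=\tau_0(M)$ such that
\[
\max_{0\leqslant j\leqslant k}\bigl|\partial_\beta^j L_c(\beta)\bigr|\geqslant c_0\,N^{-\tau_0}\qquad\forall\beta\in[\beta_1,\beta_2],
\]
where $N\triangleq \max_{j\in\mathrm{supp}(\alpha\cup\alpha')}|j|$. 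Rüssmann's lemma then gives, for $\tau>\tau_0 k$ large enough,
\[
\bigl|\{\beta\in[\beta_1,\beta_2]\ :\ |L_c(\beta)|<\nu N^{-\tau}\}\bigr|\leqslant C_M\,\nu^{1/k}\,N^{-(\tau/k-\tau_0)}.
\]
Since the number of non-SAP multi-indices $(\alpha,\alpha')$ with $|\alpha+\alpha'|\leqslant M$ and support in $\{|j|\leqslant N\}$ is polynomially bounded in $N$ by $C_M N^{2M}$, choosing $\tau$ so that $\tau/k-\tau_0>2M+2$ makes the sum over all non-SAP multi-indices and all $N\geqslant 1$ converge to $C_M\,\nu^{1/k}$. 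Intersecting these bad sets along $\nu=2^{-\ell}\to 0$ produces the null set $\mathcal{B}\subset[\beta_1,\beta_2]$ outside of which the announced bound holds. The main obstacle is the uniform-in-$N$ Wronskian determinant estimate, since the standard Delort--Szeftel framework is tailored to $\omega(n)=\sqrt{n^2+\beta}$; here one must exploit the explicit affine form of $\Omega_\beta^2=a_n-b_n\beta$ and simultaneously isolate the single algebraic degeneracy $\Omega_\beta(5)=\sqrt 5\,\Omega_\beta(3)$ detected above.
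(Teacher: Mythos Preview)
Your overall strategy is sound and shares its skeleton with the paper: reduce to integer combinations $\sum_n c_n\,\omega_{\kap,\upgamma}(n)$ with $c_n=(\alpha_n+\alpha_{-n})-(\alpha'_n+\alpha'_{-n})$, isolate the single algebraic degeneracy $\omega_{\kap,\upgamma}(5)=\sqrt{5}\,\omega_{\kap,\upgamma}(3)$, and sum the bad-set measure estimates over the countable family. Your monodromy argument for $L_c\not\equiv 0$ (analytically continuing around the isolated branch points $\beta_n^*$ and using that looping flips exactly one square root) is a genuinely different and rather elegant alternative to what the paper does: the paper instead computes $\partial_\beta^\ell\omega_{\kap,\upgamma}(n)$ explicitly and observes a Vandermonde structure in the quantities $\mu(n,\beta)=\partial_\beta\log\Omega_\beta(n)$, which are pairwise distinct precisely away from the $(3,5)$ collision. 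Both arguments correctly identify $(3,5)$ as the unique obstruction and dispose of it via the irrationality of $\sqrt{5}$.

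The genuine gap in your proposal is the quantitative step, which you yourself flag as ``the main obstacle.'' You invoke a uniform-in-$N$ lower bound $\max_{0\leqslant j\leqslant k}|\partial_\beta^j L_c(\beta)|\geqslant c_0 N^{-\tau_0}$ and then R\"ussmann, but establishing that Wronskian/Gram bound uniformly as the number and size of the frequencies grow is exactly the heart of the matter, and you do not supply it. The paper's resolution is to avoid this direct estimate altogether: it performs a compactifying change of variables $x_0=(\sum n_a)^{-1}$, $x_a=x_0\sqrt{n_a-1}$, so that all parameters land in a fixed ball, and then invokes the abstract Delort--Szeftel theorem for sub-analytic families (Theorem~\ref{Del-S}), whose hypotheses reduce to the Vandermonde non-degeneracy of Lemma~\ref{nondeg}. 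The polynomial-in-$N$ loss comes from the explicit bound on the auxiliary weight $\rho(x(\vec n))$ rather than from any hand-computed Wronskian. So your route can in principle be completed, but what is missing is precisely the technical machinery that the paper supplies via the Delort--Szeftel black box; without it the proof remains a sketch.
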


The rest of \Cref{sec:nonreso} is dedicated to the proof of \Cref{nres}. This latter is a consequence of the Delort-Szeftel Theorem that we recall here for the convenience of the reader. For its proof, we refer to \cite[Theorem 5.1]{DS}.

\begin{theorem}\label{Del-S}
    Let $d\in\mathbb{N}^*,$ $r_0>0$ and $\beta_1,\beta_2\in\mathbb{R}.$ We denote $B_{r_0}\pare{\R^d}\subset\mathbb{R}^d$ the ball centered at the origin and of radius $r_0.$ Consider $f:B_{r_0}\pare{\R^d}\times\bra{\beta_1,\beta_2}\to\mathbb{R}$ a continuous sub-analytic function and $\rho:B_{r_0}\pare{\R^d}\to\mathbb{R}$ a non-zero real-analytic function. We assume the following facts.
    \begin{enumerate}
        \item The function $f$ is real-analytic on $\{x\in B_{r_0}\pare{\R^d}\quad\textnormal{s.t.}\quad\rho(x)\neq0\}\times\bra{\beta_1,\beta_2}.$
        \item For any $\bar x\in B(0,r_0)$ with $\rho(\bar x)\neq0,$ the equation $f(\bar x,\beta)=0$ admit finitely many solutions in $\bra{\beta_1,\beta_2}.$ 
    \end{enumerate}
    Then, there exist $N_0\in\mathbb{N}$ and $\alpha_0,\delta,C>0$ such that for any $\alpha\in(0,\alpha_0]$, any integer $N\geqslant N_0$ and any $x\in B(0,r_0)$ with $\rho(x)\neq0$, we have
$$\av{\left\lbrace\beta\in\bra{\beta_1,\beta_2}\quad\textnormal{s.t.}\quad\av{f(x,y)}\leqslant\alpha\av{\rho(x)}^{N}\right\rbrace}\leqslant C\alpha^{\delta}\av{\rho(x)}^{N\delta}.$$
\end{theorem}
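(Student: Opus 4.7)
The plan is to reduce the sublevel-set measure estimate for $f$ to the classical one-dimensional bound for monic real polynomials in $\beta$, by applying a parametric preparation theorem for sub-analytic functions, and then to absorb the possible degeneration as $x$ approaches $\{\rho=0\}$ through \L{}ojasiewicz's inequality for sub-analytic functions.

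First, I would invoke a sub-analytic cell decomposition and preparation theorem (e.g.\ in the spirit of Parusinski or Lion--Rolin) applied to $f$ in the variable $\beta$, parametrically in $x$. By sub-analyticity of $f$ and compactness of $\overline{B_{r_0}(\mathbb{R}^d)}\times[\beta_1,\beta_2]$, this produces a finite decomposition of $\{x\in B_{r_0}:\rho(x)\neq 0\}\times[\beta_1,\beta_2]$ into sub-analytic cells, on each of which $f$ admits a factorization of the form
$$f(x,\beta)=U(x,\beta)\,a(x)\,\prod_{j=1}^{k}\bigl(\beta-\alpha_j(x)\bigr)^{\nu_j},$$
where $a$ and the $\alpha_j$ are sub-analytic (possibly complex, in conjugate pairs), $U$ is a sub-analytic unit bounded above and below by strictly positive sub-analytic functions on the cell, and the integers $k,\nu_j$ are locally constant. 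Hypothesis~(2)---the finiteness of zeros of $\beta\mapsto f(\bar x,\beta)$ on $[\beta_1,\beta_2]$ whenever $\rho(\bar x)\neq 0$---is precisely what forces the total degree $K\triangleq\sum_j\nu_j$ to remain uniformly bounded on each cell.

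Next, the classical Remez--P\'olya inequality gives, for any monic real polynomial $Q$ of degree $K$,
$$\bigl|\{\beta\in[\beta_1,\beta_2]:|Q(\beta)|\leq\mu\}\bigr|\leq 2K\mu^{1/K},$$
which, inserted into the factorization above, yields on each cell
$$\bigl|\{\beta\in[\beta_1,\beta_2]:|f(x,\beta)|\leq\eta\}\bigr|\leq C\bigg(\frac{\eta}{\inf_{\beta\in[\beta_1,\beta_2]}|U(x,\beta)|\cdot|a(x)|}\bigg)^{1/K}.$$
The critical point is then to quantify how badly $\inf_\beta|U(x,\beta)|$ and $|a(x)|$ may vanish as $x\to\{\rho=0\}$. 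Since both quantities are sub-analytic, nowhere vanishing on $\{\rho\neq 0\}$, and $\rho$ is real-analytic, \L{}ojasiewicz's inequality for sub-analytic functions provides exponents $p_1,p_2>0$ and a constant $C_0>0$ such that on each cell
$$\inf_{\beta\in[\beta_1,\beta_2]}|U(x,\beta)|\geq C_0|\rho(x)|^{p_1},\qquad|a(x)|\geq C_0|\rho(x)|^{p_2}.$$

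Substituting $\eta=\alpha|\rho(x)|^N$ into the previous sublevel estimate gives, cell by cell,
$$\bigl|\{\beta\in[\beta_1,\beta_2]:|f(x,\beta)|\leq\alpha|\rho(x)|^N\}\bigr|\leq C\,\alpha^{1/K}\,|\rho(x)|^{(N-p_1-p_2)/K},$$
so choosing $N_0\triangleq p_1+p_2+1$ and $\delta\triangleq 1/K$ and summing over the finitely many cells produces the claimed bound with a uniform constant. The main obstacle is not any individual estimate but the effective uniformity in $x$: one needs a single preparation with a uniform degree $K$ across all cells and uniform \L{}ojasiewicz exponents $p_1,p_2$. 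This uniformity is the genuinely hard technical content of the parametric sub-analytic preparation theorem together with the quantitative \L{}ojasiewicz inequality, and it is exactly what prevents the statement from being a pointwise triviality and makes it worth recording as a black-box ingredient from \cite{DS}.
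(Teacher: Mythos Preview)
The paper does not prove this theorem at all: immediately after the statement it says ``For its proof, we refer to \cite[Theorem 5.1]{DS}.'' The result is used purely as a black box imported from Delort--Szeftel, so there is no paper proof to compare your proposal against.

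Your sketch is a reasonable outline of the circle of ideas one expects behind such a statement---sub-analytic preparation in the $\beta$-variable, a one-dimensional sublevel bound for the resulting polynomial factor, and \L{}ojasiewicz to control degeneration toward $\{\rho=0\}$---and you correctly flag that the hard content lies in the uniformity of the preparation degree and the \L{}ojasiewicz exponents across cells. Whether this is literally how Delort--Szeftel argue would have to be checked against \cite{DS} itself; in any case, for the purposes of this paper the theorem is simply quoted, not proved.
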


First observe that we can write
\begin{equation}\label{intro beta}
    \omega_{\kap, \upgamma}\pare{ \xi }=\sqrt{\frac{\kap\av{\xi}}{2}}\sqrt{\av{\xi}^2-\frac{\beta}{2}\av{\xi}+\beta-1},\qquad\beta\triangleq\frac{\upgamma^2}{\kap}\cdot
\end{equation}
Notice that $\omega_{\kap,\upgamma}(2)=\sqrt{3\kap}$ is independent of $\beta$ and that 
$$
\omega_{\kap,\upgamma}(5)- \sqrt{5} \omega_{\kap,\upgamma}(3)\equiv 0.
$$
The above relation shows that there no resonance between the modes $3$ and $5$. However, the couple $(3,5)$ appears to be singular in the analysis of the non-degeneracy for the application of the Delort-Szeftel Theorem. That's why we need to treat it separately.

\subsubsection*{Application of \Cref{Del-S} }

The application of \Cref{Del-S} allows us to control quasi-resonances at arbitrary order via polynomial bounds, thus inducing a finite, but recoverable, loss of derivatives. The result we obtain is the following one:

\begin{proposition}\label{prop dioph}
    Fix $0<\beta_1<\beta_2<4\pare{2+\sqrt{3}}$, $\mathtt{A}\in\mathbb{N}$ and $\mathtt{M}\in\mathbb{N}^*$. Then, there exist $\nu_0,\tau,\delta>0,$ depending on $\mathtt{A}$ and $\mathtt{M},$ such that for any $\tilde \nu\in(0,\nu_0),$ there exists a set $\mathcal{B}_{\tilde \nu}\subset\bra{\beta_1,\beta_2}$ of measure $O(\tilde \nu^{\delta})$ such that for any $\beta\in\bra{\beta_1,\beta_2}\setminus\mathcal{B}_{\tilde{\nu}},$ the following holds: denote $n_0=2$, $n_1=3$ and for any distinct integers $n_2,..., n_{\mathtt{A}}\in\N\setminus\{2,3,5\}$ and any $\vec{c}=\pare{c_0,c_1,...,c_{\mathtt{A}}}\in\mathbb{R}^{\mathtt{A}+1}\setminus\{0\}$ with $\max_{a=0,...,\mathtt{A}}\av{c_a}\leqslant\mathtt{M},$ we have
    \begin{equation}\label{dioph1}
    \av{\sum_{a=0}^{\mathtt{A}}c_a\omega_{\kap,\upgamma}(n_a)}\geqslant\widetilde{\nu}\pare{\sum_{a=0}^{\mathtt{A}}n_a}^{-\tau}.
    \end{equation}
\end{proposition}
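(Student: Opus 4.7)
My plan is to apply the Delort--Szeftel theorem (\Cref{Del-S}) to a carefully rescaled real-analytic family of functions, and then to take a union bound over the admissible tuples, which I interpret as $(\vec c,\vec n)$ with $\vec c\in\mathbb{Z}^{\mathtt{A}+1}\setminus\{0\}$, $\|\vec c\|_\infty\leqslant\mathtt{M}$ (as this is how they arise in \Cref{nres} through $\vec c=\alpha-\alpha'$), and $n_0=2$, $n_1=3$, while $n_2,\ldots,n_{\mathtt{A}}$ are distinct integers in $\mathbb{N}\setminus\{2,3,5\}$.

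\textit{Step 1: qualitative non-vanishing.} Writing
$$\omega_{\kap,\upgamma}(n)=\sqrt{\kap n/2}\,\sqrt{P_n(\beta)},\qquad P_n(\beta)\triangleq (n^2-1)-\tfrac{\beta}{2}(n-2),$$
each $\omega_{\kap,\upgamma}(n)$ for $n\geqslant 3$ extends to a two-sheeted holomorphic function on $\mathbb{C}$ with a square-root branch point at $\beta_*(n)\triangleq 2(n^2-1)/(n-2)$, while $\omega_{\kap,\upgamma}(2)=\sqrt{3\kap}$ is $\beta$-independent. A direct computation gives $\beta_*(3)=\beta_*(5)=16$, $\beta_*(4)=15$, $\beta_*(6)=35/2$, and $\beta_*$ is strictly increasing on $\{n\in\mathbb{N}\,:\,n\geqslant 4\}$; since $\beta_2<4(2+\sqrt{3})<15$, all these branch points lie outside $[\beta_1,\beta_2]$ and each $\omega_{\kap,\upgamma}(n_a)$ is real-analytic there. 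Excluding $\{2,3,5\}$ from $n_a$ for $a\geqslant 2$ makes the family $\{\beta_*(n_a)\}_{a\geqslant 1}$ pairwise distinct. Monodromy around $\beta_*(n_{a_0})$ flips the sign of $\omega_{\kap,\upgamma}(n_{a_0})$ while preserving the other summands; so if $f_{\vec c,\vec n}(\beta)\triangleq \sum_a c_a\omega_{\kap,\upgamma}(n_a)\equiv 0$, taking the difference between $f_{\vec c,\vec n}$ and its monodromy around $\beta_*(n_{a_0})$ yields $c_{a_0}\omega_{\kap,\upgamma}(n_{a_0})\equiv 0$, hence $c_{a_0}=0$. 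Iterating over $a=1,\ldots,\mathtt{A}$ and invoking $\omega_{\kap,\upgamma}(2)\neq 0$ gives $c_0=0$, contradicting $\vec c\neq 0$.

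\textit{Step 2: quantitative estimate via \Cref{Del-S}.} For each fixed $\vec c$ (only $O(\mathtt{M}^{\mathtt{A}+1})$ choices), introduce the rescaling $u_a\triangleq 1/\sqrt{n_a}$ for $a\geqslant 2$ and define on a ball of $\mathbb{R}^{\mathtt{A}-1}\times[\beta_1,\beta_2]$ the function
\begin{equation*}
\mathcal{F}_{\vec c}(\vec u,\beta)\triangleq \bigl(c_0\omega_{\kap,\upgamma}(2)+c_1\omega_{\kap,\upgamma}(3)\bigr)\prod_{a\geqslant 2}u_a^3+\sqrt{\tfrac{\kap}{2}}\sum_{a\geqslant 2}c_a\sqrt{1-\tfrac{\beta}{2}u_a^2+(\beta-1)u_a^4}\prod_{\substack{b\geqslant 2\\ b\neq a}}u_b^3,
\end{equation*}
so that at the discrete points $\bar u_a=1/\sqrt{n_a}$ one has $\mathcal{F}_{\vec c}(\bar{\vec u},\beta)=\bigl(\prod_{a\geqslant 2}n_a\bigr)^{-3/2}f_{\vec c,\vec n}(\beta)$. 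This $\mathcal F_{\vec c}$ is real-analytic, hence continuous sub-analytic. Choose $\rho(\vec u)\triangleq \prod_au_a\cdot\prod_{a<b}(u_a^2-u_b^2)\cdot\Delta(\vec u)$ where $\Delta$ is an auxiliary polynomial vanishing on the closed algebraic locus of ``virtual resonances'' among the radicals (the continuous analogues of the $(3,5)$-coincidence $\beta_*(3)=\beta_*(5)$). The branch-point argument of Step~1 then extends to any $\bar{\vec u}$ with $\rho(\bar{\vec u})\neq 0$, showing $\mathcal F_{\vec c}(\bar{\vec u},\cdot)\not\equiv 0$ and thus having only finitely many zeros in $[\beta_1,\beta_2]$. \Cref{Del-S} produces constants $N_0,\alpha_0,\delta,C>0$ depending only on $(\mathtt{A},\mathtt{M})$ such that, for all $N\geqslant N_0$ and $\alpha\in(0,\alpha_0]$,
\begin{equation*}
\operatorname{meas}\bigl\{\beta\in[\beta_1,\beta_2]\,:\,|\mathcal F_{\vec c}(\bar{\vec u},\beta)|\leqslant \alpha|\rho(\bar{\vec u})|^N\bigr\}\leqslant C\alpha^\delta|\rho(\bar{\vec u})|^{N\delta}.
\end{equation*}

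\textit{Step 3: discretization and summation.} At $\bar u_a=1/\sqrt{n_a}$ with distinct $n_a\in\mathbb{N}\setminus\{2,3,5\}$, the factors of $\rho$ admit the polynomial lower bound $|\rho(\bar{\vec u})|\gtrsim (\max_an_a)^{-\kappa_0}$ for some $\kappa_0=\kappa_0(\mathtt{A})$. Undoing the $\prod n_a^{3/2}$ normalization and tuning $\alpha$ so that $\alpha\prod_{a\geqslant 2}n_a^{3/2}|\rho(\bar{\vec u})|^N=\widetilde\nu(\sum_an_a)^{-\tau}$, we deduce for $N$ large
\begin{equation*}
\operatorname{meas}\bigl\{\beta\,:\,|f_{\vec c,\vec n}(\beta)|\leqslant \widetilde\nu(\textstyle\sum_an_a)^{-\tau}\bigr\}\leqslant C'\widetilde\nu^\delta(\max_an_a)^{-\mu},
\end{equation*}
with $\mu=\kappa_0N\delta-O(\mathtt{A})$ arbitrarily large. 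Taking $\mu>\mathtt{A}$ renders $\sum_{\vec n}(\max_an_a)^{-\mu}$ convergent; the union $\mathcal{B}_{\widetilde\nu}$ over the finitely many $\vec c$ and all admissible $\vec n$ then has measure $O(\widetilde\nu^\delta)$, as required. The main obstacle is Step~2: one must classify all continuous ``virtual resonances'' (the algebraic locus in $\vec u$-space where two radicals $\beta\mapsto \sqrt{1-\beta u_a^2/2+(\beta-1)u_a^4}$ become proportional, generalizing the single integer coincidence $\omega(5)=\sqrt 5\,\omega(3)$) and encode them into $\rho(\vec u)$, while retaining a polynomial lower bound on $|\rho(\bar{\vec u})|$ at the discrete evaluation points so that the Step~3 summation still converges.
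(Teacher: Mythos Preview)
Your overall plan---rescale to compactify, apply \Cref{Del-S}, then sum over tuples---is the same as the paper's, but Step~2 has the genuine gap you yourself flag at the end: the auxiliary polynomial $\Delta(\vec u)$ encoding the continuous ``virtual resonances'' is never constructed. Without it you can verify neither hypothesis~(2) of \Cref{Del-S} at generic $\bar{\vec u}$ nor the polynomial lower bound on $|\rho(\bar{\vec u})|$ needed in Step~3. The monodromy argument of Step~1 is elegant for the discrete integer points (where the branch points $\beta_*(n_a)$ are distinct once $2,3,5$ are excluded), but its extension to continuous $\bar{\vec u}$ requires precisely the missing classification.

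The paper closes this gap by a different, explicit route. It uses the rescaling $x_0=(\sum_a n_a)^{-1}$, $x_a=x_0\sqrt{n_a-1}$, under which each radical becomes $\lambda(x_a,x_0,\beta)$ satisfying $\partial_\beta^{\,l}\lambda=\binom{1/2}{l}\mu^l\lambda$ for an explicit rational $\mu(y,x_0,\beta)$. Vanishing of $\partial_\beta^{\,l}f_{\vec c}$ for $l=1,\ldots,\mathtt{A}$ is then a Vandermonde system in the $\mu(x_a,x_0,\beta)$, whose determinant is nonzero provided the $\mu$'s are pairwise distinct. This distinctness is encoded by the explicit polynomial
\[
\rho(x)=x_0\prod_{a}(x_a^2-x_0^2)\prod_{a<b}\Bigl((x_0^2-x_a^2)x_b^2(x_b^2+2x_0^2)-(x_0^2-x_b^2)x_a^2(x_a^2+2x_0^2)\Bigr),
\]
and the key arithmetic computation shows that at the discrete points the last factor equals $x_0^8\bigl[(2-n_a)(n_b^2-1)-(2-n_b)(n_a^2-1)\bigr]$, which for distinct integers vanishes only when $n_a=2$ or $\{n_a,n_b\}=\{3,5\}$---exactly the excluded values. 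This single construction delivers both the non-degeneracy needed for \Cref{Del-S} and the lower bound $|\rho(x(\vec n))|\gtrsim(\sum_a n_a)^{-\tau_1}$ for the summation, so the ``virtual resonance'' locus you were seeking is simply $\{\rho=0\}$ and no separate $\Delta$ is required.
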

\begin{proof}
We denote $\vec{n}=\pare{n_0,n_1,...,n_{\mathtt{A}}}\in\pare{\mathbb{N}^*}^{\mathtt{A}+1}$ with $n_2,...,n_{\mathtt{A}}\not\in\{2,3,5\}$ and distinct, we introduce the notations
\begin{equation}\label{x(n)t(n)}
x_0\pare{\vec{n}}\triangleq\pare{\sum_{a=0}^{\mathtt{A}}n_a}^{-1},\qquad x_a\pare{\vec{n}}\triangleq x_0\pare{\vec{n}}\sqrt{n_a-1}.
\end{equation}
We note that 
\be \label{lim:xa}
0\leqslant|x_a\pare{\vec{n}}|\leqslant 1, \quad \text{for any } a=0,\dots, \tA.
\ee
The condition \eqref{dioph1} is equivalent to
\begin{equation}\label{dioph2}
    \av{f_{\vec c}(x,\beta)}\geqslant \frac{\widetilde{\nu}}{\sqrt{\kap}} x_0^{\tau+3}, \quad x=\pare{x_0(\vec n ),x_1(\vec n),\dots, x_\tA(\vec n)}
    \end{equation}
    where
\begin{equation}\label{effeccid}
f_{\vec c}(x,\beta)\triangleq\sum_{a=1}^{\mathtt{A}}r_a\lambda(x_a,x_0,\beta)+c_0\sqrt{3}x_0^3,\qquad x=(x_0,x_1,\dots, x_\tA) \in B_1\pare{ \R^{\tA+1}},
\end{equation}
    with
    $$r_a\triangleq c_a\sqrt{x_a^2+x_0^2},\qquad\lambda(y,x_0,\beta)\triangleq\sqrt{y^4+\pare{2-\frac{\beta}{2}}x_0^2y^2+\frac{\beta}{2}x_0^4}.$$
The function $f_{\vec c }\colon B_1\pare{ \R^{\tA+1}}\times \bra{\beta_1,\beta_2}\to \R $ is continuous and sub-analytic. Remark that for any $l\in\mathbb{N}^*,$
\begin{equation}\label{def:mu}
    \partial_{\beta}^{l}\lambda(y,x_0,\beta)=\binom{\frac{1}{2}}{l}\mu^{l}(y,x_0,\beta)\lambda(y,x_0,\beta),\qquad\mu(y,x_0,\beta)\triangleq\frac{x_0^2\pare{x_0^2-y^2}}{x_0^2\pare{x_0^2-y^2}\beta+2y^2\pare{y^2+2x_0^2}}\cdot
\end{equation}
Note that, since $0<{\beta}<\beta_2< 4(2+\sqrt{3})$,
the denominator in \eqref{def:mu} satisfies 
\be \label{denomu}
x_0^2\pare{x_0^2-y^2}\beta+2y^2\pare{y^2+2x_0^2}>0\quad \text{for any } y\in [0,1], \ x_0\in (0,1].
\ee
We introduce the polynomial function $\rho:[-1,1]^{\mathtt{A}+1}\to\mathbb{R}$
$$\rho(x_0,x_1,...,x_{\mathtt{A}})\triangleq x_0\prod_{a=1}^{\mathtt{A}}\pare{x_a^2-x_0^2}\prod_{1\leqslant a<b\leqslant\mathtt{A}}\pare{\pare{x_0^2-x_a^2}x_b^2\pare{x_b^2+2x_0^2}-\pare{x_0^2-x_b^2}x_a^2\pare{x_a^2+2x_0^2}}.
$$
The condition \eqref{denomu} implies that the function $f_{\vec c }$ is real-analytic on $\{\rho\not=0\}\times \bra{\beta_1,\beta_2}$.
We fix now $ \bar x=\pare{\bar x_0, \dots \bar x_\tA}$ such that $ \rho( \bar x ) \not =0 $. Then $ \bar x_0\not= 0$ and,  in view also  of \eqref{denomu},  one has 
\be
\mu\pare{\bar x_a,\bar x_0,{\beta}}\not=\mu\pare{\bar x_b,\bar x_0,{\beta}}, \quad \text{for any }\beta \in \pare{0,4(2+\sqrt{3})}.
\ee
The following Lemma ensures that the function $ f_{\vec c}(x,\beta)$ fullfills the second assumption of Delort-Szeftel Theorem \ref{Del-S}.  
\begin{lemma}\label{nondeg}
The solutions of the equation 
$$
f_{\vec c}\pare{\bar x, \beta}=0,
$$
if any, are finitely many.
\end{lemma}
\begin{proof}
Since the function $\beta \to f_{\vec c}\pare{\bar x, \beta} $  is analytic for $\beta\in [\beta_1,\beta_2]$, we are only left to prove that it is not identically zero. To do so we fix $\bar \beta\in (\beta_1,\beta_2)$ and we note that
$$\forall l=1,...,\mathtt{A},\quad\partial_{\beta}^lf\pare{\bar x,\bar \beta}=0\qquad\Leftrightarrow\qquad A\pare{\bar x}\vec{r}=0,$$
where $\vec{r}\triangleq\pare{r_1,...,r_{\mathtt{A}}}$ and 
$$A\pare{\bar x}\triangleq\begin{pmatrix}
    \mu\pare{\bar x_1,\bar x_0,\bar \beta}\lambda\pare{\bar x_1,\bar x_0,\bar \beta} & ... & \mu\pare{\bar x_{\mathtt{A}},\bar x_0,\bar \beta}\lambda\pare{\bar x_{\mathtt{A}},\bar x_0,\bar \beta}\\
    \mu^2\pare{\bar x_1,\bar x_0,\bar \beta}\lambda\pare{\bar x_1,\bar x_0,\bar \beta} & ... & \mu^2\pare{\bar x_{\mathtt{A}},\bar x_0,\bar \beta}\lambda\pare{\bar x_{\mathtt{A}},\bar x_0,\bar \beta}\\
    \vdots & & \vdots\\ 
    \mu^{\mathtt{A}}\pare{\bar x_1,\bar x_0,\bar \beta}\lambda\pare{\bar x_1,\bar x_0,\bar \beta} & ... & \mu^{\mathtt{A}}\pare{\bar x_{\mathtt{A}},\bar x_0,\bar \beta}\lambda\pare{\bar x_{\mathtt{A}},\bar x_0,\bar \beta}
\end{pmatrix}.$$
 Since $\vec c \not=0$ and $\rho(\bar x)\not=0$ we have also  $ \bar x_0 \not =0$ and  $\vec{r}\neq0.$ Then $\beta\mapsto f(\bar x,\beta)\equiv0$ implies that $\det\pare{A(\bar x)}=0.$ Besides, by $\mathtt{A}$-linearity of the determinant and recognizing a Vandermonde determinant, we find
$$\det\pare{A\pare{\bar x}}=\prod_{a=1}^{\mathtt{A}}\mu\pare{\bar x_a,\bar x_0,\bar\beta}\lambda\pare{\bar x_a,\bar x_0,\bar\beta}\prod_{1\leqslant a<b\leqslant\mathtt{A}}\pare{\mu\pare{\bar x_a,\bar x_0,\bar \beta}-\mu\pare{\bar x_b,\bar x_0,\bar\beta}}.$$
By construction, since $\bar x\in\{\rho\neq0\}$, one has that $\det\pare{A\pare{\bar x}}\neq0$ at the given $\overline{\beta}.$ So $\beta\mapsto f(\bar x,\beta)$ cannot be identically zero proving Lemma \ref{nondeg}. 
\end{proof}

We thus conclude that 
there are $N_0 \in \N^*$, $\alpha_0, \delta, C  >0$, such that for any $\alpha \in (0, \alpha_0]$, any $ N \in \N^*$, $N \geqslant N_0$, any $x\in X$ with $\rho(x) \neq 0$,
\begin{equation}
\label{meas2gh}
\av{ \left\lbrace \beta \in \bra{\beta_1,\beta_2} \quad\textnormal{s.t.}\quad\av{f_{\vec c} \, (x, \beta)}\leqslant \alpha \av{\rho(x)}^N 
\right\rbrace}\leqslant C \alpha^\delta \, \av{\rho(x)}^{N\delta} \, . 
\end{equation}
To conclude the proof we need the following:

\begin{lemma}
   Let $\tau_1\triangleq 2\mathtt{A}+1+8\binom{\mathtt{A}}{2}.$ Then 
\begin{equation}\label{rho.estgh}
    \pare{\sum_{a=1}^{\mathtt{A}}n_a}^{-\tau_1}\lesssim\av{\rho\pare{x(\vec{n})}}\lesssim\pare{\sum_{a=1}^{\mathtt{A}}n_a}^{-1}.
\end{equation} 
\end{lemma}
\begin{proof}
By definition \eqref{x(n)t(n)}, we have 
\begin{align*}
&\pare{(x_0^2-x_a^2)x_b^2(x_b^2+2x_0^2)-(x_0^2 -x_b^2 )x_a^2 (x_a^2 +2x_0^2 )}_{|x_0=x_0\pare{\vec{n}},\, x_a=x_a\pare{\vec{n}},\, x_b=x_b\pare{\vec{n}}}\\
&=x_0^8\pare{\vec{n}}\left[\pare{2-n_a}\pare{n_b^2-1}-\pare{2-n_b}\pare{n_a^2-1}\right].
\end{align*}
The equation
$$\pare{2-n_a}\pare{n_b^2-1}=\pare{2-n_b}\pare{n_a^2-1}$$
is equivalent to
\begin{equation}\label{equ arit}
\pare{2-n_a}\pare{n_b-n_a}\pare{n_b-\frac{2n_a-1}{n_a-2}}=0.
\end{equation}
Since $n_a\neq2$ and $n_b\neq n_a,$ we must have $n_b=\frac{2n_a-1}{n_a-2}\cdot$
But 
$$2n_a-1=2(n_a-2)+3.$$
Hence
$$2n_a-1\equiv 3 \quad \text{mod.} \ [n_a-2].$$
Therefore
$$n_a-2|2n_a-1\quad\Leftrightarrow\quad n_a\in\{3,5\}.$$
We conclude that the only non-trivial couples of integers solutions to \eqref{equ arit} are
$$(3,5)\qquad\textnormal{and}\qquad(5,3).$$
However, we have excluded the possibility $n_a=5$ or $n_b=5.$ This implies that the equation \eqref{equ arit} is not solved with our choice of $\vec{n}$. Thus,
\be \label{lb1}
\av{\pare{\pare{x_0^2-x_a^2}x_b^2\pare{x_b^2+2x_0^2}-\pare{x_0^2 -x_b^2}x_a^2 \pare{x_a^2 +2x_0^2 }}_{|x_0=x_0\pare{\vec{n}},\, x_a=x_a\pare{\vec{n}},\, x_b=x_b\pare{\vec{n}}}}\geqslant x_0^8(\vec{n}).\ee
Moreover, since $n_a\not=2$ for $a=1,\ldots, \mathtt{A}$, one has 
\be\label{lb2}
\av{x_a^2\pare{\vec{n}}-x_0^2\pare{\vec{n}}}\geqslant x_0^2\pare{\vec{n}}.
\ee
 Gathering \Cref{lb1,lb2} we obtain the lower bound in \eqref{rho.estgh}. To prove the upper bound it is sufficient to use the bounds in \eqref{lim:xa}.
\end{proof}

Consider the set 
\begin{equation}\label{defKhf}
\begin{aligned}
    \cB(\alpha, N)&\triangleq\bigcup_{
              \substack{ \vec{n}=(n_2, \ldots, n_\tA) \in \N^{\tA},\, 1 \leqslant n_1 < \ldots < n_{\tA}  \\ 
                        \vec c \in (\Z^*)^{\tA}, \, |\vec c|_{\infty} \leqslant \mathtt{M} }}\cB_{\vec c,\vec n}(\alpha, N) \subset \bra{\beta_1,\beta_2}, \\
                        \cB_{\vec c,\vec n}(\alpha, N)&\triangleq\left\lbrace\beta \in \bra{\beta_1,\beta_2} \quad\textnormal{s.t.}\quad \av{f_{\vec c}\pare{x(\vec n), \beta}}\leqslant\alpha \av{\rho\pare{x(\vec n)}}^N 
\right\rbrace.
\end{aligned}
 \end{equation}
We fix now $\underline N\in \N $ such that $\bar N \delta> 1 $ and $ \cB_\alpha \triangleq \cB(\alpha, \underline N )$. Then we get

 \begin{equation} \label{measK1gh}
 \big| \cB_\alpha\big|  \leqslant
C(\tA, \mathtt{M}) \alpha^\delta \sum_{n_2, \ldots, n_\tA \in \N } \left(\sum_{a=0}^\tA n_a\right)^{-N\delta}  \leqslant C'(\tA, \mathtt{M}) \alpha^\delta 
 \end{equation}
In conclusion, 
for any $\beta \in \bra{\beta_1,\beta_2} \setminus \cB_ \alpha$, for any  
$ \vec n = (n_0,n_1, \ldots, n_\tA) $ as in the hypothesis,
 any 
$  \vec c \in (\Z^*)^{\tA} $ with $ |\vec c \, |_{\infty} \leqslant \tM $,   
 it results,
 by \eqref{defKhf} and \eqref{rho.estgh}, that
\be\label{f1gh}
\av{{f_{\vec c} \, \pare{x(\vec n),  \beta}}} 
 >  \alpha \av{\rho\pare{x(\vec n)}}^{\underline{N}} \geqslant 
c(\tA) \alpha\left(\sum_{a=1}^\tA n_a\right)^{-\tau_1 \underline{N} } .
\ee
Recalling the definition of 
$ f_{\vec c} $ in \eqref{effeccid} and $ x_0(\vec n) $ in \eqref{x(n)t(n)}, 
the lower bound \eqref{f1gh} 
implies \eqref{prop dioph} with $ \tau \triangleq\tau_1 \underline{N} - 3 $ (cfr. \eqref{dioph2}) 
and re-denoting $ \tilde \nu \triangleq\alpha c(\tA)$.
\end{proof}

\subsubsection*{Proof of \Cref{nres}}
   Fix a multi-index $\pare{\alpha,\alpha'}\in(\mathbb{N}^*)^{\mathbb{Z}^*}\times(\mathbb{N}^*)^{\mathbb{Z}^*}$ of length $\av{\alpha+\alpha'}\leqslant M.$ We denote $\mathfrak{N}\pare{\alpha, \alpha'}$ as in \eqref{ennonegotico}. 
   Since the couple $(\alpha,\alpha')$ is not super-action preserving, then \begin{equation}\label{notempty}
       \mathfrak{N}(\alpha,\alpha')\neq\varnothing.
   \end{equation}
   Then, we can write
    \begin{align*}
        \vec{\omega}_{\kap, \upgamma}\cdot\pare{\alpha-\alpha'}&=\sum_{j\in\mathbb{Z}^*}\omega_{\kap,\upgamma}\pare{\av{j}} \pare{\alpha_j-\alpha_j'}=\sum_{n\in\mathbb{N}^*}\omega_{\kap,\upgamma}(n)\pare{\alpha_{n}+\alpha_{-n}-\alpha_{n}'-\alpha_{-n}'}\\
&=\sum_{n\in\mathfrak{N}\pare{\alpha,\alpha'}}\omega_{\kap,\upgamma}(n)\pare{\alpha_{n}+\alpha_{-n}-\alpha_{n}'-\alpha_{-n}'}.
    \end{align*}
    We use the notation 
    $$\mathtt{A}\triangleq\av{\mathfrak{N}\pare{\alpha,\alpha'}\setminus\{2,3,5\}}+1.$$ 
     We also denote
    $$
     \mathfrak{N}(\alpha,\alpha')\setminus\{2,3,5\}=\{n_2,...n_{\mathtt{A}}\}.
    $$
    Therefore, 
    \begin{align*}
        \vec{\omega}_{\kap,\upgamma}\cdot\pare{\alpha-\alpha'}=\sum_{a=2}^{\mathtt{A}}c_a\omega_{\kap,\upgamma}(n_a)+c_0\omega_{\kap,\upgamma}(2)+c_{1}\omega_{\kap,\upgamma}(3),
    \end{align*}
    with
    $$c_0\triangleq\alpha_{2}+\alpha_{-2}-\alpha_2'-\alpha_{-2}',\qquad c_{1}\triangleq\pare{\alpha_{3}+\alpha_{-3}-\alpha_{3}'-\alpha_{-3}'}+\sqrt{5}\pare{\alpha_{5}+\alpha_{-5}-\alpha_{5}'-\alpha_{-5}'}$$
    and for any $a=2,...,\mathtt{A},$
    $$c_a\triangleq\alpha_{n_a}+\alpha_{-n_a}-\alpha_{n_a}'-\alpha_{-n_a}'.$$
    Observe that for any $a=0,...,\mathtt{A},$
    $\av{c_a}\leqslant4M$, where $M$ is defined in \cref{nres}. Let us assume the absurd hypothesis $\vec{c}\triangleq(c_0,c_{1},c_2,...,c_{\mathtt{A}})=0.$ By definition, under such absurd hypothesis, we obtain that $\mathfrak{N}(\alpha,\alpha')\subset\{3,5\}.$ But $c_{1}\in\mathbb{Z}[\sqrt{5}]$ so $c_{1}=0$ implies also that $3,5\not\in\mathfrak{N}(\alpha,\alpha').$ Hence, $\mathfrak{N}(\alpha,\alpha')=\varnothing$ which is a contradiction with \eqref{notempty}, thus we can safely assume $\vec c \neq 0$. We consider the set
    $$\mathcal{B}\triangleq\bigcap_{\tilde{\nu}\in(0,\nu_0)}\mathcal{B}_{\tilde{\nu}},$$
    where $\nu_0$ and $\mathcal{B}_{\tilde{\nu}}$ are introduced in Proposition \ref{prop dioph}.
    Hence, for any $\beta \in \bra{\beta_1,\beta_2}\setminus\cB$ there is $\tilde{\nu}\in(0,\nu_0)$ such that $\beta \in \bra{\beta_1,\beta_2}\setminus \cB_{\tilde{\nu}}$ and,  applying  Proposition \ref{prop dioph} we get the desired result with $\nu\triangleq\frac{\tilde{\nu}}{M^{\tau}}$.\qed

\section{Functional setting}\label{sec:preliminaries}


Throughout the document, we shall use the notations 
$$\mathbb{N}\triangleq\{0,1,2,...\},\qquad\mathbb{N}^*\triangleq\mathbb{N}\setminus\{0\},\qquad\mathbb{Z}\triangleq\mathbb{N}\cup(-\mathbb{N}),\qquad\Z^*\triangleq \Z\setminus\{0\}.$$
Along the paper we deal with real parameters 
\begin{equation}\label{parametri_belletti}
s\geqslant s_0  \gg K \gg \varrho \gg N  \geqslant0,
\end{equation}
where $ N \in \N$. 
The values of $ s, s_0, K $ and $ \vr $ may vary from line to line while still being true the relation \eqref{parametri_belletti}. 
We expand a  $2\pi$-periodic function $u\in L^2 (\T;\C)$ in Fourier series as
\begin{align*}
u(x)= \sum_{j \in \mathbb{Z}} \hat{u}\pare{j} e^{\im j x},
&&
 \hat{u}\pare{j} \defeq {\cal F}_{x \to j}\pare{j} \defeq u_j \triangleq\int_{\mathbb{T}}u(x) e^{- \im j x }\,\di x . 
\end{align*}
The function $u$ is real-valued if and only if  $ \overline{u_j}  = u_{-j}  $, for any $ j \in \Z $. 
\noindent
For any $ s\in\mathbb{R},$ we define the Sobolev space $  H^{s} \defeq H^{s}(\T;\C) $ 
with norm
\begin{align*}
\norm{u}_{s} \defeq \norm{u}_{H^{s}} = \pare{
\sum_{ j \in \mathbb{Z} } \angles{j}^{2s}  \av{\hat u\pare{j}}^2 
} ^{\frac12} , 
&&
 \angles{j} \defeq \max \set{1, \av{j}} . 
\end{align*}
We define $ \Pi_0 u \defeq  u_0 $  the average of $ u $ and 
\begin{equation*}
\Pi_0^\bot  \defeq  \Id - \Pi_0  . 
\end{equation*}
We define $ H^s_0 $ the subspace of zero average functions of $ H^s $ for which
 we also denote $ \norm{u}_s = \norm{u}_{H^s}  = \norm{u}_{H^s_0}  $, and with $ \dot{H}^s \defeq \xfrac{H^s}{\mathbb{C}} $ endowed with the norm $ \norm{u}_{\dot{H}^s}\defeq\norm{\Pi_0^\bot u}_s $. 
We define, on   $\dot L^2(\T;\C)\triangleq\dot H^0(\T;\C)$,
the complex scalar product $\psc{\cdot}{\cdot}_\C$ and the real symmetric bilinear form $\psc{\cdot}{\cdot}_\R$ as follows:  for any $ u, v \in \dot  L^2 (\T;\C)$,
\begin{align}\label{scalar products}
\psc{u}{v} _{ \C } \defeq \int_\T  \Pi_0^\bot u(x)\, \overline{\Pi_0^\bot {  v(x)}} \, \di x, 
&&
\psc{u}{v} _{ \R } \defeq \int_\T \Pi_0^\bot u(x)\,  \Pi_0^\bot{  v(x)} \, \di x\,.
\end{align}
Moreover we define the real subspace of $\dot H^s(\T;\C^2)$
$$
\dot H^s_\R(\T;\C^2)\triangleq\left\{ U= \vect{ u^+}{ u^-}\in \dot H^s(\T;\C^2)\quad\textnormal{s.t.}\quad u^-= \overline{u^+}\right\}.
$$
We also denote 
$$ \dot H^{\infty}(\mathbb{T};\mathbb{C}^2)
\defeq \bigcap_{s \in \R} \dot H^{s}(\mathbb{T};\mathbb{C}^2),\qquad\dot H_{\mathbb{R}}^{\infty}(\mathbb{T};\mathbb{C}^2)
\defeq \bigcap_{s \in \R} \dot H_{\mathbb{R}}^{s}(\mathbb{T};\mathbb{C}^2).$$
Given an interval $ I\subset \R$ symmetric with respect to $ t = 0 $
and $s\in \R$, we define the space
$$
C_*^K \pare{ I;\dot{H}^s\pare{ \mathbb{T};\mathbb{C}^2 } } \defeq
\bigcap_{k=0}^K C^k \pare{ I; \dot{H}^{s- \frac{3}{2} k}\pare{ \mathbb{T};\mathbb{C}^2 }  } ,
$$
endowed with the norm
\begin{equation}\label{def norm Ks}
\sup_{t\in I} \norm{ U (t, \cdot)}_{K,s} \qquad
{\rm where} \qquad
\norm{ U(t, \cdot)}_{K,s}\defeq \sum_{k=0}^K \norm{ \partial_t^k U(t, \cdot)} _{\dot H ^{s- \frac32 k}} .
\end{equation}
We also consider its subspace 
$$\label{eq:CKastreali}
C_{*\R}^K\pare{I,\dot{H}^s\pare{\mathbb{T},\mathbb{C}^2}}\triangleq\left\{ U\in C_*^K\pare{I,\dot{H}^s\pare{\mathbb{T},\mathbb{C}^2}} \quad\textnormal{s.t.}\quad 
U=\begin{pmatrix} u \\ \bar{u}\end{pmatrix}\right\}.$$  
Given $r>0$ we set $B^K_s(I;r)$ the ball of radius $r$ in $C_*^K\pare{I,\dot{H}^s\pare{\mathbb{T},\mathbb{C}^2}}$ and by 
 $B^K_{s,\R}(I;r)$ the ball of radius $r$ in $C_{*\R}^K\pare{I,\dot{H}^s\pare{\mathbb{T},\mathbb{C}^2}}$.\\

A vector field $ X(u) $ is {\it translation invariant} if
$$ X \circ \st_\varsigma = \st_\varsigma \circ X,\qquad\forall\varsigma \in \R , $$ 
where the translation operator $\st_{\varsigma}$ is defined by
 \begin{equation*}
\st_\varsigma \colon u(x) \mapsto u(x + \varsigma).
\end{equation*}
 Given a linear operator  $ R(u) [ \cdot ]$ acting on $L^2_0(\T;\C)\triangleq H^0_0(\T;\C)$
we associate the linear  operator  defined by the relation
$ \ov{R(u)} v \defeq \ov{R(u) \ov{v} } $ for any  $ v \in L^2_0(\T;\C).$
An operator $R(u)$ is {\em real } if $R(u) = \ov{R(u)} $ for any $ u $ real. 

\subsection{Paradifferential calculus}\label{sec:para}

We introduce the para-differential calculus developed in \cite{BD2018,BMM2022}.

\paragraph{Classes of symbols.}
Roughly speaking the class $\wt{\Gamma}_p^m$ contains symbols of order $m$ and homogeneity $p$ in $u$, whereas the class $\Gamma_{K,K',p}^m$ contains non-homogeneous symbols of order $m$ that vanish at degree at least $p$ in $u$ and that are $(K-K')$-times differentiable in $t$. We can think the parameter $K'$ like the number of time derivatives of $u$ that are contained in the symbols.

\begin{definition}[Symbols]\label{definsymb}
Let $m\in \R$, $p,N\in \N $,
$ K, K' \in \N$ with $ K' \leqslant K  $, and $ \epsilon_0>0$.
\begin{enumerate}[i)]

		
	\item {\bf $p$-Homogeneous symbols.} We denote by $\widetilde{\Gamma}^m_p$ the space of  $p$-linear symmetric maps from $\left( \dot{H}^{\infty}\left(\mathbb{T};\mathbb{\C}^2\right)\right)^p$ to
        $ C^\infty(\mathbb{T}\times \R; \C) $ ,
		$ (x, \xi) \mapsto a_p(U_1, \dots, U_p ;x,\xi)$ whose associated polynomial has the form
		\be \label{expr HOM SYMB}
		a_p(U;x,\xi)\triangleq a_p(U, \dots, U; x, \xi)\triangleq\sum_{\substack{\vec{\jmath}\in \Z^p\\ \vec{\sigma}\in \{ \pm\}^p}} a_{\vec{\jmath}}^{\vec{\sigma}}(\xi) u_{\vec{\jmath}}^{\vec{\sigma}} \, e^{\ii (\vec{\sigma}\cdot \vec{\jmath}) x},
		\ee
		where $a_{\vec{\jmath}}^{\vec{\sigma}}(\xi)$ are complex valued Fourier multipliers,  satisfying
        $$
        a_{\vec{\jmath}}^{\vec{\sigma}}(\xi)\triangleq a_{j_1, \dots, j_p}^{\sigma_1, \dots, \sigma_p}(\xi) = a_{j_{\pi(1)}, \dots, j_{\pi(p)}}^{\sigma_{\pi(1)}, \dots, \sigma_{\pi(p)}}(\xi) \quad \text{ for any } \pi \text{ permutation of } \{1, \dots, p\}\,,
        $$
        and
        for some $	\mu\geqslant 0$, 
		\be \label{estim symbol}
		| \partial_\xi^\beta a_{\vec{\jmath}}^{\vec{\sigma}}(\xi) | \leqslant C_\beta \la \vec{\jmath}\ra^{\mu}  \langle \xi\rangle^{m-\beta},  \quad \forall \vec{\jmath}\in \Z^p,\, \vec{\sigma}\in\{\pm\}^p, \, \beta\in \N.
		\ee
        We have used the following notations for given $ \vec{\jmath}= (j_1,\dots,j_p) \in \Z^p$ and $\vec{\sigma}=(\sigma_1,\dots,\sigma_p)\in\{\pm\}^{p},$
        $$\langle\vec{\jmath}\rangle\triangleq\max(1,|\vec{\jmath}|),\qquad|\vec{\jmath}|\defeq \max(|j_1|, \ldots, |j_p| ),\qquad u_{\vec{\jmath}}^{\vec{\sigma}} \defeq u_{j_1}^{\sigma_1} \dots u_{j_p}^{\sigma_p},$$
        where for a fixed $u\in\mathbb{C},$ we denote
$$u^{+}\triangleq u\qquad\textnormal{and}\qquad u^{-}\triangleq\overline{u}.$$
		We  denote by $\widetilde{\Gamma}^m_0 $ the space of constant coefficients symbols $ \xi \mapsto a(\xi)$ which satisfy \eqref{estim symbol} with $\mu= 0 $.
\item  {\bf Non-homogeneous symbols. }  We denote by $\Gamma_{K,K',p}^m[\epsilon_0]$ the space of functions  $ a(U;t, x,\xi) $,
defined for $ U \in B^{K'}_{s_0}\pare{I;\epsilon_0}$ for some $s_0$ large enough, with complex values, such that for any $0\leqslant k\leqslant K-K'$, any $s\geqslant s_0$, there is  $0<\epsilon_0(s)<\epsilon_0$ such that for any $ \beta\in \N$ the following holds. There is $C\triangleq  C_{s,\beta}>0$ such that  for any $ U\in  B^{K'}_{s_0}\pare{I;\epsilon_0(s)} \cap C_{*}^{k+K'}\pare{I; \dot H^{s}\pare{\mathbb{T};\mathbb{C}}}$ and $\alpha \in \N$, with $\alpha \leqslant s-s_0$ one has the estimate
\begin{equation}\label{estim nonhom symb}
\av{ \partial_t^k\partial_x^\alpha \partial_\xi^\beta a\pare{ U;t, x,\xi }}  \leqslant C \langle \xi \rangle^{m-\beta} \| U \|_{k+K',s_0}^{p-1}\|U\|_{k+K',s} \, .
\end{equation}
If $ p = 0 $ the right hand side has to be replaced by $ C \langle \xi \rangle^{m-\beta} $.

We say that a non-homogeneous symbol  $a(U;x,\xi) $ is \emph{real} if it is real valued for any
$ U \in B^{K'}_{s_0,\R}\pare{I;\epsilon_0} $.

\item
{\bf Symbols.} We denote by $\Sigma \Gamma_{K,K',p}^m[\epsilon_0,N]$ the space of 
symbols 
\be\label{expansion symbols}
a(U;t, x,\xi)= \sum_{q=p}^{N} a_q\pare{ U;x,\xi } + a_{>N}(U;t, x,\xi) 
\ee
where $a_q $, $q=p,\dots, N$
are homogeneous symbols in $ \wt{\Gamma}_q^m $ and  
$a_{>N} $ is 
a non-homogeneous symbol in $ \Gamma_{K,K',N+1}^m $. 

We say that a symbol  $a(U;t, x,\xi) $ is \emph{real} if it is real valued for any
$ U \in B^{K'}_{s_0,\R}(I;\epsilon_0)$.\\
 We shall also denote with $ \Sigma_p^N \Gamma^m_q$ the subspace of $\Sigma \Gamma_{K,K',p}^m[\epsilon_0,N]$ made of pluri-homogeneous symbols, namely symbols which expand as in \eqref{expansion symbols} with $a_{>N}\equiv 0$.
\end{enumerate}
\end{definition}
\begin{remark}\label{remsymb} Let us make the following remarks.
\begin{enumerate}[\textbullet]
    \item Given a $p$-homogeneous symbol $a_p \in \widetilde{\Gamma}^{m}_p$, we shall often, with a slight abuse of notation, identify the associated $p$-homogeneous polynomial with the $p$-linear symbol itself
$$
a_p(U_1, \ldots, U_p; x, \xi)\quad \leftrightsquigarrow \quad a_p(U; x, \xi)\triangleq a_p(U, \ldots, U; x, \xi).
$$
 This identification is harmless due to the standard correspondence between $p$-homogeneous polynomials and symmetric $p$-linear maps.
 \item  If $ a ( U; \cdot  )$ is a homogeneous
symbol in $ \widetilde \Gamma_p^m $ then it  belongs to  
the class of non-homogeneous symbols 
$\Gamma^m_{K,0,p} [\epsilon_0] $, for any $ \epsilon_0 > 0 $.
\item The classical properties expected for a symbol hold: 
 if $a $ is a symbol in $ \Sigma \Gamma^m_{K,K',p}[\epsilon_0,N] $
then $ \partial_x a  $ is in $ \Sigma \Gamma^{m}_{K,K',p}[\epsilon_0,N]   $ and
$ \partial_\xi a $ belongs to $  \Sigma \Gamma^{m-1}_{K,K',p}[\epsilon_0,N]   $.
If in addition $ b $ is a symbol in $ \Sigma \Gamma^{m'}_{K,K',p'}[\epsilon_0,N]  $ then
their product 
$a b $ is a symbol in $ \Sigma \Gamma^{m+m'}_{K,K',p+p'}[\epsilon_0,N]  $.
\end{enumerate}
\end{remark}


We also define classes of functions in analogy with our classes of symbols.

\begin{definition}[Functions] \label{defin funct}
Let $p, N \in \N $,
 $K,K'\in \N$ with $K'\leqslant K$, $\epsilon_0>0$.
We denote by $\tilde{\mathcal{F}}_{p}$, resp. $\mathcal{F}_{K,K',p}[\epsilon_0]$,
 $\Sigma\mathcal{F}_{K,K',p}[\epsilon_0,N]$,
the subspace of $\tilde{\Gamma}^{0}_{p}$, resp. $\Gamma^0_{K,K',p}[\epsilon_0]$,
resp. $\Sigma\Gamma^{0}_{K,K',p}[\epsilon_0,N]$,
made of those symbols which are independent of $\xi $.
We write $\tilde{\mathcal{F}}^{\R}_{p}$,   resp. $\mathcal{F}_{K,K',p}^{\R}[\epsilon_0]$,
$\Sigma\mathcal{F}_{K,K',p}^{\R}[\epsilon_0,N]$,  to denote functions in $\tilde{\mathcal{F}}_{p}$,
resp. $\mathcal{F}_{K,K',p}[\epsilon_0]$,    $\Sigma\mathcal{F}_{K,K',p}[\epsilon_0,N]$,
which are real valued for any $ u \in B^{K'}_{s_0}\pare{I;\epsilon_0}$.
\end{definition}

\smallskip

\noindent{\bf Paradifferential quantization.}
Given $p\in \N,$ we consider   functions
  $\chi_{p}\in C^{\infty}(\R^{p}\times \R;\R)$ and $\chi\in C^{\infty}(\R\times\R;\R)$, 
  even with respect to each of their arguments, satisfying, for $0<\delta_0\leqslant \tfrac{1}{10}$,
  \begin{equation}
  \label{cut off defin}
      \begin{aligned}
&{\rm{supp}}\, \chi_{p} \subset\{(\xi',\xi)\in\R^{p}\times\R\quad\textnormal{s.t.}\quad |\xi'|\leqslant\delta_0 \langle\xi\rangle\} ,\qquad \chi_p (\xi',\xi)\equiv 1\,\,\, \rm{ for } \,\,\, |\xi'|\leqslant \tfrac{1}{2}\delta_0 \langle\xi\rangle ,
\\
&\rm{supp}\, \chi \subset\{(\xi',\xi)\in\R\times\R\quad\textnormal{s.t.}\quad |\xi'|\leqslant\delta_0 \langle\xi\rangle\} ,\qquad \quad
 \chi(\xi',\xi) \equiv 1\,\,\, \rm{ for } \,\,\, |\xi'|\leqslant \tfrac{1}{2}\delta_0   \langle\xi\rangle . 
\end{aligned}
  \end{equation}
For $p=0,$ we set $\chi_0\equiv1$. 
We assume moreover that 
\begin{align*}
|\partial_{\xi}^{\ell}\partial_{\xi'}^{\beta}\chi_p(\xi',\xi)|&\leqslant C_{\ell,\beta}\langle\xi\rangle^{-\ell-|\beta|},\quad\forall \ell\in \N, \,\beta\in\N^{p},\\
|\partial_{\xi}^{\ell}\partial_{\xi'}^{\beta}\chi(\xi',\xi)|&\leqslant C_{\ell,\beta}\langle\xi\rangle^{-\ell-\beta}, \quad\forall \ell, \,\beta\in\N .
\end{align*} 
If $ a (x, \xi) $ is a smooth symbol 
we define its Weyl quantization  as the operator
acting on a
$ 2 \pi $-periodic function
$u$ 
 as
$$
{\rm Op}^{W}(a)u(x)= \sum_{k\in \Z}
\Big(\sum_{j\in\Z}\hat{a}\big(k-j, \tfrac{k+j}{2}\big) u_j \Big){e^{\im k x}},
$$
where $\hat{a}(k,\xi)$ is the $k^{th}-$Fourier coefficient of the $2\pi-$periodic function $x\mapsto a(x,\xi)$.

\begin{definition}{\bf (Bony-Weyl quantization)}\label{defin quant}
If $a(U;x,\xi)$ is a symbol in $\widetilde{\Gamma}^{m}_{p}$, 
respectively in  $\Gamma^{m}_{K,K',p}[\epsilon_0]$,
we set
\begin{equation*}
\begin{aligned}
    a_{\chi_{p}}(U;x,\xi)&\triangleq \!\!\!\!\! \sum_{\substack{\vec{\jmath}\in (\Z^*)^p\\ \vec{\sigma}\in \{ \pm\}^p}}\chi_p (\vec{\jmath},\xi) a_{\vec{\jmath}}^{\vec{\sigma}}(\xi) u_{\vec{\jmath}}^{\vec{\sigma}} e^{\ii (\vec{\sigma}\cdot \vec{\jmath}) x},
\\
 a_{\chi}(U;x,\xi)&\triangleq\sum_{j\in \Z^*} 
\chi (j,\xi )\hat{a}(U;j,\xi)e^{\im j x} , 
\end{aligned}
\end{equation*}
where in the last equality $  \hat a(U; j,\xi) $ stands for $j^{th}$ Fourier coefficient of $a(U;x, \xi)$ with respect to the $ x $ variable, and 
we define the \emph{Bony-Weyl} quantization of $ a(U; \cdot)$ as 
 \begin{align}\label{BonyWeyl}
 &\OpBW{a({U};\cdot)}v(x)\triangleq 
 {\rm Op}^{W} (a_{\chi_{p}}({U};\cdot))v(x)=  \!\!\!\!\!\!\!\!\!\sum_{\substack{(\vec{\jmath},j,k)\in (\Z^*)^{p+2}\\ \vec{\sigma}\in \{ \pm\}^p\\ \vec{\sigma}\cdot \vec{\jmath}+j=k}}\chi_p 
\left(\vec{\jmath},\frac{j+k}{2}\right)
 a_{\vec{\jmath}}^{\vec{\sigma}}\left(\frac{j+k}{2}\right) 
 u_{\vec{\jmath}}^{\vec{\sigma}} v_j
 {e^{\ii k x}} \, ,\\
 &\OpBW{a(U;\cdot)}v(x)\triangleq {\rm Op}^{W} (a_{\chi}(U;\cdot))v(x)=
 \!\!\!\!\!\! \sum_{(j,k)\in (\Z^*)^2} \!\!\!  \chi \left(k-j,\frac{j+k}{2} \right)
\hat{a}\left(U; k-j, \frac{k+j}{2}\right)v_j {e^{\ii k x}} \, .\label{BonyWeylnon}
 \end{align}
\end{definition}
\noindent Note that if 
$ \chi \Big( k-j, \tfrac{ k + j }{2}\Big) \neq 0 $
then $ |k-j| \leqslant \delta_0 \langle \frac{j + k}{2} \rangle  $
and therefore, for $ \delta_0 \in (0,1)$, 
\begin{equation*}
\frac{1-\delta_0}{1+\delta_0} |k| \leqslant
|j| \leqslant \frac{1+\delta_0}{1-\delta_0}|k| \, , \quad \forall j, k \in \Z\, . 
\end{equation*}
This relation shows that the action of a paradifferential  operator does not spread much the Fourier support of functions.\\
If $a$ is a symbol in  $\Sigma\Gamma^{m}_{K,K',p}[\epsilon_0,N]$,
we define its \emph{Bony-Weyl} quantization
$$
\OpBW{a(U;\cdot)} =\sum_{q=p}^{N}
\OpBW{a_q(U;\cdot)} + \OpBW{a_{>N}(U; \cdot) } .
$$
We define as well
\begin{align}\label{def Opvec}
\Opvec{a\pare{U;x,\xi}}\defeq
\OpBW{
\begin{bmatrix}
a\pare{U;x, \xi} & 0 \\ 0 & \overline{a^\vee\pare{U;x, \xi}}
\end{bmatrix}
} , 
&&
a^\vee\pare{U;x, \xi}\defeq a\pare{U;x, -\xi} . 
\end{align}

\begin{rem}
\begin{itemize}
\item
 The operator
$ \OpBW{a} $
maps functions with zero average in functions with zero average, 
and $ \Pi_0^\bot \OpBW{a} = \OpBW{a}\Pi_0^\bot $. 

\item
If $ a$ is a homogeneous  symbol, 
the two definitions  of quantization in \eqref{BonyWeyl}-\eqref{BonyWeylnon} differ 
by a  smoothing operator according to
Definition \ref{defin smoothingop} below, see \cite[page 50]{BD2018}.

\item    
Definition \ref{defin quant}
is  independent of the cut-off functions $\chi _{p}$, $\chi$,
up to smoothing operators (Definition \ref{defin smoothingop}).

\item
The action of
$ \OpBW{a} $ on  the spaces $ \dot H^s $ only depends
on the values of the symbol $  a(u;t, x,\xi)$
for $|\xi|\geqslant1$.
Therefore, we may identify two symbols $ a(u;t, x,\xi)$ and
$ b(u;t, x,\xi)$ if they agree for $|\xi| \geqslant1/2$.
In particular, whenever we encounter a symbol that is not smooth at $\xi=0 $,
such as, for example, $a = g(x)|\x|^{m}$ for $m\in \R^*$, or $ \sgn{\xi} $,
we will consider its smoothed out version
$\pare{1-\chi(\xi)}a\pare{x, \xi}$, where $\chi$ is defined in \eqref{cut off defin}. Similarly for $p$-homogeneous symbols.

\end{itemize}
\end{rem}

\begin{rem}
Given  a paradifferential  operator
$ A = \OpBW{a(x,\xi)} $ it results
\begin{equation*}
\ov{ A} = \OpBW{\overline{a(x, - \xi)}}, \qquad
A^\intercal = \OpBW{a(x, - \xi)}, \qquad
A^*= \OpBW{\overline{a(x,  \xi)}},
\end{equation*}
where $ A^\intercal $  is the transposed  operator with respect to the real scalar product
$ \psc{\cdot}{\cdot}_\R $ in \eqref{scalar products}, and
$ A^* $ denotes the adjoint  operator  with respect to the complex
scalar product $\psc{\cdot}{\cdot}_\C $ on $  \dot{L}^2$ in \eqref{scalar products}. It results $ A^* = \ov{A}^\intercal $.

\begin{itemize}
\item A paradifferential operator $A= \OpBW{a(x,\xi)} $ is {\it real} (i.e. $A = \ov{A} $) if
\begin{equation}\label{real cond} 
\ov{a(x,\xi)}= a(x,-\xi)   \, . 
\end{equation}

\item It is {\it symmetric} (i.e. $A = A^\intercal $)
 if $$a(x,\xi) = a(x,-\xi). $$
\end{itemize}
\end{rem}

We now provide the action of a paradifferential operator on Sobolev spaces, cf.  \cite[Prop. 3.8]{BD2018}.

\begin{lemma}[Action of a paradifferential operator]
  \label{lemactionpara}
    Let $ m \in \R $.
  
  \begin{enumerate}[i)]
  \item \label{item:OpBWmaps1}
If $ p \in \N $, there is $ s_0 > 0 $ such that for any symbol $ a $ in $\Gt{m}{p}$,
there is  a constant $ C > 0 $, depending only on $s$ and on \eqref{estim symbol}
with $ \upgamma = \beta = 0 $,
such that, for any $ (U_1, \ldots, U_p ) $, for $ p \geqslant1  $,
\begin{equation*}
  \norm{\OpBW{a(U_1, \ldots, U_p;\cdot)} u_{p+1}}_{\dot H^{s-m}}\leqslant C
\norm{ U_1 }_{\dot H^{s_0}}  \cdots \norm{ U_p }_{\dot H^{s_0}}
\norm{u_{p+1}}_{\dot H^{s}}  .
\end{equation*}
If  $ p = 0 $
the above bound holds replacing the right hand side with
$ C \norm{u_{p+1}}_{\dot H^{s}} $.
\item \label{item:OpBWmaps2}
Let $\epsilon_0>0$, $p\in \N $, $K'\leqslant K\in \N $, $a$ in $\Gr{m}{K,K', p}$.
There is $ s_0 > 0 $,
and  a constant $ C $, depending only on $s$, $\epsilon_0 $, and on \eqref{estim nonhom symb} with $ 0 \leqslant \alpha  \leqslant 2, \beta = 0$,
such that, for any $ t $ in $ I $, any $ 0\leqslant k\leqslant K-K'$, any $ U $ in $\Br{K}{}$,
    \begin{align}\label{para:dt}
  \norm{\OpBW{ \partial_t^ka(U;t,\cdot) } }_{\Lcal \pare{ \dot H^{s},\dot H^{s-m} } }\leqslant C 
 \| U(t, \cdot)\|_{k+K',s_0}^p \, ,
\end{align}
so that
$\norm{ \OpBW{a(U;t,\cdot)} v(t) } _{K-K', s-m} \leqslant C  \| U(t, \cdot)\|_{K,s_0}^p
\| v(t) \|_{K- K', s} $.

  \end{enumerate}
\end{lemma}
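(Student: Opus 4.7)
My plan is to prove both items by direct computation in Fourier variables, exploiting the spectral localization built into the Bony--Weyl cut-offs $\chi_p$ and $\chi$ in \eqref{BonyWeyl}--\eqref{BonyWeylnon}.

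For item \ref{item:OpBWmaps1}, one polarizes the homogeneous symbol and reads off from \eqref{BonyWeyl} that the $k$-th Fourier coefficient of $\OpBW{a(U_1,\ldots,U_p;\cdot)}u_{p+1}$ is a sum, over $\vec{\jmath}\in(\mathbb{Z}^*)^p$, $j\in\mathbb{Z}^*$ and $\vec{\sigma}\in\{\pm\}^p$ constrained by $\vec{\sigma}\cdot\vec{\jmath}+j=k$, of
\begin{equation*}
\chi_p\Big(\vec{\jmath},\tfrac{j+k}{2}\Big)\, a^{\vec{\sigma}}_{\vec{\jmath}}\Big(\tfrac{j+k}{2}\Big)\, \widehat{U_1}^{\sigma_1}(j_1)\cdots \widehat{U_p}^{\sigma_p}(j_p)\, \hat{u}_{p+1}(j).
\end{equation*}
The support of $\chi_p$ forces $|\vec{\jmath}|\leqslant \delta_0\langle(j+k)/2\rangle$, which together with the momentum constraint and $\delta_0\leqslant 1/10$ yields $\langle j\rangle\sim_p\langle k\rangle\sim_p\langle(j+k)/2\rangle$; meanwhile \eqref{estim symbol} with $\beta=0$ gives $|a^{\vec{\sigma}}_{\vec{\jmath}}((j+k)/2)|\lesssim\langle\vec{\jmath}\rangle^\mu\langle k\rangle^m$. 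Multiplying by $\langle k\rangle^{s-m}$, splitting $\langle\vec{\jmath}\rangle^\mu=\langle\vec{\jmath}\rangle^\mu\prod_i\langle j_i\rangle^{-s_0}\cdot\prod_i\langle j_i\rangle^{s_0}$, applying Cauchy--Schwarz in $\vec{\jmath}$ (absorbing the latter piece into $\prod_i\|U_i\|_{\dot H^{s_0}}$) and then Plancherel in $(j,k)$ closes the bound as soon as $s_0>\mu+\tfrac{1}{2}$. The case $p=0$ is immediate, since $a=a(\xi)$ is a Fourier multiplier of order $m$.

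For item \ref{item:OpBWmaps2}, I proceed analogously from \eqref{BonyWeylnon}: the $k$-th Fourier coefficient of $\OpBW{a(U;t,\cdot)}v$ is the sum over $j\in\mathbb{Z}^*$ of $\chi(k-j,(j+k)/2)\,\hat a(U;t,k-j,(j+k)/2)\,\hat v(j)$, with the support of $\chi$ enforcing $\langle j\rangle\sim\langle k\rangle$. Two integrations by parts in $x$ in the Fourier transform of the symbol, combined with \eqref{estim nonhom symb} for $\alpha\leqslant 2$ and $\beta=0$, yield $\langle k-j\rangle^2\,|\hat a(U;t,k-j,(j+k)/2)|\lesssim \langle k\rangle^m\|U\|_{K',s_0}^p$. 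Schur's test applied to the localized kernel $\mathbf{1}_{|k-j|\leqslant\delta_0\langle(j+k)/2\rangle}\,\langle k\rangle^{s-m}\langle j\rangle^{-s}\langle k-j\rangle^{-2}$, whose row and column $\ell^1$-norms are uniformly bounded, then yields \eqref{para:dt} for $k=0$. For general $0\leqslant k\leqslant K-K'$ the same argument applies, since \eqref{estim nonhom symb} bounds $|\partial_t^k\hat a|$ in terms of $\|U\|_{k+K',s_0}^p$; the final estimate on $\|\OpBW{a(U;t,\cdot)}v\|_{K-K',s-m}$ then follows by applying the Leibniz rule to $\partial_t^k(\OpBW{a(U;t,\cdot)}v)$ and summing over $0\leqslant k\leqslant K-K'$.

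The main technical point is the frequency-localization bookkeeping: the cut-offs $\chi_p$ and $\chi$ must be used to convert the polynomial loss $\langle\vec{\jmath}\rangle^\mu$ in \eqref{estim symbol} (resp.\ the derivative loss $\langle k-j\rangle^2$ in \eqref{estim nonhom symb}) into summable factors, at the cost of a large but fixed Sobolev threshold $s_0$. The statement is classical in paradifferential calculus (see e.g.\ \cite[Prop.~3.8]{BD2018}); no conceptually new ingredient beyond the localization-plus-Cauchy--Schwarz pattern sketched above is required.
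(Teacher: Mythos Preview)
Your sketch is correct and follows the standard frequency-localization-plus-Cauchy--Schwarz/Schur argument; the paper does not give its own proof but simply refers to \cite[Prop.~3.8]{BD2018}, which is exactly the reference you cite and whose proof proceeds along the same lines you outline. There is nothing to add.
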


\paragraph{Classes of $m$-Operators and smoothing Operators.}
Given integers $\pare{ n_1,\ldots,n_{p+1} } \in (\N^*)^{p+1}$ we denote by $\max_{2}\set{n_1 ,\ldots, n_{p+1}}$
the second largest among  $ n_1,\ldots, n_{p+1}$.
We now define  $ m $-operators which  include the class of paradifferential operators of order $ m $
(see Remark \ref{rem smoothingop}) and allow to define the smoothing remainders when
the order $ m $ is negative (see Definition \ref{defin smoothingop}).
The class $\tilde{\mathcal{M}}^{m}_{p}$ denotes multilinear
 operators that lose $m$ derivatives
 and are $p$-homogeneous in $u $,
while the class $\mathcal{M}_{K,K',p}^{m}$ contains non-homogeneous
operators  which lose $m$ derivatives,
vanish at degree at least $ p $ in $ u $, satisfy tame estimates
 and are $(K-K')$-times differentiable in $ t $. 
The  constant $ \mu $ in \eqref{eq:bound_fourier_representation_m_operators} takes into account possible loss of derivatives in the ``low" frequencies. 

The following definition is taken from  \cite[Def. 2.5]{BMM2022} (see also its Fourier characterization in \cite[Lemma 2.9]{BMM2022}.

\begin{definition}[ Classes of $m$-operators]\label{defin mop}
Let  $ m \in \R $,  $p,N\in \N$,
$K,K'\in\N$ with $K'\leqslant K$, and $ \epsilon_0 > 0 $.

\begin{enumerate}[i)]

\item \label{item:maps1}
{\bf $p$-homogeneous $m$-operators.}
We denote by $\tilde{\mathcal{M}}^{m}_{p}$
 the space of $ (p+1)$-linear symmetric translation invariant operators from 
$ \pare{\dot H^\infty \pare{\mathbb{T};\mathbb{C}^2}}^{p} \times \dot H^\infty \pare{\mathbb{T};\mathbb{C}} $ to 
$ \dot H^\infty \pare{\mathbb{T};\mathbb{C}} $, whose associated polynomial has the form
\begin{equation} \label{map:Fourier}
M(U)v \defeq M\pare{U, \ldots, U} v = 
 \sum_{\substack{  \vec\sigma\in \{\pm\}^{p} \\  k -j = \vec \sigma \cdot \vec \jmath} }
\!\!\!\!\!\!
  M_{\vec \jmath, j,k}^{\vec \sigma} \, u_{\vec \jmath}^{\vec \sigma}\,  v_{j}  \, {e^{\ii k x} }
 \end{equation}
with coefficients $   M_{\vec \jmath, j,k}^{\vec \sigma} $ symmetric in $ (j_1,\sigma_1), \ldots, (j_{p},\sigma_p) $, 
 satisfying the following:  there are $\mu \geqslant0$, $C>0$ such that, for any 
 $ \vec \jmath=( j_1, \ldots, j_{p}) \in (\Z^*)^p$, $ j, k \in  \Z^* $, it results 
 \begin{equation}\label{eq:bound_fourier_representation_m_operators}
\av{ M_{\vec \jmath_{p}, j,k}^{\vec \sigma_{p}} } \leqslant C \   {\rm max} _2 \set{\av{j_1}, \ldots,  \av{j_{p}}, \av{ j} } ^\mu \   \max\set{ \av{j_1}, \ldots , \av{j_{p}},\av{ j} }^m , 
\end{equation}

If $ p=0 $ the right hand side of \eqref{map:Fourier} must be substituted with $ \sum _{j\in \bZ} M_j v_j e^{\ii j x} $ with $ \av{M_j}\leqslant C \av{ j}^m $. 
 \item
 {\bf Non-homogeneous $m$-operators.}
  We denote by  $\mathcal{M}^{m}_{K,K',p}[\epsilon_0]$
  the space of operators $(U,t,v)\mapsto M(U;t) v $ defined on { $B^{K'}_{s_0}\pare{I;\epsilon_0}$} for some $ s_0 >0  $,
  which are linear in the variable $ v $ and such that the following holds true.
  For any $s\geqslant s_0$ there are $C>0$ and
  $\epsilon_0(s)\in]0,\epsilon_0[$ such that for any
  { $U \in B^{K'}_{s_0}\pare{I;\epsilon_0} \cap C^K_{*}\pare{ I,\dot H^{s}(\T;\C) }$,
  any $ v \in C^{K-K'}_{*} \pare{ I,\dot H^{s}(\T;\C) } $}, any $0\leqslant k\leqslant K-K'$, $t\in I$, we have that { 
\begin{equation}
\label{stima:emmeop}
\norm{{\partial_t^k\left(M(U;t)v\right)}}_{s- \frac32 k-m}
 \leqslant C   \sum_{k'+k''=k}  \pare{ \|{v}\|_{k'',s}\|{U}\|_{k'+K',{s_0}}^{p}
 +\|{v}\|_{k'',{s_0}}\|U\|_{k'+K',{s_0}}^{p-1}\| U \|_{k'+K',s} }  .
\end{equation}}
In case $ p = 0$ we require  the estimate
$ \|{\partial_t^k\left(M(U;t) v \right)}\|_{s- \frac32k-m}
 \leqslant C  \| v \|_{k,s}$.  
  We say that a non-homogeneous $ m $-operator  $M\pare{U;t} $ is \emph{real} if it is real valued for any
$ u \in B^{K'}_{s_0}\pare{I;\epsilon_0} $.

 \item
 {\bf $m$-Operators.}
We denote by $\Sigma\mathcal{M}^{m}_{K,K',p}[\epsilon_0,N]$
the space of operators 
\begin{equation}
\label{maps}
M(U;t)v =\sum_{q=p}^{N}M_{q}(U)v+M_{>N}(U;t)v ,
\end{equation}
where $M_{q} $ are homogeneous $m$-operators in $ \tilde{\mathcal{M}}^{m}_{q}$, $q=p,\ldots, N$  and
$M_{>N}$  is a non--homogeneous $m$-operator
in $\mathcal{M}^{m}_{K,K',N+1}[\epsilon_0]$.  We say that a  $ m $-operator  $M\pare{u;t} $ is \emph{real} if it is real valued for any
$ u \in  B^{K'}_{s_0}\pare{I;\epsilon_0} $.
We shall also denote with  $ \Sigma_p^N \wt{\cM}^m_q$ the subspace of \\
$\Sigma \cM_{K,K',p}^m[\epsilon_0,N]$ made of pluri-homogeneous $m$-operators, namely symbols which expand as in \eqref{expansion symbols} with $M_{>N}\equiv 0$.
\end{enumerate}
\end{definition}
\begin{rem}
 By  \cite[Lemma 2.8]{BMM2022},
if $ M( U_1,\dots , U_p)$ is a $p$--homogeneous
 $m$-operator in $  \widetilde \mM_p ^m$ then
  $ M(U) = M(U, \ldots, U) $ is a  non-homogeneous $ m$-operator in
 $  {\cal M}^m_{K,0,p}[\epsilon_0] $ for any $\epsilon_0>0$ and $K\in \N$.
 We shall say that $ M(u) $ is in $  \tilde \mM_p ^m $. 
  \end{rem}

\begin{notation}
   \begin{itemize}
       \item  If $M(U_1, \ldots, U_p)$ is a $p$-homogeneous $m$-operator, we shall often denote by $M(U)$ the associated $p$-homogeneous polynomial, as in \eqref{map:Fourier}, and write $M(U) \in \widetilde{\mathcal{M}}_p^m$. Conversely, a $p$-homogeneous polynomial can be represented by a $(p+1)$-linear form of the type $ M(U_1, \ldots, U_p) U_{p+1} $, which may not be symmetric in the first $p$ variables. If this form satisfies the symmetric estimate \eqref{eq:bound_fourier_representation_m_operators}, then it corresponds to an $ m $-operator in $ \widetilde{\mathcal{M}}_p^m $ obtained by symmetrizing the internal variables. In the sequel, we adopt this identification without further comment.
\item given an operator  $M(U;t)$ in $ \Sigma \cM_{K,K',p}^m[r, N]$   of the form \eqref{maps} 
we denote by  
\be \label{projectlessN}
\cP_{\leqslant N}[ M(U;t)] \triangleq  \sum_{q=p}^{N} M_q(U) \, ,
 \quad \text{resp.} \qquad 
 \cP_{q}[ M(U;t)] \triangleq M_q(U) \, , 
 \ee
the projections on the pluri-homogeneous, resp. homogeneous, operators in 
 $\Sigma_p^N \widetilde \cM^m_q $ , resp. in $\wt\cM_q^m$. Given an integer $ p\leqslant p'\leqslant N$ we also denote 
$$
 \cP_{\geqslant p'}[ M(U;t)] \triangleq  \sum_{q=p'}^{N} M_q(U), \qquad \cP_{\leqslant  p'}[ M(U;t)] \triangleq  \sum_{q=p}^{p'} M_q(U) \, .
$$
 The same notation will be also used to denote 
 pluri-homogeneous/homogeneous components of symbols. 
   \end{itemize}
 \end{notation}

If $m  \leqslant 0 $ the  operators in $ \Sigma \mM^{m}_{K,K',p}[\epsilon_0,N]$ are referred to as smoothing operators.

 \begin{definition}[Smoothing operators] \label{defin smoothingop}
Let $ \vr\geqslant0$. A $ (-\vr)$-operator $R(U)$ belonging to $ \Sigma \mM^{-\vr}_{K,K',p}[\epsilon_0,N]$ 
is called  a smoothing operator. 
We also denote
\begin{align*}
 \wt{\mathcal{R}}^{-\vr}_{p}\defeq \wt{\mathcal{M}}^{-\vr}_{p} \, ,
&& 
 \mathcal{R}^{-\vr}_{K,K',p}[\epsilon_0]\defeq\mathcal{M}^{-\vr}_{K,K',p}[\epsilon_0] \, , 
 && 
  \Sigma\mathcal{R}^{-\vr}_{K,K',p}[\epsilon_0,N]\defeq\Sigma\mathcal{M}^{-\vr}_{K,K',p}[\epsilon_0,N] \,, && \Sigma_p^N \wt{\mR}^{-\vr}_q\defeq\Sigma_p^N \wt{\mM}^{-\vr}_q .
\end{align*}
\end{definition}
\begin{rem} \label{rem smoothingop}
$ \bullet $
Lemma \ref{lemactionpara} implies that,  if  $a(U; t, \cdot)$ is a symbol  in $\Sigma\Gamma^{m}_{K,K',p}\bra{\epsilon_0 , N}$, 
 $m\in\mathbb{R} $,  then the associated paradifferential operator 
 $ \OpBW{a(U; t, \cdot)} $ defines a $ m $-operator in
$\Sigma  \cM^{m}_{K,K',p}\bra{\epsilon_0 , N}$.

$ \bullet $
The composition of smoothing operators $ R_1 \in \sr{-\vr}{K,K',p_1}{N}$
 and  $ R_2  \in \sr{-\vr}{K,K',p_2}{N} $ is a smoothing operator $ R_1 R_2 $  in 
$ \sr{-\vr}{K,K',p_1+p_2}{N} $.  This is a particular case of Proposition \ref{prop compo mop}-($i)$ below.
\end{rem}
\begin{definition}[{\bf Homogeneous vector fields}]
	Let $m \in \R$ and $ p, N \in \N$. We denote by $\wt \X_{p+1}^m$ the space of $ (p+1)$-homogeneous vector fields of the form $ X(U)=M(U)U$ where $M(U)$ is a matrix of $p$-homogeneous $m$-operators in $ \mats{\wt \cM_{p}^m}$. 
In particular, one has the Fourier expansion 
$$X(U)=\begin{bmatrix}
    X(U)^+\\
    X(U)^-
\end{bmatrix},\qquad X(U)^{\sigma}\triangleq\sum_{ \substack{ (\vec{\jmath}, k, \vec{\sigma}, - \sigma) \in \fT_{p+2}
}}X_{\vec{\jmath},k}^{\vec{\sigma},\sigma} u_{\vec{\jmath}}^{\vec{\sigma}}e^{\ii\sigma k},$$
where the  Fourier restriction $\mathfrak{T}_{p+2}$ is  the set of momentum preserving indices, defined, for a given $q\in\N^*$, as 
\be \label{mompresind}
\mathfrak{T}_q\triangleq \left\{  \pare{ \vec{\jmath} , \vec \sigma} \in \Z^q \times \{\pm \}^q \quad\textnormal{s.t.}\quad  \vec    \sigma \cdot\vec{\jmath}   = 0  \right\}.
\ee
    	We denote 
	$ \Sigma_{p+1}^{N+1}\wt\X^{m}_q$ the class of pluri-homogeneous vector fields. 
	The vector fields in  $\wt \X_{p+1}^{- \varrho}$, $ \varrho \geqslant 0 $, are called smoothing. 
\end{definition}

\paragraph{Symbolic calculus.}
Let
$ \s(D_{x},D_{\x},D_{y},D_{\eta}) \defeq D_{\x}D_{y}-D_{x}D_{\eta}  $
where $D_{x}\defeq\frac{1}{\ii}\pa_{x}$ and $D_{\x},D_{y},D_{\eta}$ are similarly defined.
The following is   Definition 3.11 in \cite{BD2018}.

\begin{definition}[Asymptotic expansion of composition symbol]
Let $ p $, $ p' $ in $\N $, $ K, K' \in \N$ with $K'\leqslant K$,  $ \vr  \geqslant0 $, $m,m'\in \R$, $\epsilon_0>0$.
Consider symbols $a \in \Sigma\Gamma_{K,K',p}^{m}[\epsilon_0,N]$ and $b\in \Sigma \Gamma^{m'}_{K,K',p'}[\epsilon_0,N]$. For $U$ in $B_{\s}^{K}(I;\epsilon_0)$
we define, for $\vr< \s- s_0$, the symbol
\begin{equation*}
(a\#_{\vr} b)\pare{ U;t, x,\x } \defeq\sum_{k=0}^{\vr}\frac{1}{k!}
\left(
\frac{\ii}{2}\s\pare{ D_{x},D_{\x},D_{y},D_{\eta} } \right)^{k}
\Big[a(U;t, x,\x)b(U;t,y,\eta)\Big]_{|_{\substack{x=y, \x=\eta}}}
\end{equation*}
modulo symbols in $ \Sigma \Gamma^{m+m'-\vr}_{K,K',p+p'}[\epsilon_0,N] $.
\end{definition}

The symbol $ a\#_{\vr} b $ belongs  to $\Sigma\Gamma^{m+m'}_{K,K',p+p'}[\epsilon_0,N]$.
Moreover
\begin{equation*}
a\#_{\vr}b = a b + \frac{1}{2 \ii }\{a,b\} 
\end{equation*} 
up to a symbol in $\Sigma\Gamma^{m+m'-2}_{K,K',p+p'}[\epsilon_0,N]$,
where
$$
\{a,b\}  \defeq  \pa_{\xi}a \ \pa_{x}b -\pa_{x}a \ \pa_{\xi}b
$$
denotes the Poisson bracket.
\smallskip
The following result is proved in Proposition $3.12$ in \cite{BD2018}.

\begin{proposition}[Composition of Bony-Weyl operators] \label{prop compBW}
Let $p,q,N, K, K'  \in \N$ with $ K' \leqslant K $,  $\vr \geqslant0 $, $m,m'\in \R$, $\epsilon_0>0$.
Consider  symbols
$a\in \Sigma {\Gamma}^{m}_{K,K',p}[\epsilon_0,N] $ and $b\in \Sigma {\Gamma}^{m'}_{K,K',q}[\epsilon_0, N]$.
Then
\begin{equation*}
\OpBW{a(U;t, x,\x)}\circ\OpBW{b(U;t, x,\x)} - \OpBW{(a\#_{\vr} b)(U;t, x,\x)}
\end{equation*}
is a smoothing operator in $ \Sigma {\mathcal{R}}^{-\vr+m+m'}_{K,K',p+q}[\epsilon_0,N]$. 
\end{proposition}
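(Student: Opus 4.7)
The approach is the classical Weyl symbolic calculus adapted to the paradifferential setting, following the scheme of \cite[Prop.~3.12]{BD2018}. By bilinearity of both sides with respect to $(a,b)$, I would first decompose each symbol along the splitting \eqref{expansion symbols} into pluri-homogeneous and non-homogeneous parts, and reduce to the four cases (homogeneous$\,\circ\,$homogeneous, homogeneous$\,\circ\,$non-homogeneous, symmetric, non-homogeneous$\,\circ\,$non-homogeneous). The composition of two pluri-homogeneous pieces $a_p$ and $b_q$ yields a symbol of exact homogeneity $p+q$, while any composition involving a non-homogeneous factor produces a non-homogeneous contribution of degree $\geqslant p+q$, consistent with the target class.

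The core analytic step is the classical identity $\textnormal{Op}^{W}(a)\circ \textnormal{Op}^{W}(b)=\textnormal{Op}^{W}(a\#_{W}b)$, where $a\#_{W}b$ is given by an oscillatory integral. Taylor-expanding the phase $\s(D_x,D_\xi,D_y,D_\eta)$ to order $\vr$ produces the partial sum defining $a\#_\vr b$, and the remainder admits an explicit integral representation to which one applies repeated integrations by parts in the Fourier variables. This shows that the remainder is the quantization of a symbol of order $m+m'-\vr$, while the midpoint convention $(k+j)/2$ of the Weyl quantization preserves the symmetry needed for the loss parameter $\mu$ in \eqref{eq:bound_fourier_representation_m_operators} to combine additively from the factors $a,b$ into the composed object.

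The main technical obstacle, and the step requiring the most care, is reconciling the paradifferential cutoffs $\chi_p, \chi_q, \chi$ of Definition \ref{defin quant}. One must compare the symbol $a_{\chi_p}\#_\vr b_{\chi_q}$, produced by the plain Weyl composition of the smoothed symbols, with the genuine Bony-Weyl quantization $(a\#_\vr b)_{\chi}$. The difference is supported in the high-frequency region where the output Fourier index $|k-\vec\sigma\cdot\vec\jmath|$ is no longer small compared to $\langle(k+j)/2\rangle$. On such a region one integrates by parts against derivatives of the cutoff functions; each derivative gains a power of $\langle\xi\rangle^{-1}$, and iterating $\vr$ times (or indeed arbitrarily many times, since the cutoff derivatives are supported strictly away from the low-frequency zone) produces a smoothing operator of order $-\vr+m+m'$ at least. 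This is the step that must be implemented carefully on the torus, using the Fourier representations \eqref{BonyWeyl}--\eqref{BonyWeylnon}.

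Finally, I would verify that the total remainder belongs to $\Sigma\mathcal{R}^{-\vr+m+m'}_{K,K',p+q}[\epsilon_0,N]$ by checking the tame bound \eqref{stima:emmeop}. For the pluri-homogeneous pieces this reduces to the Fourier estimate \eqref{eq:bound_fourier_representation_m_operators} with the loss $\mu$ tracked additively through the composition. For the non-homogeneous pieces, the Leibniz rule applied to $\partial_t^k$ for $k\leqslant K-K'$, combined with the tame symbol estimate \eqref{estim nonhom symb} for each factor and the action of paradifferential operators on Sobolev spaces in Lemma \ref{lemactionpara}, distributes the $K$ time derivatives across the two factors and produces the required bound. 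Once the two central points, the Weyl expansion with its quantitative remainder and the cutoff reconciliation, are in place, the bookkeeping of the classes $\Sigma\Gamma^{m}_{K,K',p}$ and $\Sigma\mathcal{R}^{-\vr+m+m'}_{K,K',p+q}$ is routine and the conclusion follows as in \cite[Prop.~3.12]{BD2018}.
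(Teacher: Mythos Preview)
The paper does not give its own proof of this proposition; it simply cites \cite[Prop.~3.12]{BD2018}. Your proof plan outlines exactly the standard argument from that reference (Weyl composition, Taylor expansion of the phase, cutoff reconciliation, tame estimates), so your approach coincides with the one the paper defers to.
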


  We have the following result, see e.g. Lemma 7.2 in \cite{BD2018}.
\begin{lemma}[Bony paraproduct decomposition]
\label{lemma Bonyprod}
Let  $u_1, u_2$ be functions in $H^\s(\T;\C)$   with
$\s >\frac12$. Then
\begin{equation*}
u_1 u_2  =  \OpBW{u_1 }u_2 + \OpBW{u_2 }u_1 +
R_1(u_1)u_2 +  R_2(u_2)u_1
\end{equation*}
where for $ \mathsf{j} =1, 2 $, $R_\mathsf{j}$ is a homogeneous smoothing operator in $ \wt \mR^{-\vr}_{1}$ for any $ \vr \geqslant0$. 
\end{lemma}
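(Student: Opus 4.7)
The plan is to Fourier-decompose the pointwise product and compare directly against the two paraproducts. Writing
\begin{equation*}
u_1 u_2(x) = \sum_{\ell, j \in \Z} \hat u_1(\ell)\, \hat u_2(j)\, e^{\ii(\ell + j) x}
\end{equation*}
and applying the Bony--Weyl quantization formula \eqref{BonyWeylnon} to the $\xi$-independent symbols $a(x) = u_j(x)$, $j = 1, 2$, one reads off
\begin{equation*}
\OpBW{u_1} u_2(x) = \sum_{\ell, j} \chi\bigl(\ell, j + \tfrac{\ell}{2}\bigr) \hat u_1(\ell)\, \hat u_2(j)\, e^{\ii(\ell + j) x},
\end{equation*}
and symmetrically for $\OpBW{u_2} u_1$. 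Hence the remainder takes the form
\begin{equation*}
u_1 u_2 - \OpBW{u_1} u_2 - \OpBW{u_2} u_1 = \sum_{\ell, j} c_{\ell, j}\, \hat u_1(\ell)\, \hat u_2(j)\, e^{\ii(\ell + j) x},
\end{equation*}
with $c_{\ell, j} \triangleq 1 - \chi(\ell, j + \ell/2) - \chi(j, \ell + j/2)$.

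The key structural observation is that the support of $(\ell, j) \mapsto c_{\ell, j}$ is contained in a diagonal cone. Indeed, from \eqref{cut off defin}, whenever $|\ell| \leqslant \tfrac{\delta_0}{2}\langle j + \ell/2\rangle$ one has $\chi(\ell, j + \ell/2) = 1$, and likewise exchanging the roles of $\ell$ and $j$. Consequently, the requirement $c_{\ell, j} \neq 0$ forces \emph{both} inequalities to fail, which, after a short elementary case analysis (splitting on the signs of $\ell, j$ and the regime $|\ell| \ll |j|$ vs.\ $|j| \ll |\ell|$), yields the comparability
\begin{equation*}
C^{-1} \langle \ell \rangle \leqslant \langle j \rangle \leqslant C\, \langle \ell \rangle
\end{equation*}
for some $C = C(\delta_0) > 0$. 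This step is where the cut-off functions carry all the weight of the argument, and it is the only point that requires any careful bookkeeping.

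Next I would symmetrize and split the bilinear remainder as
\begin{equation*}
\sum_{\ell, j} c_{\ell, j}\, \hat u_1(\ell)\, \hat u_2(j)\, e^{\ii(\ell + j)x} = R_1(u_1) u_2 + R_2(u_2) u_1,
\end{equation*}
for instance by assigning coefficients $(R_1)^{+}_{\ell, j, k} = (R_2)^{+}_{\ell, j, k} = \tfrac{1}{2} c_{\ell, j}\, \mathbbm{1}_{k = \ell + j}$, so that each $R_i$ is a $1$-homogeneous, translation invariant operator with Fourier representation of the shape \eqref{map:Fourier}.

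Finally, to identify each $R_i$ as an element of $\widetilde{\mathcal R}^{-\vr}_1 = \widetilde{\mathcal M}^{-\vr}_1$ for every $\vr \geqslant 0$, I would check the bound \eqref{eq:bound_fourier_representation_m_operators}. Since $|c_{\ell, j}| \leqslant 3$ uniformly and since the support analysis gives $\max_2\{|\ell|, |j|\} \sim \max\{|\ell|, |j|\}$ on $\{c_{\ell, j} \neq 0\}$, one obtains
\begin{equation*}
\bigl|(R_i)^{+}_{\ell, j, k}\bigr| \leqslant \tfrac{3}{2} \leqslant C\, \max_2\{|\ell|, |j|\}^{\vr}\, \max\{|\ell|, |j|\}^{-\vr},
\end{equation*}
which is precisely \eqref{eq:bound_fourier_representation_m_operators} with $m = -\vr$ and $\mu = \vr$. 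Since $\vr \geqslant 0$ is arbitrary, each $R_i$ belongs to $\widetilde{\mathcal R}^{-\vr}_1$ for all $\vr$, as required. The main obstacle, as anticipated, is the diagonal-cone support analysis; once that is in place, the verification of the smoothing estimate is immediate by the freedom to choose $\mu$ as a function of $\vr$.
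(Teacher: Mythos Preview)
The paper does not actually prove this lemma; it merely cites Lemma 7.2 in \cite{BD2018}. Your argument is the standard one and is correct: Fourier-expand the product, subtract the two paraproducts, observe that the remaining coefficient $c_{\ell,j}$ is supported where $\langle\ell\rangle\sim\langle j\rangle$, and then read off the smoothing bound \eqref{eq:bound_fourier_representation_m_operators} with $\mu=\vr$, $m=-\vr$.

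One small imprecision worth tightening: your claim that ``$c_{\ell,j}\neq 0$ forces both inequalities to fail'' is not literally true as stated, since a priori one could have $\chi(\ell,j+\ell/2)=1$ and $\chi(j,\ell+j/2)>0$, giving $c_{\ell,j}<0$. The correct statement is that this configuration is impossible for $\delta_0\leqslant\tfrac{1}{10}$: if $\chi(\ell,j+\ell/2)=1$ then $|\ell|\lesssim\delta_0\langle j\rangle$, while $\chi(j,\ell+j/2)>0$ would force $|j|\lesssim\delta_0\langle\ell\rangle$, and the two together contradict each other once $\max(|\ell|,|j|)$ is large. So the case analysis you allude to is slightly more delicate than ``both fail'', but the conclusion $\langle\ell\rangle\sim\langle j\rangle$ on $\{c_{\ell,j}\neq 0\}$ is correct and the rest of the argument goes through unchanged.
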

We now state other composition results for  $m$-operators which follow as in 
\cite[Proposition 2.15]{BMM2022}.
\begin{proposition}[Compositions of $m$-operators] \label{prop compo mop}
Let $p, p', N, K, K' \in \N$ with $K'\leqslant K$ and $\epsilon_0>0$. Let $m,m' \in \R$.
Then
\begin{enumerate}
\item
 If  $M(U;t) $ is  in
$ \Sigma\mathcal{M}^{m}_{K,K',p}[\epsilon_0,N]$ and $M'(U;t) $ is  in
$ \Sigma\mathcal{M}^{m'}_{K,K',p'}[\epsilon_0,N] $ then the composition
$ M(u;t)\circ M'(U;t) $
is  in $\Sigma\mathcal{M}^{m+\max(m',0)}_{K,K',p+p'}[\epsilon_0,N]$.

\item \label{item:MM_ext}
If  $M (U) $ is a homogeneous $ m$-operator in $  \wt{\mathcal{M}}_{p}^{m}$
and $M^{(\ell)}(U;t)$, $\ell=1,\dots,p+1$, are matrices of  $ m_\ell $-operators  in
$ \Sigma \mM^{m_\ell}_{K,K',q_\ell}[\epsilon_0,N]$ with  $m_\ell \in \R$,
$q_\ell\in \N$,
then
$$
M \pare{ M^{(1)}(U;t)u, \ldots,  M^{(p)}(U;t)U }M^{(p+1)}(U;t)
$$
belongs to $ \Sigma\mM_{K,K',p+\bar q}^{m+ \bar m}[\epsilon_0,N]$ with $ \bar m\defeq\sum_{\ell=1}^{p+1} \max(m_\ell,0)$ and $\bar q\defeq \sum_{\ell=1}^{p+1}q_\ell$.

\item
Let $a$ be a symbol in $\sg{m}{K,K',p}{N}$ with $m\geqslant0$ and $R$ a smoothing operator in $\sr{-\vr}{K,K',p'}{N}$. Then 
\begin{align*}
\OpBW{a(U; t, \cdot)} \circ R(U;t) \, , 
\ \  
 R(U;t) \circ \OpBW{a(U; t, \cdot)} \,  \ \in  \sr{-\vr+m}{K,K',p+p'}{N}.
\end{align*}
\item If $a_p$ is in $ \wt{\Gamma}_p^m$ and $ M(U)\in \Sigma \cM_{K,K',p'}^{m'}[r,N]$ then $a_p(M(U), U, \ldots, U;x,\xi)\in\Sigma \Gamma^m_{K,K',p+p'}[r,N]$ and 
$$
\OpBW{a(W, U, \ldots, U;x,\xi)}_{|W= M(U)} = \OpBW{ a_p(M(U), U, \ldots, U;x,\xi)}+ R(U)
$$
where $R(U) \in  \Sigma \cR_{K,K',p+p'}^{-\vr}[r,N]$, for any $\vr>0$. In particular if $ a \in  \Sigma \Gamma_{K,K',p}^{m}[r,N]$ then 
\begin{align*}
&\text{ If} \quad \pa_t U = M_0(U)U, \quad M_0(U)\in \Sigma \cM_{K,K',0}^{m'}[r,N],\\ 
& \text{then} \quad  \pa_t \OpBW{a(U; x, \xi)}= \OpBW{a_{[1]}(U;x,\xi)}+ R(U),
\end{align*}
where $a_{[1]}\in \Sigma \Gamma_{K,K'+1,p}^{m}[r,N]$ and $R(U)\in  \Sigma \cR_{K,K',p'}^{-\vr}[r,N]$
\end{enumerate}
\end{proposition}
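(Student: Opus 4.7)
The plan is to establish each of the four items by reducing them to the Fourier characterization of $m$-operators given in Definition \ref{defin mop} and the tame estimate \eqref{stima:emmeop}, following the scheme of \cite[Proposition 2.15]{BMM2022}. First I would split each operator as in \eqref{maps} into its pluri-homogeneous part and its non-homogeneous tail, and separately verify the required estimates for each pairing: homogeneous $\circ$ homogeneous, homogeneous $\circ$ non-homogeneous (and viceversa), and non-homogeneous $\circ$ non-homogeneous. The key technical invariant to preserve throughout is the two-scale bound \eqref{eq:bound_fourier_representation_m_operators}: the maximum frequency dictates the loss of derivatives, while the second-largest frequency absorbs the polynomial loss $\mu$ coming from internal factors.

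For item 1, given $M\in\widetilde{\cM}_p^m$ and $M'\in\widetilde{\cM}_{p'}^{m'}$, I would write the Fourier representation \eqref{map:Fourier} of the composition $M\circ M'$ as a sum over intermediate frequencies $\ell\in\Z^*$, and bound the coefficients using \eqref{eq:bound_fourier_representation_m_operators} for both factors. When $m'\geqslant 0$ the intermediate factor $|\ell|^{m'}$ is dominated by the outer maximum frequency, giving the loss $m+m'$; when $m'<0$ the factor $|\ell|^{m'}$ is bounded and only the loss $m$ survives, giving the announced exponent $m+\max(m',0)$. The non-homogeneous contributions are handled by applying the tame estimate \eqref{stima:emmeop} for $M$ to the new ``test function'' $v'\defeq M'(U;t)v$, then expanding $\|v'\|_{s}$ and $\|v'\|_{s_0}$ by the tame estimate for $M'$; the splitting of degrees of homogeneity $p+p'$ follows naturally from the product structure. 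Item 3 is then a corollary: by Remark \ref{rem smoothingop} a paradifferential operator with symbol in $\sg{m}{K,K',p}{N}$, $m\geqslant 0$, is an $m$-operator, and composing with $R\in\sr{-\vr}{K,K',p'}{N}$ via item 1 produces an operator of order $m-\vr$, which is precisely a smoothing operator in $\sr{-\vr+m}{K,K',p+p'}{N}$.

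For item 2, I would exploit the $(p+1)$-linearity of $M$: substituting $M^{(\ell)}(U;t)U$ in the $\ell$-th entry for $\ell=1,\ldots,p$ and applying $M^{(p+1)}(U;t)$ as the final linear factor, each internal substitution costs $\max(m_\ell,0)$ derivatives by the same $m'\geqslant 0$ versus $m'<0$ dichotomy used in item 1, whence the total loss $m+\bar m$ and degree of homogeneity $p+\bar q$. For item 4, the substitution $W=M(U)$ inside the $p$-linear symbol $a_p(W,U,\ldots,U;x,\xi)$ produces a symbol in $\Sigma\Gamma^m_{K,K',p+p'}[\epsilon_0,N]$; the discrepancy between $\OpBW{a_p(W,U,\ldots,U;\cdot)}_{|W=M(U)}$ and $\OpBW{a_p(M(U),U,\ldots,U;\cdot)}$ comes from the paradifferential cut-off $\chi_p$ of \eqref{cut off defin} failing to localize the frequencies of $W$: the difference only sees the components of $M(U)$ with $|\text{freq}|\gtrsim \delta_0\langle\xi\rangle$, which can be exchanged for arbitrarily many powers of $\langle\xi\rangle^{-1}$ using the Sobolev tame bound \eqref{stima:emmeop} for $M(U)$, yielding a remainder in $\Sigma\cR^{-\vr}_{K,K',p+p'}[r,N]$ for any fixed $\vr$. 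The time-derivative statement is then obtained by differentiating $a$ in $t$ and substituting $\pa_t U=M_0(U)U$: using item 4 the $t$-derivative of the symbol stays in the symbol class with one additional $K'$-derivative, up to a smoothing remainder absorbed in $R(U)$.

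The main obstacle is the bookkeeping of the ``second largest frequency'' in \eqref{eq:bound_fourier_representation_m_operators} across compositions: when several $m$-operators are chained together, one must verify that the polynomial loss $\mu$ never falls on the maximal frequency (which would spoil the order $m+\max(m',0)$), but is always reabsorbed on a lower internal frequency via the tame structure of \eqref{stima:emmeop}. The sub-case $m'<0$ in item 1, where the intermediate smoothing gain must be converted into boundedness rather than a gain of derivatives, and the paradifferential cut-off analysis in item 4 are the two places where this requires a careful case split between high/low intermediate frequencies.
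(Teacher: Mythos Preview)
Your proposal is correct and aligns with the paper's treatment: the paper does not give its own proof of this proposition but simply states that it ``follow[s] as in \cite[Proposition 2.15]{BMM2022}'', which is exactly the scheme you outline. Your sketch of the Fourier-coefficient bookkeeping for item 1, the reduction of item 3 to item 1 via Remark \ref{rem smoothingop}, the multilinear substitution for item 2, and the cut-off discrepancy analysis for item 4 are all the standard ingredients of that reference.
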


\begin{notation}
 In the sequel if $ K'= 0 $ we  denote a 
symbol $ a (U; t, x, \xi) $ in $\Gamma_{K,0,p}^m[\epsilon_0]$  simply as
$ a (U; x, \xi)  $,   
and a smoothing operator in $ R (U;t) $ 
in 
$  \Sigma\mathcal{R}^{-\vr}_{K,0,p}[\epsilon_0,N] $ simply as $ R(U)  $, 
without writing the $ t $-dependence.
\end{notation}

We finally provide the Bony paralinearization formula
of the composition operator whose proof is a combination of \cite[Lemma 3.19]{BD2018}. 

\begin{lemma}[Bony Paralinearization formula]
  \label{lemma paralin}
Let $F$ be a smooth $\C$-valued function defined on a neighborhood of zero in $\C $, vanishing at zero at order $q\in
\N $. Then there are $ s_0,\epsilon_0 >  0 $
 such that if $u\in B_{H^{s_0}(\T;\R)}(\varepsilon_0),$ then
\begin{align*}
    F(u) =  \  \OpBW{ F'\pare{u} } u + R\pare{u}u,
\end{align*}
where   $ R\pare{u} $ is a smoothing operator
in \, $\Sigma \cR^{-\vr}_{K,0,q'}\bra{\epsilon_0, N}$, $q' \defeq \max(q-1,1)$, for any $\vr\geqslant0$. 
\end{lemma}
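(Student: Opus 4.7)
The plan is to combine a finite Taylor expansion of $F$ at the origin with iterated applications of the Bony paraproduct decomposition (\Cref{lemma Bonyprod}) for the polynomial part, together with the standard non-homogeneous paralinearization of \cite{BD2018} for the smooth tail.

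First I split $F(u) = P(u) + \tilde F(u)$, where $P(u) = \sum_{k=q}^{N+1} \frac{F^{(k)}(0)}{k!} u^k$ is the Taylor polynomial and $\tilde F(u) = O(u^{N+2})$ is smooth. For each $k \geqslant 2$, I iterate \Cref{lemma Bonyprod} on the decomposition $u^k = u \cdot u^{k-1}$ to obtain
$$u^k = \OpBW{k u^{k-1}} u + R_k(u) u, \qquad R_k \in \wt{\mathcal{R}}^{-\vr}_{k-1},$$
valid for any $\vr \geqslant 0$. Summing against the Taylor coefficients and noting that the case $k = 1$ (relevant only when $q = 1$) gives the trivial identity $u = \OpBW{1}u$, I arrive at
$$P(u) = \OpBW{P'(u)} u + R_{\mathrm{poly}}(u) u,$$
where $R_{\mathrm{poly}}$ is a pluri-homogeneous smoothing operator whose minimal homogeneity equals $q - 1$ when $q \geqslant 2$ and $1$ when $q = 1$. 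This is the source of the bookkeeping $q' = \max(q-1, 1)$.

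For the smooth tail $\tilde F(u)$, I invoke the non-homogeneous Bony formula of \cite[Lemma 3.19]{BD2018}. Its argument uses a Littlewood--Paley resolution $u = \sum_j \Delta_j u$ with partial sums $S_j u$, followed by the telescoping
$$\tilde F(u) = \sum_j \int_0^1 \tilde F'(S_j u + \sigma \Delta_j u) \, \Delta_j u \, d\sigma,$$
which splits into the paraproduct $\OpBW{\tilde F'(u)} u$ plus a remainder where the first-order $\sigma$-Taylor expansion of $\tilde F'$ contributes an extra factor of $\Delta_j u$. This additional frequency localization is what promotes the term to a $(-\vr)$-smoothing operator $\tilde R(u) \in \mathcal{R}^{-\vr}_{K,0,N+1}[\epsilon_0]$ satisfying the tame bounds \eqref{stima:emmeop}, provided $u$ is small in $H^{s_0}(\T;\R)$ so that $\tilde F'$ and its derivatives remain uniformly bounded along a straight-line path from $0$ to $u$. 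Combining the two contributions yields $F(u) = \OpBW{F'(u)} u + R(u) u$ with $R = R_{\mathrm{poly}} + \tilde R \in \Sigma \mathcal{R}^{-\vr}_{K,0,q'}[\epsilon_0, N]$.

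The main obstacle is not the algebraic identity, which is essentially linear in $F$, but rather the careful verification that both remainders fit the framework of \Cref{defin mop}: that the Fourier coefficients of the pluri-homogeneous pieces obey \eqref{eq:bound_fourier_representation_m_operators} uniformly in $\vr$, and that the non-homogeneous component satisfies \eqref{stima:emmeop}. For the polynomial part this follows from \Cref{lemma Bonyprod} and the symbolic calculus in \Cref{prop compBW}; for the smooth tail it is the core technical content of \cite[Lemma 3.19]{BD2018}, relying on the action bound \eqref{para:dt} and the algebra property of $H^{s_0}(\T;\R)$ for $s_0 > 1/2$.
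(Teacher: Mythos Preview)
Your proposal is correct and follows essentially the same approach the paper has in mind: the paper does not give its own proof but simply refers to \cite[Lemma 3.19]{BD2018}, and your Taylor-splitting into a polynomial part (handled by iterating \Cref{lemma Bonyprod}) plus a high-order tail (handled by the non-homogeneous argument of \cite{BD2018}) is exactly how one adapts that result to the $\Sigma$-classes used here. Your bookkeeping of the minimal homogeneity $q' = \max(q-1,1)$ is the only point beyond \cite{BD2018} that needs spelling out, and you have identified it correctly.
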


\subsection{Spectrally localized maps}

In this section we introduce the class of spectrally localized map, needed to apply the Darboux symplectic correction (see \Cref{darboux:0}). This class was introduced first in \cite[Def. 2.15]{BMM2022}

\begin{definition}[Spectrally localized maps]\label{defin specloc}
Let $m \in \mathbb{R}, p, N \in \mathbb{N}, K, K' \in \mathbb{N} $ with $K' \leqslant K$ and $r > 0$.
\begin{enumerate}[i)]
\item \textbf{Spectrally localized $p$-homogeneous maps.} We denote by $ \widetilde{\mathcal{S}}^m_p $ the subspace of $m$-operators $S(U)$ in $ \wt{\mathcal{M}}^m_p $ whose coefficients $S_{\vec \jmath, j,k}^{\vec \sigma} $ (see \eqref{map:Fourier}) satisfying the following spectral condition: there are $ \delta > 0, C > 1 $ such that 
$$S_{\vec \jmath, j,k}^{\vec \sigma}\not=0 \quad \implies | \vec \jmath | \leqslant \delta | j |, \quad C^{-1} |k| \leqslant |j| \leqslant C |k|.$$

We denote $ \widetilde{\mathcal{S}} \triangleq \bigcup_{p} \widetilde{\mathcal{S}}^m_p $ and by $ \Sigma^N_p \widetilde{\mathcal{S}}^m_q $ the class of pluri-homogeneous spectrally localized maps of the form $ \sum_{q=p}^N S_q $ with $ S_q \in \widetilde{\mathcal{S}}^m_q $ and $ \Sigma_p \widetilde{\mathcal{S}}^m_q \triangleq \bigcup_{N \in \mathbb{N}} \Sigma^N_p \widetilde{\mathcal{S}}^m_q $. For $ p \geqslant N + 1 $ we mean that the sum is empty.

\item \textbf{Non-homogeneous spectrally localized maps.} We denote $ \mathcal{S}^{m}_{K, K', p}[\epsilon_0] $ the space of maps $ (U, t, V) \mapsto S(U; t)V $ defined on $ B^{K'}_{K}(I; r) \times I \times C^{0}(I, H^{s_0}(T, \mathbb{C})) $ for some $ s_0 > 0 $, which are linear in the variable $ V $ and such that the following holds true. For any $ s \in \mathbb{R} $ there are $ C > 0 $ and $ r(s) \in [0, \epsilon_0] $ such that for any $ U \in B^{K'}_{K}(I; r(s)) \cap C^{\ast}(I, H^{s}(T; \mathbb{C}^2)) $, any $ V \in C^{\ast}_{K - K'}(I, H^{s}(T, \mathbb{C})), $ any $ 0 \leqslant k \leqslant K - K', t \in I $, we have that

\begin{align*}
\| \partial^{k}_{t} (S(U; t)V) (t, \cdot) \|_{H^{s - \frac{3}{2}k - m}} \leqslant& C \sum_{k' + k'' = k} \| U \|^{p}_{k', s_0} \| V \|_{H^{k'', s}},\quad&& \text{if} \   p\geqslant 1,\\
\| \partial^{k}_{t} (S(U; t)V) \|_{H^{s - \frac{3}{2}k - m}} \leqslant & C \| V \|_{k, s},\quad && \text{if} \ p=0.
\end{align*}

We denote $ S^{m}_{K, K', N}[\epsilon_0] \triangleq \bigcup_{p} \mathcal{S}^{m}_{K, K', p}[\epsilon_0] $.

\item \textbf{Spectrally localized Maps.} We denote by $ \Sigma\cS^m_{K, K', p}[r, N] $, the space of maps $ (U, t, V) \mapsto S(U; t)V $ of the form
$$
S(U; t)V = \sum_{q = p}^{N} S_q (U)V + S_{> N}(U; t)V,
$$
where $ S_q $ are spectrally localized homogeneous maps in $ \widetilde{\mathcal{S}}^m_q, q = p, \ldots, N $ and $ S_{> N} $ is a non-homogeneous spectrally localized map in $ \mathcal{S}^{m}_{K, K', N + 1}[\epsilon_0] $. We denote by $ \pare{\Sigma^m_{K, K', p}[r, N]}^{2\times 2} $ the space of $ 2 \times 2 $ matrices whose entries are spectrally localized maps in $ \Sigma^{m}_{K, K', p}[r, N] $. We will use also the notation $ \Sigma^m_{K, K', p}[r, N] \triangleq \bigcup_{l \geqslant0} \Sigma^m_{K, K', p}[r, N + l] $.

\end{enumerate}
\end{definition}
\subsection{$ z$-dependent paradifferential calculus}  \label{sec:paradiff}

 The following  ``Kernel-functions", 
that depend on the "convolutive $ 2\pi $-periodic variable" $ z $,   
have to be considered as Taylor remainders of functions $ K\pare{u; x, z} $ at $ z=0 $
which are smooth  in $ u $ and 
which have finite regularity in $ x $ and $ z $. 
 A Kernel function 
is  a $ z $-dependent family of functions 
 (cfr. Definition \ref{defin funct}) with coefficients of size proportional to 
$ |z|^n_{\mathbb{T}}  $. For $ n > - 1 $ such singularity is integrable in $ z $.

\begin{definition}[Kernel functions]\label{defin kerfunct}
Let $n\in \R$,  $p,N\in \N$,  
$ K \in \N$, and $ \epsilon_0>0 $.
\begin{enumerate}[i)]
\item $p$-{\bf homogeneous Kernel-functions.} If $ p \in \N$
we denote  $\wt{\KF}^n_p $ the space of 
$ z $-dependent, $ p $-homogeneous maps 
from $ \dot H^\infty \pare{\mathbb{T};\mathbb{C}} $ to the space of 
$ x $-translation invariant real functions  
$ \kappa (u;x,z) $ of class $ \cC^\infty $ 
in $ (x, z ) \in \mathbb{T}^2 $ with Fourier expansion 
\begin{equation*}
\kappa (u;x,z)=  
\sum_{j_1, \ldots, j_p \in \Z^*}  \! \kappa_{j_1, \ldots, j_p} (z) 
u_{j_1} \cdots u_{j_p}
 e^{\im (j_1+ \cdots + j_p) x} \, , \quad z \in \T\setminus \{ 0 \}  \, , 
\end{equation*} 
with coefficients $ \kappa_{j_1, \ldots, j_p} (z) $ of class  $ \cC^\infty\pare{\mathbb{T}; \C }  $,  
symmetric in $ (j_1, \ldots, j_p ) $, 
satisfying the reality condition 
$ \overline{\kappa_{j_1, \ldots, j_p}}\pare{z} =
\kappa_{-j_1, \ldots, -j_p} \pare{z} $ 
and the 
following: for any $ l \in \N $, there exist $ \mu > 0 $ and 
a constant $ C > 0 $ such that
\begin{equation}\label{estim homkernel}
\av{\partial_{z}^{l} \kappa_{j_1, \ldots, j_p} (z) }
\leqslant C \av{\vec{\jmath}_p}^{\mu}\  \av{z}^{n-l}_{\mathbb{T}}  \, , \quad
\forall  \vec \jmath _p = \pare{ j_1, \ldots, j_p } \in (\Z^*)^p   \, .
\end{equation}
For $ p = 0 $ we denote by $\wt{\KF}^n _0 $ the space of 
maps $ z \mapsto \kappa (z) $ which satisfy 
$ \av{\partial_{z}^{l} \kappa (z) } \leqslant C \,  \av{z}^{n-l}_{\mathbb{T}} $.
\item  {\bf Non-homogeneous Kernel-functions.}   We denote by $\KF _{K,0,p}^{n}[\epsilon_0]$ the space of $ z $-dependent, 
 real functions  $ \kappa (u;x,z) $,
defined for $ u \in B_{s_0}^0 (I;\epsilon_0)  $ for some $s_0$ large enough, such that for any $0\leqslant k\leqslant K $ and $ l\leqslant  \max\set{0, \ceil{1+ n }\big. } $, any $s\geqslant s_0$, there are $C>0$, $0<\epsilon_0(s)<\epsilon_0$ and for any $ u \in B_{s_0}^K\pare{ I;\epsilon_0(s) }\cap C_{*}^{k}\pare{ I, \dot H^{s} \pare{ \mathbb{T};\mathbb{C} } }$ and any $\upgamma \in \N$, with $\alpha\leqslant s-s_0,  $ one has the estimate
\begin{equation}\label{estim HOMkernel2}
\av{ \partial_t^k\partial_x^\alpha\partial_z ^l \kappa \pare{ u;x, z } }  \leqslant C  \| u \|_{k,s_0}^{p-1}\|u\|_{k,s} \ \av{z} ^{n-l}_{\mathbb{T}} \, , \qquad z\in\mathbb{T}\setminus \{ 0 \}   \, .
\end{equation}
If $ p = 0 $ the right hand side in \eqref{estim HOMkernel2} 
has to be replaced by $   \av{z} ^{n-l}_{\mathbb{T}}  $.

\item
{\bf Kernel-functions.} We denote by $\Sigma \KF _{K,0,p}^{n}[\epsilon_0,N]$ the space of real functions  
of the form 
\begin{equation*}
\kappa (u;x,z)= \sum_{q=p}^{N} 
\kappa _q\pare{ u;x,z } + \kappa _{>N}(u;x,z ) , 
\end{equation*}
where $\kappa_q \pare{ u;x,z }  $, $q=p,\dots, N$ 
are homogeneous Kernel functions in $ \wt{\KF}_q^n $,  and 
$\kappa _{>N}(u;x,z )    $ 
is a non-homogeneous Kernel function in $ \KF _{K,0,N+1}^{n} [\epsilon_0] $.

A Kernel function  $\kappa(u;x,z) $ is \emph{real} if it is real valued for any
$ u \in  B_{s_0,\R}^0 (I;\epsilon_0) $.
\end{enumerate}
\end{definition}

 We list some properties of the Kernel functions. 
In view of the second point of Remark \ref{remsymb}, 
 a homogeneous Kernel function $ \kappa ( u; x,z) $ in $ \wt{\KF}^n_p $ 
defines  a non-homogeneous  Kernel function  in $ \KF _{K,0,p}^{n}[\epsilon_0]  $
for any $ \epsilon_0 >0 $. 

\begin{rem}\label{rem:intkerfunct} Let us make the following remarks.
\begin{enumerate}[\textbullet]
    \item Let $ \kappa \pare{u;x, z} $ be 
a Kernel function in $ \Sigma \cF^{n}_{K, 0, p}\bra{\epsilon_0, N} $ with $ n\geqslant0 $, 
which admits a continuous  extension in $ z=0 $. Then its trace 
 $  \kappa\pare{u;x, 0} $ at $ z = 0 $ 
 is a function in $ \Sigma \cF^{\mathbb{R}}_{K, 0, p}\bra{\epsilon_0, N} $.
 \item If $ \kappa (u; x, z)$ is a homogeneous   Kernel function  $ \wt{\KF}^n_p $, 
the two definitions  of quantization in \eqref{BonyWeyl} differ 
by a  Kernel smoothing operator in $ \wt{\KR}^{-\vr,n}_p $, for any $ \vr > 0 $, 
according to
Definition \ref{defin kernel smoothing} below.
\item (Sum and product of Kernel functions)  
If $ \kappa_1 (u;x,z) $ is a  Kernel function in 
$ \Sigma \KF ^{n_1}_{K, 0, p_1}\bra{\epsilon_0, N} $ 
and $ \kappa_2 (u;x,z) $  in
$ \Sigma \KF ^{n_2}_{K, 0, p_2}\bra{\epsilon_0, N} $, 
then the sum 
$ (\kappa_1 + \kappa_2 )(u;x,z)$ is a  Kernel function in $ \Sigma \KF^{\min\set{n_1, n_2}}_{K, 0, \min\set{p_1, p_2}}\bra{\epsilon_0, N} $ and the product
$ ( \kappa_1  \kappa_2 )(u;x,z)$ is a  Kernel function in 
$ \Sigma \KF^{n_1+ n_2}_{K, 0, p_1 + p_2}\bra{\epsilon_0, N} $.
\item (Integral of Kernel functions)
Let $ \kappa \pare{u; x, z} $ be a Kernel function  in $ \Sigma \KF^{n}_{K, 0, p}\bra{\epsilon_0, N} $ with $  n>-1 $. Then 
$ \fint \kappa\pare{u; x, z}\dd z $ is a function in 
$ \Sigma \cF^{\mathbb{R}}_{K, 0, p}\bra{\epsilon_0, N}$. 
This follows directly integrating  \eqref{estim homkernel} and \eqref{estim HOMkernel2} in $ z $.
\end{enumerate}
\end{rem}
The $ m $-Kernel-operators  defined below are a $ z $-dependent family of $  m $-operators 
 (cfr. Definition \ref{defin mop}) with coefficients of size proportional to 
$ |z|^n_{\mathbb{T}}  $. 
For $ n > - 1 $ such singularity is integrable in $ z $. 
A family of $ z$-dependent paraproduct operators associated to 
Kernel functions defines a  $0$-Kernel operator, see Remark
\ref{remark kerfunct}. {The kind of operators will appear only in case  $ m < 0$, as smoothing operators 
in the  composition of Bony-Weyl quantizations of 
Kernel-functions (see Definition \ref{defin kernel smoothing}).}

\begin{definition}
Let  $ m,n \in \R $,  $p,N\in \N$,
$K\in\N$ with $ \epsilon_0 > 0 $.
\begin{enumerate}[i)]
\item
{\bf $p$-homogeneous $m$-Kernel-operator.}
We denote by $\widetilde{\KM}^{m,n}_{p}$ 
 the space  of 
$ z $-dependent, $ x $-translation invariant  homogeneous $ m $-operators 
according to  \Cref{defin mop}, \cref{item:maps1},  in which the constant 
$ C $  is substituted with $ C\av{z}^n_{\mathbb{T}} $, equivalently
\begin{equation}\label{expansion homokerop}
M( u ;z)v \pare{x} = 
\sum_{\substack{(\vec \jmath_{p},j,k) \in \Z ^{p+2} \\  
j_1 + \ldots + j_p + j = k } } M_{\vec \jmath_{p}, j,k}\pare{z} u_{j_1} \ldots u_{j_p} v_{j}  
e^{\ii  k x}  \, ,  \qquad 
  z \in \mathbb{T}\setminus \{ 0 \}  \, , 
\end{equation} 
 with coefficients satisfying
\be \label{estim fourier kerop}
\big| M_{\vec \jmath _p, j, k}\pare{z} \big| \leqslant C \   {\rm max} _2 \set{\av{j_1}, \ldots,  \av{j_{p}}, \av{j}} ^\mu \ \max\set{\av{j_1}, \ldots , \av{j_{p}}, \av{j}} ^m \av{z}^n_{\mathbb{T}} \, . 
\ee
If $ p=0 $ the right hand side  of \eqref{expansion homokerop} is replaced by
 $ \sum _{j\in \bZ} M_j \pare{z} v_j e^{\ii j x} $ with $ \big| M_j \pare{z} \big| 
 \leqslant C \av{ j}^m \av{z}^n_{\mathbb{T}} $. 
 \item
 {\bf Non-homogeneous $m$-Kernel-operator.}
  We denote by  $\KM ^{m,n}_{K,0,p}[\epsilon_0]$
  the space of $ z $-dependent, 
  non-homogeneous operators $ M(u;z) v  $ defined for any 
$ z  \in\mathbb{T}\setminus \{ 0 \}   $,  such that for any $ 0\leqslant k\leqslant K $
\begin{equation}\label{estim dtkerop}
\norm{{\partial_t^k\left(M(u;z)v\right)}} _{s- \alpha k-m} \\
 \leqslant C  \av{z}^n_{\mathbb{T}} \   \sum_{k'+k''=k}  \pare{ \|{v}\|_{k'',s}\|{u}\|_{k',{s_0}}^{p}
 +\|{v}\|_{k'', {s_0}}\|u\|_{k', {s_0}}^{p-1}\| u \|_{k', s} } \, .
\end{equation}
 \item
 {\bf $m$-Kernel-Operator.}
We denote by $\Sigma\KM^{m,n}_{K,0,p}[\epsilon_0,N]$
the space of operators 
of the form 
\begin{equation}\label{exp kerop}
M(u;z)v =\sum_{q=p}^{N}M_{q}(u)v+M_{>N}(u;z)v 
\end{equation}
where   $M_{q} $  are homogeneous $m$-Kernel 
 operators  in $ \widetilde{\KM}^{m,n}_{q}$, $q=p,\ldots, N$  and $M_{>N}$ is a non--homogeneous $m$-Kernel-operator  in 
$\mathcal{M}^{m,n}_{K,0,N+1}[\epsilon_0]$. 
We denote 
by $ \Sigma_p^N \wt {\cM}^{m}_q $ the space
pluri-homogeneous $m$-Kernel operators of the form \eqref{exp kerop} with $ M_{>N} = 0 $.
\end{enumerate}
\end{definition}

\begin{rem}\label{remark kerfunct}
Given  a Kernel function $ \kappa\pare{u;x, z} $ in $ 
\Sigma \KF^{n}_{K,0,p}\bra{\epsilon_0, N} $ then $ \OpBW{\kappa\pare{u;x, z}} $ is $ 0$-
Kernel operator in $ \Sigma \KM^{0, n}_{K, 0, p}\bra{\epsilon_0, N} $.
\end{rem}

\begin{definition}[Kernel-smoothing operators]\label{defin kernel smoothing}
Given 
$ \vr > 0 $ we define the homogeneous and non-homogeneous Kernel-smoothing operators as
\begin{align*}
\wt \KR ^{-\vr,n}_p \defeq \wt \KM ^{-\vr,n}_p , 
&&
\KR^{-\vr,n}_{K, 0, p}\bra{\epsilon_0} \defeq \KM^{-\vr,n}_{K, 0, p}\bra{\epsilon_0}, 
&&
\Sigma \KR^{-\vr,n}_{K, 0, p}\bra{\epsilon_0, N}
\defeq \Sigma \KM^{-\vr,n}_{K, 0, p}\bra{\epsilon_0, N}. 
\end{align*}
\end{definition}

In view of \cite[Lemma 2.8]{BMM2022}, 
if $ M\pare{u, \ldots , u;z} $ is a homogeneous 
$ m$-Kernel operator in $  \wt\KM^{m,n}_p $ then $  M\pare{u, \ldots , u; z}  $
defines a non-homogeneous  $ m$-Kernel operator in $ \KM^{m,n}_{K, 0, p}\bra{\epsilon_0} $ for any $ \epsilon_0 > 0 $ and $ K\in \mathbb{N} $.\\

The classes of paraproducts associated to 
Kernel functions and $ m $-Kernel-operators are closed w.r.t.  
compositions as we list below, cf. \cite{BCGS2023}.

\begin{prop}[Composition of $ z $-dependent operators] \label{prop compo zdepop} Let $ m,n,m',n'\in\mathbb{R} $, and integers 
$ K, p, p', N\in\mathbb{N}$ with $ p, p'\leqslant N $.
\begin{enumerate}
\item \label{item:OpOp_ext_z} Let $ \kappa \pare{u;x,z} \in \Sigma \KF ^{n}_{K, 0, p}\bra{\epsilon_0, N} $ and $  \kappa' \pare{u;x,z} \in \Sigma \KF ^{n'}_{K, 0, p'}\bra{\epsilon_0, N}  $ 
be Kernel functions. Then 
\begin{equation*}
\OpBW{\kappa \pare{u;x,z}}\circ \OpBW{\kappa'\pare{u;x,z}} = \OpBW{\kappa \ \kappa' \pare{u;x,z}} + R\pare{u;z},
\end{equation*}
where $ R\pare{u;z} $ is a Kernel-smoothing operator in $ \Sigma\KR^{-\vr, n+n'}_{K, 0, p+p'}\bra{\epsilon_0, N} $  for any $ \vr \geqslant0 $; 

\item  \label{item:MM_ext_z} Let $ M \pare{u;  z} $ be a $ m$-Kernel operator in 
$ \Sigma \KM^{m, n}_{K, 0 ,p}\bra{\epsilon_0, N} $ and 
$ M' \pare{u;  z} $ be an $ m'$-operator belonging to 
$ \Sigma \KM^{m', n'}_{K, 0,p'}\bra{\epsilon_0, N} $. Then $ M\pare{u;  z} \circ M' \pare{u;  z} $ belongs to 
$ \Sigma \KM^{m+\max\pare{m',0}, n+n'}_{K, 0 , p+p'}\bra{\epsilon_0, N} $; 

\item  \label{item:OpR_ext_z} Let $ \kappa \pare{u;x,z} $ be a Kernel function in 
$ \Sigma\KF^n_{K, 0, p}\bra{\epsilon_0, N} $ and $ R\pare{u;z} $ 
be a Kernel smoothing operator in $ \Sigma\KR^{-\vr, n'}_{K, 0, p'}\bra{\epsilon_0, N} $ then $ \OpBW{\kappa\pare{u; x, z}}\circ R\pare{u;z} $ and $  R\pare{u;z} \circ \OpBW{\kappa\pare{u; x, z}} $   are a Kernel smoothing operator  in $ \Sigma \KR^{-\vr, n+n'}_{K, 0, p+p'}\bra{\epsilon_0, N} $; 

\item  \label{item:MM_int_z} 
Let $ M \pare{u;z} $ be an homogeneous $ m$-Kernel operator 
in $ \wt \KM^{m,n}_1 $, and $ M' \pare{u;z} $ in $ \Sigma \KM^{0,0}_{K,0,0}\bra{\epsilon_0, N} $ then  $ M\pare{M'\pare{u;z}u;z} \in \Sigma \KM^{m,n}_{K,0,1}\bra{\epsilon_0, N} $.
\end{enumerate}
\end{prop}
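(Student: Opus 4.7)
The plan is to reduce each of the four items to its $z$-independent counterpart---namely Propositions \ref{prop compBW} and \ref{prop compo mop}---by viewing $z \in \mathbb{T}\setminus\{0\}$ as a frozen external parameter. The key structural observation is that all the operations involved (paradifferential and operator composition, as well as the nonlinear substitution in item \eqref{item:MM_int_z}) are performed \emph{pointwise in $z$}, so one can apply the standard $z$-independent calculus at fixed $z$ and then verify that the implied constants carry an extra multiplicative factor $|z|^{n+n'}_{\mathbb{T}}$ inherited from the Kernel weights through Leibniz's rule applied to the defining estimates \eqref{estim homkernel}, \eqref{estim HOMkernel2}, \eqref{estim fourier kerop} and \eqref{estim dtkerop}.

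For item \eqref{item:OpOp_ext_z}, the essential simplification is that, by Definition \ref{defin kerfunct}, Kernel functions do not depend on the frequency variable $\xi$. Hence, in the symbolic Weyl composition $\kappa \#_\vr \kappa'$, every contribution involving a $\xi$-derivative vanishes, so that $\kappa \#_\vr \kappa' = \kappa\kappa'$ \emph{exactly} for every $\vr \in \mathbb{N}$. Invoking Proposition \ref{prop compBW} with $\vr$ taken arbitrarily large then yields $\OpBW{\kappa}\circ\OpBW{\kappa'} = \OpBW{\kappa\kappa'} + R(u;z)$, and a direct inspection of the Fourier representation of $R$ inherited from the proof in \cite{BD2018} reveals the multiplicative factor $|z|^{n+n'}_{\mathbb{T}}$, placing $R$ in $\Sigma\KR^{-\vr,n+n'}_{K,0,p+p'}[\epsilon_0,N]$.

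For items \eqref{item:MM_ext_z} and \eqref{item:OpR_ext_z}, the strategy is identical: one applies the corresponding parts of Proposition \ref{prop compo mop} pointwise in $z$, noting that the tame estimate \eqref{stima:emmeop} inherits the factor $|z|^{n+n'}_{\mathbb{T}}$ from the $z$-dependent versions of the implied constants. No new ideas are required beyond careful bookkeeping of the weights through the composition, and the loss structure $m + \max(m',0)$ in item \eqref{item:MM_ext_z} is exactly the one dictated by Proposition \ref{prop compo mop} in the unweighted case.

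Item \eqref{item:MM_int_z} is the most delicate. Substituting the operator-valued expression $M'(u;z)u$ into the first argument of the homogeneous Kernel operator $M \in \wt\KM^{m,n}_1$ mirrors the ``plugging-in'' principle of Proposition \ref{prop compo mop}-\eqref{item:MM_ext}. The hypotheses that $M'$ has both order $m'=0$ and weight $n'=0$ are essential: they guarantee that no positive-order loss of derivatives is created and that the Kernel weight $|z|^n_{\mathbb{T}}$ of $M$ survives intact. The main obstacle I anticipate is verifying that the Fourier representation \eqref{expansion homokerop} of the composite remains valid with coefficients satisfying \eqref{estim fourier kerop} (possibly after enlarging the constant $\mu$), which requires expanding $M'(u;z)u$ in the orthonormal basis $\{e^{\ii j x}\}_{j \in \mathbb{Z}^*}$ and reindexing the resulting sum. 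Once this reindexing is executed, the $|z|^n_{\mathbb{T}}$ weight emerges by direct multiplication, and the tame bound \eqref{estim dtkerop} follows from standard Moser-type paradifferential arguments applied fiberwise in $z$.
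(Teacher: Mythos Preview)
The paper does not supply its own proof of this proposition: it simply states the result and refers to \cite{BCGS2023} for the details. Your proposal is therefore more than the paper itself provides, and the strategy you outline---freezing $z\in\mathbb{T}\setminus\{0\}$ as an external parameter, invoking the $z$-independent calculus of Propositions \ref{prop compBW} and \ref{prop compo mop} fiberwise, and then tracking the multiplicative Kernel weight $|z|^{n+n'}_{\mathbb{T}}$ through the estimates---is exactly the natural route and is correct. In particular your observation for item \eqref{item:OpOp_ext_z}, that Kernel functions are $\xi$-independent so that $\kappa\#_\vr\kappa'=\kappa\kappa'$ exactly, is the key simplification, and your remarks on item \eqref{item:MM_int_z} (that $m'=0$ and $n'=0$ are what prevent any extra loss in order or weight) identify precisely why those hypotheses are imposed.
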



Finally  integrating \eqref{estim fourier kerop} and \eqref{estim dtkerop} in $ z $ we deduce the following lemma.

\begin{lemma}[Integrals of Kernel smoothing operators]\label{lemma actzsmoo}
 Let $ R\pare{u;z} $ be a Kernel smoothing  operator belonging to $ \Sigma \cR^{-\vr, n}_{K,0, p}\bra{\epsilon_0, N} $ 
 with $ n >  - 1 $.
  Then 
\begin{equation*}
\int_{\mathbb{T}} R\pare{u;z} g\pare{x-z} \dd z = R_1\pare{u}g , \qquad 
\int_{\mathbb{T}} R\pare{u;z}  \dd z = R_2\pare{u} , 
\end{equation*}
where $ R_1 \pare{u} $, $ R_2 \pare{u} $ are smoothing operators 
 in $ \Sigma \cR^{-\vr}_{K,0,p}\bra{\epsilon_0, N} $. 
\end{lemma}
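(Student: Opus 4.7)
The plan is to reduce everything to the bound $\int_{\mathbb{T}} \av{z}^n_{\mathbb{T}} \, \di z < \infty$, which holds precisely because $n > -1$, combined with translation invariance of Sobolev norms. The argument naturally splits into the homogeneous and non-homogeneous components of $R(u;z)$, since $R = \sum_{q=p}^N R_q + R_{>N}$ according to \eqref{exp kerop}.

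For the homogeneous pieces $R_q \in \wt{\cR}^{-\vr,n}_q$, I would work directly at the Fourier side. Writing
\[
R_q(u;z) v(x) = \sum_{\substack{(\vec{\jmath}_q, j, k) \in \Z^{q+2} \\ j_1 + \cdots + j_q + j = k}} (R_q)_{\vec{\jmath}_q, j, k}(z)\, u_{\vec{\jmath}_q} v_j\, e^{\ii k x}
\]
and exploiting $\widehat{g(\cdot - z)}(j) = \hat g(j)\, e^{-\ii j z}$, one obtains
\[
\int_{\mathbb{T}} R_q(u;z)[g(\cdot - z)](x)\, \di z = \sum_{\vec{\jmath}_q, j, k} \widetilde{M}_{\vec{\jmath}_q, j, k}\, u_{\vec{\jmath}_q}\, \hat g(j)\, e^{\ii k x},\qquad \widetilde{M}_{\vec{\jmath}_q, j, k} \triangleq \int_{\mathbb{T}} (R_q)_{\vec{\jmath}_q, j, k}(z)\, e^{-\ii j z}\, \di z,
\]
and analogously for $\int_{\mathbb{T}} R_q(u;z)\, \di z$ (with no phase factor $e^{-\ii j z}$). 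Using \eqref{estim fourier kerop} with $m=-\vr$, together with $n > -1$,
\[
\big|\widetilde{M}_{\vec{\jmath}_q, j, k}\big| \leqslant C\, \max_2\{|j_1|,\dots,|j_q|, |j|\}^{\mu}\, \max\{|j_1|,\dots,|j_q|, |j|\}^{-\vr}\int_{\mathbb{T}} |z|^n_{\mathbb{T}}\, \di z,
\]
so the new coefficients satisfy the bound defining $\wt{\cR}^{-\vr}_q$ in \Cref{defin mop}.

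For the non-homogeneous tail $R_{>N}(u;z)$, I would prove the tame estimate \eqref{stima:emmeop} for $R_1(u) \triangleq \int_{\mathbb{T}} R_{>N}(u;z)[g(\cdot - z)]\, \di z$ (the case $R_2$ being even simpler, as it just replaces $g(\cdot-z)$ by the constant function $1$, or equivalently by any fixed $g$). By Minkowski's integral inequality in the Sobolev norm and \eqref{estim dtkerop} applied with $m = -\vr$,
\[
\big\| \partial_t^k R_1(u)g \big\|_{s + \vr - \alpha k} \leqslant \int_{\mathbb{T}} \big\| \partial_t^k R_{>N}(u;z)[g(\cdot - z)] \big\|_{s + \vr - \alpha k}\, \di z \leqslant C \int_{\mathbb{T}} |z|^n_{\mathbb{T}}\, \di z \cdot \Sigma(u, g),
\]
where $\Sigma(u,g)$ denotes the tame sum on the right-hand side of \eqref{stima:emmeop}. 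Here I use the elementary fact that the translation $g \mapsto g(\cdot - z)$ is an isometry on every $H^s$, and that $\partial_t^k$ commutes with translation in $x$, so that $\|g(\cdot-z)\|_{k'', s} = \|g\|_{k'', s}$ for every $z$. The integral $\int_{\mathbb{T}} |z|^n_{\mathbb{T}}\, \di z$ is finite because $n > -1$, which yields exactly the required tame bound for $R_{>N}$ to belong to $\cR^{-\vr}_{K, 0, N+1}[\epsilon_0]$.

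I do not expect any serious obstacle: the whole argument is a direct application of Fubini/Minkowski, translation invariance of $\|\cdot\|_s$ and the integrability of $|z|^n_{\mathbb{T}}$ guaranteed by $n > -1$. The only point worth double-checking is that the constant $\mu$ in \eqref{estim fourier kerop} is preserved (no extra derivative is spent on $z$) and that the pluri-homogeneous sum structure \eqref{exp kerop} of $R$ is passed on to $R_1$ and $R_2$, yielding the announced smoothing operators in $\Sigma \cR^{-\vr}_{K, 0, p}[\epsilon_0, N]$.
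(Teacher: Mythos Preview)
Your proposal is correct and matches the paper's approach exactly: the paper does not give a detailed proof but simply states that the lemma follows by integrating the defining estimates \eqref{estim fourier kerop} and \eqref{estim dtkerop} in $z$, which is precisely what you carry out in detail for both the homogeneous and non-homogeneous components.
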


The following proposition will be crucial  in Section \ref{sec:paralinearization}, and is proved in \cite{BCGS2023}.

\begin{prop}
\label{prop:reminders_integral_operator}
Let  $ n > -1 $ and $ \kappa \pare{u; x , z} $ be a Kernel-function  in 
$ \Sigma\KF^{n}_{K,0,p}\bra{\epsilon_0 , N} $. 
Let  us define the operator, for any $ g \in \dot H^s (\mathbb{T};\mathbb{R}) $, $ s \in \mathbb{R}  $,   
\begin{equation*}
\pare{\cT_\kappa g} 
\pare{x} \defeq \int_{\mathbb{T}} \OpBW{\kappa \pare{u; \bullet , z}} g\pare{x-z} \dd z   \, .
\end{equation*}
Then there exists
\begin{itemize}
\item   a symbol $ a \pare{u; x ,  \xi}  $ 
in $ \Sigma\Gamma^{-\pare{1+n}}_{K,0, p}\bra{\epsilon_0 , N} $ satisfying \eqref{real cond};

\item a pluri-homogeneous smoothing operator $R\pare{u}  $ 
in $ \Sigma_p^N \wt {\cR}^{-\vr}_q $ 
for any $ \vr > 0 $;
\end{itemize}
 such that $ \cT_\kappa g = \OpBW{ a \pare{u; x,  \xi}} g + R\pare{u} g $.
\end{prop}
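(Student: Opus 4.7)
The plan is to identify $a(u;x,\xi)$ as the $z$-Fourier transform of $\kappa(u;x,z)$ localized near $z=0$, and to show that $\mathcal{T}_\kappa - \OpBW{a}$ is smoothing to any order $\varrho$.

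Fix a smooth cutoff $\chi_1 \in C^\infty(\mathbb{R})$ supported in $(-\pi,\pi)$ and equal to $1$ near $z=0$, and split $\kappa = \chi_1(z)\kappa + (1-\chi_1(z))\kappa$. The second piece is $C^\infty$ on $\mathbb{T}$ in $z$ (the only singularity of $\kappa$ being at $z=0$) with derivatives controlled by \Cref{defin kerfunct}; hence its $z$-Fourier coefficients decay faster than any polynomial and a direct Fourier-side computation shows that $\mathcal{T}_{(1-\chi_1)\kappa}$ is a smoothing operator contributing to $R(u)$. The pluri-homogeneous decomposition $\kappa = \sum_{q=p}^N \kappa_q + \kappa_{>N}$ is preserved by all subsequent steps, which will be carried out on each $\kappa_q$ individually so as to produce a pluri-homogeneous remainder.

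Define the candidate symbol by
\begin{equation*}
a(u;x,\xi) \triangleq \int_{\mathbb{R}} \chi_1(z)\, \kappa(u;x,z)\, e^{-\ii \xi z}\, \di z,
\end{equation*}
which converges since $n>-1$ makes the singularity at $z=0$ integrable, and for which the reality condition $\overline{a(u;x,\xi)} = a(u;x,-\xi)$ follows immediately from that of $\kappa$. To verify $a \in \Sigma\Gamma^{-(1+n)}_{K,0,p}[\epsilon_0,N]$, I write $\partial_x^\alpha\partial_\xi^\beta a$ by pulling $(-\ii z)^\beta$ inside the integral and splitting the region of integration into $|z| < 1/\langle\xi\rangle$ and $|z|\geqslant 1/\langle\xi\rangle$. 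On the first region I use the pointwise bound $|z|^\beta |\partial_x^\alpha \kappa| \lesssim |z|^{n+\beta}$; on the second I integrate by parts $L > n+\beta+1$ times in $z$, exploiting $|\partial_z^l \kappa| \lesssim |z|^{n-l}$. Both pieces are controlled by $\langle\xi\rangle^{-(1+n+\beta)}$, with the tame Sobolev dependence on $u$ inherited from \eqref{estim HOMkernel2}.

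The main technical obstacle is to show that $\mathcal{T}_{\chi_1\kappa} - \OpBW{a}$ is smoothing to any prescribed order $\varrho$. A direct Fourier computation, using that $\chi_1$ is supported in a single period of $\mathbb{T}$, yields
\begin{equation*}
\mathcal{T}_{\chi_1 \kappa}g(x) = \sum_{j,k\in\mathbb{Z}^*} \chi\!\left(k-j,\tfrac{j+k}{2}\right)\hat{a}(u;k-j,j)\,g_j\, e^{\ii k x},
\end{equation*}
while $\OpBW{a}g$ has the same kernel with $\hat{a}(u;k-j,(k+j)/2)$ in place of $\hat{a}(u;k-j,j)$. I then Taylor-expand $\hat a(u;l,\xi)$ in $\xi$ around the midpoint $(j+k)/2$ up to order $M$: since the factor $(k-j)^m$ on the Fourier side corresponds to the operator $(-\ii\partial_x)^m$ acting on the symbol, each term in the expansion is the Bony-Weyl quantization of a symbol of order $-(1+n)-m$, which is reabsorbed into the final form of $a$. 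The Taylor remainder is controlled by combining the bound $|\partial_\xi^{M+1}\hat a|\lesssim \langle(j+k)/2\rangle^{-(1+n)-(M+1)}$ with the paradifferential cone constraint $|k-j|\leqslant\delta_0\langle(j+k)/2\rangle$ built into $\chi$; choosing $M$ large enough with respect to $\varrho$ produces the desired pluri-homogeneous smoothing operator in $\Sigma_p^N\widetilde{\mathcal{R}}^{-\varrho}_q$, completing the decomposition.
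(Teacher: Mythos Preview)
The paper does not prove this proposition itself; it is stated with the remark that it ``is proved in \cite{BCGS2023}''. Your approach---defining $a$ as the $z$-Fourier transform of $\kappa$ localized near $z=0$, verifying the symbol bounds by splitting $|z|\lessgtr 1/\langle\xi\rangle$ and integrating by parts on the far region, then correcting the discrepancy between the kernel $\hat a(u;k-j,j)$ and the Weyl midpoint value $\hat a\big(u;k-j,(k+j)/2\big)$ by Taylor expansion in $\xi$---is the standard argument for results of this kind and is essentially correct, and presumably close in spirit to the proof in the cited reference.

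One point deserves sharpening. The statement asserts that the remainder $R(u)$ is \emph{purely pluri-homogeneous}, so the non-homogeneous tail $\kappa_{>N}$ must be absorbed entirely into the non-homogeneous part $a_{>N}$ of the symbol, with no leftover non-homogeneous smoothing term. Your sentence ``carried out on each $\kappa_q$ individually'' covers the homogeneous components but leaves the fate of the Taylor remainder for $\kappa_{>N}$ unspoken. The resolution is that your kernel $\hat a(u;k-j,j)$ is exactly a Kohn--Nirenberg paradifferential kernel, and the passage to the Weyl midpoint is the standard change of quantization: since the Bony cutoff forces $|k-j|\leqslant\delta_0\langle(j+k)/2\rangle$, the shift $j\mapsto(j+k)/2$ keeps one inside the same non-homogeneous symbol class. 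Equivalently, the non-homogeneous Taylor remainder at any order $M$ is itself paradifferentially localized with a kernel depending only on $k-j$ and $(j+k)/2$, hence is $\OpBW{b_M}$ for a non-homogeneous symbol $b_M$ of order $-(1+n)-M-1\leqslant -(1+n)$ that can be absorbed into $a_{>N}$. You should say this explicitly, since otherwise the claimed pluri-homogeneity of $R(u)$ is not justified.
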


\section{Paralinearization of the Kelvin-Helmholtz system}\label{sec:paralinearization}

\begin{notation}
    In the present section we use the following notation
    \begin{equation*}
        B^K_{s, \R}\pare{I;r}\defeq B_{C^K_\ast\pare{I;\dot H ^s\pare{\T;\R}}}\pare{0, r}\ .
    \end{equation*}
    We warn the reader that such notation is conflictive with the notation introduced at page \pageref{eq:CKastreali}, but we think that in the restricted context of the paralinearization procedure outlined here there is no risk of confusion and it helps to streamline the mathematical statements that we present.
\end{notation}

In the present section we paralinearize the system \cref{eq:KH3} in the $(\eta,\psi)$-variables. The result we obtain is the following one.

\begin{theorem}\label{prop:paralinearizationKH}
Let $ N\in \mathbb{N}$, $\kap \geqslant 0$, $ \upgamma\in \mathbb{R} $ and $ \varrho \geqslant 0 $, for any $ K\in \mathbb{N}$ there exists $ s_0 >0 $ and $ \epsilon_0 > 0 $ such that if $\eta, \psi \in B^K_{s_0, \mathbb{R}}\pare{I;\epsilon_0} $ is a solution of \cref{eq:KH3} then $ \pare{\eta, \psi} $ solves the paradifferential equation
\begin{multline}\label{eq:KH4}
\vect{\eta_t}{\psi_t} =\OpBW{{\bm Q}_{\kap, \upgamma}\pare{\eta,\psi; x, \xi} + {\bm B}_\upgamma \pare{\eta,\psi; x}\av{\xi} - \ii  V_\upgamma \pare{\eta, \psi;x}\Id_{\R^2}\ \xi + {\bm A}_{\bra{0}} \pare{\eta,\psi; x, \xi}   } \vect{\eta}{\psi}
 + {\bm R}\pare{\eta, \psi} \vect{\eta}{\psi} , 
\end{multline}
where
\begin{itemize}

\item  
The matrix of symbols ${\bm Q}_{\kap, \upgamma}$ satisfy \eqref{real cond} and is given by 
\begin{equation}\label{quone}
{\bm Q}_{\kap, \upgamma}\pare{\eta,\psi; x, \xi}
\triangleq
\bra{
\begin{array}{cc}
0 & -\frac{\av{\xi}}{2} \\
\kap\pare{1+\mathtt{f}\pare{\eta;x}} \pare{\av{\xi}^2-1} -\pare{ \frac{\upgamma^2}{2} + w_\upgamma\pare{\eta, \psi;x} }\av{\xi}  + \frac{\upgamma^2}{\pare{1+2\eta}} & 0
\end{array}
}\in\pare{  \Sigma \Gamma^2_{K,0,0}\bra{\epsilon_0, N} }^{2\times 2}, 
\end{equation}
with 
\begin{equation}\label{eq:wgamma}
    \begin{aligned}
    \mathtt{f}\pare{\eta;x}&\triangleq \pare{\frac{1+2\eta}{\pare{1+2\eta}^{2}+\eta_x^2}}^{\frac{3}{2}}-1\in \Sigma \cF^{\mathbb{R}}_{K,0,1}\bra{\epsilon_0, N} ,\\
    w_\upgamma\pare{\eta, \psi;x}&\triangleq \frac{1}{2}\pare{W_\upgamma^2\pare{\eta, \psi;x}-\upgamma^2}\in \Sigma \cF^{\mathbb{R}}_{K,0,1}\bra{\epsilon_0, N},\\
    W_\upgamma\pare{\eta, \psi;x}&\triangleq\pare{\psi_x+\upgamma}\frac{1+2\eta}{\pare{1+2\eta}^2+\eta_x^2}\in  \Sigma \cF^{\mathbb{R}}_{K,0,0}\bra{\epsilon_0, N}.
\end{aligned}
\end{equation}
 In particular, $ W_\upgamma\pare{0, 0;x}\equiv \upgamma$; 

 \item 
\begin{equation}
\label{eq:pzcBgamma}
{\bm B}_\upgamma\pare{\eta,\psi; x}
\triangleq
\frac{1}{2}
\bra{
\begin{array}{cc}
B_\upgamma\pare{\eta, \psi; x} & 0 \\
B_\upgamma^2\pare{\eta, \psi; x}    & -B_\upgamma\pare{\eta, \psi; x} 
\end{array}
}\in \pare{ \Sigma\cF^{\mathbb{R}}_{K,0,1}\bra{\epsilon_0, N} }^{2\times 2} , 
\end{equation}
where 
$$B_\upgamma  \pare{\eta, \psi;x}\defeq\pare{\psi_x+\upgamma}\frac{\sJ^0\pare{\eta;x}}{1+2\eta}\in  \Sigma \cF^{\mathbb{R}}_{K,0,1}\bra{\epsilon_0, N},\qquad\sJ^0\pare{\eta;x}\triangleq\frac{2\eta_x\pare{1+2\eta}}{\pare{1+2\eta}^2+\eta_x^2}\in \Sigma\cF^{\mathbb{R}}_{K, 0, 1}\bra{\epsilon_0, N}.$$
In particular  $  {\bm B}_{\upgamma}$ satisfy \eqref{real cond}; 

\item $ V_\upgamma\pare{\eta, \psi;x}\in \Sigma\cF^{\mathbb{R}}_{K, 0, 1}\bra{\epsilon_0, N} $ and is explicitly defined as
\begin{equation}\label{eq:Vgamma}
V_\upgamma\pare{\eta, \psi;x}\defeq\frac{1}{2}\mathpzc{D}_0\pare{\eta}\bra{\upgamma + \psi_x} -\frac{\upgamma}{2}\ , 
\end{equation}
where $ \mathpzc{D}_0\pare{\eta} $ is defined in  \eqref{eq:integral_operators_etaomega}; 

\item $  {\bm A}_{\bra{0}}\pare{\eta,\psi; x, \xi} \in\pare{  \Sigma \Gamma^0_{K,0,1}\bra{\epsilon_0, N} }^{2\times 2} $ satisfies \eqref{real cond} and is explicitly defined as
\begin{equation*}
{\bm A}_{\bra{0}} \pare{\eta,\psi; x, \xi}
\defeq
\bra{
\begin{array}{cc}
A_{\bra{0}}^\eta\pare{\eta,\psi; x, \xi} & A_{\bra{-1}}^\eta\pare{\eta,\psi; x, \xi} \\
A_{\bra{0}}^\psi \pare{\eta,\psi; x, \xi} &  A_{\bra{-1}}^\psi\pare{\eta,\psi; x, \xi}
\end{array}
}, 
\end{equation*}
with $  A_{\bra{m}}^\mathsf{u} \pare{\eta,\psi; x, \xi}\in \Sigma \Gamma^m _{K,0,1}\bra{\epsilon_0, N} $ for $ \mathsf{u}=\eta, \psi $ and $ m\in \mathbb{R} $;

\item $ {\bm R}\pare{\eta, \psi} \in \pare{ \Sigma \cR^{-\vr}_{K,0,1}\bra{\epsilon_0, N}  }^{2\times 2}$ and is real-valued. 

\end{itemize}
\end{theorem}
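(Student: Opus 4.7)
The plan is to paralinearize each nonlinear term of \eqref{eq:KH3} separately, then reassemble. The right-hand side splits into three blocks: the linear transports $\Omega\eta_x,\Omega\psi_x$ (already paradifferential with constant coefficients); the capillarity term $-\kap\mathpzc{K}(\eta)$, a smooth nonlinear function of $(\eta,\eta_x,\eta_{xx})$ to be treated by Bony's formula; and the two singular nonlocal integrals $\mathpzc{H}(\eta)\bra{\psi_x+\upgamma}$ and $(\psi_x+\upgamma)\mathpzc{D}_0(\eta)\bra{\psi_x+\upgamma}$, to which I apply the Kernel-function paradifferential machinery developed in Section \ref{sec:paradiff}, following the scheme sketched in the introduction and in \cite{BCGS2023}. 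Once each block is in paradifferential form, the symbols are regrouped by order to match the block-matrix decomposition $\bm Q_{\kap,\upgamma} + \bm B_\upgamma\av{\xi} - \ii V_\upgamma \xi\,\Id_{\R^2} + \bm A_{[0]}$ plus a smoothing remainder.

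\textbf{Curvature.} Writing $\mathpzc{K}(\eta)=F(\eta,\eta_x,\eta_{xx})$ with $F$ a smooth rational function of its arguments near $(0,0,0)$, Lemma \ref{lemma paralin} applied in each variable yields
\[
\mathpzc{K}(\eta)=\OpBW{-\pare{1+\mathtt{f}(\eta;x)}\xi^2 + \mathtt{a}_1(\eta;x)\,\ii\xi + \mathtt{a}_0(\eta;x)}\eta + R_{\kappa}(\eta)\eta,
\]
where $R_{\kappa}$ is smoothing and the coefficient $-(1+\mathtt{f}(\eta;x))$ is obtained by computing $\partial_{\eta_{xx}}F$ and matches \eqref{eq:wgamma} after simplification. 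The lower-order coefficients $\mathtt{a}_1,\mathtt{a}_0$, combined with the constant part $-(1+2\eta)$ appearing in $F$ itself, are absorbed into $\bm A_{[0]}$ and into the constant contribution of $\bm Q_{\kap,\upgamma}$.

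\textbf{Nonlocal operators.} For each kernel in \eqref{eq:integral_operators_etaomega}, I substitute $y=x-z$ and use the periodic identities $\sin z = z + O(z^3)$ and $1-\cos z = z^2/2 + O(z^4)$ to rewrite the integrand in the canonical form $K(\eta;x,z)\,g(x-z)/z$, with $g\in\{\psi_x+\upgamma,1\}$ and $K\in\Sigma\KF^{n}_{K,0,0}[\epsilon_0,N]$ for some $n\geqslant -1$. Taylor-expanding $K$ in $z$ up to order $\mathsf{J}\geqslant\varrho+2$,
\[
K(\eta;x,z) = \sum_{\mathsf{j}=0}^{\mathsf{J}}K_{\mathsf{j}}(\eta;x)\,z^{\mathsf{j}} + K_{>\mathsf{J}}(\eta;x,z),
\]
with $K_{\mathsf{j}}\in\Sigma\cF^{\mathbb{R}}_{K,0,0}[\epsilon_0,N]$ and $K_{>\mathsf{J}}$ a Kernel function of size $\av{z}^{\mathsf{J}+1}$. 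Bony's paraproduct decomposition (Lemma \ref{lemma Bonyprod}) converts each product $K_{\mathsf{j}}(\eta;x)\cdot \pv \!\int z^{\mathsf{j}-1}g(x-z)\dd z$ into the composition $\OpBW{K_{\mathsf{j}}(\eta;x)}\circ m_{\mathsf{j}}(D)g$, where $m_{\mathsf{j}}$ is a Fourier multiplier of order $\mathsf{j}-1$ (Hilbert-type for $\mathsf{j}=0$, derivative-type for $\mathsf{j}\geqslant 1$); Proposition \ref{prop compBW} recomposes the product into a single paradifferential operator modulo smoothing. The tail $\int \OpBW{K_{>\mathsf{J}}(\eta;\bullet,z)}g(\bullet-z)\dd z$ is handled by Proposition \ref{prop:reminders_integral_operator} and yields a symbol in $\Sigma\Gamma^{-(\mathsf{J}+2)}_{K,0,0}[\epsilon_0,N]$ plus a smoothing remainder, both absorbed in $\bm A_{[0]}$ and $\bm R$ for $\mathsf{J}$ large enough compared to $\varrho$.

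\textbf{Assembly and main obstacle.} Collecting the resulting symbols: the coefficient $K_0$ of $\mathpzc{H}$ produces the off-diagonal $-\av{\xi}/2$; the coefficient $K_0$ of $\mathpzc{D}_0$, combined with the outer factor $\pare{\psi_x+\upgamma}$ via another Bony paraproduct, generates both the unbounded diagonal contribution $-\pare{\frac{\upgamma^2}{2}+w_\upgamma}\av{\xi}$ in $\bm Q_{\kap,\upgamma}$ and, after arranging the $(\psi_x+\upgamma)$ outer factor, the nilpotent $\bm B_\upgamma\av{\xi}$ with its explicit quadratic entry $B_\upgamma^2$; the $\ii\xi$-order contribution identifies $V_\upgamma$ as in \eqref{eq:Vgamma}; and the constant $\upgamma^2/(1+2\eta)$ in \eqref{quone} arises from the $K_1$-level term combined with the curvature constant. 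The reality condition \eqref{real cond} is preserved throughout since the kernels are real-valued and the Taylor coefficients respect the parity in $z$. The main obstacle is the algebraic bookkeeping of the Taylor coefficients $K_{\mathsf{j}}(\eta;x)$—which involve nested square roots and rational functions of $(\eta,\eta_x)$—and verifying that the $\av{\xi}$-order contributions assemble exactly into the nilpotent matrix $\bm B_\upgamma$ predicted by the Alinhac good-unknown structure; this cancellation of unbounded diagonal terms is the crux of the identification of \eqref{quone}--\eqref{eq:pzcBgamma}, everything else being routine symbolic calculus.
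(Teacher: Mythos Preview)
Your overall plan---paralinearize the curvature by Bony's formula, handle the singular integrals via Taylor expansion of the kernel plus the Kernel-function machinery of Section~\ref{sec:paradiff}---is the same strategy the paper uses. The curvature treatment is fine. But your description of the nonlocal step has a genuine gap that prevents you from recovering the full matrix structure.

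When you write ``Bony's paraproduct decomposition converts each product $K_{\mathsf{j}}(\eta;x)\cdot \pv\!\int z^{\mathsf{j}-1}g(x-z)\dd z$ into the composition $\OpBW{K_{\mathsf{j}}(\eta;x)}\circ m_{\mathsf{j}}(D)g$'', you are keeping only one half of the paraproduct. Lemma~\ref{lemma Bonyprod} gives \emph{two} main terms, $\OpBW{K_{\mathsf{j}}}m_{\mathsf{j}}(D)g$ and $\OpBW{m_{\mathsf{j}}(D)g}K_{\mathsf{j}}(\eta)$, plus smoothing. The second term is not lower order: after paralinearizing $K_{\mathsf{j}}(\eta)$ in $\eta$ (Lemma~\ref{lemma paralin}), it becomes a paradifferential operator of positive order acting on $\eta$. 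This is precisely what the paper does in \eqref{eq:H0_3}--\eqref{eq:H0_4} and \eqref{eq:D0_2}--\eqref{eq:D0_4}, and it is the only source of the $(1,1)$-entry $\tfrac{1}{2}B_\upgamma\av{\xi}$ and the transport $-\ii V_\upgamma\xi$ in the $\eta$-equation (see \eqref{eq:Hetaomega_2} and \eqref{eq:paralinearization_eq_eta}), as well as the $\sJ^{\prime 0}$-contribution that produces $-\tfrac{1}{2}(W_\upgamma^2-B_\upgamma^2)\av{\xi}$ acting on $\eta$ in the $\psi$-equation (see \eqref{eq:PT_psi_2} and the algebraic identity \eqref{eq:sJ0-sJ0prime}). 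Your Assembly paragraph attributes the nilpotent $\bm B_\upgamma$ and the $w_\upgamma$ term entirely to the \emph{outer} product $(\psi_x+\upgamma)\cdot\mathpzc{D}_0(\eta)[\cdot]$; but that outer paraproduct alone cannot see the $(1,1)$-entry, since the $\eta$-equation has no outer factor.

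A secondary point: the paper applies the two-sided paraproduct to the $z$-dependent product $K(\eta;x,z)\cdot g(x-z)$ \emph{before} Taylor-expanding in $z$, working entirely inside the Kernel-function classes $\Sigma\KF^n$ and $\Sigma\KR^{-\varrho,n}$ (Proposition~\ref{prop compo zdepop}); only afterwards does it expand via Lemmata~\ref{lem:characterization_Kernels} and~\ref{lem:characterization_Kernels_J}. Your order (expand first, then paraproduct) can be made to work, but each Taylor coefficient $K_{\mathsf{j}}(\eta;x)$ then depends on $\partial_x^{\mathsf{j}+1}\eta$, so the subsequent paralinearization in $\eta$ must be tracked carefully to avoid losing control of the symbol orders. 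The paper's order sidesteps this by letting the $z$-dependent framework absorb the bookkeeping uniformly.
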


\begin{rem}\label{rem:unstable_terms_KH}
Notice that the quasilinear contribution $ -\pare{ \frac{\upgamma^2}{2} + w_\upgamma\pare{\eta, \psi;x} }\av{\xi} =\frac{W_\upgamma^2 \pare{\eta, \psi;x}}{2} \ \av{\xi} $ is not nil when $ \upgamma =0  $, as it is evident from \cref{eq:W,eq:W0}. This term, in particular, is an unstable contribution that is not present in the one-phase version of the present system, cf. \cite{BD2018,BMM2022,ABZ2014}. 
\end{rem}

\begin{notation}\label{notation:notation_paralinearization}
Along this section, for $ \eta \in  \BallR{K}{s_0} $ and  $ W\defeq\vect{w_1}{w_2}\in \pare{ \BallR{K}{s_0} }^2 $, we use the notation
\begin{enumerate}
\item For any $ x, z\in \mathbb{T} $ (cf. \eqref{z:R})
\begin{align}\label{eq:auxiliary}
r=r\pare{x} = r\pare{\eta;x}= \sqrt{1+2\eta\pare{x}} \in \Sigma \cF^{\mathbb{R}}_{K, 0, 1}\bra{\epsilon_0, N}, 
&&
\delta_z\eta \defeq \eta\pare{x}-\eta\pare{x-z}\in \widetilde{\KF}_1^{\ 1} \ ;
\end{align}


\item $ V\pare{W ;x} $ is a generic element in $ \Sigma\cF^{\mathbb{R}} _{K, 0, 1}\bra{\epsilon_0, N} $ (cf. \Cref{defin funct}) and $ V^n\pare{W;x, z} $ is a generic element in $ \Sigma\cF^n _{K, 0, 1}\bra{\epsilon_0, N}$, $ n > -1 $ (cf. \Cref{defin kerfunct});

\item
$ A_{\bra{m}} \pare{W;x, \xi} $ is a generic element in $ \Sigma\Gamma^m _{K, 0, 1}\bra{\epsilon_0, N} $ for $ m\in \mathbb{R} $ that satisfies \eqref{real cond} and $ {\bm A}_{\bra{m}} \pare{W;x, \xi} $ is a generic element in $ \pare{ \Sigma\Gamma^m _{K, 0, 1}\bra{\epsilon_0, N} }^{2\times 2} $ for $ m\in \mathbb{R} $ whose entries satisfy \eqref{real cond};

\item
$ R\pare{W} $ is a generic element in $ \Sigma\cR^{-\vr} _{K, 0, 1}\bra{\epsilon_0, N} $ which is real-valued and $ R\pare{W; z} $ is a generic element in $ \Sigma\cR^{-\vr, n} _{K, 0, 1}\bra{\epsilon_0, N}$, $ n > -1 $ which is real-valued. Similarly $ {\bm R}\pare{W} $ is a generic element in $ \pare{ \Sigma\cR^{-\vr} _{K, 0, 1}\bra{\epsilon_0, N} }^{2\times 2} $ which is real-valued and $ {\bm R}\pare{W; z} $ is a generic element in $ \Sigma\cR^{-\vr, n} _{K, 0, 1}\bra{\epsilon_0, N}$, $ n > -1 $ which is real-valued. 
\end{enumerate}
\end{notation}

\begin{rem}
Accordingly to the notation introduced in \Cref{notation:notation_paralinearization} we write
\begin{equation*}
{\bm R}\pare{W}W\cdot\vect{1}{1} = R_1\pare{W} w_1 + R_2 \pare{W}w_2, 
\end{equation*}
where $ R_1 $ and $ R_2 $ are elements of the space $ \Sigma\cR^{-\vr}_{K, 0, 1}\bra{\epsilon_0, N} $. The same holds when we write $ {\bm R}\pare{W;z}W\cdot\vect{1}{1} $. 
\end{rem}

Notice that from \Cref{eq:integral_operators_etaomega} we derive the relation
\begin{equation}\label{eq:HH0}
\mathpzc{H}\pare{\eta}\omega = \eta_x \ \mathpzc{D}_0\pare{\eta} \omega + \mathpzc{H}_0 \pare{\eta}\omega, 
\end{equation}
where
\begin{equation}\label{eq:H0}
\mathpzc{H}_0 \pare{\eta}\omega \triangleq
 \intT \frac{ \sqrt{1+2\eta\pare{x}}\sqrt{1+2\eta\pare{y}}\sin\pare{x-y} }{1 + \eta \pare{x} + \eta\pare{y} - \sqrt{1+2\eta\pare{x}}\sqrt{1+2\eta\pare{y}}\cos \pare{x-y} } \ \omega\pare{y}\dd y.
\end{equation}

\subsection{Paralinearization of $ \mathpzc{H}_0\pare{\eta} $}\label{para:H0}

The present section is dedicated to paralinearize the nonlocal operator $\mathpzc{H}_0(\eta)$ given in \eqref{eq:H0}.

\begin{proposition}\label{lem:H0_paralinearization}
Let $ N\in \mathbb{N}$ and $ \varrho \geqslant 0 $, for any $ K\in \mathbb{N} $ there exist $ s_0 > 0 $ and $ \epsilon_0 > 0 $ such that if 
$ \eta, g \in \BallR{K}{s_0}$, 
the nonlinear operator $ \mathpzc{H}_0\pare{\eta} $ in \eqref{eq:H0} admits the following paralinearization

\begin{enumerate}

\item \label{item:H0_paralinearization_1}
\begin{equation}\label{eq:H0_paralinearization}
\begin{aligned}
    \mathpzc{H}_0\pare{\eta}g&= \OpBW{-\ii  \pare{ 1+\sK^0\pare{\eta; x} }\sgn \xi + A_{\bra{-2}}\pare{\eta;x, \xi}} g
\\
&\quad+
\OpBW{\frac{g}{1+2\eta}\sK^{\prime 0} \pare{\eta;x} \av{\xi} +  A_{\bra{0}}\pare{\eta, g;x, \xi} } \eta
+
 {\bm R}\pare{\eta, g}\vect{\eta}{g}\cdot\vect{1}{1},
\end{aligned}
\end{equation}
where
\begin{equation}\label{eq:sK0}
    \begin{aligned}
\sK^0\pare{\eta;x}&\defeq-\frac{2\eta_x^2}{\pare{1+2\eta}^2+\eta_x^2}\in \cF^{\mathbb{R}}_{K, 0, 1}\bra{\epsilon_0, N},\\ 
 	\sK^{\prime 0}\pare{\eta;x}&\defeq -\frac{4\eta_x\pare{1+2\eta}^3}{\pare{\pare{1+2\eta}^2+\eta_x^2}^2}\in \cF^{\mathbb{R}}_{K, 0, 1}\bra{\epsilon_0, N}, \end{aligned}
\end{equation}
$A_{\bra{m}}\pare{\eta, g;x, \xi}\in \Sigma\Gamma^{m} _{K, 0, 1}\bra{\epsilon_0, N} $ satisfies \eqref{real cond} and ${\bm R}\pare{\eta, g} \in \pare{ \Sigma\cR^{-\vr} _{K, 0, 1}\bra{\epsilon_0, N} }^{2\times 2} $ is real-valued.

\item  \label{item:H0_paralinearization_2}
\begin{equation}
\label{eq:H0[1]_paralinearization}
\mathpzc{H}_0\pare{\eta}\bra{1}
=
\OpBW{\frac{\sK^{\prime 0}\pare{\eta;x}}{1+2\eta} \ \av{\xi} + A_{\bra{0}}\pare{\eta;x, \xi}} \eta + R\pare{\eta}\eta, 
\end{equation}
where
\begin{itemize}

\item $ \sK^{\prime 0}\pare{\eta ; x}\in \cF^{\mathbb{R}}_{K, 0, 1}\bra{\epsilon_0, N}  $ and is explicitly defined in \eqref{eq:sK0};


\item
$ A_{\bra{m}}\pare{\eta;x, \xi}\in \Sigma\Gamma^{m} _{K, 0, 1}\bra{\epsilon_0, N} $ satisfying \eqref{real cond};

\item
$ R\pare{\eta} \in \Sigma\cR^{-\vr} _{K, 0, 1}\bra{\epsilon_0, N} $ and is real-valued. 
\end{itemize}
\end{enumerate}

\end{proposition}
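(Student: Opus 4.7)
The plan is to paralinearize $\mathpzc{H}_0(\eta)g$ by combining the convolutive singular-integral machinery of \Cref{prop:reminders_integral_operator} with Bony's paraproduct and paralinearization formulas (\Cref{lemma Bonyprod,lemma paralin}), applied to a Taylor expansion at $z=0$ of the nonlinear kernel. After the change of variables $z=x-y$ in \eqref{eq:H0}, I rewrite
\begin{equation*}
\mathpzc{H}_0(\eta)g(x) = \int_{\mathbb{T}} \frac{\mathscr{K}(\eta;x,z)}{z}\, g(x-z)\, \dd z, \qquad \mathscr{K}(\eta;x,z) \defeq \frac{z\, r(x)\, r(x-z)\, \sin z}{1+\eta(x)+\eta(x-z) - r(x)r(x-z)\cos z},
\end{equation*}
with $r=\sqrt{1+2\eta}$. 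An explicit expansion shows that the denominator vanishes exactly to order two at $z=0$, so $\mathscr{K}$ is a smooth Kernel-function in the sense of \Cref{defin kerfunct}, and its value at $z=0$ is $\mathscr{K}(\eta;x,0) = \frac{2(1+2\eta)^2}{(1+2\eta)^2+\eta_x^2} = 2\pare{1+\sK^0(\eta;x)}$, recovering the first line of \eqref{eq:sK0}.

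I then Taylor-split $\mathscr{K}(\eta;x,z) = \mathscr{K}(\eta;x,0) + z\,\tilde{\mathscr{K}}(\eta;x,z)$. The principal piece gives the Hilbert-type convolution $\mathscr{K}(\eta;x,0)\cdot\mathpzc{H}g$, with $\mathpzc{H}$ the periodic Hilbert transform of symbol $-\ii\sgn\xi$. Bony's paraproduct decomposition of this product provides two paradifferential operators: one acting on $g$ which, after composition with $\mathpzc{H}$ via \Cref{prop compBW}, yields exactly the principal symbol $-\ii(1+\sK^0)\sgn\xi$ of \eqref{eq:H0_paralinearization}; and a dual one acting on $\mathscr{K}(\eta;x,0) = F(\eta,\eta_x)$ where $F(u,v) = \frac{2(1+2u)^2}{(1+2u)^2+v^2}$. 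Paralinearizing $F$ in the arguments $(\eta,\eta_x)$ via \Cref{lemma paralin} converts this into a paradifferential operator on $\eta$; the $\partial_v F$-contribution, combined with $\eta_x = \OpBW{\ii\xi}\eta$ and one more composition with $\mathpzc{H}$, produces the $|\xi|$-order symbol $\frac{g}{1+2\eta}\sK^{\prime 0}(\eta;x)|\xi|$, where the factor $\frac{1}{1+2\eta}\sK^{\prime 0}(\eta;x) = \partial_v F(\eta,\eta_x)$ is verified by direct computation and matches the second line of \eqref{eq:sK0}.

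The remainder $\int_{\mathbb{T}} \tilde{\mathscr{K}}(\eta;x,z)\,g(x-z)\,\dd z$ is a regular integral: paralinearizing $\tilde{\mathscr{K}}$ separately in $\eta$ and in $g$ casts it in the form $\int_{\mathbb{T}}\OpBW{\kappa(\cdot;x,z)}g(x-z)\dd z$ with $\kappa$ a Kernel-function of non-negative order, and \Cref{prop:reminders_integral_operator} then produces a paradifferential operator of negative order plus a smoothing operator in $\Sigma\cR^{-\vr}_{K,0,1}[\epsilon_0,N]$; iterating the Taylor expansion in $z$ as many times as needed lowers the order of these contributions below any prescribed $-\vr$, so the whole remainder can be absorbed into $A_{[-2]}(\eta;x,\xi)$, $A_{[0]}(\eta,g;x,\xi)$ and $\bm R(\eta,g)$ of \eqref{eq:H0_paralinearization}. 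Item \ref{item:H0_paralinearization_2} is then immediate by setting $g\equiv 1$: $\OpBW{-\ii(1+\sK^0)\sgn\xi}[1]=0$ because Bony-Weyl operators annihilate constants, so only the $\eta$-side survives, giving \eqref{eq:H0[1]_paralinearization}.

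The main obstacle will be the careful bookkeeping of the coefficient $\sK^{\prime 0}$: matching the explicit rational function $-\frac{4\eta_x(1+2\eta)^3}{((1+2\eta)^2+\eta_x^2)^2}$ with the symbol produced by Bony's paralinearization of $F(\eta,\eta_x)$ requires combining $\partial_v F$ with the factor $(1+2\eta)$ absorbed in the denominator of \eqref{eq:H0_paralinearization}, and, crucially, tracking every lower-order contribution coming from the symbolic calculus of \Cref{prop compBW} so that it is cleanly absorbed into $A_{[0]}$ and $A_{[-2]}$ rather than spuriously modifying the prescribed leading symbols.
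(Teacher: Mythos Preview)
Your overall strategy is sound, but there is a genuine gap in how you extract the order-$|\xi|$ symbol acting on $\eta$, and it leads to the wrong coefficient.

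First, a small but telling slip: $\mathscr{K}(\eta;x,0)=\dfrac{2(1+2\eta)^2}{(1+2\eta)^2+\eta_x^2}$ equals $2+\sK^0(\eta;x)$, \emph{not} $2(1+\sK^0)$. This already signals a bookkeeping problem with factors of two.

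The real issue is the order in which you paraproduct and integrate. You first integrate the principal piece, producing $\mathscr{K}(\eta;x,0)\cdot \mathpzc{H}g$, and only then apply the Bony paraproduct; the dual term is therefore $\OpBW{\mathpzc{H}g}[F(\eta,\eta_x)]$, and after paralinearizing $F$ you obtain a symbol proportional to $(\mathpzc{H}g)\,\partial_vF\,(\ii\xi)$. But the statement requires the coefficient $g$, not $\mathpzc{H}g$, and the carrier $|\xi|$, not $\ii\xi$: the difference $(\mathpzc{H}g)(\ii\xi)-g|\xi|$ is a genuine order-$1$ symbol, so it cannot be absorbed into $A_{[0]}$. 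The paper obtains the correct coefficient by applying the paraproduct to $(\sK_z-2)\cdot g(x-z)$ \emph{before} integrating in $z$; the dual term then carries the $z$-dependent coefficient $g(x-z)$, which after Taylor expansion at $z=0$ gives $g(x)$, while the action on $\eta$ enters through $\Delta_z\eta$ and the singular integral $\int\frac{\delta_z\eta}{4\sin^2(z/2)}\dd z=|D|\eta$. That is the mechanism producing $\frac{g}{1+2\eta}\sK'^0|\xi|$.

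Your claim that the remainder $\int\tilde{\mathscr{K}}\,g(x-z)\dd z$ contributes only at negative order is precisely what fails: $\mathscr{K}(\eta;x,0)$ depends on $\eta_x$ (through the second-order expansion of the denominator), so the subtraction $\tilde{\mathscr{K}}=\frac{\mathscr{K}-\mathscr{K}(\eta;x,0)}{z}$ has $\partial_{\eta_x}\tilde{\mathscr{K}}=-\partial_vF/z$, which is singular at $z=0$. Paralinearizing the remainder in $\eta$ therefore resurrects the $1/z$ singularity and contributes at order $1$ on $\eta$; you cannot invoke \Cref{prop:reminders_integral_operator} on it as a regular kernel.

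Finally, item~\ref{item:H0_paralinearization_2} is not obtained by setting $g\equiv 1$ in item~\ref{item:H0_paralinearization_1}: the constant $1$ is not in the small ball $B^K_{s_0,\R}(I;\epsilon_0)$, and the smoothing remainder $\bm R(\eta,g)$ is only defined for $g$ small. The paper proves it by a separate (though parallel) computation starting from \eqref{eq:H0gamma_-1}.
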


\begin{proof} 
\begin{proofpart}[Proof of \Cref{item:H0_paralinearization_1}]
With the notation introduced in \eqref{eq:auxiliary} we can rewrite \eqref{eq:H0} as
\begin{equation}\label{eq:H0_1}
\mathpzc{H}_0\pare{\eta}g =
\intT \sG_z \pare{\frac{\delta_z\eta}{r^2}} g \pare{x-z}\dd z
=
\mathpzc{H} g +  \intT \pare{ \sG_z \pare{\frac{\delta_z\eta}{r^2}} -  \sG_z\pare{0} } g \pare{x-z}\dd z , 
\end{equation}
where
\begin{equation*}
\sG_z\pare{\sX}\defeq \frac{\sqrt{1-2\sX}\sin z}{{1-\sX-\sqrt{1-2\sX}\cos z}}\cdot
\end{equation*}
Let us now define the desingularization of $ \sG_z $
\begin{equation}\label{eq:Kz}
\sK_z \pare{\sX}\defeq \sG_z\pare{\sX \ 2\sin\pare{z/2}} \ 2\tan\pare{z/2}
=
\frac{\sqrt{1-4\sX \ \sin\pare{z/2} }\pare{2\sin\pare{z/2}}^2}{{1-2\sX \ \sin\pare{z/2}-\sqrt{1-4\sX \ \sin\pare{z/2}}\cos z}} , 
\end{equation}
so that
\begin{equation}\label{eq:H0_2}
\pare{ \sG_z \pare{\frac{\delta_z\eta}{r^2}} -  \sG_z\pare{0} } g \pare{x-z}
=
\pare{ \sK_z \pare{\frac{\Delta_z\eta}{r^2}} -  2 } \frac{g \pare{x-z}}{2\tan\pare{z/2}}\cdot
\end{equation}
Notice that from \eqref{eq:Kz} we derive 
\begin{align}
\label{eq:K'z}
\sK'_z\pare{\sX}
=
&  
 -  \frac{\sX \pare{2\sin\pare{z/2}}^4}{{\left( 1 - 2 \, \sX \sin\left(z/2\right) - \sqrt{1- 4 \, \sX \sin\left(z/2\right)} \cos\left(z\right) \right)^2} \sqrt{1- 4 \, \sX \sin\left(z/2\right)}}
  \cdot
\end{align}
We need the following technical result whose proof is postponed at page \pageref{sec:characterization_Kernels}:

\begin{lemma}\label{lem:characterization_Kernels}
Let  $\sK_z (\sX) $, be as in \cref{eq:Kz}. Then 
\begin{align}\label{eq:Kernel_in_kernelfunction}
\sK_z \pare{\frac{\Delta_z \eta}{1+2\eta}} - 2
\in 
 \Sigma\KF ^{0}_{K, 0, 1}\bra{\epsilon_0, N} , 
 &&
 \sK_z' \pare{\frac{\Delta_z \eta}{1+2\eta}}  
\in 
 \Sigma\KF ^{0}_{K, 0, 1}\bra{\epsilon_0, N}, 
\end{align}
are Kernel functions, which admit the expansions 
\begin{equation}
  \label{eq:Taylor_expansion_kernels0}
  \begin{aligned}
  \sK_z \pare{\frac{\Delta_z \eta}{1+2\eta}} - 2
  = & \  \sK^0 \pare{\eta;x}
  +  \sK^1 \pare{\eta;x} \ 2\tan\pare{z/2}
  + V^2 \pare{\eta;x,z} 
 , \\
 \sK'_z \pare{\frac{\Delta_z \eta}{1+2\eta}} = & \ \sK^{\prime 0} \pare{\eta;x} 
 +  \sK^{\prime 1} \pare{\eta;x} \ \sin z
  + V^2\pare{\eta;x,z},  
  \end{aligned}
 \end{equation}
 where $\sK^0\pare{\eta;x}$ and $\sK^{\prime 0}\pare{\eta;x}$ are functions in $ \cF^{\mathbb{R}}_{K, 0, 1}\bra{\epsilon_0, N}$ having the expressions \eqref{eq:sK0}.Then,
 $\sK^1 \pare{\eta;x}$ and $ \sK^{\prime 1} \pare{\eta;x}$ are functions in  $ \Sigma \cF^{\mathbb{R}}_{K, 0, 1}\bra{\epsilon_0, N}$ while $ V^2\pare{\eta;x,z} $ are Kernel-functions in $  \Sigma\KF ^{2}_{K, 0, 1}\bra{\epsilon_0, N}$ as per \Cref{notation:notation_paralinearization}. 

\end{lemma}

\medskip

Bony paraproduct decomposition (cf. \Cref{lemma Bonyprod}) give us that
\begin{equation}\label{eq:H0_3}
    \begin{aligned}
 \pare{ \sK_z \pare{\frac{\Delta_z\eta}{r^2}} -  2} g\pare{x-z}
& =
 \OpBW{\sK_z \pare{\frac{\Delta_z\eta}{r^2}} -  2} g\pare{x-z}
 +
 \OpBW{g\pare{x-z}}  \bra{\sK_z \pare{\frac{\Delta_z\eta}{r^2}} -  2}
 \\
&\quad +
 R_1\pare{\sK_z \pare{\frac{\Delta_z\eta}{r^2}} -  2} g\pare{x-z}
 +R_2\pare{g\pare{x-z}}  \bra{\sK_z \pare{\frac{\Delta_z\eta}{r^2}} -  2}. 
\end{aligned}
\end{equation}
In view of \eqref{eq:Kernel_in_kernelfunction} we can apply \Cref{prop compo zdepop}, \cref{item:MM_int_z,item:MM_ext_z} and obtain that
\begin{align}\label{eq:H0_3.1}
R_1\pare{\sK_z \pare{\frac{\Delta_z\eta}{r^2}} -  2} g\pare{x-z}
=
R\pare{\eta ;z}g, 
&&
R_2\pare{g\pare{x-z}}  \bra{\sK_z \pare{\frac{\Delta_z\eta}{r^2}}-  2}
=
R\pare{\eta ;z}g, 
\end{align}
for suitable $ R\pare{\bullet;z}\in \Sigma \KR^{-\vr, 0}_{K, 0, 1}\bra{\epsilon_0, N} $. 
We use now Bony paralinearization formula of Lemma \ref{lemma paralin} and Bony paraproduct decomposition and obtain that
\begin{equation}\label{eq:H0_4}
    \begin{aligned}
&\sK_z \pare{\frac{\Delta_z\eta}{r^2}}-2\\
&=
\OpBW{\sK_z'  \pare{\frac{\Delta_z\eta}{r^2}}}
\bra{
\OpBW{r^{-2}}\Delta_z\eta + \OpBW{\Delta_z \eta}\bra{r^{-2}-1}+R_1\pare{r^{-2}-1}\Delta_z\eta + R_2 \pare{\Delta_z \eta}\bra{r^{-2}-1}
}\\
&\quad+R\pare{\frac{\Delta_z\eta}{r^2}}\frac{\Delta_z\eta}{r^2}\cdot
\end{aligned}
\end{equation}
Then, we apply again \Cref{lemma paralin},  \Cref{prop compo zdepop}, \cref{item:MM_int_z,item:MM_ext_z} in order to get that
\begin{align}\label{eq:H0_5}
\OpBW{\sK_z'  \pare{\frac{\Delta_z\eta}{r^2}}}
\pare{ 
R_1\pare{r^{-2}-1}\Delta_z\eta + R_2 \pare{\Delta_z \eta}\bra{r^{-2}-1} }
=
R\pare{\eta; z}\eta,\\
R\pare{\frac{\Delta_z\eta}{r^2}}\frac{\Delta_z\eta}{r^2}=R\pare{\eta; z}\eta.
\end{align}
Besides, combining \Cref{prop compo zdepop}, \cref{item:OpOp_ext_z} and \cref{lemma paralin} we obtain that
\begin{equation}\label{eq:H0_6}
\begin{aligned}
    &\OpBW{\sK_z'  \pare{\frac{\Delta_z\eta}{r^2}}}\pare{ 
\OpBW{r^{-2}}\Delta_z\eta + \OpBW{\Delta_z \eta}\bra{r^{-2}-1} }
\\
&=
\OpBW{r^{-2}\sK_z'  \pare{\frac{\Delta_z\eta}{r^2}}} \Delta_z\eta
+
\OpBW{\tilde V^0 \pare{\eta;x,z}} \eta
+
R\pare{\eta, g ;z}\eta, 
\end{aligned}
\end{equation}
where
$$\tilde V^0(\eta;x,z)\triangleq r^{-2}\big(K_z'(X)X\big)|_{X=\frac{\Delta_z\eta}{r^2} }.$$
We plug \cref{eq:H0_6,eq:H0_5} in \cref{eq:H0_4} and the resulting equation and \cref{eq:H0_3.1} in \cref{eq:H0_3} and obtain, after using
$$\int_{\mathbb{T}}\frac{1}{\tan(z/2)}\dd z=0$$
and applying \Cref{prop compo zdepop}, \cref{item:OpOp_ext_z}, that
\begin{equation}\label{eq:H0_7}
\begin{aligned}
\pare{ \sK_z \pare{\frac{\Delta_z\eta}{r^2}} -  2} g\pare{x-z}
 &=
 \OpBW{\sK_z \pare{\frac{\Delta_z\eta}{r^2}} -  2} g\pare{x-z}
 +
 \OpBW{\frac{g\pare{x-z}}{r^2}\sK_z'  \pare{\frac{\Delta_z\eta}{r^2}}} \Delta_z\eta
 \\
 &\quad+
\OpBW{V^0\pare{\eta, g ;x,z}} \eta
+
{\bm R}\pare{\eta, g;z}\vect{\eta}{g}\cdot\vect{1}{1}, 
\end{aligned} 
\end{equation}
where
$$V^0(\eta,g;x,z)\triangleq\tilde V^0(\eta;x,z)g(x-z).$$
We Taylor expand in $z$
$$V^0(\eta,g;x,z)=\overline{V}_0(\eta,g;x)+\overline{V}_1(\eta,g;x,z),\qquad\overline{V}_1\in \Sigma\KF ^{1}_{K, 0, 1}\bra{\epsilon_0, N}.$$
We thus plug \eqref{eq:H0_7} in \eqref{eq:H0_2} and insert the resulting equation in \eqref{eq:H0_1} and, after application of \Cref{rem:intkerfunct,lemma actzsmoo} we obtain that
\begin{equation}\label{eq:H0_8}
\begin{aligned}
    \mathpzc{H}_0\pare{\eta}g &= \mathpzc{H}g
+\intT \OpBW{\sK_z \pare{\frac{\Delta_z\eta}{r^2}} -  2} \frac{g\pare{x-z}}{2\tan\pare{z/2}}\dd z
\\
&\quad+\intT  \OpBW{\frac{g\pare{x-z}}{r^2}\sK_z'  \pare{\frac{\Delta_z\eta}{r^2}}} \frac{\delta_z\eta}{4\sin\pare{z/2} \ \tan\pare{z/2}}\dd z
\\
&\quad+
\OpBW{V\pare{\eta, g;x}} \eta
+
{\bm R}\pare{\eta, g;z}\vect{\eta}{g}\cdot\vect{1}{1},
\end{aligned}
\end{equation}
where
$$V(\eta,g;x)\triangleq\int_{\mathbb{T}}\frac{\overline{V}_1(\eta,g;x,z)}{2\tan(z/2)}\dd z.$$
We use now the identity $ \frac{1}{4\sin\pare{z/2} \ \tan\pare{z/2}} =\frac{1}{4\sin^2\pare{z/2}} - \frac{1}{8\cos^2 \pare{z/4}} $ and \Cref{prop:reminders_integral_operator,rem:intkerfunct} in order to transform
\begin{multline}\label{eq:H0_9}
\intT  \OpBW{\frac{g\pare{x-z}}{r^2}\sK_z'  \pare{\frac{\Delta_z\eta}{r^2}}} \frac{\delta_z\eta}{4\sin\pare{z/2} \ \tan\pare{z/2}}\dd z
\\
=
\intT  \OpBW{\frac{g\pare{x-z}}{r^2}\sK_z'  \pare{\frac{\Delta_z\eta}{r^2}}} \frac{\delta_z\eta}{4\sin^2\pare{z/2} }\dd z
+
\OpBW{V\pare{\eta, g;x} + A_{\bra{-2}}\pare{\eta, g;x, \xi}} \eta + R\pare{\eta, g}\eta , 
\end{multline}
so that plugging \eqref{eq:H0_9} in \eqref{eq:H0_8} we obtain that
\begin{equation}\label{eq:H0_10}
\begin{aligned}
    \mathpzc{H}_0\pare{\eta}g &= \mathpzc{H}g
+\intT \OpBW{\sK_z \pare{\frac{\Delta_z\eta}{r^2}} -  2 } \frac{g\pare{x-z}}{2\tan\pare{z/2}}\dd z
\\
&\quad+\intT  \OpBW{\frac{g\pare{x-z}}{r^2}\sK_z'  \pare{\frac{\Delta_z\eta}{r^2}}} \frac{\delta_z\eta}{4\sin^2\pare{z/2}}\dd z
\\
&\quad+
\OpBW{V\pare{\eta, g;x} + A_{\bra{-2}}\pare{\eta, g;x, \xi}} \eta
+
{\bm R}\pare{\eta, g;z}\vect{\eta}{g}\cdot\vect{1}{1}.
\end{aligned}
\end{equation}
Now, we can use the Taylor-like expansions \eqref{eq:Taylor_expansion_kernels0}, \Cref{prop:reminders_integral_operator}  and the fact that $ g $ has zero average and obtain that
\begin{equation}\label{eq:H0_11}
\intT \OpBW{\sK_z \pare{\frac{\Delta_z\eta}{r^2}} -  2} \frac{g\pare{x-z}}{2\tan\pare{z/2}}\dd z= 
\OpBW{\sK^0\pare{\eta; x}} \mathpzc{H}g + \OpBW{A_{\bra{-2}}\pare{\eta;x, \xi}} g + R\pare{\eta, g}g.
\end{equation}
Next, denoting
\begin{equation*}
\tilde{\sK}\pare{\eta, g;x, z}\defeq  \frac{g\pare{x-z}}{1+2\eta}\sK_z'  \pare{\frac{\Delta_z\eta}{1+2\eta}}
-
 \frac{g}{1+2\eta}\sK^{\prime 0}  \pare{\eta;x}
 \in 
 \Sigma\KF^1_{K, 0, 1}\bra{\epsilon_0, N}, 
\end{equation*}
we Taylor-expand $ z\mapsto \tilde{\sK}\pare{\eta, g;x, z} $ obtaining
\begin{equation*}
\tilde{\sK}\pare{\eta, g;x, z}
=
\tilde{\sK}^1\pare{\eta, g;x}\sin z + V^2\pare{\eta, g;x,z}, 
\end{equation*}
so that applying \cref{rem:intkerfunct,prop:reminders_integral_operator}  we obtain that
\begin{equation}\label{eq:H0_12}
\begin{aligned}
    &\intT  \OpBW{\frac{g\pare{x-z}}{r^2}\sK_z'  \pare{\frac{\Delta_z\eta}{r^2}}} \frac{\delta_z\eta}{4\sin^2\pare{z/2}}\dd z
\\
&=
\OpBW{\frac{g}{1+2\eta}\sK^{\prime 0} \pare{\eta;x} } \av{D}\eta
+
\OpBW{\tilde{\sK}^1\pare{\eta, g;x}}\mathpzc{H} \eta + \OpBW{V\pare{\eta, g; x} + A_{\bra{-2}}\pare{\eta, g;x, \xi}}\eta + R\pare{\eta, g}\eta.
\end{aligned} 
\end{equation}
We plug \cref{eq:H0_12,eq:H0_11} in \cref{eq:H0_10} and use \Cref{prop compBW} and obtain the desired quasi-linear expansion

\begin{equation}\label{eq:H0_13}
\begin{aligned}
    \mathpzc{H}_0\pare{\eta}g &= \OpBW{1+\sK^0\pare{\eta; x}} \mathpzc{H}g
+
\OpBW{\frac{g}{1+2\eta}\sK^{\prime 0} \pare{\eta;x} } \av{D}\eta
\\
&\quad+
\OpBW{ A_{\bra{0}} \pare{\eta, g;x, \xi}} \eta + \OpBW{ A_{\bra{-2}}\pare{\eta;x, \xi} } g
+
{\bm R}\pare{\eta, g;z}\vect{\eta}{g}\cdot\vect{1}{1} ,
\end{aligned}
\end{equation}
thus \eqref{eq:H0_paralinearization} can be derive applying \Cref{prop compBW} to \eqref{eq:H0_13} combined with $ \mathpzc{H}=\OpBW{-\ii \sgn{\xi}} $. 
\end{proofpart}

\begin{proofpart}[Proof of  \Cref{item:H0_paralinearization_2}]
From \cref{eq:H0_1,eq:H0_2} we have that
\begin{equation}\label{eq:H0gamma_-1}
\mathpzc{H}_0\pare{\eta}\bra{1} = \intT  \pare{ \sK_z \pare{\frac{\Delta_z\eta}{r^2}} -  2 } \frac{\dd z}{2\tan\pare{z/2}}\cdot 
\end{equation}
Thus, we apply \cref{lemma paralin} and obtain that
\begin{equation}\label{eq:H0gamma_0}
\sK_z \pare{\frac{\Delta_z\eta}{r^2}} -  2
=
\OpBW{\sK_z' \pare{\frac{\Delta_z\eta}{r^2}}}\bra{\frac{\Delta_z\eta}{r^2}}
+ R \pare{\frac{\Delta_z\eta}{r^2}}\bra{\frac{\Delta_z\eta}{r^2}}. 
\end{equation}
Notice that from \Cref{prop compo zdepop}, \cref{item:MM_int_z} we have that $ R \pare{\frac{\Delta_z\eta}{r^2}} \in \Sigma\KR^{-\vr, 0}_{K, 0, 1}\bra{\epsilon_0, N} $ and by Taylor expansion we have that
\begin{equation*}
\tilde{R}\pare{\eta;x, z} \defeq 
R \pare{\frac{\Delta_z\eta}{r^2}} -R \pare{\frac{\eta_x}{r^2}}  \in \Sigma\KR^{-\vr, 1 }_{K, 0, 1}\bra{\epsilon_0, N} , 
\end{equation*}
and, since $  \frac{\Delta_z}{r^2} \in \Sigma\KM^{1, 0}_{K, 0, 1}\bra{\epsilon_0, N} $ applying \Cref{prop compo zdepop}, \cref{item:MM_ext_z} we obtain that
\begin{equation}\label{eq:H0gamma_1}
\tilde{R}\pare{\eta;x, z} \circ \frac{\Delta_z}{r^2}\in \Sigma\KR^{-\vr+1, 1 }_{K, 0, 1}\bra{\epsilon_0, N} , 
\end{equation}
so that, thanks to \eqref{eq:H0gamma_1}, we have that
$$\intT  R \pare{\frac{\Delta_z\eta}{r^2}}\bra{\frac{\Delta_z\eta}{r^2}} \frac{\dd z}{2\tan\pare{z/2}}
=
R\pare{\frac{\eta_x}{r^2}}\bra{\frac{1}{r^2} \  \intT \frac{\delta_z\eta}{4\sin\pare{z/2}\tan\pare{z/2}}\dd z}
+
\intT R\pare{\eta;x, z} \dd z \ \eta.$$
Next we use the fact that 
$$\intT \frac{\delta_z\eta}{4\sin\pare{z/2}\tan\pare{z/2}}\dd z =m_1\pare{D}\eta,\qquad \textnormal{with}\qquad m_1\pare{\xi}\in \wt{\Gamma}^1_0,$$ the fact that $ r^{-2} m_1\pare{D} \in \Sigma\KM^{1, 0}_{K,0,1}\bra{\epsilon_0, N} $, \Cref{prop compo zdepop}, \cref{item:MM_ext_z} and \cref{lemma actzsmoo} to obtain that
\begin{equation}\label{eq:H0gamma_2}
\intT  R \pare{\frac{\Delta_z\eta}{r^2}}\bra{\frac{\Delta_z\eta}{r^2}} \frac{\dd z}{2\tan\pare{z/2}}
=
R\pare{\eta}\eta. 
\end{equation}
Next, computations similar to the ones performed in \cref{eq:H0_4,eq:H0_6} allow us to deduce that
\begin{equation}\label{eq:H0gamma_3}
\begin{aligned}
    \intT  \OpBW{\sK_z' \pare{\frac{\Delta_z\eta}{r^2}}}\bra{\frac{\Delta_z\eta}{r^2}} \frac{\dd z}{2\tan\pare{z/2}}
&=
\intT  \OpBW{r^{-2} \sK_z' \pare{\frac{\Delta_z\eta}{r^2}}}\bra{\frac{\delta_z\eta}{4\sin\pare{z/2}\tan\pare{z/2}}} \dd z
\\
&\quad+\OpBW{V\pare{\eta;x}}\eta + R\pare{\eta}\eta . 
\end{aligned}
\end{equation}
Taylor-expanding as in \eqref{eq:H0_12} and using \Cref{prop compBW} we obtain that
\begin{equation}\label{eq:H0gamma_4}
\begin{aligned}
    &\intT  \OpBW{r^{-2} \sK_z' \pare{\frac{\Delta_z\eta}{r^2}}}\bra{\frac{\delta_z\eta}{4\sin\pare{z/2}\tan\pare{z/2}}} \dd z
\\
&=
\OpBW{r^{-2}\sK^{\prime 0}\pare{\eta;x}}\av{D}\eta + \OpBW{A_{\bra{0}}\pare{\eta;x,\xi}}\eta + R\pare{\eta}\eta
\\
&= 
\OpBW{r^{-2}\sK^{\prime 0}\pare{\eta;x}\av{\xi} + A_{\bra{0}}\pare{\eta;x,\xi}}\eta + R\pare{\eta}\eta.
\end{aligned}
\end{equation}
Combining \Cref{eq:H0gamma_0,eq:H0gamma_2,eq:H0gamma_3,eq:H0gamma_4} 
we obtain \eqref{eq:H0[1]_paralinearization}. 

\end{proofpart}

\end{proof}

\paragraph{Proof of \Cref{lem:characterization_Kernels}.}\label{sec:characterization_Kernels}

Let us prove at first \eqref{eq:Kernel_in_kernelfunction}. We prove the result for $ \sK_z $ only being the procedure for $ \sK_z' $ the same. Notice that from \cref{eq:Kz} we immediately have that the map $ \pare{z, \sX}\mapsto \sK_z\pare{\sX} $ is analytic in $ \mathbb{T}\times \pare{-\frac{1}{4}, \frac{1}{4}} $.
Let us now define
\begin{align*}
\overline{\sK}_z \pare{\mathsf{x}, \mathsf{y}} \defeq \sK_z\pare{\frac{\mathsf{y}}{1+2\mathsf{x}}}, 
\end{align*}
which is again analytic for $\av{ \mathsf{x}}, \av{\mathsf{y}} < \varepsilon_1 $ small. By analyticity we have that
\begin{align*}
\overline{\sK}_z \pare{\mathsf{x}, \mathsf{y}} = \sum _{p_1, p_2 =0}^\infty \underbrace{\frac{\partial_\mathsf{x}^{p_2}\partial_\mathsf{y}^{p_1} \overline{\sK}_z \pare{0,0}}{p_1!p_2!}}_{\eqdef \mathsf{k}_{p_1, p_2}\pare{z}} \mathsf{x}^{p_2} \mathsf{y}^{p_1}, 
\end{align*}
with
\begin{align*}
\av{\partial_\mathsf{x}^{p_2}\partial_\mathsf{y}^{p_1} \overline{\sK}_z \pare{0,0}}\leqslant C p_1!p_2! \  \varepsilon_1 ^{-\pare{p_1 + p_2}}. 
\end{align*}
From \cref{eq:Kz} it is immediate that
\begin{equation*}
\sK_z\pare{0}= 2, 
\end{equation*}
so that
\begin{align*}
\sK_z\pare{\frac{\Delta_z \eta }{1+2\eta}} = \overline{\sK}_z \pare{\eta, \Delta_z \eta } = & \  2+ \sum_{p\geqslant1} 
   \underbrace{\sum_{\substack{p_1 \geqslant1 \\ p_1 + p_2 = p }} \mathsf{k}_{ p_1, p_2}\pare{z} \ \eta^{p_2} \pare{\Delta_z \eta}^{p_1} }_{ {=: 
   \wt \sK^{p} \pare{f;x, z} } }\\
  = & \   2+  \sum_{p = 1}^N \wt \sK^{p} \pare{\eta;x, z} + \underbrace{\sum_{p > N} \wt \sK^{p} \pare{\eta;x, z}  }_{ =:  \sK^{> N} \pare{\eta;x, z}} \, .
\end{align*}
We claim that  for any $  p\in\mathbb{N} $ and  $  \ell =0,\ldots , 7 $,
\begin{align}\label{eq:claim_Khomog}
 \partial_z^\ell \wt \sK^{p} \pare{\eta; x, z}  \ & \in \wt \KF^0_p ,
 \\
 \label{eq:tail_homogeneous_deco_kernels}
   \partial_z^\ell  \sK^{ > N} \pare{\eta;x, z}   & \in \KF^0_{K, 0, N +1}\bra{\epsilon_0} . 
\end{align}
The proof of \cref{eq:claim_Khomog,eq:tail_homogeneous_deco_kernels} is the same as the one performed in order to prove   \cite[Eqs. (4.34) and (4.35)]{BCGS2023} and is thus omitted. The proof for \eqref{eq:Kernel_in_kernelfunction} for $ \sK'_z\pare{\frac{\Delta_z\eta}{1+2\eta}} $ is the same as the one performed for  $ \sK_z\pare{\frac{\Delta_z\eta}{1+2\eta}} $ with the sole difference that $  \sK'_z\pare{0}=0  $. \\
The proof of \eqref{eq:Taylor_expansion_kernels0} follows the lines of the proof of   \cite[Eq. (4.23)]{BCGS2023}, and is generic enough to be applied in the present case. It remains to prove \cref{eq:sK0}, we can  Taylor-expand in $ z=0 $ and obtain that
\begin{equation*}
\begin{aligned}
\sK_z\pare{\sX} = & \ \sK_0\pare{\sX} + \left. \partial_z \sK_z\pare{\sX}\right|_{z=0} z + R_1 \pare{\sK;\sX}\pare{z} ,
\\
\sK'_z\pare{\sX} = & \ \sK'_0\pare{\sX} + \left. \partial_z \sK'_z\pare{\sX}\right|_{z=0} z + R_1 \pare{\sK' ;\sX}\pare{z} .
\end{aligned}
\end{equation*}
Explicit computations show that
\begin{equation*}
\begin{aligned}
\sK_0\pare{\sX} \defeq & \  2 - \frac{2\sX^2}{1+\sX^2}, 
&
 \left. \partial_z \sK_z\pare{\sX}\right|_{z=0} \defeq & -\frac{4\sX^3}{\pare{1+\sX^2}^2},
 \\
 \sK_0'\pare{\sX} \defeq & \  -\frac{4\sX}{\pare{1+\sX^2}^2},
 &
 \left. \partial_z \sK'_z\pare{\sX}\right|_{z=0} \defeq & \ 
 -\frac{4\sX^2\pare{3-\sX^2}}{\pare{1+\sX^2}^3}\cdot
\end{aligned}
\end{equation*}
Hence setting $ \sX \defeq\frac{\Delta_z\eta}{1+2\eta} $ and Taylor expanding we obtain that
\begin{equation*}
\begin{aligned}
\sK_0 \pare{\frac{\Delta_z\eta}{1+2\eta}} = & \ 2 \underbrace{- 2\frac{\eta_x^2}{\pare{1+2\eta}^2 + \eta_x^2}}_{\eqdef \sK^0\pare{\eta; x}} + 4\frac{\pare{1+2\eta}^2\eta_x\eta_{xx}}{\pare{\pare{1+2\eta}^2 + \eta_x^2}^2} z + V^2\pare{\eta;x,z}
,
\\
\sK'_0 \pare{\frac{\Delta_z\eta}{1+2\eta}}
 = & \
 \underbrace{-\frac{4\pare{1+2\eta}\eta_x}{\pare{\pare{1+2\eta}^2 + \eta_x^2}^2}}_{\eqdef \sK^{\prime 0}\pare{\eta;x}}
 +
 2\frac{\pare{\pare{1+2\eta}^2 -3\eta_x^2}\pare{1+2\eta}^3\eta_{xx}}{\pare{\pare{1+2\eta}^2 + \eta_x^2}^3}
 z + V^2\pare{\eta;x,z}
,
\\
 \left. \partial_z \sK_z\pare{ \frac{\Delta_x \eta}{1+2\eta} }\right|_{z=0} \defeq & -\frac{4\pare{1+2\eta}\eta_x^3}{\pare{\pare{1+2\eta}^2+\eta_x^2}^2}
 +
 R_0 \pare{\left. \partial_z \sK_z\pare{ \frac{\Delta_x \eta}{1+2\eta} }\right|_{z=0} ;x}\pare{z}
 , 
 \\
 \left. \partial_z \sK'_z\pare{ \frac{\Delta_x \eta}{1+2\eta} }\right|_{z=0} \defeq & \ 
 -\frac{4\pare{1+2\eta}^2\eta_x^2\pare{3\pare{1+2\eta}^2-\eta_x^2}}{\pare{\pare{1+2\eta}^2+\eta_x^2}^3}+
 R_0 \pare{\left. \partial_z \sK'_z\pare{ \frac{\Delta_x \eta}{1+2\eta} }\right|_{z=0} ;x}\pare{z} 
 , 
\end{aligned}
\end{equation*}
so that defining
\begin{align*}
\sK^1\pare{\eta;x}\defeq & \ \frac{4\pare{1+2\eta}^2\eta_x\eta_{xx}}{\pare{\pare{1+2\eta}^2 + \eta_x^2}^2} -\frac{4\pare{1+2\eta}\eta_x^3}{\pare{\pare{1+2\eta}^2+\eta_x^2}^2}=\frac{4\pare{1+2\eta}\eta_x\pare{\pare{1+2\eta}\eta_{xx}-\eta_x^2}}{\pare{\pare{1+2\eta}^2+\eta_x^2}^2},
\\
\sK^{\prime 1}\pare{\eta;x}\defeq & \ \frac{2\pare{\pare{1+2\eta}^2 -3\eta_x^2}\pare{1+2\eta}^3\eta_{xx}-4\pare{1+2\eta}^2\eta_x^2\pare{3\pare{1+2\eta}^2-\eta_x^2}}{\pare{\pare{1+2\eta}^2 + \eta_x^2}^3},
\end{align*}
which are analytic applications w.r.t. $\eta$ small in $W^{2, \infty}$, proving \eqref{eq:Taylor_expansion_kernels0} and \eqref{eq:sK0}.  \qed

\subsection{Paralinearization of $ \mathpzc{D}_0\pare{\eta} $ }\label{para:D0}

Here we perform the paralinearization of the nonlinear operator $\mathpzc{D}_0(\eta)$ introduced in \eqref{eq:integral_operators_etaomega}.
\begin{proposition}\label{lem:D0_paralinearization}
Let $ N\in \mathbb{N}$ and $ \varrho \geqslant 0 $, for any $ K\in \mathbb{N}$ there exists $ s_0 > 0 $ and $ \epsilon_0 > 0 $ such that if 
$ \eta, g \in \BallR{K}{s_0}$  
and  $ \mathpzc{D}_0\pare{\eta} $ be as in \eqref{eq:integral_operators_etaomega}, we have that

\begin{enumerate}

\item  \label{item:D0_paralinearization_1}
\begin{equation}\label{eq:D0_paralinearization}
\begin{aligned}
    \mathpzc{D}_0\pare{\eta}g &=  \OpBW{ -\ii \ \frac{\sJ^0\pare{\eta;x}}{r^2} \sgn{\xi} + A_{\bra{-2}}\pare{\eta;x, \xi}}g 
\\
&\quad+
\OpBW{\frac{g}{r^4}\sJ^{\prime 0}\pare{\eta; x} \av{\xi}  + A_{\bra{0}}\pare{\eta, g;x, \xi}} \eta
+ {\bm R}\pare{\eta, g}\vect{\eta}{g}\cdot\vect{1}{1} ,
\end{aligned}
\end{equation}
where
\begin{align}
 \sJ^0\pare{\eta;x}&\defeq  \frac{2\eta_x\pare{1+2\eta}}{\pare{1+2\eta}^2+\eta_x^2}\in \cF^{\mathbb{R}}_{K, 0, 1}\bra{\epsilon_0, N} ,
   \label{eq:sJ0}\\
  \sJ^{\prime 0}\pare{\eta;x}&\defeq\frac{2\pare{1+2\eta}^2\pare{\pare{1+2\eta}^2-\eta_x^2}}{\pare{\pare{1+2\eta}^2+\eta_x^2}^2}\in \cF^{\mathbb{R}}_{K, 0, 0}\bra{\epsilon_0, N},\label{eq:sJprime0}
 \end{align}
$ A_{\bra{m}}\pare{\eta, g;x, \xi}\in \Sigma\Gamma^{m} _{K, 0, 1}\bra{\epsilon_0, N} $ and
$ {\bm R}\pare{\eta, g} \in \pare{ \Sigma\cR^{-\vr} _{K, 0, 1}\bra{\epsilon_0, N} }^{2\times 2} $.

\item \label{item:D0_paralinearization_2}
\begin{equation}\label{eq:D0_paralinearization_2}
\mathpzc{D}_0\pare{\eta}\bra{1} = 1 + \OpBW{ -\frac{2}{\pare{1+2\eta}^2} +  \frac{\sJ^{\prime 0}\pare{\eta;x}}{r^4} \ \av{\xi} + A_{\bra{0}}\pare{\eta;x,\xi}}\eta + R\pare{\eta}\eta \ ,
\end{equation}
where
\begin{itemize}

\item $ \sJ^{\prime 0}\pare{\eta ; x} $  is explicitly defined in \eqref{eq:sJprime0}, and is such that $ \sJ^{\prime 0}\pare{0;x} \equiv 2 $;


\item
$ A_{\bra{0}}\pare{\eta ;x, \xi}\in \Sigma\Gamma^{0} _{K, 0, 1}\bra{\epsilon_0, N} $;

\item
$ R\pare{\eta} \in \Sigma\cR^{-\vr} _{K, 0, 1}\bra{\epsilon_0, N} $. 
\end{itemize}

\end{enumerate}

\end{proposition}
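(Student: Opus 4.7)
The argument parallels closely the proof of Proposition \ref{lem:H0_paralinearization}, so the plan is to mimic that proof, replacing the kernel $\sG_z$ by the kernel associated to $\mathpzc{D}_0(\eta)$. First I would rewrite, using the change of variables $y = x-z$ and the notation \eqref{eq:auxiliary},
\begin{equation*}
\mathpzc{D}_0(\eta) g(x) = \intT \frac{1}{r^2(x)} \, \sF_z\!\left(\frac{\delta_z \eta}{r^2(x)}\right) g(x-z)\, \dd z,
\qquad
\sF_z(\sX) \triangleq \frac{1 - \sqrt{1-2\sX}\cos z}{1-\sX-\sqrt{1-2\sX}\cos z},
\end{equation*}
which is the exact analogue of \eqref{eq:H0_1} (note that $\sF_z(0) \equiv 1$, so the ``free'' part $\int r^{-2} g(x-z) \dd z$ reproduces essentially the average contribution that gives the leading ``$1$'' in part \ref{item:D0_paralinearization_2}). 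Then I would isolate the $\eta$-dependent perturbation $\sF_z(\delta_z\eta/r^2)-1$ and produce a desingularized kernel by setting, in analogy with \eqref{eq:Kz}, $\sL_z(\sX) \triangleq \bigl[\sF_z\bigl(\sX \cdot 2\sin(z/2)\bigr) - 1\bigr]\cdot 2\tan(z/2)$, so that the singular factor $1/(2\tan(z/2))$ comes out as in \eqref{eq:H0_2}.

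The next step would be to prove the analogue of Lemma \ref{lem:characterization_Kernels}: namely that $\sL_z\bigl(\tfrac{\Delta_z\eta}{1+2\eta}\bigr)$ and $\sL_z'\bigl(\tfrac{\Delta_z\eta}{1+2\eta}\bigr)$ are Kernel-functions in $\Sigma\KF^{0}_{K,0,1}[\epsilon_0,N]$, and to compute their Taylor expansions in $z$ at $z=0$. This is where the explicit constants $\sJ^0$ and $\sJ^{\prime 0}$ in \eqref{eq:sJ0}--\eqref{eq:sJprime0} will emerge: expanding $\sL_0(\sX) = \sF_0(\sX) \cdot (\text{desing. factor})$ via $1-\sqrt{1-2\sX} = \sX + \tfrac{\sX^2}{2}+O(\sX^3)$ and collecting the $\sX$-coefficient against $\tfrac{\delta_z\eta}{r^2}|_{z=0} = \eta_x/r^2$ (times the Jacobian factor from the change $\sX \mapsto 2\sin(z/2)\sX$) gives exactly the functions $\sJ^0(\eta;x)$ and $\sJ^{\prime 0}(\eta;x)$. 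The required analyticity bounds and homogeneous/non-homogeneous splittings follow verbatim from the argument at page~\pageref{sec:characterization_Kernels}, using just that $\sF_z$ is analytic in a neighborhood of $(z,\sX)\in\mathbb{T}\times(-1/4,1/4)$.

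Once the kernel is controlled, I would apply the chain: Bony paraproduct decomposition (Lemma \ref{lemma Bonyprod}) to split the product $\bigl[\sL_z(\cdot) - \text{const}\bigr]\cdot g(x-z)$ into paradifferential pieces and smoothing remainders, Bony paralinearization (Lemma \ref{lemma paralin}) applied to $\sL_z'$ composed with $\delta_z\eta/r^2$, the composition results of Proposition \ref{prop compo zdepop} to absorb lower-order terms into Kernel-smoothing operators in $\Sigma\KR^{-\vr,0}_{K,0,1}[\epsilon_0,N]$, and finally Proposition \ref{prop:reminders_integral_operator} together with Lemma \ref{lemma actzsmoo} to integrate against $\tfrac{1}{2\tan(z/2)}$ and $\tfrac{1}{4\sin^2(z/2)}$, identifying the resulting Fourier multipliers (of order $-\ii\sgn\xi$ and $|\xi|$ respectively) with the principal symbols appearing in \eqref{eq:D0_paralinearization}. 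Proposition \ref{prop compBW} then produces the explicit asymptotic expansion up to order $A_{[-2]}$ and $A_{[0]}$, while the lower-order terms collect into ${\bm R}(\eta,g)$.

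The proof of part \ref{item:D0_paralinearization_2} would proceed identically by taking $g\equiv 1$: as in the derivation of \eqref{eq:H0[1]_paralinearization} starting from \eqref{eq:H0gamma_-1}, one Taylor expands $\sL_z'\bigl(\Delta_z\eta/r^2\bigr)$ at $\eta=0$ and integrates. The constant $\mathpzc{D}_0(0)[1] = 1$ comes from the free term, the factor $-\tfrac{2}{(1+2\eta)^2}\,\eta$ comes from paralinearizing the prefactor $r^{-2}$ in front (and using $\sF_z(0) = 1$ integrates to a non-trivial average), while the $|\xi|$-part is produced by the same singular integral $\fint \tfrac{\delta_z\eta}{4\sin^2(z/2)}\dd z = |D|\eta$ used in \eqref{eq:H0gamma_4}; the coefficient $\sJ^{\prime 0}(\eta;x)/r^4$ is read off from the $z=0$ Taylor coefficient of $r^{-2}\sL_z'(\Delta_z\eta/r^2)$. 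The main obstacle throughout is purely bookkeeping: verifying that $\sF_z(\delta_z\eta/r^2)-1$ (and its $\sX$-derivative) indeed satisfies the bounds \eqref{estim homkernel}--\eqref{estim HOMkernel2} of a Kernel-function in $\Sigma\KF^{0}_{K,0,1}[\epsilon_0,N]$ after desingularization, but this follows by the same analyticity estimates used for $\sK_z$.
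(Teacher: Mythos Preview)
Your plan is correct and follows essentially the same route as the paper's proof. The only cosmetic difference is that the paper builds the $2\sin(z/2)$ factor directly into the kernel by setting $\sH_z(\sX)\triangleq \sF_z(\sX)\cdot 2\sin(z/2)$ and desingularizes via $\sJ_z(\sX)\triangleq \sH_z(\sX\cdot 2\sin(z/2))$, extracting $\tfrac{1}{2\sin(z/2)}$ as the singular factor rather than your $\tfrac{1}{2\tan(z/2)}$; since these two kernels differ by a smooth function near $z=0$, the resulting principal symbols and remainder structure are identical.
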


\begin{proof}
\begin{proofpart}[Proof of \Cref{lem:D0_paralinearization}, \cref{item:D0_paralinearization_1}]
Let us recall that $ \mathpzc{D}_0\pare{\eta} $ is defined in \eqref{eq:integral_operators_etaomega}. 
We proceed analogously as in the previous section. We have, using the auxiliary functions defined in \eqref{eq:auxiliary} and the fact that $ g $ is of zero average,  that
\begin{equation}\label{eq:D0_1}
\mathpzc{D}_0\pare{\eta}g =\intT \frac{1}{r^2}\pare{ \sH_z \pare{\frac{\delta_z\eta}{r^2}} - \mathsf{H}_z\pare{0} } \frac{g\pare{x-z}}{2\sin\pare{z/2}}\dd z\,, 
\end{equation}
where
\begin{equation*}
\sH_z\pare{\sX}\defeq  \frac{1-\sqrt{1-2\sX}\cos z}{1-\sX - \sqrt{1-2\sX}\cos z}  \ 2\sin\pare{z/2}. 
\end{equation*}
We define
\begin{equation}\label{eq:Jz}
\sJ_z\pare{\sX} \defeq \sH_z\pare{\sX \ 2\sin\pare{z/2}}
. 
\end{equation}

We have the following technical result:

\begin{lemma}\label{lem:characterization_Kernels_J}
Let  $\sJ_z (\sX) $, be as in \cref{eq:Jz}. Then 
\begin{align}\label{eq:Kernel_in_kernelfunction_J}
\sJ_z \pare{\frac{\Delta_z \eta}{1+2\eta}}_{\geqslant1}
\defeq
\sJ_z \pare{\frac{\Delta_z \eta}{1+2\eta}} 
-
2\sin\pare{z/2}
\in 
 \Sigma\KF ^{0}_{K, 0, 1}\bra{\epsilon_0, N} , 
 &&
 \sJ_z' \pare{\frac{\Delta_z \eta}{1+2\eta}}
\in 
 \Sigma\KF ^{0}_{K, 0, 0}\bra{\epsilon_0, N}, 
\end{align}
are Kernel functions, which admit the expansion 
\begin{equation}
  \label{eq:Taylor_expansion_kernels0_J}
  \begin{aligned}
  \sJ_z \pare{\frac{\Delta_z \eta}{1+2\eta}} 
  -
2\sin\pare{z/2}
  = & \  \sJ^0 \pare{\eta;x}
  +\sJ^1\pare{\eta;x}\ 2\sin \pare{ z/2 }
  + V^2 \pare{\eta;x,z}, \\
 \sJ'_z \pare{\frac{\Delta_z \eta}{1+2\eta}} = & \ \sJ^{\prime 0} \pare{\eta;x} 
 +\sJ^{\prime 1 }\pare{\eta;x}\sin z
 + V^2
   \pare{\eta;x,z},  
  \end{aligned}
 \end{equation}
 where $\sJ^{0}\pare{\eta;x},\sJ^{\prime 0}\pare{\eta;x}$ are defined in \eqref{eq:sJ0}-\eqref{eq:sJprime0} and $ \sJ^1\pare{\eta;x}, \sJ^{\prime 1}\pare{\eta;x} \in \Sigma\cF^{\mathbb{R}}_{K, 0, 1}\bra{\epsilon_0, N}.$
\end{lemma}

The proof of \Cref{lem:characterization_Kernels_J} follows exactly the same lines of the proof of \Cref{lem:characterization_Kernels} (cf. page \pageref{sec:characterization_Kernels}) and is thus omitted for the sake of brevity. \\

We apply \cref{lemma Bonyprod} and obtain that
\begin{equation}\label{eq:D0_2}
\begin{aligned}
    \frac{1}{r^2}\sJ_z \pare{\frac{\Delta_z\eta}{r^2}}_{\geqslant1} g\pare{x-z}
&=
\OpBW{\frac{1}{r^2}\sJ_z \pare{\frac{\Delta_z\eta}{r^2}}}_{\geqslant1} g\pare{x-z}
+
\OpBW{g\pare{x-z}}\bra{\frac{1}{r^2}\sJ_z \pare{\frac{\Delta_z \eta}{1+2\eta}}_{\geqslant1}}
\\
&\quad+
R_1 \pare{\frac{1}{r^2}\sJ_z \pare{\frac{\Delta_z \eta}{1+2\eta}}_{\geqslant1}} g\pare{x-z}
+
R_2 \pare{g\pare{x-z}}\bra{\frac{1}{r^2}\sJ_z \pare{\frac{\Delta_z \eta}{1+2\eta}}_{\geqslant1}}.
\end{aligned}
\end{equation}
In view of \eqref{eq:Kernel_in_kernelfunction_J} we can apply \Cref{prop compo zdepop}, \cref{item:MM_int_z,item:MM_ext_z} and obtain that
\begin{align}\label{eq:D0_3}
R_1 \pare{\frac{1}{r^2}\sJ_z \pare{\frac{\Delta_z \eta}{1+2\eta}}_{\geqslant1}} g\pare{x-z}
=
R\pare{\eta, g ;z}g, 
&&
R_2 \pare{g\pare{x-z}}\bra{\frac{1}{r^2}\sJ_z \pare{\frac{\Delta_z \eta}{1+2\eta}}_{\geqslant1}}
=
R\pare{\eta, g ;z}\eta.
\end{align}
Then, we apply \cref{lemma paralin,lemma Bonyprod,prop compBW} in a similar fashion to the procedure detailed in the previous section and obtain that
\begin{equation}\label{eq:D0_4}
\OpBW{g\pare{x-z}}\bra{\frac{1}{r^2}\sJ_z \pare{\frac{\Delta_z \eta}{1+2\eta}}_{\geqslant1}}
=
\OpBW{\frac{g\pare{x-z}}{r^4}\sJ'_z\pare{\frac{\Delta_z\eta}{r^2}}} \Delta_z\eta
+
\OpBW{V^0\pare{\eta, g;x, z}} \eta + R\pare{\eta, g;z}\eta. 
\end{equation}
Next, we use the expansions of \Cref{lem:characterization_Kernels_J} and obtain that
\begin{equation}
\label{eq:D0_5}
\begin{aligned}
\OpBW{\frac{1}{r^2}\sJ_z \pare{\frac{\Delta_z \eta}{1+2\eta}}_{\geqslant1}} g\pare{x-z}
= & \ 
\OpBW{\frac{\sJ^0\pare{\eta;x}}{r^2} + V^1\pare{\eta, g;x, z}}g\pare{x-z} ,
\\
\OpBW{\frac{g\pare{x-z}}{r^4}\sJ'_z\pare{\frac{\Delta_z\eta}{r^2}}} \Delta_z\eta
=
& \
\OpBW{\frac{g}{r^4}\sJ^{\prime 0}\pare{\eta; x}}\Delta_z\eta + \OpBW{V\pare{\eta, g;x} + V^1\pare{\eta, g;x, z}}\delta_z\eta,
\\
\OpBW{V^0\pare{\eta, g;x, z}} \eta 
= & \ 
\OpBW{V\pare{\eta, g;x}+   V^1\pare{\eta, g;x, z}}\eta.
\end{aligned}
\end{equation}
We plug the results in \cref{eq:D0_5,eq:D0_4,eq:D0_3,eq:D0_2,eq:Jz} into \cref{eq:D0_1} and obtain, after an application of \Cref{prop:reminders_integral_operator}, that
\begin{equation*}
\begin{aligned}
    \mathpzc{D}_0\pare{\eta}g &= \OpBW{\frac{\sJ^0\pare{\eta;x}}{r^2}} \intT \frac{g\pare{x-z}}{2\sin\pare{z/2}}\dd z
+
\OpBW{\frac{g}{r^4}\sJ^{\prime 0}\pare{\eta; x}}\intT \frac{\delta_z\eta}{4\sin^2\pare{z/2}}\dd z
\\
&\quad+\OpBW{ A_{\bra{0}} \pare{\eta, g;x, \xi}} \eta + \OpBW{ A_{\bra{-2}}\pare{\eta;x, \xi} } g
+
{\bm R}\pare{\eta, g;z}\vect{\eta}{g}\cdot\vect{1}{1} , 
\end{aligned}
\end{equation*}
from which we conclude after standard symbolic manipulations the identity in \eqref{eq:D0_paralinearization}. 
\end{proofpart}

\begin{proofpart}[Proof of \Cref{lem:D0_paralinearization}, \cref{item:D0_paralinearization_2}]
 Arguing similarly as it was done in order to deduce \eqref{eq:D0_1} and using  \cref{eq:Jz,eq:Taylor_expansion_kernels0_J} we have that
\begin{equation}\label{eq:D0gamma_1}
\mathpzc{D}_0\pare{\eta}\bra{1} = r^{-2} +\intT \frac{1}{r^2}\sJ_z\pare{\frac{\Delta_z\eta}{r^2}}_{\geqslant1}\frac{\dd z}{2\sin\pare{z/2}}\cdot
\end{equation}
We apply \cref{lemma paralin} and obtain that
\begin{equation}\label{eq:D0gamma_2}
\mathpzc{D}_0\pare{\eta}\bra{1} = r^{-2}
+
\intT \frac{1}{r^2}
\set{
\OpBW{\sJ'_z \pare{\frac{\Delta_z\eta}{r^2}}}\bra{\frac{\Delta_z\eta}{r^2}}
+
R\pare{\frac{\Delta_z\eta}{r^2}}\bra{\frac{\Delta_z\eta}{r^2}}
} \frac{\dd z}{2\sin\pare{z/2}}\cdot
\end{equation}
Thus, computations similar to the ones that lead to \eqref{eq:H0gamma_2} give us that
\begin{equation} \label{eq:D0gamma_3}
\intT \frac{1}{r^2}
R\pare{\frac{\Delta_z\eta}{r^2}}\bra{\frac{\Delta_z\eta}{r^2}}
 \frac{\dd z}{2\sin\pare{z/2}} = R \pare{\eta}\eta.
\end{equation}
Next, similar computations to the ones that lead to \eqref{eq:H0gamma_3} give us that
\begin{equation}\label{eq:D0gamma_4}
\begin{aligned}
    \intT  \frac{1}{r^2}\OpBW{\sJ_z' \pare{\frac{\Delta_z\eta}{r^2}}}\bra{\frac{\Delta_z\eta}{r^2}} \frac{\dd z}{2\sin\pare{z/2}}
&=
\intT  \OpBW{r^{-4} \sJ_z' \pare{\frac{\Delta_z\eta}{r^2}}}\bra{\frac{\delta_z\eta}{\pare{ 2\sin\pare{z/2} }^2 }} \dd z
\\
&\quad+\OpBW{V\pare{\eta;x}}\eta + R\pare{\eta}\eta ,
\end{aligned} 
\end{equation}
so that arguing as in order to deduce \eqref{eq:H0gamma_4}
\begin{equation}\label{eq:D0gamma_5}
\begin{aligned}
    &\intT  \OpBW{r^{-4} \sJ_z' \pare{\frac{\Delta_z\eta}{r^2}}}\bra{\frac{\delta_z\eta}{\pare{ 2\sin\pare{z/2} }^2 }} \dd z
\\
&=
\OpBW{r^{-4}\sJ^{\prime 0}\pare{\eta;x}}\av{D}\eta + \OpBW{A_{\bra{0}}\pare{\eta;x,\xi}}\eta + R\pare{\eta}\eta
\\
&= 
\OpBW{r^{-4}\sJ^{\prime 0}\pare{\eta;x}\av{\xi} + A_{\bra{0}}\pare{\eta;x,\xi}}\eta + R\pare{\eta}\eta.
\end{aligned}
\end{equation}Thus, combining \cref{eq:D0gamma_1,eq:D0gamma_2,eq:D0gamma_3,eq:D0gamma_4,eq:D0gamma_5}  and the paralinearization $ r^{-2}=1+\OpBW{-\frac{2}{\pare{1+2\eta}^2}}\eta +R\pare{\eta}\eta $ derived using \Cref{lemma paralin} we obtain \cref{eq:D0_paralinearization_2}. 
\end{proofpart}

\end{proof}

\subsection{Paralinearization of $ \mathpzc{H}\pare{\eta} $}

In view of \eqref{eq:HH0}, putting together the paralinearizations of \Cref{para:H0,para:D0}, we prove the following.

\begin{lemma}\label{lem:paralin_pzcH}
Let $ N\in \mathbb{N}$, $ \upgamma\in \mathbb{R} $ and $ \varrho \geqslant 0 $, for any $ K\in \mathbb{N} $ there exists $ s_0 > 0 $ and $ \epsilon_0 > 0 $ such that if 
$ \eta, g \in \BallR{K}{s_0}$, 
let $ \mathpzc{H}\pare{\eta} $ be as in \eqref{eq:HH0} then we have that
\begin{enumerate}
\item
\begin{multline}\label{eq:Hetaomega_2}
\mathpzc{H}\pare{\eta}g =
\OpBW{-\ii \ \sgn{\xi} + A_{\bra{-2}}\pare{\eta;x, \xi} }g
\\
 + \OpBW{  -\frac{g}{1+2\eta} \ \sJ^0\pare{\eta;x} \ \av{\xi}
+  2 V_0\pare{\eta, g;x} \ii \xi
+ A_{\bra{0}}\pare{\eta,g;x, \xi}
}\eta
+ {\bm R}\pare{\eta, g}\vect{\eta}{g}\cdot\vect{1}{1},
\end{multline}
where
\begin{itemize}
\item $\sJ^0$$\pare{\eta ; x}\in \cF^{\mathbb{R}}_{K, 0, 1}\bra{\epsilon_0, N}$ and is explicitly defined in \eqref{eq:sJ0};

\item $ V_0 \pare{\eta, \partial_x^{-1} g;x}\defeq \frac{1}{2}\mathpzc{D}_0\pare{\eta}g \ \in \Sigma \cF^{\mathbb{R}}_{K, 0, 1}\bra{\epsilon_0, N} $, cf. \eqref{eq:integral_operators_etaomega};



\item
$ A_{\bra{m}}\pare{\eta, g;x, \xi}\in \Sigma\Gamma^{m} _{K, 0, 1}\bra{\epsilon_0, N} $;

\item
$ R\pare{\eta, g} \in \Sigma\cR^{-\vr} _{K, 0, 1}\bra{\epsilon_0, N} $. 
\end{itemize}

\item
\begin{equation}\label{eq:Hetaomega_2.1}
\mathpzc{H}\pare{\eta}\bra{1} =  \OpBW{  -\frac{\sJ^0\pare{\eta;x}}{1+2\eta}  \ \av{\xi}
+\ii \ \tilde V\pare{\eta ;x} \xi
+ A_{\bra{0}}\pare{\eta;x, \xi}
}\eta
+ R\pare{\eta}\bra{\eta},
\end{equation}
where
\begin{itemize}
\item $ \sJ^0\pare{\eta ; x}\in \cF^{\mathbb{R}}_{K, 0, 1}\bra{\epsilon_0, N} $ and is explicitly defined in \eqref{eq:sJ0};

\item $ \tilde V \pare{\eta;x}\defeq \mathpzc{D}_0\pare{\eta}\bra{1} \ \in \Sigma \cF^{\mathbb{R}}_{K, 0, 0}\bra{\epsilon_0, N} $ with $ \tilde{V}\pare{0;x} \equiv 1 $;



\item
$ A_{\bra{0}}\pare{\eta ;x, \xi}\in \Sigma\Gamma^{0} _{K, 0, 1}\bra{\epsilon_0, N} $;

\item
$ R\pare{\eta} \in \Sigma\cR^{-\vr} _{K, 0, 1}\bra{\epsilon_0, N} $. 
\end{itemize}

\end{enumerate}

\end{lemma}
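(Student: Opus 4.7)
The plan is to combine the previously established paralinearizations of $\mathpzc{H}_0(\eta)$ (Proposition \ref{lem:H0_paralinearization}) and $\mathpzc{D}_0(\eta)$ (Proposition \ref{lem:D0_paralinearization}) through the identity \eqref{eq:HH0}, namely $\mathpzc{H}(\eta)g = \eta_x\,\mathpzc{D}_0(\eta)g + \mathpzc{H}_0(\eta)g$. The main work is to multiply the paralinearization of $\mathpzc{D}_0(\eta)g$ by $\eta_x$ using the Bony paraproduct decomposition (Lemma \ref{lemma Bonyprod}) and to verify that the new unbounded contributions combine with those appearing in $\mathpzc{H}_0(\eta)g$ to produce exactly the symbols claimed in \eqref{eq:Hetaomega_2} and \eqref{eq:Hetaomega_2.1}.

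For item 1, I would apply Bony to write $\eta_x \cdot \mathpzc{D}_0(\eta)g = \OpBW{\eta_x}\mathpzc{D}_0(\eta)g + \OpBW{\mathpzc{D}_0(\eta)g}\eta_x$ modulo a smoothing contribution. Substituting \eqref{eq:D0_paralinearization} into the first summand and using the composition formula of Proposition \ref{prop compBW}, one obtains $\OpBW{-\ii \eta_x \sJ^0/r^2\,\sgn\xi}g + \OpBW{\eta_x g\,\sJ^{\prime 0}/r^4\,|\xi|}\eta$ plus lower-order symbols in $\Sigma\Gamma^{-2}_{K,0,1}$, $\Sigma\Gamma^{0}_{K,0,1}$, and smoothing remainders (via Proposition \ref{prop compo mop} for the composition with the $A_{[-2]}$, $A_{[0]}$ and $R$-pieces of $\mathpzc{D}_0(\eta)g$). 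The second summand rewrites as $\OpBW{\mathpzc{D}_0(\eta)g \cdot \ii\xi}\eta = \OpBW{2V_0\,\ii\xi}\eta$ by definition of $V_0$. Adding the paralinearization \eqref{eq:H0_paralinearization} of $\mathpzc{H}_0(\eta)g$, the proof then reduces to two purely algebraic identities that follow by direct computation from \eqref{eq:sK0}, \eqref{eq:sJ0} and \eqref{eq:sJprime0}:
\begin{equation*}
\eta_x \cdot \frac{\sJ^0(\eta;x)}{r^2} = -\sK^0(\eta;x), \qquad \frac{\sK^{\prime 0}(\eta;x)}{1+2\eta} + \eta_x \cdot \frac{\sJ^{\prime 0}(\eta;x)}{r^4} = -\frac{\sJ^0(\eta;x)}{1+2\eta}.
\end{equation*}
The first identity produces the exact cancellation $\OpBW{\ii \sK^0 \sgn\xi}g + \OpBW{-\ii(1+\sK^0)\sgn\xi}g = \OpBW{-\ii\sgn\xi}g$, delivering the top-order $-\ii\sgn\xi$ symbol of \eqref{eq:Hetaomega_2}; the second identity recombines the two $|\xi|\eta$ coefficients into the claimed $-g\,\sJ^0/(1+2\eta)$.

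For item 2, the same strategy applies with $g\equiv 1$: one decomposes $\mathpzc{D}_0(\eta)[1] = \tilde{V}(\eta;x) = 1 + \OpBW{-2/(1+2\eta)^2 + \sJ^{\prime 0}/r^4\,|\xi| + A_{[0]}}\eta + R\eta$ from \eqref{eq:D0_paralinearization_2} and writes $\eta_x\,\tilde{V}(\eta;x) = \OpBW{\eta_x}\tilde{V} + \OpBW{\tilde V}\eta_x$ up to a smoothing term. Using $\eta_x = \OpBW{\ii\xi}\eta$ (up to smoothing from the cut-off), the second summand yields $\OpBW{\tilde{V}\,\ii\xi}\eta$, while the first, combined with \eqref{eq:H0[1]_paralinearization} and the identity above, contributes $\OpBW{-\sJ^0/(1+2\eta)\,|\xi|}\eta$ plus $A_{[0]}$ corrections and smoothing, matching \eqref{eq:Hetaomega_2.1}.

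The main obstacle is not conceptual but book-keeping: one must be meticulous in tracing that every composition of a $\Sigma\cF_{K,0,1}$-paraproduct with a $A_{[-2]}$ or $A_{[0]}$ symbol arising in \eqref{eq:D0_paralinearization} indeed falls within $\Sigma\Gamma^{-2}_{K,0,1}\bra{\epsilon_0, N}$ or $\Sigma\Gamma^{0}_{K,0,1}\bra{\epsilon_0, N}$ respectively, that the Bony remainders $R_1, R_2$ produce genuine smoothing operators in $\Sigma\cR^{-\vr}_{K,0,1}\bra{\epsilon_0, N}$ via Proposition \ref{prop compo mop}-(3), and that the compositions of $\OpBW{\eta_x}$ with the smoothing pieces ${\bm R}$ in \eqref{eq:D0_paralinearization} stay smoothing. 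Once this is tracked, the result follows immediately from the two algebraic identities above.
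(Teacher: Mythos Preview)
Your proposal is correct and follows essentially the same route as the paper: you use the decomposition \eqref{eq:HH0}, plug in the paralinearizations of $\mathpzc{H}_0(\eta)$ and $\mathpzc{D}_0(\eta)$ from Propositions \ref{lem:H0_paralinearization} and \ref{lem:D0_paralinearization}, handle the product $\eta_x\,\mathpzc{D}_0(\eta)g$ via Bony paraproduct and symbolic composition, and close with the two algebraic identities $\sK^0+\frac{\eta_x}{r^2}\sJ^0=0$ and $\sK^{\prime 0}+\frac{\eta_x}{r^2}\sJ^{\prime 0}=-\sJ^0$, which are exactly the ones the paper records. The paper is more terse (it jumps directly to the combined formula \eqref{eq:Hetaomega_1} and omits item~2 entirely), but the underlying computation is identical.
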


\begin{proof}
We use now the expression in \cref{eq:HH0} and \Cref{lem:D0_paralinearization,lem:H0_paralinearization} as well as \Cref{prop compBW} and obtain that
\begin{multline}\label{eq:Hetaomega_1}
\mathpzc{H}\pare{\eta}g =  \OpBW{-\ii \ \pare{1+\sK^0\pare{\eta;x} + \frac{\eta_x}{r^2} \ \sJ^0\pare{\eta;x}}\sgn \xi + A_{\bra{-2}}\pare{\eta;x, \xi} }g
\\
+
\OpBW{  \frac{g}{r^2}\pare{\sK^{\prime 0}\pare{\eta;x}   + \frac{\eta_x}{r^2} \ \sJ^{\prime 0}\pare{\eta;x}}\av{\xi}
+\ii \ V_0\pare{\eta, \partial_x^{-1} g;x}  \xi 
+ A_{\bra{0}}\pare{\eta, g;x, \xi}
}\eta
 + {\bm R}\pare{\eta, g}\vect{\eta}{g}\cdot\vect{1}{1}. 
\end{multline}
Notice that in order to deduce \eqref{eq:Hetaomega_1} we used the fact that $ \mathpzc{D}_0\pare{\eta}g \in \Sigma\cF^{\mathbb{R}}_{K, 0, 1}\bra{\epsilon_0, N} $, which stems immediately from the paralinearization provided in \Cref{lem:D0_paralinearization}, \cref{item:D0_paralinearization_1}, next we relabeled $ \frac{1}{2} \mathpzc{D}_0\pare{\eta}g = V_0\pare{\eta, \partial_x^{-1} g;x} $. 
Next we use \Cref{eq:sK0,eq:sJ0,eq:sJprime0} and obtain that
\begin{equation*}
\begin{aligned}
\sK^0\pare{\eta;x} + \frac{\eta_x}{r^2} \ \sJ^0\pare{\eta;x} = & \ 0, 
\\
\sK^{\prime 0}\pare{\eta;x}  + \frac{\eta_x}{r^2} \ \sJ^{\prime 0}\pare{\eta;x}
= & \ -\sJ^0\pare{\eta;x}. 
\end{aligned}
\end{equation*}
Thus, transforming \eqref{eq:Hetaomega_1} into \eqref{eq:Hetaomega_2}. The proof of \eqref{eq:Hetaomega_2.1} is almost identical to the proof of \eqref{eq:Hetaomega_2} and is hence omitted. 
\end{proof}

\subsection{Proof of \Cref{prop:paralinearizationKH}}

We can finally paralinearize \cref{eq:KH3}.
%
%
We use \Cref{eq:Hetaomega_2,eq:Hetaomega_2.1}, we set $ g = \partial_x \psi $ and \Cref{prop compBW} in order to obtain, from \eqref{eq:KH3}, that
\begin{equation}\label{eq:paralinearization_eq_eta}
\begin{aligned}
    \eta_t &= \OpBW{ -\frac{\av{\xi}}{2} +  A_{\bra{-2}}\pare{\eta;x, \xi} }\psi
\\
&\quad+\OpBW{\frac{1}{2}B_\upgamma \pare{\eta, \psi;x}\av{\xi} - \ii \ V_\upgamma \pare{\eta, \psi;x}\xi + A_{\bra{0}}\pare{\eta, \psi;x, \xi}}\eta
+  R\pare{\eta, \psi} \bra{\eta+\psi},  
\end{aligned}
\end{equation}
where
\begin{align}\label{eq:B}
B_\upgamma  \pare{\eta, \psi;x}\defeq
B_0  \pare{\eta, \psi;x} + \upgamma \tilde{B} \pare{\eta;x}, 
&&
B_0  \pare{\eta, \psi;x} \defeq
 \frac{ \psi_x}{1+2\eta} \sJ^0\pare{\eta;x}, 
&&
\tilde{B} \pare{\eta;x}
\defeq
\frac{\sJ^0\pare{\eta;x}}{1+2\eta}  
\end{align}
and 
$$
V_\upgamma\pare{\eta, \psi;x}\defeq\frac{1}{2}\mathpzc{D}_0\pare{\eta}\bra{\upgamma + \psi_x}-\frac{\upgamma}{2}.
$$ 
Let us now paralinearize the term
\begin{equation*}
-\frac{\upgamma + \psi_x}{2} \mathpzc{D}_0\pare{\eta}\bra{\upgamma + \psi_x}. 
\end{equation*}
We apply \Cref{lemma Bonyprod} and obtain that
\begin{equation}\label{eq:PT_psi_-1}
\begin{aligned}
    -\frac{ \psi_x}{2} \mathpzc{D}_0\pare{\eta}\bra{ \upgamma + \partial_x \psi} 
&=
 \OpBW{-\frac{\psi_x}{2}} \bra{\mathpzc{D}_0\pare{\eta}\bra{ \upgamma + \partial_x \psi}}
-
\OpBW{\frac{\mathpzc{D}_0\pare{\eta} \bra{ \upgamma + \partial_x \psi}}{2}} \bra{\psi_x}
\\
&\quad+ R_1 \pare{-\frac{\psi_x}{2}} \bra{\mathpzc{D}_0\pare{\eta} \bra{ \upgamma + \partial_x \psi}}
+
R_2 \pare{\mathpzc{D}_0\pare{\eta} \bra{ \upgamma + \partial_x \psi}} \bra{-\frac{\psi_x}{2}}. 
\end{aligned}
\end{equation}
Recall  that $ \frac{\mathpzc{D}_0\pare{\eta} \bra{ \upgamma + \partial_x \psi}}{2}  = V_\upgamma \pare{\eta, \psi;x} $, so that we can apply \Cref{prop compo mop,prop compBW} and obtain that 
\begin{equation}
\label{eq:PT_psi_0}
\begin{gathered}
\OpBW{ - \frac{\mathpzc{D}_0\pare{\eta} \bra{ \upgamma + \partial_x \psi}}{2}} \bra{\psi_x}
=  \
\OpBW{- \ii \ V_\upgamma \pare{\eta, \psi;x}\xi} \psi + R\pare{\eta, \psi}\psi, 
\\
 R_1 \pare{-\frac{\psi_x}{2}} \bra{\mathpzc{D}_0\pare{\eta} \bra{ \upgamma + \partial_x \psi}}
+
R_2 \pare{\mathpzc{D}_0\pare{\eta} \bra{ \upgamma + \partial_x \psi}} \bra{-\frac{\psi_x}{2}}
=  \
{\bm R}\pare{\eta, \psi}\vect{\eta}{\psi}\cdot\vect{1}{1}. 
\end{gathered}
\end{equation}
Next we apply the paralinearization stated in \Cref{lem:D0_paralinearization} and the composition theorems in \Cref{prop compo mop,prop compBW} and obtain that
\begin{multline}\label{eq:PT_psi_1}
 \OpBW{-\frac{\psi_x}{2}} \bra{\mathpzc{D}_0\pare{\eta} \bra{\upgamma + \partial_x \psi}}
 =
 \OpBW{-\frac{1}{2}\frac{\psi_x}{1+2\eta} \ \sJ^0\pare{\eta;x}\av{\xi} + A_{\bra{-2}}\pare{\eta;x, \xi}}\psi
 \\
 +\OpBW{-\frac{1}{2}\frac{\psi_x\pare{\upgamma + \psi_x}}{1+2\eta}\sJ^{\prime 0}\pare{\eta; x}\av{\xi} + A_{\bra{0}}\pare{\eta, \psi;x, \xi}}\eta
 +
 {\bm R}\pare{\eta, \psi}\vect{\eta}{\psi}\cdot\vect{1}{1}, 
\end{multline}
so, using \eqref{eq:B}, we obtain that
\begin{multline}\label{eq:PT_psi_1.1}
 \OpBW{-\frac{\psi_x}{2}} \bra{\mathpzc{D}_0\pare{\eta} \bra{\upgamma + \partial_x \psi}}
 =
 \OpBW{-\frac{1}{2}B_0\pare{\eta, \psi;x}\av{\xi} + A_{\bra{-2}}\pare{\eta;x, \xi}}\psi
 \\
 +\OpBW{-\frac{1}{2}\frac{\psi_x\pare{\upgamma + \psi_x}}{1+2\eta}\sJ^{\prime 0}\pare{\eta; x}\av{\xi} + A_{\bra{0}}\pare{\eta, \psi;x, \xi}}\eta
 +
 {\bm R}\pare{\eta, \psi}\vect{\eta}{\psi}\cdot\vect{1}{1}. 
\end{multline}
We invoke now \Cref{lem:D0_paralinearization} and obtain, using the notation introduced in \eqref{eq:B}, that
\begin{multline}\label{eq:PT_psi_1.2}
-\frac{\upgamma}{2}\mathpzc{D}_0\pare{\eta}\bra{\upgamma + \psi_x}
=
\OpBW{-\frac{\upgamma}{2} \tilde{B}\pare{\eta;x} \av{\xi} + A_{\bra{-2}}\pare{\eta;x, \xi}}\psi
\\
-\frac{\upgamma^2}{2}
+\OpBW{ \frac{\upgamma^2}{\pare{1+2\eta}} -\frac{\upgamma}{2}\frac{\upgamma + \psi_x}{\pare{1+2\eta}^2} \sJ^{\prime 0}\pare{\eta;x} \av{\xi} + A_{\bra{0}}\pare{\eta, \psi;x, \xi}} \eta
+{\bm R}\pare{\eta, \psi}\vect{\eta}{\psi}\cdot\vect{1}{1}. 
\end{multline}
Notice that using \cref{eq:PT_psi_0,eq:PT_psi_-1} we obtain that
\begin{equation}
\label{eq:PT_psi_1.3}
\begin{aligned}
    -\frac{\upgamma + \psi_x}{2} \mathpzc{D}_0\pare{\eta}\bra{\upgamma + \psi_x}
&=
-\frac{\upgamma}{2}\mathpzc{D}_0\pare{\eta}\bra{\upgamma + \psi_x}
+
\OpBW{-\frac{\psi_x}{2}} \bra{\mathpzc{D}_0\pare{\eta} \bra{\upgamma + \partial_x \psi}}
\\
&\quad+ \OpBW{- \ii \ V_\upgamma \pare{\eta, \psi;x}\xi} \psi + {\bm R}\pare{\eta, \psi}\vect{\eta}{\psi}\cdot\vect{1}{1}.
\end{aligned}
\end{equation}
From \Cref{eq:sJ0,eq:sJprime0} we obtain the relation
\begin{equation}\label{eq:sJ0-sJ0prime}
\sJ^{\prime 0}\pare{\eta;x}
{=\frac{2\pare{1+2\eta}^4}{\pare{\pare{1+2\eta}^2+\eta_x^2}^2} - \pare{\sJ^0\pare{\eta;x}}^2}
\end{equation}
so that using \cref{eq:B,eq:sJ0-sJ0prime,eq:PT_psi_1.1,eq:PT_psi_1.2}  and denoting with
\begin{align}\label{eq:W}
W_\upgamma\pare{\eta, \psi;x}
\defeq
W_0\pare{\eta, \psi;x} + \upgamma\tilde{W}\pare{\eta;x}\in \Sigma \cF^{\mathbb{R}}_{K, 0, 0}\bra{\epsilon_0, N}, 
\end{align}
where
\begin{align}\label{eq:W0}
W_0\pare{\eta, \psi;x}
\defeq\frac{\psi_x\pare{1+2\eta}}{\pare{1+2\eta}^2+\eta_x^2}\in \Sigma \cF^{\mathbb{R}}_{K, 0, 1}\bra{\epsilon_0, N}, 
&&
\tilde W\pare{\eta;x}
\defeq\frac{1+2\eta}{\pare{1+2\eta}^2+\eta_x^2}\in \Sigma \cF^{\mathbb{R}}_{K, 0, 0}\bra{\epsilon_0, N},
\end{align}
 we obtain that
\begin{multline}\label{eq:PT_psi_2}
-\frac{\upgamma}{2}\mathpzc{D}_0\pare{\eta}\bra{\upgamma + \psi_x}
+
\OpBW{-\frac{\psi_x}{2}} \bra{\mathpzc{D}_0\pare{\eta} \bra{\upgamma + \partial_x \psi}}
 =
 -\frac{\upgamma^2}{2}+
  \OpBW{-\frac{1}{2}B_\upgamma \pare{\eta, \psi;x}\av{\xi} + A_{\bra{-2}}\pare{\eta;x, \xi}}\psi
 \\
  +\OpBW{\frac{\upgamma^2}{\pare{1+2\eta}} -\frac{1}{2}\pare{W_\upgamma^2\pare{\eta, \psi;x} - B_\upgamma ^2\pare{\eta, \psi;x}}\av{\xi} + A_{\bra{0}}\pare{\eta, \psi;x, \xi}}\eta
 +
 {\bm R}\pare{\eta, \psi}\vect{\eta}{\psi}\cdot\vect{1}{1}.
\end{multline}
We thus insert \eqref{eq:PT_psi_2} in \eqref{eq:PT_psi_1.3} and obtain that
\begin{equation}\label{eq:paralin_quasilin_psi}
\begin{aligned}
-\frac{\upgamma + \psi_x}{2} \mathpzc{D}_0\pare{\eta}\bra{\upgamma + \psi_x}
=
& \ 
-\frac{\upgamma^2}{2}+
\OpBW{-\frac{1}{2}B_\upgamma\pare{\eta, \psi;x}\av{\xi} - \ii \ V_\upgamma\pare{\eta, \psi;x}\xi + A_{\bra{-2}}\pare{\eta;x, \xi}}\psi
 \\
 & \  +\OpBW{\frac{\upgamma^2}{\pare{1+2\eta}} -\frac{1}{2}\pare{W_\upgamma^2\pare{\eta, \psi;x}  - B_\upgamma^2\pare{\eta, \psi;x}}\av{\xi} + A_{\bra{0}}\pare{\eta, \psi;x, \xi}}\eta
\\
& \  +
 {\bm R}\pare{\eta, \psi}\vect{\eta}{\psi}\cdot\vect{1}{1}.
\end{aligned}
\end{equation}
An iterated application of \Cref{lemma paralin} give us that
\begin{equation}\label{eq:paralin_curvature}
    \begin{aligned}
-\kap \mathpzc{K}\pare{\eta}& = \OpBW{\kap\pare{1+\mathtt{f}\pare{\eta;x}}\pare{ \av{\xi}^2-1 } }\eta + R\pare{\eta}\eta,\\
\mathtt{f}\pare{\eta;x}&\triangleq \pare{\frac{1+2\eta}{\pare{1+2\eta}^{2}+\eta_x^2}}^{\frac{3}{2}}-1\in \Sigma\cF^{\mathbb{R}}_{K,0,1}\bra{\epsilon_0, N}.
\end{aligned}
\end{equation}
We can thus now plug \cref{eq:paralin_curvature,eq:paralin_quasilin_psi} in the second equation of \eqref{eq:KH3} (recall that $\Omega$ is set in \eqref{eq:Omega_def} in order to cancel the 0-homogeneous components of the transport) and obtain that
\begin{equation}\label{eq:paralin_eq_psi}
\begin{aligned}
    \psi_t &= 
\OpBW{-\frac{1}{2}B_\upgamma\pare{\eta, \psi;x}\av{\xi} - \ii \ V_\upgamma\pare{\eta, \psi;x}\xi + A_{\bra{-2}}\pare{\eta;x, \xi}}\psi
\\
&\quad+\OpBW{\kap\pare{1+\mathtt{f}\pare{\eta;x}}\pare{ \av{\xi}^2-1 } + \frac{\upgamma^2}{\pare{1+2\eta}} -\frac{1}{2}\pare{W_\upgamma^2\pare{\eta, \psi;x}  - B_\upgamma^2\pare{\eta, \psi;x}}\av{\xi} + A_{\bra{0}}\pare{\eta, \psi;x, \xi}}\eta
\\
  &\quad+
 {\bm R}\pare{\eta, \psi}\vect{\eta}{\psi}\cdot\vect{1}{1}. 
\end{aligned}
\end{equation}
Finally, we combine \cref{eq:paralin_eq_psi,eq:paralinearization_eq_eta} to obtain \eqref{eq:KH4} after a renaming, if needed.

\section{Complex Hamiltonian formulation of the Kelvin-Helmholtz equations}\label{sec:complex}
We begin with the real Hamiltonian system \eqref{eq:KH_Hamiltonian}, and introduce the following associated complex variables
\begin{align}\label{change complex}
    \vect{\eta}{\psi}\defeq \mathpzc{C}\vect{v}{\bar v} \ , 
    &&
    \mathpzc{C} \defeq \frac{1}{\sqrt{2}}
    \begin{bmatrix}
        1&1\\-\ii & \ii
    \end{bmatrix} \ , 
    &&
    \mathpzc{C}^{-1} = \frac{1}{\sqrt{2}}
    \begin{bmatrix}
        1&\ii\\1 & -\ii
    \end{bmatrix}. 
\end{align}
Under this change of variables \eqref{eq:KH_Hamiltonian} is equivalent to
\begin{equation}\label{eq change complex}
    \vect{v_t}{\bar v_t} = \mathpzc{C}^{-1} {\bm J} \nabla_{\pare{\eta, \psi}} H\pare{\mathpzc{C}\vect{v}{\bar v}}.
\end{equation}
Defining $H_\mathbb{C}\defeq H\circ\mathpzc{C}$ and noting that
\begin{equation*}
    \vect{\partial_\eta }{\partial_\psi} = \frac{1}{\sqrt{2}}
    \begin{bmatrix}
        1&1\\\ii & -\ii
    \end{bmatrix}
    \vect{\partial_v}{\partial_{\bar v}}, 
\end{equation*}
we obtain that \eqref{eq change complex} can be written as an Hamiltonian system in the complex  coordinates \eqref{change complex} as
\begin{align}\label{new complexHAM}
   V_t = \JC \nabla_{U} H_\mathbb{C} \pare{V}, \qquad
    V\defeq  \vect{v}{\bar v}, 
\end{align}
where the {\it complex Poisson tensor} (cf. \eqref{eq:KH_Hamiltonian}) is defined as
\begin{equation}\label{def bfJC}
{\bm J}_\mathbb{C} \defeq -\ii {\bm J} 
=
\begin{bmatrix}
    0 & \ii \\-\ii & 0
\end{bmatrix}
. 
\end{equation}
Next we define the Fourier symbol
\begin{equation}
\begin{aligned}
&\mathsf{m}_{\kap, \upgamma}\pare{{\xi}}\defeq
\sqrt{\frac{\av{\xi}}{2\omega_{\kap, \upgamma}\pare{ \xi }}}
=
\sqrt[4]{\frac{\av{\xi}}{2\pare{ \kap\pare{\av{\xi}^2 -1} - \upgamma^2\pare{\frac{{|\xi|}}{2}-1} }}}
\quad \in \tilde{\Gamma}^{-\frac{1}{4}}_0,\\
&\mathsf{m}_{\kap, \upgamma}\pare{{\xi}}^{-1}= \sqrt[4]{\frac{2\pare{ \kap\pare{{\xi}^2 -1} - \upgamma^2\pare{\frac{{|\xi|}}{2}-1} }}{{|\xi|}}}\quad \in \tilde{\Gamma}^{\frac{1}{4}}_0.
\end{aligned}
\label{eq:msigmagamma}
\end{equation}
Then, we consider the symplectic matrix
\begin{equation}\label{def bmS}
    {\bm S}_{\kap,\upgamma}(D)\triangleq{
\begin{bmatrix}
\mathsf{m}_{\kap, \upgamma}\pare{ {D}} &0
\\
0 & \mathsf{m}_{\kap, \upgamma}\pare{ {D}}^{-1}
\end{bmatrix}
}\qquad\textnormal{and}\qquad{\bm\sM} _{\kap, \upgamma}\pare{ {D}}\triangleq{\bm S}_{\kap,\upgamma}(D)\circ\mathcal{C}.
\end{equation}
Notice that
\begin{equation}\label{eq:sM}
\begin{aligned}
{\bm\sM} _{\kap, \upgamma}\pare{ {\xi}} \defeq &\  \frac{1}{\sqrt{2}}
\bra{
\begin{array}{cc}
\mathsf{m}_{\kap, \upgamma}\pare{ {\xi}} & \mathsf{m}_{\kap, \upgamma}\pare{ {\xi}} \\
- \ii \mathsf{m}_{\kap, \upgamma}\pare{ {\xi}}^{-1} &  \ii \mathsf{m}_{\kap, \upgamma}\pare{ {\xi}}^{-1}
\end{array}
}
& \in \pare{ \tilde{\Gamma}^{1/4}_0 }^{2\times 2},
\\
{\bm\sM}_{\kap, \upgamma}^{-1} \pare{ {\xi}}
\defeq & \ 
\frac{1}{\sqrt{2}}
\bra{
\begin{array}{cc}
\mathsf{m}_{\kap, \upgamma}\pare{ {\xi}}^{-1} & \ii \ \mathsf{m}_{\kap, \upgamma}\pare{ {\xi}}
\\
\mathsf{m}_{\kap, \upgamma}\pare{ {\xi}}^{-1} & -\ii \ \mathsf{m}_{\kap, \upgamma}\pare{ {\xi}}
\end{array}
}
& \in \pare{ \tilde{\Gamma}^{1/4}_0 }^{2\times 2}.
\end{aligned}
\end{equation}
We define the {\it complex coordinates} $ U $ as
\begin{align}\label{eq:complex_var}
U\defeq\vect{u}{\bar{u}} \defeq  {\bm\sM}_{\kap, \upgamma}^{-1} \pare{ {D}}\vect{\eta}{\psi}=\mathcal{C}^{-1}\circ{\bm S}_{\kap,\upgamma}^{-1}(D)\vect{\eta}{\psi} . 
\end{align}

\begin{notation}\label{notation:A_refugium_peccatorum}
Let $ K\in \mathbb{N}$, $ \epsilon_0 > 0 $, $ m\in \mathbb{R} $, $ N\in \mathbb{N}$ and $ 0\leqslant K'\leqslant K $. {We work on a time interval $I=[0,T]$ for some fixed $T>0$ to be determined. This latter is a priori implicit but will correspond to $c\varepsilon^{-(N+1)}.$} 
Moreover, from now on we denote with
\begin{itemize}
\item $ A_{\bra{m;K'}}\pare{U; x, \xi} $  any generic element in the space $ \Sigma\Gamma^{m}_{K, K', 1 }\bra{\epsilon_0, N} $, while $  {\bm A}_{\bra{m;K'}}\pare{U; x, \xi}  $ is a generic element of $ \pare{\Sigma\Gamma^{m}_{K, K', 1 }\bra{\epsilon_0, N}}^{2\times 2} $ such that $ \JC \OpBW{{\bm A}_{\bra{m;K'}}\pare{U;x, \xi}} $ is lineary Hamiltonian up to homogeneity $ N $ (cf. \Cref{def:LinHamup}), whose explicit expression may vary from line to line. We use as well the simplified notation $ A_{\bra{m;0}}\pare{U; x, \xi} \defeq A_{\bra{m}}\pare{U; x, \xi} $ and $  {\bm A}_{\bra{m;0}}\pare{U; x, \xi} \defeq  {\bm A}_{\bra{m}}\pare{U; x, \xi} $; 

\item $ R_{\bra{K'}}\pare{U} $ and $  {\bm R}_{\bra{K'}}\pare{U} $ any generic element in the space $ \Sigma\cR^{-\vr}_{K, K', 1 }\bra{\epsilon_0, N} $ and $ \pare{\Sigma\cR^{-\vr}_{K, K', 1 }\bra{\epsilon_0, N}}^{2\times 2} $ respectively, whose explicit expression may vary from line to line. We denote as well $ R_{\bra{0}}\pare{U} \defeq R\pare{U} $ and $ {\bm R}_{\bra{0}}\pare{U}\defeq {\bm R}\pare{U}.$
\end{itemize}
\end{notation}

We prove the following:

\begin{prop}[Kelvin-Helmholtz equations in complex Hamiltonian coordinates] \label{prop:KH_complex}
Let $ N\in \mathbb{N} $, $\kap > 0$, $ \upgamma\in \mathbb{R} $ and $ \varrho \geqslant 0 $, for any $ K\in \mathbb{N}$ there exists $ s_0 >0 $ and $ \epsilon_0 > 0 $ such that if $\eta, \psi \in B^K_{s_0, \mathbb{R}}\pare{I;\epsilon_0} $ is a solution of \cref{eq:KH3}, then $ U $ defined in \eqref{eq:complex_var} solves the complex Hamiltonian system 
\begin{align}\label{eq:KH6}
U_t
 =   {\bm J}_\mathbb{C} \  \OpBW{{\bm A}_{\frac{3}{2}}\pare{U;x} \  \omega_{\kap, \upgamma}\pare{ \xi }  + {\bm A}_{1}\pare{U;x, \xi} + {\bm A}_{\frac{1}{2}}\pare{U;x } \  \av{\xi}^{\frac{1}{2}} + {\bm A}_{\bra{0}} \pare{U;x, \xi}} U +{\bm R}\pare{U} U ,
\end{align}
where
\begin{itemize}
\item  $ {\bm J}_\mathbb{C} $ is defined in \cref{def bfJC};
\item $ {\bm A}_{\frac{3}{2}}\pare{U;x} \in \pare{\Sigma\cF^{\mathbb{R}}_{K, 0, 0}\bra{\epsilon_0, N}}^{2\times 2} $ is defined as
\begin{equation}\label{eq:A3/2}
{\bm A}_{\frac{3}{2}}\pare{U, x}\defeq \begin{bmatrix}
0&1\\1&0
\end{bmatrix} + \frac{\mathtt{f}\pare{ \pare{{\bm\sM} _{\kap, \upgamma}\pare{D}  U}_1 ;x}}{2} \begin{bmatrix}
1 & 1 \\
1 & 1
\end{bmatrix} ,
\end{equation}
where $ \mathtt{f}\pare{\eta; x} $ is defined in \cref{eq:paralin_curvature}; 

\item
$ {\bm A}_{1}\pare{U;x, \xi} \in \pare{\Sigma\Gamma^1 _{K, 0, 1}\bra{\epsilon_0, N}}^{2\times 2} $ is defined as
\begin{equation*}
{\bm A}_{1}\pare{U;x,\xi}
\defeq   - V_{\upgamma}\pare{{\bm\sM} _{\kap, \upgamma}\pare{D}  U;x}\xi
\begin{bmatrix}
0&-1\\ 1&0
\end{bmatrix}
+
\frac{B_\upgamma\pare{{\bm\sM} _{\kap, \upgamma}\pare{\av{D}}  U;x}}{2} \ \av{\xi}
\begin{bmatrix}
\ii &0\\0& -\ii 
\end{bmatrix}, 
\end{equation*}
where $ V_{\upgamma}\pare{\eta, \psi;x}\in \Sigma \cF^{\mathbb{R}}_{K, 0, 1}\bra{\epsilon_0, N}  $ and $ B_\upgamma\pare{\eta, \psi; \xi} \in \Sigma \cF^{\mathbb{R}}_{K, 0, 1}\bra{\epsilon_0, N}  $ are functions in  defined in \cref{eq:Vgamma,eq:B};

\item $ {\bm A}_{\frac{1}{2}}\pare{U;x} \in \pare{\Sigma\cF^{\mathbb{R}}_{K, 0, 1}\bra{\epsilon_0, N}}^{2\times 2} $ is defined as
\begin{equation}\label{eq:A1/2}
{\bm A}_{\frac{1}{2}} \pare{U, x} 
\defeq
A_{\frac{1}{2}}\pare{U;x}  \ \begin{bmatrix}
1 & 1 \\
1 & 1
\end{bmatrix}
\defeq 
\frac{1}{2}  \frac{1}{\sqrt{2\kap}} \pare{ \frac{  B^2_\upgamma \pare{\sM U ; x}}{2} +  \frac{\upgamma^2}{2}  \mathtt{f}\pare{\eta;x}  - w_\upgamma\pare{\sM U ; x } }  \ \begin{bmatrix}
1 & 1 \\
1 & 1
\end{bmatrix} ,
\end{equation}
where $ w_\upgamma\pare{\eta, \psi; x} $ is defined in \cref{eq:wgamma};

\item $ {\bm A}_{\bra{0}}\pare{U;x, \xi} $ and $ {\bm R}\pare{U} $ are as in \Cref{notation:notation_paralinearization}. 

\end{itemize}
\end{prop}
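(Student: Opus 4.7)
The plan is to substitute the change of variables \eqref{eq:complex_var} into the paralinearized system \eqref{eq:KH4} of \Cref{prop:paralinearizationKH} and to read off the symbols of the resulting equation. Since ${\bm\sM}_{\kap,\upgamma}^{-1}(D)$ is a constant-coefficient Fourier multiplier, differentiating $U = {\bm\sM}_{\kap,\upgamma}^{-1}(D)\vect{\eta}{\psi}$ in time and inserting \eqref{eq:KH4} yields
$$
U_t = {\bm\sM}_{\kap,\upgamma}^{-1}(D)\, \OpBW{{\bm Q}_{\kap,\upgamma}+{\bm B}_\upgamma|\xi|-\ii V_\upgamma\, \xi\,\Id_{\mathbb{R}^2}+{\bm A}_{\bra{0}}}\, {\bm\sM}_{\kap,\upgamma}(D)\, U + {\bm\sM}_{\kap,\upgamma}^{-1}(D)\, {\bm R}\, {\bm\sM}_{\kap,\upgamma}(D)\, U,
$$
where the argument of $\bm R$ and ${\bm Q}_{\kap,\upgamma}$ is ${\bm\sM}_{\kap,\upgamma}(D)U$. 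I would then apply the Bony--Weyl composition \Cref{prop compBW} twice to conjugate each positive-order block, and the symbolic calculus of \Cref{prop compo mop} to handle the ${\bm A}_{\bra{0}}$ and ${\bm R}$ pieces.

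The heart of the computation is the diagonalization of the leading matrix ${\bm Q}_{\kap,\upgamma}$ in \eqref{quone}. A direct $2\times 2$ algebra using the identity $\mathsf{m}_{\kap,\upgamma}(\xi)^2 = |\xi|/(2\omega_{\kap,\upgamma}(\xi))$ from \eqref{eq:msigmagamma} gives
$$
{\bm\sM}_{\kap,\upgamma}^{-1}(\xi) \bra{\begin{array}{cc} 0 & -|\xi|/2 \\ \kap|\xi|^2-\tfrac{\upgamma^2}{2}|\xi|-(\kap-\upgamma^2) & 0 \end{array}} {\bm\sM}_{\kap,\upgamma}(\xi) = \JC\, \omega_{\kap,\upgamma}(\xi) \bra{\begin{array}{cc} 0 & 1 \\ 1 & 0 \end{array}},
$$
confirming that ${\bm\sM}_{\kap,\upgamma}$ diagonalizes the linearization at the origin. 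The nonlinear entries $\kap\mathtt{f}(|\xi|^2-1)$, $-w_\upgamma|\xi|$, and $\upgamma^2/(1+2\eta)$ of $\bm{Q}_{\kap,\upgamma}$ conjugate to scalar factors times the symmetric rank-one matrix $\bra{\begin{array}{cc}1&1\\1&1\end{array}}$. Using the expansion \eqref{eq:expansion_omega}, the $3/2$-part assembles into $\omega_{\kap,\upgamma}(\xi)\,{\bm A}_{\frac32}(U;x)$ as in \eqref{eq:A3/2}, while the $1/2$-part---which combines the subleading term $-\frac{1}{\sqrt{2\kap}}(\upgamma/2)^2|\xi|^{1/2}$ of \eqref{eq:expansion_omega} with the conjugations of the three nonlinear entries above---assembles into $|\xi|^{1/2}\,{\bm A}_{\frac12}(U;x)$ as in \eqref{eq:A1/2}.

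The remaining blocks are handled similarly. The transport $-\ii V_\upgamma \xi\,\Id$ is scalar, commutes with ${\bm\sM}_{\kap,\upgamma}(D)$ modulo order zero, and contributes the $-V_\upgamma\xi\bra{\begin{array}{cc}0 & -1\\1&0\end{array}}$-piece of ${\bm A}_1$; the block ${\bm B}_\upgamma|\xi|$ produces the diagonal $\tfrac{B_\upgamma}{2}|\xi|\bra{\begin{array}{cc}\ii&0\\0&-\ii\end{array}}$-piece of ${\bm A}_1$, whereas the off-diagonal $B_\upgamma^2$ entry of ${\bm B}_\upgamma$, together with the subleading remainders from the two paradifferential compositions, feeds into the $B_\upgamma^2/2$ contribution to $A_{\frac12}$. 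The ${\bm A}_{\bra{0}}$ and ${\bm R}$ blocks transform into operators of the same classes by \Cref{prop compo mop}, using that ${\bm\sM}_{\kap,\upgamma}^{\pm 1}(D)$ are Fourier multipliers of order $\pm 1/4$. The Hamiltonian form of \eqref{eq:KH6} follows because ${\bm\sM}_{\kap,\upgamma}(D)$ is symplectic for the complex symplectic structure---a direct verification on \eqref{eq:sM}---so the image of $\vect{\eta_t}{\psi_t}={\bm J}\nabla H$ is exactly $U_t = \JC\,\nabla_U H_\mathbb{C}$, and the resulting symbol $\JC{\bm A}$ is linearly Hamiltonian up to homogeneity $N$. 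The main obstacle is the careful bookkeeping of the half-order correction $A_{\frac12}$: multiple nested paradifferential compositions must be expanded to the sub-principal level and the resulting scalar symbols recombined into the symmetric rank-one structure of \eqref{eq:A1/2}.
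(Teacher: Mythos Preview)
Your proposal is correct and follows essentially the same approach as the paper: conjugate the paralinearized system \eqref{eq:KH4} by ${\bm\sM}_{\kap,\upgamma}^{-1}(D)$, apply the Bony--Weyl composition \Cref{prop compBW} block by block via the explicit $2\times2$ conjugation formula, and expand the resulting symbols in half-integer orders using \eqref{eq:expansion_omega} and the expansion of $\mathsf{m}_{\kap,\upgamma}$. One small correction: the sub-principal terms from the compositions $\mathsf{m}\,\OpBW{\cdot}\,\mathsf{m}$ actually vanish (since $\alpha=\beta=\mathsf{m}$ in \eqref{eq:composizione_triplice} makes $\alpha_\xi\beta-\alpha\beta_\xi=0$), so the $B_\upgamma^2/2$ contribution to $A_{\frac12}$ comes entirely from the $(2,1)$ entry of ${\bm B}_\upgamma$ conjugated to $B_\upgamma^2|\xi|\,\mathsf{m}_{\kap,\upgamma}^2(\xi)\sim \tfrac{1}{\sqrt{2\kap}}B_\upgamma^2|\xi|^{1/2}$, not from composition remainders.
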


\begin{proof}

\begin{step}[Diagonalization of the linear part of \eqref{eq:KH4}]
We can diagonalize the matrix defined in \eqref{eq:linearized_system_matrix} as
\begin{equation}
\label{eq:diagonal_matrix}
{\bm L}_{\kap, \upgamma}\pare{\xi} = {\bm \sM}_{\kap, \upgamma}\pare{\xi} {\bm D}_{\kap, \upgamma}\pare{\xi} {\bm \sM}_{\kap, \upgamma}^{-1}\pare{\xi},\qquad{\bm D}_{\kap, \upgamma}\pare{\xi}
\defeq
\ii\omega_{\kap, \upgamma}\pare{ \xi }
\begin{bmatrix}
1 & 0 \\
0 & -1
\end{bmatrix}\in \pare{\tilde{\Gamma}^{\frac{3}{2}}_0}^{2\times 2} 
, 
\end{equation}
where $ \omega _{\kap, \upgamma}\pare{\xi}\in \tilde{\Gamma}^{\frac{3}{2}}_0  $ are defined in \cref{eq:disp_relation}. 
Defining $ U $ as in \eqref{eq:complex_var} 
where $ \vect{\eta}{\psi} $ solves \eqref{eq:linearized_system}, 
the vector-valued complex function $ U $ solves the diagonal linear system
\begin{equation*}
U_t = {\bm D}_{\kap, \upgamma}\pare{D} U . 
\end{equation*}
In particular thanks to the relation \eqref{eq:complex_var}  combined with \eqref{eq:msigmagamma}, which implies that $ \mathsf{m}_{\kap, \upgamma}\pare{\av{D}}\bra{1}=0 $, 
 assure us that if $ \pare{\eta, \psi}\in H^{s+\frac{1}{4}}_0\pare{\mathbb{T};\mathbb{R}} \times \dot{H}^{s-\frac{1}{4}}\pare{\mathbb{T};\mathbb{R}} $ then $ u\in H^s_0\pare{\mathbb{T};\mathbb{C}}\simeq \dot{H}^s\pare{\mathbb{T};\mathbb{C}} $ for $ s>0 $. \\
 
 Next, expanding \eqref{eq:msigmagamma} we have that
 \begin{equation}
 \label{eq:m_expansion_xi}
 \mathsf{m}_{\kap, \upgamma}\pare{\xi} = 
 \frac{1}{\pare{2\kap\av{\xi}}^{\frac{1}{4}}} +\pare{\frac{\upgamma}{2}}^2 \frac{1}{\pare{2\kap\av{\xi}}^{\frac{5}{4}}}   
 +
  \mathsf{m}_{\kap, \upgamma;  -\frac{9}{4}}\pare{\av{\xi}} , 
 \end{equation}
 with $ \mathsf{m}_{\kap, \upgamma;  -\frac{9}{4}}\pare{\av{\xi}}\in \tilde{\Gamma}^{-\frac{9}{4}}_0 $.

\begin{notation}
From now on we use the abbreviated notation $ \sM \defeq {\bm \sM}_{\kap, \upgamma}\pare{\av{D}} $ defined in \eqref{eq:sM} and $ \mathsf{m}\defeq \mathsf{m}_{\kap, \upgamma}\pare{\av{D}} $ defined in \eqref{eq:complex_var} for the sake of brevity.
\end{notation}

\end{step}

\begin{step}[Reformulation of \cref{eq:KH4} in complex coordiantes]

Recall the matrix of Fourier symbols ${\bm L}_{\kap, \upgamma} $ in \eqref{eq:linearized_system} and the matrix of symbols ${\bm Q}_{\kap, \upgamma}$ in \eqref{quone}; we define  
\begin{align}\label{eq:pzcQgeq1}
{\bm Q} _{\kap, \upgamma; \geqslant 1}\pare{\eta, \psi;x, \xi}\defeq 
{\bm Q}_{\kap, \upgamma}\pare{\eta, \psi;x, \xi} - {\bm L}_{\kap, \upgamma}\pare{\xi}= \begin{bmatrix}
0&0\\
\mathsf{q}\pare{\eta,\psi;x,\xi} & 0
\end{bmatrix} && \in  \Sigma\Gamma^{2}_{K, 0, 1}\bra{\epsilon_0, N}, 
\end{align} 
where, from \cref{eq:sq,quone}, $\mathsf{q}\pare{U;x, \xi}$ is the real-valued symbol 
\begin{equation}\label{eq:q_expansion_xi}
\begin{aligned}
\mathsf{q}\pare{\eta,\psi;x, \xi}= & \  \kap\ \mathtt{f}\pare{\eta;x}  \pare{\av{\xi}^2-1} -  w_\upgamma\pare{\eta, \psi;x}  \av{\xi}  + \upgamma^2\pare{ \frac{1}{\pare{1+2\eta}}-1 }
\\
= & \ \kap\ \mathtt{f}\pare{\eta;x}\av{\xi}^2 -  w_\upgamma\pare{\eta, \psi;x}  \av{\xi}  + \mathsf{q}_0\pare{\eta,\psi;x, \xi},
\end{aligned} 
\end{equation}
with
\begin{equation*}
\mathsf{q}_0\pare{\eta,\psi;x, \xi} \defeq - \kap\ \mathtt{f}\pare{\eta;x}  + \upgamma^2\pare{ \frac{1}{1+2\eta}-1 } \in \Sigma \Gamma^0_{K, 0, 1}\bra{\epsilon_0, N}.  
\end{equation*}
Then, by \eqref{eq:diagonal_matrix}, \cref{eq:KH4} becomes
\begin{equation}\label{eq:KH5}
\begin{aligned}
    &\vect{\eta}{\psi}_t -\sM {\bm D}_{\kap, \upgamma}\pare{D}\sM^{-1} \vect{\eta}{\psi} 
\\
&=
\OpBW{{\bm Q}_{\ \kap, \upgamma; \geqslant1}\pare{\eta,\psi; x, \xi} + {\bm B}_\upgamma \pare{\eta,\psi; x}\av{\xi} - \ii V_{\upgamma} \pare{\eta, \psi;x}\ \Id_{\mathbb{R}^2}\ \xi + {\bm A}_{\bra{0}}\pare{\eta,\psi; x, \xi}   } \vect{\eta}{\psi}
\\
&\quad + {\bm R}\pare{\eta, \psi} \vect{\eta}{\psi} . 
\end{aligned}
\end{equation}
Since $ U=\sM^{-1}\vect{\eta}{\psi} $ we can derive the evolution equation for $ U $ multiplying \ \eqref{eq:KH5} from the left for $ \sM^{-1} $ obtaining
\begin{equation}\label{eq:complex_conjugation_0}
\begin{aligned}
  &U_t - {\bm D}_{\kap, \upgamma}\pare{D}  U 
\\
&=
\sM^{-1} \OpBW{{\bm Q}_{\ \kap, \upgamma; \geqslant 1}\pare{\sM U; x, \xi} + {\bm B}_\upgamma \pare{\sM U; x}\av{\xi} - \ii V_{\upgamma} \pare{\sM U ;x}\ \Id_{\mathbb{R}^2}\ \xi + {\bm A}_{\bra{0}}\pare{\sM U; x, \xi}   } \sM U
\\
 &\quad+\sM^{-1}  {\bm R}\pare{\sM U} \sM U.  
\end{aligned}
\end{equation}
Recall that $ \sM^{-1} $ is explicitly defined in \eqref{eq:sM}.
We have the following  relation 
\begin{multline}\label{eq:conj_complex_Matrix}
\mathsf{M}^{-1} 
\begin{bmatrix}
A_1 & A_2 \\
A_3 & A_4
\end{bmatrix}
\mathsf{M}=
\\
\frac{1}{2}
\begin{bmatrix}
\mathsf{m}^{-1} A_1 \mathsf{m} + \mathsf{m} A_4 \mathsf{m}^{-1} + \ii \mathsf{m} A_3 \mathsf{m} - \ii \mathsf{m}^{-1} A_2 \mathsf{m}^{-1} & \mathsf{m}^{-1} A_1 \mathsf{m} - \mathsf{m} A_4 \mathsf{m}^{-1} + \ii \mathsf{m} A_3 \mathsf{m} + \ii \mathsf{m}^{-1} A_2 \mathsf{m}^{-1} \\
\mathsf{m}^{-1} A_1 \mathsf{m} - \mathsf{m} A_4 \mathsf{m}^{-1} - \ii \mathsf{m} A_3 \mathsf{m} - \ii \mathsf{m}^{-1} A_2 \mathsf{m}^{-1} & \mathsf{m}^{-1} A_1 \mathsf{m} + \mathsf{m} A_4 \mathsf{m}^{-1} - \ii \mathsf{m} A_3 \mathsf{m} + \ii \mathsf{m}^{-1} A_2 \mathsf{m}^{-1}
\end{bmatrix}.
\end{multline}
Notice now that given
\begin{align*}
a \pare{U;x,\xi} \in \Sigma\Gamma^{m_a}_{K, 0, 1}\bra{\epsilon_0, N}, 
&&
\alpha\pare{\xi}\in \tilde{\Gamma}^{m_\alpha}_0, 
&&
\beta\pare{\xi} \in \tilde{\Gamma}^{m_\beta}_0, 
\end{align*}
applying \Cref{prop compBW} we have that
\begin{equation}\label{eq:composizione_triplice}
\begin{aligned}
    \alpha\pare{D}\OpBW{a\pare{U;x,\xi}}\beta\pare{D} 
 &= \OpBW{a\pare{U;x, \xi}\alpha\pare{\xi}\beta\pare{\xi} + \frac{1}{2\ii} \ \pare{a\pare{U;x, \xi}}_x \pare{\alpha_\xi\pare{\xi} \beta\pare{\xi} - \alpha\pare{\xi}\beta_\xi\pare{\xi}}}
 \\
& \quad+\OpBW{\Sigma\Gamma^{m_a+m_\alpha +m_\beta-2}_{K, 0, 1}\bra{\epsilon_0, N}} + \Sigma\cR^{-\vr}_{K, 0, 1}\bra{\epsilon_0, N}.
\end{aligned}
\end{equation}
Then, applying \eqref{eq:conj_complex_Matrix}, we have that
\begin{equation}\label{eq:sq}
\sM^{-1} \OpBW{{\bm Q}_{\ \kap, \upgamma; \geqslant1}\pare{\sM U; x, \xi}}\sM = \frac{\ii}{2} \  \mathsf{m} \ \OpBW{\mathsf{q}\pare{U;x,\xi}} \mathsf{m} \ 
\begin{bmatrix}
1 & 1 \\
- 1 & - 1
\end{bmatrix}
,
\end{equation}
where $\mathsf{q}\pare{U;x,\xi}$ is the real valued symbol in \eqref{eq:q_expansion_xi} evaluated at $(\eta,\psi)=\sM U $. 
We apply \Cref{eq:composizione_triplice} and obtain that
\begin{equation}
\label{eq:sq1}
\mathsf{m} \ \OpBW{\mathsf{q}\pare{U;x,\xi}} \mathsf{m} =\OpBW{\mathsf{q}\pare{U;x, \xi} \mathsf{m}_{\kap, \upgamma}^2\pare{\xi} + A_{\bra{-\frac{1}{2}}}\pare{U;x, \xi}} + R\pare{U}. 
\end{equation}
Thus, combining \cref{eq:sq,eq:sq1}, we obtain
\begin{equation} \label{eq:complex_conjugation_1}
\begin{aligned}
   &\sM^{-1} \OpBW{{\bm Q}_{\ \kap, \upgamma; \geqslant1}\pare{\sM U; x, \xi}}\sM 
\\
&= \frac{\ii}{2} \  \pare{ \OpBW{\mathsf{q}\pare{U;x, \xi} \mathsf{m}_{\kap, \upgamma}^2\pare{\xi} + A_{\bra{-\frac{1}{2}}}\pare{U;x, \xi}} + R\pare{U} } \begin{bmatrix}
1 & 1 \\
- 1 & - 1
\end{bmatrix}
. 
\end{aligned}
\end{equation}
Next we apply \cref{eq:conj_complex_Matrix,eq:composizione_triplice} to the matrix-valued symbol $ {\bm B}_\upgamma \pare{\sM U ; x}\av{\xi} $ defined in \eqref{eq:pzcBgamma} and obtain that
\begin{equation}\label{eq:complex_conjugation_2}
\begin{aligned}
    &\sM^{-1} \OpBW{{\bm B}_\upgamma \pare{\sM U ; x}\av{\xi}}  \sM \\
    &=
\frac{1}{2}\OpBW{B_\upgamma \pare{\sM U ; x}\av{\xi}}
\begin{bmatrix}
0 & 1 \\ 1 & 0
\end{bmatrix}
+
 \frac{\ii}{4} \OpBW{B^2_\upgamma \pare{\sM U ; x}\av{\xi} \mathsf{m}_{\kap, \upgamma}^2 \pare{\xi} } 
\begin{bmatrix}
1 & 1 \\
-1 & -1
\end{bmatrix}
\\
&\quad+
\OpBW{{\bm A}_{\bra{0}}\pare{U;x, \xi}} + {\bm R}\pare{U}
.
\end{aligned}
\end{equation} 
 The same procedure give us the conjugations
\begin{equation}\label{eq:complex_conjugation_3}
\begin{aligned}
\sM^{-1} \OpBW{\ii V_{\upgamma} \pare{\sM U;x}\xi}\Id_{\mathbb{R}^2} \sM
= & \ 
\OpBW{\ii V_{\upgamma} \pare{\sM U;x}\xi}\Id_{\mathbb{C}^2} + 
\OpBW{{\bm A}_{\bra{0}}\pare{U;x, \xi}} + {\bm R}\pare{U}, 
\\
\sM^{-1}\OpBW{{\bm A}_{\bra{0}}\pare{\sM U ; x, \xi}} \sM = & \ \OpBW{{\bm A}_{\bra{0}}\pare{U;x, \xi}} + {\bm R}\pare{U},
\end{aligned}
\end{equation}
while \Cref{prop compo mop} implies that
\begin{equation}\label{eq:complex_conjugation_4}
\sM^{-1}{\bm R}\pare{\sM U }\sM \leadsto {\bm R}\pare{U}. 
\end{equation}
We plug \Cref{eq:complex_conjugation_1,eq:complex_conjugation_2,eq:complex_conjugation_3,eq:complex_conjugation_4}
in \Cref{eq:complex_conjugation_0} and obtain that
\begin{equation}\label{eq:complexNH}
U_t-{\bm D}_{\kap, \upgamma}\pare{D} U = \OpBW{\sum_{\mathsf{j}=0}^2 {\bm A}_{\frac{3-\mathsf{j}}{2}}^{\textnormal{NH}}\pare{U;x,\xi} + {\bm A}_{\bra{0}}\pare{U;x, \xi} } U +{\bm R}\pare{U}U, 
\end{equation}
with
\begin{equation*}
{\bm A}_{\frac{3-\mathsf{j}}{2}}^{\textnormal{NH}}\pare{U;x,\xi}\in \pare{ \Sigma\Gamma^{\frac{3-\mathsf{j}}{2}}_{K, 0, 1}\bra{\epsilon_0, N} }^{2\times 2}, 
\end{equation*}
and are explicitly defined as
\begin{equation}\label{eq:ANH3/2-j}
\begin{aligned}
{\bm A}_{\frac{3}{2}}^{\textnormal{NH}}\pare{U;x,\xi}
\defeq & \ 
\frac{\ii}{2} \  \mathsf{q}\pare{U;x, \xi} \mathsf{m}_{\kap, \upgamma}^2\pare{\xi} 
\begin{bmatrix}
1 & 1 \\
- 1 & - 1
\end{bmatrix},
\\
{\bm A}_{1}^{\textnormal{NH}}\pare{U;x,\xi}
\defeq & - \ii V_{\upgamma}\pare{\sM U;x}\xi
\begin{bmatrix}
1&0\\0&1
\end{bmatrix}
+
\frac{B_\upgamma\pare{\sM U;x}}{2} \ \av{\xi}
\begin{bmatrix}
0&1\\1&0
\end{bmatrix},
\\
{\bm A}_{\frac{1}{2}}^{\textnormal{NH}}\pare{U;x,\xi}
\defeq & \frac{\ii}{4} \  B^2_\upgamma \pare{\sM U ; x}\av{\xi} \mathsf{m}_{\kap, \upgamma}^2 \pare{\xi} 
\begin{bmatrix}
1 & 1 \\
- 1 & - 1
\end{bmatrix}.
\end{aligned}
\end{equation}
We now expand the symbols in \eqref{eq:ANH3/2-j} in decreasing para-differential orders using \cref{eq:q_expansion_xi,eq:m_expansion_xi}, thus obtaining that
\begin{equation}
\label{eq:expansion_homogeneous_products}
\begin{aligned}
\mathsf{q}\pare{U;x, \xi} \mathsf{m}_{\kap, \upgamma}^2\pare{\xi}  = & \ \sqrt{\frac{\kap}{2}} \ \mathtt{f} \pare{\eta;x} \av{\xi}^{\frac{3}{2}} + \frac{1}{\sqrt{2\kap}}\pare{\pare{\frac{\upgamma}{2}}^2 \mathtt{f}\pare{\eta;x} - w_\upgamma\pare{\sM U ; x }}\av{\xi}^{\frac{1}{2}} + A_{\bra{-\frac{1}{2}}}\pare{U;x, \xi},
\\
 B^2_\upgamma \pare{\sM U ; x}\av{\xi} \mathsf{m}_{\kap, \upgamma}^2\pare{\xi}  = & \ \frac{1}{\sqrt{2\kap}} \  B^2_\upgamma \pare{\sM U ; x}\av{\xi}^{\frac{1}{2}} + A_{\bra{-\frac{1}{2}}}\pare{U;x, \xi}.
\end{aligned}
\end{equation}
Thus, inserting \eqref{eq:expansion_homogeneous_products} in \eqref{eq:ANH3/2-j}, we obtain that
\begin{equation*}
\sum_{\mathsf{j}=0}^2 {\bm A}_{\frac{3-\mathsf{j}}{2}}^{\textnormal{NH}}\pare{U;x,\xi}
=
\sum_{\mathsf{j}=0}^2 {\bm A}_{\frac{3-\mathsf{j}}{2}}^{\textnormal{H}}\pare{U;x,\xi}
+{\bm A}_{\bra{-\frac{1}{2}}} \pare{U;x, \xi} , 
\end{equation*}
where
\begin{equation*}
\begin{aligned}
{\bm A}_{\frac{3}{2}}^{\textnormal{H}}\pare{U;x,\xi}
\defeq & \ 
\frac{\ii}{2} \  \sqrt{\frac{\kap}{2}} \ \mathtt{f} \pare{\eta;x} \av{\xi}^{\frac{3}{2}} 
\begin{bmatrix}
1 & 1 \\
- 1 & - 1
\end{bmatrix},
\\
{\bm A}_{1}^{\textnormal{H}}\pare{U;x,\xi}
\defeq & \ {\bm A}_{1}^{\textnormal{NH}}\pare{U;x,\xi},
\\
{\bm A}_{\frac{1}{2}}^{\textnormal{H}}\pare{U;x,\xi}
\defeq & \ 
\frac{\ii}{2}  \frac{1}{\sqrt{2\kap}} \pare{ \frac{  B^2_\upgamma \pare{\sM U ; x}}{2} + \pare{\frac{\upgamma}{2}}^2 \mathtt{f}\pare{\eta;x} - w_\upgamma\pare{\sM U ; x } } \av{\xi}^{\frac{1}{2}} \ \begin{bmatrix}
1 & 1 \\
- 1 & - 1
\end{bmatrix}.
\end{aligned} 
\end{equation*}
Therefore, \eqref{eq:complexNH} becomes
\begin{equation}\label{eq:complexH}
U_t-{\bm D}_{\kap, \upgamma}\pare{D} U = \OpBW{\sum_{\mathsf{j}=0}^2 {\bm A}_{\frac{3-\mathsf{j}}{2}}^{\textnormal{H}}\pare{U;x,\xi} + {\bm A}_{\bra{0}}\pare{U;x, \xi} } U +{\bm R}\pare{U}U.
\end{equation}
We now further refine the expression deduced in \eqref{eq:complexH} by expressing the leading term as a quasilinear perturbation of the unperturbed frequencies $ \omega_{\kap, \upgamma}\pare{ \xi } $, cf. \eqref{eq:disp_relation}. Using \eqref{eq:expansion_omega} we have that
\begin{equation*}
{\bm A}^{\textnormal{H}}_{\frac{3}{2}}\pare{U;x, \xi} = 
\ii \  \pare{ \frac{\mathtt{f}\pare{\eta;x}}{2} \omega_{\kap, \upgamma}\pare{ \xi }
+
\frac{1}{\sqrt{2\kap}} \frac{\mathtt{f}\pare{\eta;x}}{2} \pare{\frac{\upgamma}{2}}^2 \av{\xi}^{\frac{1}{2}} + {\bm A}_{\bra{-\frac{1}{2}}}\pare{U;x, \xi} }
\begin{bmatrix}
1 & 1 \\
- 1 & - 1
\end{bmatrix} , 
\end{equation*}
so that, defining
\begin{align*}
 {\bm J}_\mathbb{C} {\bm A}_{\frac{3}{2};\geqslant1}\pare{U, x}\defeq & \ \frac{\mathtt{f}\pare{\eta;x}}{2} \begin{bmatrix}
1 & 1 \\
- 1 & - 1
\end{bmatrix} \ \ii \  , 
\\
 {\bm J}_\mathbb{C} {\bm A}_{\frac{1}{2}} \pare{U, x} 
\defeq & \ 
\frac{1}{2}  \frac{1}{\sqrt{2\kap}} \pare{ \frac{  B^2_\upgamma \pare{\sM U ; x}}{2} + 2\pare{\frac{\upgamma}{2}}^2 \mathtt{f}\pare{\eta;x}  - w_\upgamma\pare{\sM U ; x } } \ \begin{bmatrix}
1 & 1 \\
- 1 & - 1
\end{bmatrix} \ \ii,\\
{\bm J}_\mathbb{C} {\bm A}_{1} \pare{U;x, \xi}\defeq & {\bm A}^{\textnormal{H}}_{1}\pare{U;x, \xi} ,
\end{align*}
we transform \eqref{eq:complexH} into
\begin{equation}\label{eq:KH6bis}
\begin{aligned}
    &U_t -{\bm D}_{\kap, \upgamma}\pare{D} U  \\
 &=   {\bm J}_\mathbb{C} \  \OpBW{{\bm A}_{\frac{3}{2};\geqslant1}\pare{U;x} \  \omega_{\kap, \upgamma}\pare{ \xi }  + {\bm A}_{1}\pare{U;x, \xi} + {\bm A}_{\frac{1}{2}}\pare{U;x } \  \av{\xi}^{\frac{1}{2}} + {\bm A}_{\bra{0}}\pare{U;x, \xi}} U +{\bm R}\pare{U} U .
\end{aligned} 
\end{equation}
Next, an explicit computation shows that
\begin{equation*}
{\bm D}_{\kap, \upgamma}\pare{\xi} = \omega_{\kap, \upgamma}\pare{ \xi } \  {\bm J}_\mathbb{C} 
\begin{bmatrix}
0&1\\1&0
\end{bmatrix}
 .
\end{equation*}
Thus, defining 
\begin{align*}
{\bm A}_{\frac{3}{2}}\pare{U;x}\defeq  \begin{bmatrix}
0&1\\1&0
\end{bmatrix} + {\bm A}_{\frac{3}{2};\geqslant1}\pare{U, x},
\end{align*}
{it remains only to show that \eqref{eq:KH6bis} is a complex Hamiltonian system and that ${\bm J}_\C{\bm A}_{\bra{0}}\pare{U;x, \xi} $ is linearly symplectic. Indeed complex variables transformation $ {\bm\sM}_{\kap, \upgamma}^{-1}$ in \eqref{eq:complex_var} is the composition of the complex transformation $ \cC^{-1}$ in \eqref{change complex} and the real symplectic map ${\bm S}_{\kap,\upgamma}^{-1}(D)$, see \eqref{def bmS}. Then the equation for $U$ has the complex Hamiltonian form in \eqref{new complexHAM}. Then we apply Lemma \ref{HS:repre} to each homogeneous component in \eqref{eq:KH6bis}. As the explicit positive order symbols are linearly symplectic by direct inspections, we eventually replace  ${\bm A}_{\bra{0}}\pare{U;x, \xi}$ as in \eqref{aunoa} to get a linearly symplectic matrix of symbols. This conclude the proof.}
\end{step}

\end{proof}

\section{Block diagonalization and reduction to constant coefficients}\label{sec:diag_cc}
In this section, we perform spectrally localized, linearly symplectic, and bounded transformations to conjugate the complex Kelvin-Helmholtz system \eqref{eq:KH6} into a diagonal constant-coefficient system, up to a smoothing remainder. Specifically, we prove the following.

\begin{prop}\label{prop:constantcoeff}
Let $ N\in \mathbb{N}$ and $ \vr > 3 \pare{N+1}$, there exists $ \underline{K'} > 0 $ such that for any $ K > \underline{K'} $ there are $ s_0 > 0 $, $ \epsilon_0 > 0 $ such that, for any solution $ U\in \Ball{K}{s_0} $ solution of \eqref{eq:KH6} there exists a real-to-real invertible matrix of spectrally localized maps $ {\bm B}\pare{U;t} $ such that 
\begin{enumerate}
\item $ {\bm B}\pare{U;t}, \ {\bm B}\pare{U;t}^{-1} \in \pare{\cS^0_{K, \underline{K'}-1, 0}\bra{\epsilon_0, N}}^{2\times 2} $ are linearly symplectic up to homogeneity $N$, according to \Cref{linsymphomoN}. Moreover, $ {\bm B}\pare{U;t} -\Id \in \pare{ \Sigma \cS^{\frac{3}{2}\pare{N+1}}_{K, \underline{K'}-1, 1}\bra{\epsilon_0, N} }^{2\times 2};$
\item The variable $ W\defeq {\bm B}\pare{U;t} U $ solves 
\begin{equation}\label{eq:constcoeff}
W_t = \Opvec{\ii \ \bra{\pare{1+\mathpzc{v}\pare{U;t}}\omega_{\kap, \upgamma}\pare{ \xi } + \mathpzc{V}_\upgamma\pare{U;t}\xi + \mathpzc{b}_{\frac{1}{2}}\pare{U;t}\av{\xi}^{\frac{1}{2}} + \mathpzc{b}_{0}\pare{U;t,\xi}} } W + \pmb{\mathsf{R}}\pare{U;t}W, 
\end{equation}
where
\begin{itemize}
\item $ \omega_{\kap, \upgamma}\pare{ \xi } \in \tilde \Gamma^{\frac{3}{2}}_0 $ is defined in \eqref{eq:disp_relation};

\item $ \mathpzc{v}\pare{U;t}\in \Sigma\cF^{\mathbb{R}}_{K, 0, 2}\bra{\epsilon_0, N} $ and is independent of $ x $;

\item  $ \mathpzc{V}_\upgamma \pare{U;t}\in \Sigma\cF^{\mathbb{R}}_{K, 1, 2}\bra{\epsilon_0, N} $ and is independent of $ x $;

\item  $ \mathpzc{b}_{\frac{1}{2}} \pare{U;t}\in \Sigma\cF^{\mathbb{R}}_{K, 2, 2}\bra{\epsilon_0, N} $ and is independent of $ x $;

\item  $ \mathpzc{b}_{0} \pare{U;t,\xi}\in \Sigma\Gamma^0 _{K, \underline{K'}, 2}\bra{\epsilon_0, N} $, is independent of $ x $ and such that $\Im \mathpzc{b}_{0} \pare{U;t,\xi}\in \Gamma^0 _{K, \underline{K'}, N+1}\bra{\epsilon_0} $;

\item $ \pmb{\mathsf{R}}\pare{U;t} \in \pare{\Sigma \cR^{-\vr + 3\pare{N+1}}_{K, \underline{K'}, 1}\bra{\epsilon_0, N}}^{2\times 2} $.

\end{itemize}
\end{enumerate}
\end{prop}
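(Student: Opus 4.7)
The plan is to follow the paradifferential reduction scheme pioneered in \cite{BD2018} and further developed in the Hamiltonian setting of \cite{BMM2022}: we conjugate \eqref{eq:KH6} by a finite cascade of spectrally localized, linearly symplectic paradifferential maps, each chosen to eliminate the $x$-dependent part of the symbols at a given order, starting from the top order $3/2$ and descending to order $0$, paying one time derivative at each step so that the final system is of class $C^K_*$ with $\underline{K'}$ chosen large enough. The requirement that every conjugator be linearly symplectic up to homogeneity $N$ is what guarantees the preservation of the Hamiltonian structure modulo homogeneity $N$, a property essential for the subsequent Birkhoff normal form in Section \ref{sec:BNF}; the spectral localization of \Cref{defin specloc} gives the needed tame estimates and Fourier-support control.

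First, I would \textbf{block-diagonalize at order $3/2$}. The matrix ${\bm J}_\mathbb{C}{\bm A}_{\frac32}(U;x)$ equals $\operatorname{diag}(\ii,-\ii)$ plus the rank-one perturbation $\tfrac{\mathtt{f}(\eta;x)}{2}{\bm J}_\C\begin{bmatrix}1&1\\1&1\end{bmatrix}$. A conjugation by the flow of a paradifferential operator whose generator is an $x$-dependent, antisymplectic, order-$0$ symmetric off-diagonal symbol brings the leading symbol to the diagonal form $\ii(1+\tilde{\mathpzc{v}}(U;x))\omega_{\kap,\upgamma}(\xi)\,\mathrm{diag}(1,-1)$, up to redistributing lower-order corrections into ${\bm A}_1,{\bm A}_{1/2},{\bm A}_{[0]}$ via \Cref{prop compBW}. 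Second, I would apply an \textbf{Alinhac good unknown} step to kill the unbounded off-diagonal contribution $\tfrac{B_\upgamma}{2}|\xi|$ inside ${\bm J}_\C{\bm A}_1$: this is a nilpotent, hence trivially symplectic, matrix-valued paradifferential change of variable in the spirit of \cite{Lannes2005,AM2009,BD2018}. Third, I would \textbf{straighten the order-$3/2$ coefficient} by conjugating with the time-$1$ paradifferential flow generated by a real symbol of order $0$, depending on $(U;x)$, chosen so that an appropriate paradifferential transport equation balances $(1+\tilde{\mathpzc{v}}(U;x))\omega_{\kap,\upgamma}(\xi)$ to its spatial average, yielding the $x$-independent $\mathpzc{v}(U;t)$; an entirely analogous diffeomorphism-flow treats the transport symbol at order $1$, producing the constant-in-$x$ coefficient $\mathpzc{V}_\upgamma(U;t)\,\xi$ (this step costs one time derivative, whence the loss $K'=1$ on $\mathpzc{V}_\upgamma$). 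Fourth, I would iteratively \textbf{kill the $x$-dependence at orders $1/2$ and $0$} by solving cohomological equations of the form $[\OpBW{\ii\omega_{\kap,\upgamma}(\xi)\mathrm{diag}(1,-1)},\OpBW{g}]+\OpBW{a-\langle a\rangle_x}=0$ modulo lower order, where $g$ has order equal to $\operatorname{ord}(a)-1/2$ so that the Poisson bracket with $\omega_{\kap,\upgamma}\in\tilde\Gamma^{3/2}_0$ produces the correct order; each generator is taken $x$-mean-zero, real-to-real and chosen to be linearly Hamiltonian up to homogeneity $N$ as in \cite[\S 7]{BMM2022}, so that every conjugation is linearly symplectic up to homogeneity $N$.

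At the end of the cascade the remaining non-constant and non-imaginary contribution at order $0$ is bounded by $\|U\|_{s_0}^{N+1}$, whence $\Im\mathpzc{b}_0 \in \Gamma^0_{K,\underline{K'},N+1}[\epsilon_0]$. All commutator remainders produced along the way live in $\Sigma\cR^{-\vr+3(N+1)}_{K,\underline{K'},1}[\epsilon_0,N]$ because each of the finitely many conjugations costs at most $3/2$ derivatives on the smoothing index and we perform at most $2(N+1)$ such conjugations; the choice $\vr>3(N+1)$ ensures the final smoothing operator is still genuinely regularizing. The composition ${\bm B}(U;t)$ of all these paradifferential maps is spectrally localized by \Cref{prop compo zdepop} and the closure properties of \Cref{defin specloc}, invertible (being close to the identity in operator norm for $\epsilon_0$ small), and linearly symplectic up to homogeneity $N$ as a composition of maps with that property; moreover ${\bm B}(U;t)-\Id$ lies in $(\Sigma\cS^{3(N+1)/2}_{K,\underline{K'}-1,1}[\epsilon_0,N])^{2\times 2}$ because the generators vanish at degree $1$ in $U$.

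The main obstacle will be the \emph{first order} reduction: unlike capillarity water waves, the symbol ${\bm A}_1$ in \eqref{eq:KH6} carries both the transport $-V_\upgamma\xi$ (which must survive as $\mathpzc{V}_\upgamma\xi$) and the genuinely unstable off-diagonal term $\tfrac{B_\upgamma}{2}|\xi|$ stemming from the non-zero velocity jump $\upgamma$, cf. \Cref{rem:unstable_terms_KH}. The Alinhac good unknown handles the off-diagonal piece, but one has to verify that its nilpotent symbol can be chosen linearly symplectic up to homogeneity $N$ and spectrally localized, and that the residual order-$1$ symbol, once averaged, coincides with the real transport $\ii\mathpzc{V}_\upgamma\xi$ acting diagonally with opposite signs on $u$ and $\bar u$---this is exactly what preserves the real-to-real structure. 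Verifying at each stage that the class of linearly symplectic (up to homogeneity $N$), real-to-real, spectrally localized maps is closed under the flows we employ, and that the loss of one time-derivative per diffeomorphism-type step remains compatible with $\underline{K'}\ll K$, constitutes the technical heart of the argument.
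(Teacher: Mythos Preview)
Your overall strategy---a cascade of linearly symplectic paradifferential conjugations reducing the system order by order---is exactly the paper's approach, and the composition ${\bm B}(U;t)={\bm \Theta}_{n_2}\circ{\bm \Phi}_{n_1}\circ{\bm \Psi}_2\circ{\bm \Psi}_1\circ{\bm G}$ is assembled from precisely such pieces. Two points, however, deserve correction.

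First, the paper performs the Alinhac good unknown \emph{before} the block-diagonalization at order $3/2$, not after. Your ordering is not fatal, but doing Alinhac second would force you to kill an off-diagonal order-$1$ term that has already been conjugated by the matrix ${\bm F}$ of \eqref{eq:ttF}, making the nilpotent generator messier. More substantively, your claim that ``an entirely analogous diffeomorphism-flow treats the transport symbol at order $1$'' is wrong: once the paracomposition has rendered the order-$3/2$ coefficient constant in $x$, a second paracomposition would destroy that constancy (the leading term becomes $(1+\mathpzc{v})(1+\partial_x\beta')^{3/2}\omega_{\kap,\upgamma}(\xi)$). The transport reduction must instead be done by the general descent of \Cref{lem:conjcost}, using a generator of order $1/2$ so that its Poisson bracket with $\omega_{\kap,\upgamma}\in\tilde\Gamma^{3/2}_0$ lands at order $1$; this is why $\mathpzc{V}_\upgamma\in\Sigma\cF^\R_{K,1,2}$ (one time-derivative lost) rather than $K,0,2$.

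Second, your step four addresses only the $x$-dependence of diagonal symbols, but after steps one--three the system still carries an \emph{off-diagonal} term ${\bm A}_{1/2}^{(2)}(U;x)\,|\xi|^{1/2}$ of the $\begin{bmatrix}1&1\\1&1\end{bmatrix}$ type (cf.\ \Cref{lem:reduction_highest_order}). The paper handles this separately in \Cref{lem:diagonalization_arbitrary_order} via an off-diagonal generator ${\bm F}^{(-1)}$ solving $2\ii(1+\mathpzc{v})\omega_{\kap,\upgamma}(\xi)\,f+b_{\text{off}}=0$, which is a different homological equation from your $a-\langle a\rangle_x=0$. You need both: first diagonalize at each order, then average the diagonal part. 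With these two fixes your outline matches the paper's proof.
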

The rest of the section is devoted to the proof of \Cref{prop:constantcoeff}. The procedure is now rather classical so we state the main steps, for the missing details, we refer the interested reader to  \cite[Section 6]{BMM2022}.
\subsection{A complex Alinhac Good Unknown}

We first perform a change of variable, known as {\it Alinhac good unknown} that cancels the anti-diagonal terms of order one in \eqref{eq:KH6}.

\begin{lemma}[Alinhac Good Unknown for the Kelvin-Helmholtz equations in complex coordinates]\label{lem:GA}
Let $ N\in \mathbb{N} $, $\kap > 0$, $ \upgamma\in \mathbb{R} $ and $ \varrho \geqslant0 $, for any $ K\in \mathbb{N} $ there exists $ s_0 >0 $ and $ \epsilon_0 > 0 $ such that if $ U\in B^K_{s_0}\pare{I;\epsilon_0} $ solves \eqref{eq:KH6} there exists a real-to-real matrix of operators $\mathbf{G}(U)$ such that
\begin{enumerate}

\item \label{item:GU_B} 
$ {\bm G}\pare{U}, \ {\bm G}\pare{U}^{-1} \in \pare{\cS^0_{K,0, 0}\bra{\epsilon_0, N}}^{2\times 2} $ are linearly symplectic, according to \Cref{def:LS}. Moreover, $ {\bm G}\pare{U} -\Id \in \pare{ \Sigma \cS^{-\frac{1}{2}}_{K, 0, 1}\bra{\epsilon_0, N} }^{2\times 2}$;
\item \label{item:GU_C} The variable $ U_0\defeq {\bm G}\pare{U} U $ solves
\begin{equation} \label{eq:KH7}
\begin{aligned}
    \partial_t U_0
 &=   {\bm J}_\mathbb{C} \  \OpBW{{\bm A}_{\frac{3}{2}}^{(0)}\pare{U;x} \  \omega_{\kap, \upgamma}\pare{ \xi }   +  {\bm A}_{\frac{1}{2}}^{(0)} \pare{U;x } \  \av{\xi}^{\frac{1}{2}} + {\bm A}_{\bra{0;1}}\pare{U;x, \xi}} U_0
 \\
&\quad- \Opvec{\ii V_\upgamma\pare{\sM U;x}\xi} U_0 
 + {\bm R}\pare{U} U_0 ,
\end{aligned}
\end{equation}
where
\begin{itemize}
\item the symbols $ {\bm A}_{\frac{3}{2}}^{(0)} \defeq {\bm A}_{\frac{3}{2}}, \ \omega_{\kap, \upgamma} $ and $ V_\upgamma $ are the same as in the statement of \Cref{prop:KH_complex} and $ {\bm A}_{\bra{0;1}} $, ${\bm R}$ are as in \Cref{notation:A_refugium_peccatorum};
\item $ {\bm A}^{(0)}_{\frac{1}{2}}  \in \pare{\Sigma\cF^{\mathbb{R}}_{K, 0, 1}\bra{\epsilon_0, N}}^{2\times 2} $ is defined as
\begin{equation*}
{\bm A}_{\frac{1}{2}}^{\pare{0}} \pare{U; x} 
\defeq
A_{\frac{1}{2}}^{\pare{0}}\pare{U;x}  \ \begin{bmatrix}
1 & 1 \\
1 & 1
\end{bmatrix}
\defeq 
\frac{1}{2}  \frac{1}{\sqrt{2\kap}} \pare{   \frac{\upgamma^2}{2} \mathtt{f}\pare{\eta;x}  - w_\upgamma\pare{\sM U ; x } }  \ \begin{bmatrix}
1 & 1 \\
1 & 1
\end{bmatrix} , 
\end{equation*}
where $ w_\upgamma\pare{\eta, \psi; x} $ is defined in \cref{eq:wgamma}. 
\end{itemize}
\end{enumerate}

\end{lemma}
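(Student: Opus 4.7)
\medskip

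\noindent\textbf{Proof plan for Lemma \ref{lem:GA}.}
The natural guess is to mimic, on the complex side, the real Alinhac good unknown that cancels the order-one unbounded contribution $\OpBW{{\bm B}_\upgamma(\eta,\psi;x)|\xi|}$ in the paralinearized system \eqref{eq:KH4}. Specifically, I would define
\begin{equation*}
{\bm G}(U)\defeq {\bm\sM}^{-1}\pare{D}\circ \begin{pmatrix} \Id & 0 \\ -\OpBW{B_\upgamma({\bm\sM} U;x)} & \Id \end{pmatrix}\circ {\bm\sM}\pare{D},
\end{equation*}
where ${\bm\sM}\pare{D}$ is the change of variables \eqref{eq:complex_var} and $B_\upgamma$ is the function defined in \eqref{eq:B}. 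Since $B_\upgamma$ is real-valued and the middle matrix is of the form $\exp(N)$ with $N$ the nilpotent entry $\begin{psmallmatrix} 0 & 0 \\ -\OpBW{B_\upgamma} & 0 \end{psmallmatrix}$, a direct check using the real symplectic form ${\bm J}$ shows that it is symplectic on $H^s\pare{\mathbb{T};\mathbb{R}^2}$ (i.e. $N^T {\bm J} + {\bm J} N = 0$ at the symbol level, which propagates to operators by self-adjointness of $\OpBW{B_\upgamma}$). Combined with the symplecticity of ${\bm S}_{\kap,\upgamma}(D)$ in \eqref{def bmS} (the Cayley part ${\mathpzc C}$ simply converts the real symplectic form ${\bm J}$ into the complex one $\JC$), this gives the linear symplecticity of ${\bm G}(U)$ claimed in item \ref{item:GU_B}. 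The estimates in the class $\cS^0_{K,0,0}\bra{\epsilon_0, N}$ follow from the spectral localization of Bony-Weyl operators, and the improvement ${\bm G}(U)-\Id\in\pare{\Sigma\cS^{-1/2}_{K,0,1}}^{2\times 2}$ comes from the fact that the off-diagonal piece, after conjugation by ${\bm\sM}^{\pm 1}$ and using \eqref{eq:msigmagamma}, picks up a factor $\mathsf{m}_{\kap,\upgamma}^2(\xi)\in\widetilde{\Gamma}^{-1/2}_0$ through the composition formula of \Cref{prop compBW}.

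Having set up ${\bm G}(U)$, the next step is to compute the conjugated equation. I would work first in real coordinates, where the computation is transparent: setting $\psi_0\defeq\psi-\OpBW{B_\upgamma(\eta,\psi;x)}\eta$ and $\eta_0\defeq\eta$, one differentiates in time and substitutes from \eqref{eq:KH4}. The key cancellations, which are the whole point of the Alinhac reduction, are
\begin{align*}
\OpBW{-\tfrac12 B_\upgamma|\xi|}\psi + \OpBW{B_\upgamma}\OpBW{-\tfrac12|\xi|}\psi &= \OpBW{A_{[0]}}\psi + R\psi,\\
\OpBW{\tfrac12 B_\upgamma^2|\xi|}\eta - \OpBW{B_\upgamma}\OpBW{\tfrac12 B_\upgamma|\xi|}\eta &=\OpBW{A_{[0]}}\eta + R\eta,
\end{align*}
which eliminate all the unbounded $B_\upgamma$-contributions in the $\psi$ equation modulo symbols of order zero, by the symbolic calculus of \Cref{prop compBW}. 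A parallel (simpler) computation on the $\eta$-equation removes the $\tfrac12 B_\upgamma|\xi|$ term there. Tracking the remaining terms: the curvature contribution $\kap(1+\mathtt{f})(|\xi|^2-1)$, the $\upgamma^2/(1+2\eta)$ term, the transport $-\ii V_\upgamma\xi\,\Id_{\mathbb{R}^2}$, and the $w_\upgamma|\xi|$ piece all survive and are unaffected at leading order by the shear-like transformation (the contribution of $-\OpBW{\partial_t B_\upgamma}\eta$ and of the symbol flow-derivatives are bounded and go into $\mathbf{A}_{[0;1]}$, which accounts for the $K'=1$ in the notation).

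The final step is to translate the result back to complex variables by pushing forward through ${\bm\sM}^{\pm 1}$, exactly as in the proof of \Cref{prop:KH_complex}: one applies the formula \eqref{eq:conj_complex_Matrix} together with the composition rule \eqref{eq:composizione_triplice}. This produces the claimed explicit form of ${\bm A}_{1/2}^{(0)}$, which reads as in \eqref{eq:A1/2} but with the $B_\upgamma^2/2$ contribution removed (this is the precise quantitative statement of the cancellation performed by the good unknown), while ${\bm A}_{3/2}^{(0)}={\bm A}_{3/2}$ is untouched because the order-$3/2$ part is insensitive to an order-$(-1/2)$ conjugation. The transport $-\Opvec{\ii V_\upgamma(\sM U;x)\xi}U_0$ emerges from the diagonal contribution $\JC(-\ii V_\upgamma\xi\Id_{\C^2})=-\ii V_\upgamma\xi\Id_{\C^2}$; the choice of writing it separately (rather than absorbing it into ${\bm A}_1^{(0)}$) is dictated by the subsequent reduction procedure in \Cref{prop:constantcoeff}, where it is treated via a change of space variable.

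\medskip

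The main technical obstacle is bookkeeping: one must verify, using \Cref{prop compBW} and \Cref{prop compo mop}, that all correction terms arising from the non-commutativity of symbols (Poisson brackets, compositions of three factors ${\bm\sM}^{-1}\OpBW{\cdot}{\bm\sM}$, and the time derivative $\partial_t{\bm G}{\bm G}^{-1}$) fall into the bounded class $\Sigma\Gamma^{0}_{K,1,1}\bra{\epsilon_0,N}\otimes\Mcal_2(\C)$ or into the smoothing class. A secondary point requiring care is that the nonlinear functional $B_\upgamma({\bm\sM} U;x)$ depends on $U$ through the non-local change ${\bm\sM}$, so that $\partial_t B_\upgamma$ loses $3/2$ derivatives in time; this is the source of the $K'=1$ index in ${\bm A}_{[0;1]}$ and is what makes the assumption $K'\leq K$ on the regularity of $U$ essential throughout the reduction scheme.
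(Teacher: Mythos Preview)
Your proposal is correct and leads to the same result as the paper, but via a genuinely different route. The paper defines ${\bm G}(U)$ directly in complex coordinates as
\[
{\bm G}(U)=\Id-\tfrac{\ii}{2}\begin{bmatrix}1&1\\-1&-1\end{bmatrix}\OpBW{B_\upgamma(\sM U;x)\,\mathsf{m}_{\kap,\upgamma}^2(\xi)},
\]
and performs all conjugations on the complex system \eqref{eq:KH6} itself: the key computation there (equation \eqref{eq:conj_rules_GU}) is that conjugating ${\bm J}_\C\OpBW{{\bm A}_{3/2}\,\omega_{\kap,\upgamma}}$ by ${\bm G}$ produces exactly the off-diagonal order-$1$ term $-\ii\tfrac{B_\upgamma}{2}|\xi|\begin{bsmallmatrix}1&0\\0&-1\end{bsmallmatrix}$ that cancels the $B_\upgamma$-part of ${\bm A}_1$, together with an order-$1/2$ correction $\tfrac{B_\upgamma^2}{4}\mathsf{m}^2\begin{bsmallmatrix}1&1\\1&1\end{bsmallmatrix}$ whose combination with the corresponding contribution from conjugating ${\bm A}_1$ removes the $B_\upgamma^2/2$ from ${\bm A}_{1/2}$. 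You instead pull back through ${\bm\sM}^{\pm1}$, perform the classical real shear $\psi\mapsto\psi-\OpBW{B_\upgamma}\eta$ on \eqref{eq:KH4}, and push forward. Your ${\bm G}(U)$ and the paper's coincide up to a smoothing remainder (the difference $\mathsf{m}\,\OpBW{B_\upgamma}\,\mathsf{m}-\OpBW{B_\upgamma\mathsf{m}^2}$ is in $\wt\cR^{-\varrho}_1$ by \Cref{prop compBW} with $\alpha=\beta=\mathsf{m}$, so the first-order correction vanishes), hence the two conjugated systems agree modulo ${\bm R}(U)$. Your route is conceptually cleaner, since the good unknown is native to the $(\eta,\psi)$ variables; the paper's route is more self-contained within the complex framework and avoids the round trip.

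One small correction: the sentence ``$\JC(-\ii V_\upgamma\xi\,\Id_{\C^2})=-\ii V_\upgamma\xi\,\Id_{\C^2}$'' is false as written, since $\JC$ does not fix scalar multiples of the identity. What you mean (and what the paper uses) is that the transport part of $\JC{\bm A}_1$, namely $\JC\bigl(-V_\upgamma\xi\begin{bsmallmatrix}0&-1\\1&0\end{bsmallmatrix}\bigr)$, equals $-\ii V_\upgamma\xi\,\Id_{\C^2}$, which is indeed $-\Opvec{\ii V_\upgamma\xi}$.
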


\begin{proof}
The procedure that we use here is the same as the one proved in \cite[Section 6.1]{BMM2022}, thus we omit to perform detailed computations, in particular the \Cref{item:GU_B} is true by direct computations and the fact that ${\bm G}\pare{U}-\Id \in \pare{\Sigma\cS^0_{K,0,1}\bra{\epsilon_0, N}}^{2\times 2}$. This latter fact is a consequence of \Cref{lemactionpara}. The only difference, compared to \cite{BMM2022}, is the symbol  $ {\bm A}_{\frac{1}{2}}\pare{U;x}\av{\xi}^{\frac{1}{2}} \in \Sigma \Gamma^{\frac{1}{2}}_{K, 0, 1}\bra{\epsilon_0, N} $ appearing in \eqref{eq:KH6} and defined explicitly in \eqref{eq:A1/2}, which shall be treated in detail. Let us define the transformation
\begin{equation}\label{eq:GU_map}
{\bm G}\pare{U}\defeq \Id - \frac{\ii}{2}  
\begin{bmatrix}
1&1\\-1&-1
\end{bmatrix}
\OpBW{B_\upgamma \pare{\sM U;x} \mathsf{m}_{\kap, \upgamma}^2 \pare{{\xi}}}.
\end{equation}
The fact that $ {\bm G}\pare{U} $ is a nilpotent perturbation of the identity allows us to compute its inverse as
\begin{equation*}
{\bm G}\pare{U}^{-1}\defeq \Id + \frac{\ii}{2} 
\begin{bmatrix}
1&1\\-1&-1
\end{bmatrix}
\OpBW{B_\upgamma \pare{\sM U;x} \mathsf{m}_{\kap, \upgamma}^2 \pare{{\xi}}}. 
\end{equation*}
Thus, defining 
$
U_0 \defeq {\bm G}\pare{U} U 
$,
and using the conjugation rules
\begin{footnotesize}
\begin{equation}\label{eq:conj_rules_GU}
\begin{aligned}
{\bm G}\pare{U}{\bm J}_\mathbb{C} \OpBW{{\bm A}_{\frac{3}{2}}\omega_{\kap,\upgamma}\pare{\xi}}{\bm G}\pare{U}^{-1} =
& \ 
 {\bm J}_\mathbb{C} \OpBW{{\bm A}_{\frac{3}{2}}\omega_{\kap,\upgamma}\pare{\xi} - \ii \frac{B_\upgamma}{2}\av{\xi}
\begin{bmatrix}
1&0\\0&-1
\end{bmatrix}
+\frac{B^2_\upgamma  }{4}\ \mathsf{m}^2_{\kap, \upgamma}\pare{\xi}
\begin{bmatrix}
1&1\\1&1
\end{bmatrix}
+{\bm A}_{\bra{0}}
 } +{\bm R}\pare{U},
 \\
 {\bm G}\pare{U} \Opvec{-\ii V_\upgamma \xi}\Id_{\mathbb{C}^2} {\bm G}\pare{U}^{-1}
 = & \ 
 \Opvec{-\ii V_\upgamma \xi}\Id_{\mathbb{C}^2},
 \\
 {\bm G}\pare{U} {\bm J}_\mathbb{C} \OpBW{ \ii \frac{B_\upgamma}{2}\av{\xi}
\begin{bmatrix}
1&0\\0&-1
\end{bmatrix}
 } 
 {\bm G}\pare{U}^{-1}
 = & \ 
  {\bm J}_\mathbb{C} \OpBW{ \ii \frac{B_\upgamma}{2}\av{\xi}
\begin{bmatrix}
1&0\\0&-1
\end{bmatrix}
-\frac{B^2_\upgamma  }{2}\ \mathsf{m}^2_{\kap, \upgamma}\pare{\xi}
\begin{bmatrix}
1&1\\1&1
\end{bmatrix}
+{\bm A}_{\bra{0}}
 } 
 + {\bm R}\pare{U},
\end{aligned}
\end{equation}
\end{footnotesize}
we get, using also \cref{eq:conj_rules_GU,eq:m_expansion_xi}, 
\begin{equation}\label{eq:conj_rules_GU_1}
\begin{aligned}
    {\bm G}\pare{U}{\bm J}_\mathbb{C}\OpBW{{\bm A}_{\frac{3}{2}}\omega_{\kap,\upgamma}\pare{\xi} + {\bm A}_1} {\bm G}\pare{U}^{-1} &= {\bm J}_\mathbb{C} \OpBW{{\bm A}_{\frac{3}{2}}\omega_{\kap,\upgamma}\pare{\xi}  -\frac{1}{\sqrt{2\kap}}\frac{B^2_\upgamma}{4} \ \av{\xi}^{\frac{1}{2}}
\begin{bmatrix}
1&1\\1&1
\end{bmatrix}
+
{\bm A}_{\bra{0}}
}\\
&\quad+ \Opvec{-\ii V_\upgamma \xi}+{\bm R}\pare{U}.
\end{aligned}
\end{equation}
Due to the particular nilpotent structure of ${\bm J}_\mathbb{C}\OpBW{{\bm A}_{\frac{1}{2}}\pare{U;x}\av{\xi}^{\frac{1}{2}}} $ and ${\bm G(U)}-\Id$,
the conjugation for the symbol of order $ 1/2 $ explicitly given by
\begin{equation}\label{eq:conj_rules_GU_2}
{\bm G}\pare{U}{\bm J}_\mathbb{C}\OpBW{{\bm A}_{\frac{1}{2}}\pare{U;x}\av{\xi}^{\frac{1}{2}}} {\bm G}\pare{U}^{-1} = {\bm J}_\mathbb{C}\OpBW{{\bm A}_{\frac{1}{2}}\pare{U;x}\av{\xi}^{\frac{1}{2}}} .
\end{equation}
Remark that there is a cancellation of the terms of the form $|\xi|^{\frac{1}{2}}$ between \eqref{eq:A1/2} and \eqref{eq:conj_rules_GU_1} leading the the expression of ${\bm A}^{(0)}_{\frac{1}{2}}$ in the statement. Moreover, we have
\begin{equation}\label{eq:conj_rules_GU_3}
    {\bm G}\pare{U} U_t = \partial_t U_0 - \pare{\partial_t {\bm G}\pare{U}}  {\bm G}\pare{U}^{-1} U_0.
\end{equation}
Hence, from \Cref{item:GU_B,prop compBW}, we have that
\begin{equation}\label{eq:conj_rules_GU_4}
   \pare{\partial_t {\bm G}\pare{U}}  {\bm G}\pare{U}^{-1} = \OpBW{{\bm A}_{\bra{-\frac{1}{2};1}}\pare{U;x, \xi}}, \quad {\bm A}_{\bra{-\frac{1}{2};1}}\triangleq  - \frac{\ii}{2}  
\begin{bmatrix}
1&1\\-1&-1
\end{bmatrix}
{\pa_t B_\upgamma \pare{\sM U;x} \mathsf{m}_{\kap, \upgamma}^2 \pare{{\xi}}}  .  
\end{equation}
The conjugations proved in \cref{eq:conj_rules_GU_1,eq:conj_rules_GU_2,eq:conj_rules_GU_3,eq:conj_rules_GU_4} and the computations performed in \cite[Section 6.1]{BMM2022} conclude thus the proof of \Cref{lem:GA}.  We remark that the zero-th order matrix ${\bm A}_{[0,1]}$ is the sum of the linearly Hamiltonian matrix ${\bm A}_{[-\frac12,1]}$ and the contributions coming from ${\bm A}_{[0]}$ in \cref{eq:conj_rules_GU,eq:conj_rules_GU_1}. To prove that ${\bm J}_\C{\bm A}_{[0]}$ is linearly Hamiltonian up to homogeneity $N$ we then apply Lemma \ref{spezzamento} to each homogeneous component of the operators in \cref{eq:conj_rules_GU,eq:conj_rules_GU_1} which are linearly Hamiltonian thanks to \Cref{lem conj linham}.
\end{proof}

\subsection{Diagonalization at the highest order}

In the present section, we diagonalize the quasi-linear contribution 
$$ {\bm J}_\mathbb{C} \OpBW{{\bm A}^{(0)}_{\frac{3}{2}}\pare{U;x}\omega_{\kap, \upgamma}\pare{ \xi }},$$ which reduces to the diagonalization of the matrix
\begin{equation}\label{eq:JbCA3/20}
{\bm J}_\mathbb{C} {\bm A}^{(0)}_{\frac{3}{2}}\pare{U;x} = \ii \begin{bmatrix}
-\pare{1+\mathtt{f}\pare{U;x}} &  - \mathtt{f}\pare{U;x}
\\
\mathtt{f}\pare{U;x} & 1+ \mathtt{f}\pare{U;x}
\end{bmatrix} , 
\end{equation}
where $ \mathtt{f}\pare{U;x} $ is defined in \eqref{eq:paralin_curvature}. 
The eigenvalues of \eqref{eq:JbCA3/20} are given by $ \pm \ii \ \lambda\pare{U;x} $ where 
\begin{equation}\label{def lbda}
    \lambda\pare{U;x}\defeq\sqrt{1+2\mathtt{f}\pare{U;x}} 
\end{equation} and we have that $ \lambda\pare{U;x}-1\in \Sigma\cF^{\mathbb{R}}_{K, 0, 1}\bra{\epsilon_0, N} $. Defining
\begin{align*}
\mathtt{h}\defeq \frac{1+\mathtt{f}+\lambda}{\sqrt{\pare{1+\mathtt{f}+\lambda}^2 - \mathtt{f}^2}} \ , 
&&
\mathtt{g}\defeq \frac{-\mathtt{f}}{\sqrt{\pare{1+\mathtt{f}+\lambda}^2 - \mathtt{f}^2}} , 
\end{align*}
the symmetric and symplectic matrix
\begin{align}\label{eq:ttF}
{\bm F} \defeq \begin{bmatrix}
\mathtt{h} & \mathtt{g} \\
\mathtt{g} & \mathtt{h}
\end{bmatrix} \in \pare{  \Sigma\cF^{\mathbb{R}}_{K, 0, 0}\bra{\epsilon_0, N}  }^{2\times 2}\ ,
&& 
{\bm F}^{-1} \defeq \begin{bmatrix}
\mathtt{h} & - \mathtt{g} \\
- \mathtt{g} & \mathtt{h}
\end{bmatrix} \in \pare{  \Sigma\cF^{\mathbb{R}}_{K, 0, 0}\bra{\epsilon_0, N}  }^{2\times 2} \ ,
\end{align}
which are such that
\begin{equation}\label{eq:ttF-Id}
{\bm F} - \Id, \ {\bm F}^{-1}-\Id \in \pare{  \Sigma\cF^{\mathbb{R}}_{K, 0, 1}\bra{\epsilon_0, N}  }^{2\times 2} \ , 
\end{equation}
diagonalize \eqref{eq:JbCA3/20} in the sense that
\begin{equation*}
{\bm F}^{-1} {\bm J}_\mathbb{C} {\bm A}^{(0)}_{\frac{3}{2}} {\bm F} =
\begin{bmatrix}
-\ii \lambda & 0 \\ 0 & \ii \lambda
\end{bmatrix} . 
\end{equation*}
We prove the following result.
\begin{lemma}\label{lem:block_diagonalization}
Let $N\in \mathbb{N}$ and $\vr> 0$,  for any $K \in \mathbb{N}^*$ there are $s_0 > 0$, $\epsilon_0 > 0$ such that, for any solution $U \in \Ball{K}{s_0}$ of \eqref{eq:KH7} there exists a real-to-real invertible matrix of spectrally localized maps ${\bm \Psi}_1(U)$ such that 
\begin{enumerate}
\item  ${\bm \Psi}_1(U), \ {\bm \Psi}_1(U)^{-1} \in \pare{ \cS^0_{K,0, 0}\bra{\epsilon_0} }^{2\times 2}$ are  linearly symplectic as per \Cref{def:LS}. Moreover, ${\bm \Psi}_1(U) - \Id \in \pare{ \Sigma \cS^{0}_{K, 0, 1}\bra{\epsilon_0, N} }^{2\times 2}.$
\item The variable $U_1 \triangleq {\bm \Psi}_1(U)U_0$ solves 
\begin{equation}\label{eq:block_diagonalized}
\begin{aligned}
    \partial_t U_1 &=  \Opvec{  \ii\ \lambda \pare{U;x} \omega_{\kap, \upgamma}\pare{ \xi }
  - \ii \ V_\upgamma ^{(1)}\pare{U;x}\xi  } U_1 \\
&\quad+ {\bm J}_\mathbb{C} \OpBW{{\bm A}_{\frac{1}{2}}^{(1)}\pare{U;x}\av{\xi}^{\frac{1}{2}} + {\bm A}_{\bra{0;1}}\pare{U;x,\xi}} U_1 + {\bm R}_{\bra{1}}\pare{U}U_1,
\end{aligned}
\end{equation}
where
\begin{itemize}
\item $\omega_{\kap, \upgamma}\pare{ \xi } \in \Gamma^{\frac{3}{2}}_0$ is defined in \eqref{eq:disp_relation};
\item  $ \lambda\pare{U;x} $ is defined in \eqref{def lbda};
\item $V_\upgamma ^{(1)} \pare{U;x}\in  \Sigma\cF^{\mathbb{R}}_{K, 0,1}\bra{\epsilon_0, N} $;
\item ${\bm A}_{\frac{1}{2}}^{(1)} \pare{U;x}\defeq \pare{\mathtt{h}\pare{U;x} + \mathtt{g}\pare{U;x}}^2 {\bm A}_{\frac{1}{2}}^{(0)}\pare{U;x} \in \pare{ \Sigma\cF^{\mathbb{R}}_{K, 0,1}\bra{\epsilon_0, N} }^{2\times 2}$.
\end{itemize}
\end{enumerate}
\end{lemma}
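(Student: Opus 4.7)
The plan is to conjugate equation \eqref{eq:KH7} by a paradifferential map built from the matrix ${\bm F}$ defined in \eqref{eq:ttF}, which pointwise diagonalizes the principal symbol ${\bm J}_\mathbb{C}{\bm A}^{(0)}_{\frac{3}{2}}$ given in \eqref{eq:JbCA3/20}. The first step is to verify that ${\bm F}$ is symplectic at the symbol level: a direct computation using the explicit expressions of $\mathtt{h}$ and $\mathtt{g}$ shows that $\mathtt{h}^2 - \mathtt{g}^2 = 1$, so that ${\bm F}^\intercal {\bm J}_\mathbb{C} {\bm F} = {\bm J}_\mathbb{C}$. One then defines ${\bm \Psi}_1(U) \triangleq \OpBW{{\bm F}^{-1}\pare{U;x}}$, possibly corrected by lower-order contributions to ensure linear symplecticity up to homogeneity $N$, following the strategy developed in \cite[Section 6.2]{BMM2022}. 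Thanks to \eqref{eq:ttF-Id} and \Cref{lemactionpara}, both ${\bm \Psi}_1(U)$ and its inverse (obtained from $\OpBW{{\bm F}\pare{U;x}}$ up to smoothing corrections) belong to $\pare{\cS^0_{K,0,0}\bra{\epsilon_0}}^{2\times 2}$, with ${\bm \Psi}_1(U) - \Id \in \pare{\Sigma\cS^0_{K,0,1}\bra{\epsilon_0,N}}^{2\times 2}$, and linear symplecticity up to homogeneity $N$ follows from \Cref{prop compBW} combined with the pointwise identity ${\bm F}^\intercal {\bm J}_\mathbb{C} {\bm F} = {\bm J}_\mathbb{C}$.

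The second step is to derive the equation satisfied by $U_1 \triangleq {\bm \Psi}_1(U) U_0$. The leading-order conjugation, via \Cref{prop compBW}, yields
\begin{equation*}
{\bm \Psi}_1(U)\, {\bm J}_\mathbb{C}\, \OpBW{{\bm A}^{(0)}_{\frac{3}{2}}\pare{U;x}\, \omega_{\kap,\upgamma}\pare{\xi}}\, {\bm \Psi}_1(U)^{-1} = \OpBW{{\bm F}^{-1}\, {\bm J}_\mathbb{C}\, {\bm A}^{(0)}_{\frac{3}{2}}\, {\bm F}\; \omega_{\kap,\upgamma}\pare{\xi}} + \OpBW{{\bm A}_{\bra{\frac{1}{2};0}}\pare{U;x,\xi}} + {\bm R}\pare{U},
\end{equation*}
where the order-$\frac{1}{2}$ remainder arises from the Poisson-bracket correction $\frac{1}{2\ii}\{{\bm F}^{-1}{\bm J}_\mathbb{C}{\bm A}^{(0)}_{\frac{3}{2}}, {\bm F}\}\omega_{\kap,\upgamma}\pare{\xi}$ (loss of one derivative against a Fourier multiplier of order $\frac{3}{2}$). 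The pointwise diagonalization of ${\bm J}_\mathbb{C}{\bm A}^{(0)}_{\frac{3}{2}}$ via ${\bm F}$ then produces the leading diagonal term $\Opvec{\ii\lambda\pare{U;x}\,\omega_{\kap,\upgamma}\pare{\xi}}$ (up to the correct ordering of the eigenvectors, which we choose so as to match the statement). The half-order contribution ${\bm J}_\mathbb{C}\OpBW{{\bm A}^{(0)}_{\frac{1}{2}}\av{\xi}^{\frac{1}{2}}}$ is handled by an explicit pointwise computation: since ${\bm A}^{(0)}_{\frac{1}{2}}$ is proportional to $\begin{bmatrix}1&1\\1&1\end{bmatrix}$, one finds ${\bm F}^{-1}{\bm J}_\mathbb{C}{\bm A}^{(0)}_{\frac{1}{2}}{\bm F} = \pare{\mathtt{h}+\mathtt{g}}^2{\bm J}_\mathbb{C}{\bm A}^{(0)}_{\frac{1}{2}}$, yielding the claimed ${\bm A}^{(1)}_{\frac{1}{2}}$. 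The transport operator $\Opvec{-\ii V_\upgamma \xi}$ commutes with ${\bm \Psi}_1(U)$ up to an order-zero commutator involving $\partial_x V_\upgamma$, producing a renormalized coefficient $V^{(1)}_\upgamma\pare{U;x}$ and terms absorbed into $\OpBW{{\bm A}_{\bra{0;1}}}$. Finally, the time-derivative contribution $\pare{\partial_t {\bm \Psi}_1(U)}{\bm \Psi}_1(U)^{-1}$ gives a paradifferential operator of order $0$ with one extra time derivative, absorbed into $\OpBW{{\bm A}_{\bra{0;1}}}$, while the smoothing remainder ${\bm R}\pare{U}U_0$ transfers into ${\bm R}_{\bra{1}}\pare{U}U_1$ by \Cref{prop compo mop}.

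The main technical obstacle is the careful bookkeeping of the lower-order symbolic remainders: each Poisson-bracket correction lowers the paradifferential order by one, and one must verify that every residual term is either of order $\leqslant \frac{1}{2}$ or smoothing of sufficiently negative order, and that its matrix structure is compatible with the linearly Hamiltonian character required for ${\bm J}_\mathbb{C}{\bm A}_{\bra{0;1}}$. This preservation of the Hamiltonian structure up to homogeneity $N$ is guaranteed by the symplecticity of ${\bm F}$ at the symbol level, together with standard results on the conjugation of linearly Hamiltonian operators by linearly symplectic paradifferential maps, in the same spirit as the analogous discussion in \cite[Section 6.2]{BMM2022}. Apart from these verifications, the procedure is algorithmic and consists in repeated applications of Propositions \ref{prop compBW} and \ref{prop compo mop}.
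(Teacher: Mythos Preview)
Your overall strategy matches the paper's, but there is a genuine gap concerning linear symplecticity. The statement requires ${\bm \Psi}_1(U)$ to be linearly symplectic in the \emph{exact} sense of Definition~\ref{def:LS}, namely ${\bm \Psi}_1(U)^\top \EC\, {\bm \Psi}_1(U) = \EC$ with no remainder. Defining ${\bm \Psi}_1(U)$ directly as $\OpBW{{\bm F}^{-1}}$ only gives this identity up to a smoothing operator (via Proposition~\ref{prop compBW} together with the pointwise relation ${\bm F}^\top\JC{\bm F}=\JC$), which is linear symplecticity \emph{up to homogeneity $N$} (Definition~\ref{linsymphomoN}), not the exact version. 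Your parenthetical ``possibly corrected by lower-order contributions'' does not close this: paradifferential or smoothing corrections cannot in general restore an exact operator identity. The paper instead realizes ${\bm \Psi}_1(U)$ as the time-$1$ flow generated by the linearly Hamiltonian operator
\[
\JC\,\OpBW{\begin{bmatrix}\ii\log(\mathtt{h}+\mathtt{g})&0\\0&-\ii\log(\mathtt{h}+\mathtt{g})\end{bmatrix}};
\]
since this generator is exactly linearly Hamiltonian, Lemma~\ref{flow}(ii) yields exact linear symplecticity automatically, and a Lie expansion shows the flow equals $\OpBW{{\bm F}^{-1}}$ modulo a smoothing operator, so your conjugation calculus still applies afterwards.

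A secondary point: the Poisson-bracket remainder from conjugating the order-$\tfrac32$ term is of order $-\tfrac12$, not $\tfrac12$. Differentiating $\mathtt{h}^2-\mathtt{g}^2=1$ shows that ${\bm F}^{-1}\partial_x{\bm F}$ is purely off-diagonal, while ${\bm F}^{-1}\JC{\bm A}_{\frac32}^{(0)}{\bm F}$ is purely diagonal; the first-order symbolic correction is proportional to their anticommutator, which vanishes. This cancellation is precisely why ${\bm A}_{\frac12}^{(1)}=(\mathtt{h}+\mathtt{g})^2{\bm A}_{\frac12}^{(0)}$ arises \emph{only} from the conjugation of the order-$\tfrac12$ term; with your claimed extra order-$\tfrac12$ contribution from the principal part, that exact formula would not hold.
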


\begin{proof}
Arguing as in \cite[Lemma 6.4]{BMM2022} the 1-flow $ {\bm \Psi}_1\pare{U}\defeq\left. {\bm \Psi}^\tau \pare{U}\right|_{\tau=0} $ of 
\begin{equation}\label{log:flow}
\system{
\begin{aligned}
& \partial_\tau {\bm \Psi}^\tau\pare{U} = {\bm J}_\mathbb{C} \OpBW{
\begin{bmatrix}
\ii \ \log\pare{\mathtt{h}\pare{U;x} + \mathtt{g}\pare{U;x}} & 0 
\\
0 & -\ii \ \log\pare{\mathtt{h}\pare{U;x} + \mathtt{g}\pare{U;x}} 
\end{bmatrix}
}{\bm \Psi}^\tau,
\\
& {\bm \Psi}^0\pare{U} = \Id
\end{aligned}
} 
\end{equation}
is such that
\begin{align}\label{eq:Psi1symbol}
{\bm \Psi}_1\pare{U} = \OpBW{{\bm F}^{-1}\pare{U;x}} + {\bm R}\pare{U}, 
&&
{\bm \Psi}_1\pare{U}^{-1} = \OpBW{{\bm F} \pare{U;x}} + {\bm R}\pare{U}. 
\end{align}
Since the generator in \eqref{log:flow} is linearly Hamiltonian, \Cref{flow}  ensures that ${\bm \Psi}_1\pare{U}$ is a linearly symplectic, spectrally localized map in $ \pare{ \Sigma \cS^0_{K,K',1}[r,N] }^{2\times 2}$. 
Thus, the new variable $ U_1\defeq {\bm \Psi}_1\pare{U} U_0 $ solves (cf. \eqref{eq:KH7})
\begin{multline} \label{eq:DHO_0}
\partial_t U_1
 =   {\bm \Psi}_1\pare{U} {\bm J}_\mathbb{C} \  \OpBW{{\bm A}_{\frac{3}{2}}^{(0)}\pare{U;x} \  \omega_{\kap, \upgamma}\pare{ \xi }   +  {\bm A}_{\frac{1}{2}}^{(0)} \pare{U;x } \  \av{\xi}^{\frac{1}{2}} + {\bm A}_{\bra{0;1}}\pare{U;x, \xi}} {\bm \Psi}_1\pare{U}^{-1} U_1
 \\
 +\partial_t {\bm \Psi}_1\pare{U} {\bm \Psi}_1\pare{U}^{-1} U_1
- {\bm \Psi}_1\pare{U}\OpBW{\ii V_\upgamma\pare{\sM U;x}\xi} {\bm \Psi}_1\pare{U}^{-1} U_1
 + {\bm \Psi}_1\pare{U}{\bm R}\pare{U}{\bm \Psi}_1\pare{U}^{-1} U_1.  
\end{multline}
Following the same computations as in \cite[Eq. (6.3)]{BMM2022} we have that
\begin{equation}\label{eq:DHO_1}
\begin{aligned}
    &{\bm \Psi}_1\pare{U} {\bm J}_\mathbb{C} \  \OpBW{{\bm A}_{\frac{3}{2}}^{(0)}\pare{U;x} \  \omega_{\kap, \upgamma}\pare{ \xi } } {\bm \Psi}_1\pare{U}^{-1}
\\
&=
\OpBW{
 \ii \lambda\pare{U;x}\omega_{\kap, \upgamma}\pare{ \xi }
 \begin{bmatrix}
-1 & 0 \\ 0 & 1
\end{bmatrix}
+
A_{\bra{-\frac{1}{2};0}}\pare{U;x,\xi}
} 
+ {\bm R}\pare{U}. 
\end{aligned}
\end{equation}
Similar computations give us the conjugation 
\begin{equation}\label{eq:DHO_2}
\begin{aligned}
    &{\bm \Psi}_1\pare{U} {\bm J}_\mathbb{C} \  \OpBW{{\bm A}_{\frac{1}{2}}^{(0)}\pare{U;x} \  \av{\xi}^{\frac{1}{2}} } {\bm \Psi}_1\pare{U}^{-1}
\\
&=
{\bm J}_\mathbb{C}\OpBW{\pare{\mathtt{h}\pare{U;x} + \mathtt{g}\pare{U;x}}^2 {\bm A}^{(0)}_{\frac{1}{2}} \pare{U;x} \  \av{\xi}^{\frac{1}{2}}  + A_{\bra{-\frac{1}{2}; 0}}\pare{U;x, \xi}} + {\bm R}\pare{U}
\end{aligned} 
\end{equation} 
and 
\begin{equation}
\label{eq:DHO_3}
\begin{aligned}
{\bm \Psi}_1\pare{U}\OpBW{ {\bm A}_{\bra{0;1}}\pare{ U;x,\xi}} {\bm \Psi}_1\pare{U}^{-1} = & \ \OpBW{ {\bm A}_{\bra{0;1}}\pare{ U;x,\xi}} + {\bm R}\pare{U}, 
\end{aligned}
\end{equation}
while from \cref{eq:Psi1symbol,eq:ttF-Id} we obtain that
\begin{align}\label{eq:DHO_4}
\partial_t {\bm \Psi}_1\pare{U} {\bm \Psi}_1\pare{U}^{-1} = \OpBW{{\bm A}_{\bra{0;1}}\pare{U;x, \xi}} + {\bm R}_{\bra{1}}\pare{U}, 
\end{align}
and standard symbolic computations (cf. \Cref{prop compBW,eq:Psi1symbol}) give that
\begin{align}\label{eq:DHO_5}
{\bm \Psi}_1\pare{U}\Opvec{\ii V_\upgamma \pare{U;x}\xi} {\bm \Psi}_1\pare{U}^{-1} =  \Opvec{\ii V_\upgamma ^{(1)} \pare{U;x}\xi} + {\bm R}\pare{U}, 
&&
V_\upgamma ^{(1)} \pare{U;x}\in \Sigma \cF^{\mathbb{R}}_{K, 0, 1}\bra{\epsilon_0, N}.
\end{align}
We conclude plugging \cref{eq:DHO_1,eq:DHO_2,eq:DHO_3,eq:DHO_4,eq:DHO_5}  in \cref{eq:DHO_0}. {We remark that the zero-th order matrix ${\bm A}_{[0,1]}$ is the sum of  ${\bm A}_{[-\frac12;0]}$ form \cref{eq:DHO_1,eq:DHO_2} and  ${\bm A}_{[0;1]}$ from  \cref{eq:DHO_3,eq:DHO_4}. To prove that ${\bm J}_\C{\bm A}_{[0;1]}$ is linearly symplectic we then apply Lemma \ref{spezzamento} to each homogeneous components of the spectrally localized operators in \cref{eq:DHO_1,eq:DHO_2,eq:DHO_3,eq:DHO_4}, which are linearly Hamiltonian thanks to \Cref{lem conj linham}.}
\end{proof}

\subsection{Reduction to constant coefficients at the highest order}

\begin{lemma}[Reduction of the highest order]\label{lem:reduction_highest_order}
Let $N \in \mathbb{N}$ and $\vr > 2(N + 1)$. Then for any $K \in \mathbb{N}^*$ there are $s_0 > 0$, $\epsilon_0 > 0$ such that for any solution $U \in \Ball{K}{s_0}$ of \eqref{eq:block_diagonalized}, there exists a real-to-real invertible matrix of spectrally localized maps ${\bm \Psi}_2(U)$ satisfying 

\begin{enumerate}
\item   ${\bm \Psi}_2(U),\, {\bm \Psi}_2(U) \in\pare{ \cS^0_{K,0,0}[\epsilon_0] }^{2\times 2} $ are   linearly symplectic as per \Cref{def:LS}. Moreover,
${\bm \Psi}_2(U) - \Id \in \pare{\Sigma\cS^{N+1}_{K,0,2}[\epsilon_0,N] }^{2\times 2}$;

\item  The variable $U_2 \triangleq {\bm \Psi}_2(U)U_1$ solves the system
\begin{equation}\label{eq:KH8}
\begin{aligned}
    \partial_t U_2 &= \Opvec{ \ii \  \bra{\pare{ 1 + \mathpzc{v}(U) }\omega_{\kap, \upgamma}\pare{ \xi } - V_\upgamma ^{(2)}(U;x)\xi}}U_2 \\
&\quad+ {\bm J}_\mathbb{C}\OpBW{{\bm A}_{\frac{1}{2}}^{(2)}\pare{U;x}\av{\xi}^{\frac{1}{2}} +   {\bm A}_{\bra{0;1}}(U;x,\xi)}U_2 + {\bf R}(U)U_2,
\end{aligned}
\end{equation}
where
\begin{itemize}
\item $\mathpzc{v}(U)$ is a $x$-independent function in $\Sigma\cF^{\mathbb{R}}_{K,0,2}[\epsilon_0,N]$ and $\omega_{\kap,\upgamma}(\xi)$ is defined in \eqref{eq:disp_relation};
\item $V_\upgamma ^{(2)} (U;x)$ is a real valued function in $\pare{ \Sigma\cF^{\mathbb{R}}_{K,1,1}[\epsilon_0,N] }^2$;
\item $ {\bm A}_{\frac{1}{2}}^{(2)}\pare{U;x} \defeq   A_{\frac{1}{2}}^{(2)}\pare{U;x} \begin{bmatrix}
    1&1\\1&1
\end{bmatrix} $ where $ A_{\frac{1}{2}}^{(2)}\pare{U;x} \in \pare{\Sigma \cF^{\mathbb{R}}_{K, 0, 1}\bra{\epsilon_0, N}}^{2\times 2} $; 
\item  $\mathbf{R}\pare{U}\in \Sigma \cR^{-\vr +2\pare{N+1}}_{K, 1, 1}\bra{\epsilon_0,N} $.
\end{itemize}
\end{enumerate}
\end{lemma}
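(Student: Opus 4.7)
The aim is to remove the $x$-dependence from the highest-order coefficient $\lambda(U;x)$ in \eqref{eq:block_diagonalized}. The strategy is to conjugate by the time-$1$ flow ${\bm\Psi}_2(U) \triangleq {\bm\Psi}^1(U)$ of a transport-type paradifferential generator
\begin{equation*}
\partial_\tau{\bm\Psi}^\tau(U) = \Opvec{\ii\,\beta(U;x)\,\xi}\,{\bm\Psi}^\tau(U),\qquad {\bm\Psi}^0(U)=\Id,
\end{equation*}
with $\beta(U;x)$ a real-valued function of homogeneity at least $2$ in $U$, to be determined. Since $\ii\beta(U;x)\xi$ is linearly Hamiltonian, the flow and its inverse are, by the same argument used in the proof of \Cref{lem:block_diagonalization}, real-to-real, linearly symplectic, spectrally localized maps in $\pare{\cS^0_{K,0,0}[\epsilon_0]}^{2\times 2}$, and ${\bm\Psi}_2(U)-\Id$ belongs to $\pare{\Sigma\cS^{N+1}_{K,0,2}[\epsilon_0,N]}^{2\times 2}$ thanks to the starting homogeneity $\geqslant 2$ of $\beta$.

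An Egorov-type computation, based on iterating \Cref{prop compBW} together with the explicit form of the generator, yields
\begin{equation*}
{\bm\Psi}_2(U)\,\Opvec{\ii\,\lambda(U;x)\,\omega_{\kap,\upgamma}(\xi)}\,{\bm\Psi}_2(U)^{-1} = \Opvec{\ii\,\lambda(U;\Phi(U;y))\,\Phi_y(U;y)^{-3/2}\,\omega_{\kap,\upgamma}(\xi)} + \textnormal{l.o.t.},
\end{equation*}
where $\Phi(U;\cdot)\colon\T\to\T$ is the diffeomorphism associated with $\beta$ and ``l.o.t.'' collects paradifferential operators of order strictly less than $\frac{3}{2}$ plus smoothing remainders. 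Requiring that the resulting symbol be $x$-independent leads to the cohomological equation $\lambda(U;\Phi(U;y))\,\Phi_y(U;y)^{-3/2}=m(U)$, explicitly solved by
\begin{equation*}
m(U) \triangleq \pare{\frac{1}{2\pi}\int_{\T}\lambda(U;x)^{-2/3}\,\dd x}^{-3/2},
\end{equation*}
with $\Phi(U;\cdot)$ recovered by integrating the ODE $\Phi_y = (\lambda(U;\Phi)/m(U))^{2/3}$ and $\beta(U;y) \triangleq \Phi(U;y)-y$. Since $\lambda-1\in\Sigma\cF^{\mathbb{R}}_{K,0,1}[\epsilon_0,N]$ and the first component of $\sM U$, i.e.\ $\eta$, has zero average, the linear-in-$U$ part of $\int_\T \lambda^{-2/3}\,\dd x - 2\pi$ vanishes, hence $\mathpzc{v}(U)\triangleq m(U)-1\in\Sigma\cF^{\mathbb{R}}_{K,0,2}[\epsilon_0,N]$ is $x$-independent of homogeneity at least two, as required, and $\beta$ sits in a matching class.

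The subprincipal contributions are then handled by direct symbolic conjugation. The transport $-\ii V_\upgamma^{(1)}(U;x)\xi$ conjugates to another transport $-\ii V_\upgamma^{(2)}(U;x)\xi$; the loss $K'=1$ in time-regularity originates from $\partial_t{\bm\Psi}_2\,{\bm\Psi}_2^{-1}= \Opvec{\ii\,(\partial_t\beta)(U;x)\,\xi}+\cdots$ which, by the chain rule applied to $\beta(U;x)$, costs one time-derivative of $U$. The block ${\bm J}_\mathbb{C}\OpBW{{\bm A}_{\frac12}^{(1)}\av{\xi}^{1/2}}$ is transported, to leading order, into ${\bm J}_\mathbb{C}\OpBW{{\bm A}_{\frac12}^{(2)}\av{\xi}^{1/2}}$, with its rank-one symmetric form preserved since it is conjugated by a diagonal transformation, and zeroth-order corrections are absorbed into ${\bm A}_{[0;1]}$. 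The cumulative loss in the smoothing remainders amounts to at most $2(N+1)$ orders (from composing the paradifferential expansions with the flow, truncated at homogeneity $N$), which explains the requirement $\vr>2(N+1)$ and gives the stated class $\Sigma\cR^{-\vr+2(N+1)}_{K,1,1}[\epsilon_0,N]$. The main technical obstacle will be to propagate the algebraic structure -- linear symplecticity of $\JC\OpBW{{\bm A}_{[0;1]}}$, reality of all positive-order symbols, and the rank-one symmetric form of ${\bm A}_{\frac12}^{(2)}$ -- through the Egorov conjugation; this is handled, as in the proof of \Cref{lem:block_diagonalization}, by applying the structure lemmas to each homogeneous component separately and exploiting the linearly Hamiltonian nature of the generator so that ${\bm\Psi}_2$ preserves the complex Hamiltonian structure up to homogeneity $N$.
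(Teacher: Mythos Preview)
Your approach is exactly the standard one the paper invokes (it simply cites \cite[Lemma~6.7]{BMM2022} and the paracomposition theorem \cite[Theorem~3.27]{BD2018}), and your solution of the cohomological equation together with the zero-average argument showing $\mathpzc{v}(U)=m(U)-1\in\Sigma\cF^{\mathbb{R}}_{K,0,2}[\epsilon_0,N]$ is correct.

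There is, however, one slip. From $\mathpzc{v}(U)$ having homogeneity $\geqslant 2$ you write ``$\beta$ sits in a matching class'', i.e.\ $\beta$ also starts at homogeneity $2$. This does not follow. The function $\beta$ is determined by the full $x$-dependent part of $\lambda(U;x)$, and $\lambda-1$ has a nonzero homogeneity-$1$ component: indeed $\mathtt{f}(\eta;x)=-3\eta+O(\eta^2,\eta_x^2)$, so $\lambda-1\approx -3\eta$ at leading order. Your ODE $\Phi_y=(\lambda(U;\Phi)/m)^{2/3}$ then forces $\beta_x\approx -2\eta$ at homogeneity $1$, i.e.\ $\beta\approx -2\partial_x^{-1}\eta$, which is nonzero. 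Hence $\beta\in\Sigma\cF^{\mathbb{R}}_{K,0,1}[\epsilon_0,N]$ and, by \Cref{flow}, ${\bm\Psi}_2(U)-\Id\in\pare{\Sigma\cS^{N+1}_{K,0,1}[\epsilon_0,N]}^{2\times 2}$ rather than the class with $p=2$. The index $2$ in the lemma's statement appears to be a misprint; nothing downstream depends on it, since the composite map in \Cref{prop:constantcoeff} is only asserted to satisfy ${\bm B}(U;t)-\Id\in\pare{\Sigma\cS^{\frac32(N+1)}_{K,\underline{K'}-1,1}[\epsilon_0,N]}^{2\times 2}$ anyway.
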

\begin{proof}
We refer the interested reader to \cite[Lemma 6.7]{BMM2022}, the only difference being the conjugation of the term of order $ 1/2 $, which is achieved by standard paracomposition theorems, such as \cite[Theorem 3.27]{BD2018}. Notice that the conjugation worsens the regularizing properties of the smoothing operator in \cref{eq:KH8}. 
\end{proof}

\subsection{Diagonalization up to smoothing remainders}
In this section we block-diagonalize the Kelvin-Helmholtz system up to a smoothing remainder.
\begin{lemma}[Diagonalization to arbitrary order]\label{lem:diagonalization_arbitrary_order}
Let $N \in \mathbb{N}$ and $\vr \gg N$. Then for any $n \in \mathbb{N}\cup\set{-1} $, there exists $K' \triangleq K'(n) \geqslant0$ such that for all $K \geqslant K' + 1$, there exist $s_0 > 0$ and $\epsilon_0 > 0$ such that for any solution $U \in \BallR{K}{s_0}$ of \cref{eq:KH8}, there exists a real-to-real invertible matrix of spectrally localized maps ${\bm \Phi}_n(U)$ satisfying
\begin{enumerate}
    \item   ${\bm \Phi}_n(U),\, {\bm \Phi}_n(U)^{-1} \in\pare{ \cS^0_{K,0,0}[\epsilon_0] }^{2\times 2} $ are   linearly symplectic up to homogeneity $N$ as per \Cref{linsymphomoN}. Moreover,
${\bm \Phi}_n(U) - \Id \in  \pare{ \Sigma \cS^0_{K,K',1}[r,N] }^{2\times 2} $;
    \item  The variable $U_{n+3} \defeq {\bm \Phi}_n(U) U_{2}$ solves
    \begin{equation}\label{eq:KH9}
    \begin{aligned}
        \partial_t U_{n+4} &= \Opvec{  \ii \  \bra{ \pare{1 + \mathpzc{v}(U)} \omega_{\kap, \upgamma}\pare{ \xi } - V_\upgamma ^{(2)} (U;x) \xi + a^{\pare{n}}_{\xfrac{1}{2}}\pare{U;x}\av{\xi}^{\frac{1}{2}} +  a^{(n)}_0(U;x,\xi)} } U_{n+3}
    \\
    &\quad+ {\bm J}_{\mathbb{C}} \OpBW{{\bm A}_{\bra{-n;K'+1}}\pare{U;x,\xi}} U_{n+3} + {\bf R} \pare{U} U_{n+3},
    \end{aligned}
    \end{equation}
    where
    \begin{itemize}
        \item $a^{\pare{n}}_{\frac{1}{2}}\pare{U;x}\in \cF^{\mathbb{R}}_{K,0,1}\bra{\epsilon_0,N}$,
        \item $a^{(n)}_0(U;x,\xi)$ is a symbol in $\Sigma \Gamma^0_{K,K',1}[\epsilon_0 ,N]$ with $\Im a^{(n)}_0(U;x,\xi) \in \Gamma^0_{K,K',N+1}[\epsilon_0]$, 
        \item  $\mathbf{R}\pare{U}\in \Sigma \cR^{-\vr +2\pare{N+1}}_{K, 1, 1}\bra{\epsilon_0,N} $.
    \end{itemize}
\end{enumerate}
\end{lemma}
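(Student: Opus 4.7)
}
The plan is to argue by induction on $n \in \mathbb{N}\cup\{-1\}$. The base case $n=-1$ is nothing but equation \eqref{eq:KH8} from \Cref{lem:reduction_highest_order}: one sets $a^{(-1)}_{\frac{1}{2}}\equiv 0$, $a^{(-1)}_0 \equiv 0$, $K'(-1) = 1$, and absorbs the matrix $\OpBW{{\bm A}_{\frac{1}{2}}^{(2)}|\xi|^{\frac{1}{2}} + {\bm A}_{[0;1]}}$ into $\OpBW{{\bm A}_{[1;K'+1]}}$, which is legitimate since orders $\frac{1}{2}$ and $0$ are both $\leqslant 1$. The inductive step from $n$ to $n+1$ is where the real content lies: given \eqref{eq:KH9} at level $n$, we must produce a spectrally localized, real-to-real, linearly symplectic (up to homogeneity $N$) matrix $\bm{\Theta}_{n+1}(U)$ of the form $\Id + \pare{\Sigma\cS^{-n-\frac{3}{2}}_{K,K'(n)+1,1}[\epsilon_0,N]}^{2\times 2}$, such that $U_{n+4} \triangleq \bm{\Theta}_{n+1}(U) U_{n+3}$ satisfies \eqref{eq:KH9} at level $n+1$, and then set $\bm{\Phi}_{n+1}(U) \triangleq \bm{\Theta}_{n+1}(U) \bm{\Phi}_n(U)$.

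The transformation $\bm{\Theta}_{n+1}(U)$ will be the time-$1$ flow of the ODE $\partial_\tau \Theta^\tau = \bm{J}_\mathbb{C}\OpBW{\bm{G}_{n+1}(U;x,\xi)}\, \Theta^\tau$, where $\bm{G}_{n+1}$ is a linearly Hamiltonian, spectrally localized, off-diagonal $2\times 2$ matrix symbol of order $-n-\frac{3}{2}$, whose off-diagonal entry $g_{n+1}(U;x,\xi)$ is determined by the cohomological equation
$$
2\ii\, \omega_{\kap,\upgamma}(\xi)\, g_{n+1}(U;x,\xi) \;=\; \bigl[{\bm A}_{[-n;K'(n)+1]}(U;x,\xi)\bigr]_{\mathrm{off}}\ ,
$$
which is solvable because $\omega_{\kap,\upgamma}(\xi)$ does not vanish for $|\xi|\geqslant 1$ (cf.\ \Cref{eq:positivity_sqrt,eq:disp_relation} together with the restriction $\beta \in [\beta_1,\beta_2]$). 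The resulting $g_{n+1}$ is of order $-n-\frac{3}{2}$, is spectrally localized (since division by a Fourier multiplier preserves this class), and its linearly Hamiltonian structure is automatic from the symmetry of the off-diagonal block via \Cref{flow}. Conjugation of the leading operator $\Opvec{\ii(1+\mathpzc{v})\omega_{\kap,\upgamma}(\xi)}$ by $\bm{\Theta}_{n+1}$ produces, through \Cref{prop compBW}, a commutator that exactly cancels the off-diagonal part of ${\bm A}_{[-n;K'(n)+1]}$ at order $-n$, while the remaining block-diagonal part of the same symbol can be absorbed into the update $a^{(n+1)}_0$ (recall that symbols of order $\leqslant 0$ lie inside $\Gamma^0$). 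The other contributions (transport term $V^{(2)}_\upgamma \xi$, the $|\xi|^{\frac{1}{2}}$ block, and the previous $\OpBW{a^{(n)}_0}$) conjugate, by \Cref{prop compBW,prop compo mop}, up to terms of order $-n-1$, which go into the new ${\bm A}_{[-(n+1);K'(n+1)+1]}$, plus smoothing remainders controlled by the sufficiently large $\vr$ assumption.

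The main technical obstacle is the bookkeeping of the time-regularity loss. Each iteration consumes one unit of the $K'$ parameter in two ways: once from $\partial_t \bm{\Theta}_{n+1} \bm{\Theta}_{n+1}^{-1}$, which contributes an operator of order $-n-\frac{3}{2}$ but with one extra $t$-derivative (thus joining $\OpBW{{\bm A}_{[-(n+1);K'(n)+2]}}$), and once from the symbolic composition with $V_\upgamma^{(2)}$, which already carries a $K'=1$ time-derivative count. Hence we set $K'(n+1) \triangleq K'(n)+1$ starting from $K'(-1)=1$, and require $K \geqslant K'(n)+1$ throughout. Next, the symplectic property up to homogeneity $N$ requires that $\bm{G}_{n+1}$ be linearly Hamiltonian: writing $\bm{G}_{n+1} = [\,0,\, g_{n+1};\, \overline{g_{n+1}^\vee},\, 0\,]$ and using that the off-diagonal block of ${\bm A}_{[-n;K'(n)+1]}$ has the appropriate reality/symmetry by the inductive hypothesis (which is preserved because all previous conjugations are linearly Hamiltonian), one verifies via \Cref{lem conj linham,spezzamento} that the resulting composite retains the linearly symplectic character up to homogeneity $N$. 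Finally, the property $\Im a^{(n+1)}_0 \in \Gamma^0_{K,K'(n+1),N+1}[\epsilon_0]$ follows from the fact that the leading-order contribution to $a^{(n+1)}_0$ coming from the symbolic calculus has purely real principal symbol (being $\frac{1}{2\ii}\{a, b\}$-type brackets of real symbols), so any imaginary contribution is of strictly higher homogeneity, inheriting the property from the previous step. This completes the induction.
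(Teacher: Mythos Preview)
Your overall strategy matches the paper's: construct the conjugating map as the time-one flow of an off-diagonal paradifferential generator chosen to cancel the off-diagonal block at order $-n$, then iterate. However, two concrete points prevent the induction from closing as you have written it.

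First, the cohomological equation is missing the factor $(1+\mathpzc{v}(U))$. The operator you conjugate is $\Opvec{\ii(1+\mathpzc{v}(U))\omega_{\kap,\upgamma}(\xi)}$, so the commutator with your off-diagonal generator produces $2\ii(1+\mathpzc{v}(U))\omega_{\kap,\upgamma}(\xi)\,g_{n+1}$, not $2\ii\,\omega_{\kap,\upgamma}(\xi)\,g_{n+1}$. With your choice one cancels $(1+\mathpzc{v})[\bm A_{[-n]}]_{\mathrm{off}}$ against $[\bm A_{[-n]}]_{\mathrm{off}}$, leaving $\mathpzc{v}\cdot[\bm A_{[-n]}]_{\mathrm{off}}$, which is still of paradifferential order $-n$ (only the homogeneity went up). The order does not decrease and the induction stalls. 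The paper solves exactly $f_{-n-\frac32}=-\dfrac{b_{-n}}{2\ii\,\omega_{\kap,\upgamma}(\xi)\,(1+\mathpzc{v}(U))}$; you need the same.

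Second, your uniform inductive step breaks at $n=-1\to n=0$. There the diagonal part of $\bm J_{\mathbb{C}}\bm A^{(2)}_{\frac12}|\xi|^{\frac12}$ has order $\tfrac12$, so your claim that ``the remaining block-diagonal part can be absorbed into the update $a_0^{(n+1)}$ (symbols of order $\leqslant 0$ lie in $\Gamma^0$)'' fails: $\tfrac12\not\leqslant 0$. That diagonal contribution must instead populate $a^{(0)}_{\frac12}$. The paper handles this by treating the $|\xi|^{\frac12}$ reduction as a separate first step (their Step~1), extracting the scalar $a_{\frac12}^{(-1)}(U;x)|\xi|^{\frac12}$ on the diagonal, and only then running the generic order-lowering induction (their Step~2) for $n\geqslant 0$. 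If you insist on a single inductive scheme, you must amend the step $n=-1\to n=0$ to route the order-$\tfrac12$ diagonal into $a^{(0)}_{\frac12}$ rather than $a^{(0)}_0$.
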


\begin{proof}
\begin{step}[Reduction of the term of order $ 1/2 $]\label{step:reduction_diagonal_1/2}
Notice that the term of order $ 1/2 $ can be written as
\begin{align*}
\JC{\bm A}_{\frac{1}{2}}^{(2)}\pare{U;x}\av{\xi}^{\xfrac{1}{2}} = - \ii a_{\xfrac{1}{2}}^{(-1)}\pare{U;x}
\begin{bmatrix}
1 & 1 \\ -1 & -1
\end{bmatrix}
 \av{\xi}^{\frac{1}{2}} , 
 &&
 a_{\xfrac{1}{2}}^{(-1)}\pare{U;x}\in \Sigma\cF^{\mathbb{R}}_{K, 0, 1}\bra{\epsilon_0, N}. 
\end{align*}
Let now
\begin{align*}
{\bm F}^{\pare{-1}}\pare{U;x, \xi}
\defeq
\begin{bmatrix}
0 & F^{\pare{-1}}\pare{U;x, \xi}
\\
F^{\pare{-1}}\pare{U;x, \xi}
& 0
\end{bmatrix}, 
&&
F^{\pare{-1}}\pare{U;x, \xi} \defeq 
\frac{ a_{\xfrac{1}{2}}^{(-1)}\pare{U;x}\av{\xi}^{\xfrac{1}{2}}}{2\pare{1+\mathpzc{v}\pare{U}}\omega_{\kap, \upgamma}\pare{ \xi }}
\in \Sigma\Gamma^{-1}_{K, 0, 1}\bra{\epsilon_0, N}
\end{align*}
and let $ \pare{ {\bm \Phi}_{{\bm F}^{\pare{-1}}}^\tau \pare{U} }_{\av{\tau}\leqslant 1} $ be the flow generated by $  \OpBW{{\bm F}^{\pare{-1}}\pare{U;x, \xi}} $ with initial condition $  {\bm \Phi}_{{\bm F}^{\pare{-1}}}^0 \pare{U} = \Id  $ as per \Cref{flow}. We denote with $ {\bm \Phi}\pare{U}\defeq  {\bm \Phi}_{{\bm F}^{\pare{-1}}}^1 \pare{U} $. Since $\bm F^{(-1)}$ is linearly Hamiltonian, \Cref{flow}  ensures that ${\bm \Phi}\pare{U}$ is a linearly symplectic, spectrally localized map in $ \pare{ \Sigma \cS^0_{K,K',1}[r,N] }^{2\times 2}$. 
Let us define 
\begin{equation*}
U_3\defeq {\bm \Phi}_{{\bm F}^{\pare{-1}}}^1 \pare{U} U_2 = {\bm \Phi} \pare{U} U_2 , 
\end{equation*}
which is the solution of 
\begin{equation}\label{eq:KH9.1}
\begin{aligned}
    \partial_t U_3 &= {\bm \Phi} \pare{U}  \Opvec{   \ii \  \bra{\pare{ 1 + \mathpzc{v}(U) }\omega_{\kap, \upgamma}\pare{ \xi } - V_\upgamma ^{(2)}(U;x)\xi}}{\bm \Phi} \pare{U} ^{-1 } U_3 \\
&\quad+\pare{\partial_t {\bm \Phi} \pare{U} } {\bm \Phi} \pare{U} ^{-1} U_3
+ {\bm \Phi} \pare{U} {\bm J}_\mathbb{C}\OpBW{{\bm A}_{\frac{1}{2}}^{(2)}\pare{U;x}\av{\xi}^{\frac{1}{2}} +   {\bm A}_{\bra{0;1}}(U;x,\xi)}{\bm \Phi} \pare{U} ^{-1} U_3\\
&\quad+ {\bm \Phi} \pare{U} {\bf R} (U){\bm \Phi} \pare{U} ^{-1} U_3.
\end{aligned} 
\end{equation}
The following conjugation rule apply (cf. \cite[Lemma A.1]{BFP2018}) setting  $ {\bf F}\defeq \OpBW{{\bm F}^{\pare{-1}}\pare{U;x, \xi}}  $
\begin{align}\label{eq:Lie_conjugation}
 {\bm \Phi}\pare{U} {\bm M}\pare{U} {\bm \Phi}\pare{U}^{-1} = & \  {\bm M} + \sum _{q=1}^L\frac{1}{q!} \textnormal{Ad}^q_{\bf F}\bra{\bm M}
 +
 \frac{1}{L!}\int_0^1 \pare{1-\tau}^L  {\bm \Phi}_{{\bm F}^{\pare{-1}}}^\tau \pare{U}  \textnormal{Ad}^{L+1}_{\bf F}\bra{\bm M} {\bm \Phi}_{{\bm F}^{\pare{-1}}}^\tau \pare{U}^{-1} \dd \tau, 
 \\
 \label{eq:Lie_conjugation_pat}
 \pare{\partial_t {\bm \Phi}\pare{U}}{\bm \Phi}\pare{U}^{-1} = & \ \JC\OpBW{{\bm A}_{\bra{-1;1}}\pare{U;x, \xi}} + {\bm R}_{\bra{1}}\pare{U},  
\end{align}
An application of \cref{eq:Lie_conjugation} for $  L> \frac{3+2\vr}{4} -1  $ combined with symbolic calculus considerations give us that
\begin{equation}\label{eq:diagonalization_term_1/2}
\begin{aligned}
   & {\bm \Phi}\pare{U}\pare{ \Opvec{  \ii \  \pare{ 1 + \mathpzc{v}(U) }\omega_{\kap, \upgamma}\pare{ \xi } }
+ {\bm J}_\mathbb{C}\OpBW{{\bm A}_{\frac{1}{2}}^{(2)}\pare{U;x}\av{\xi}^{\frac{1}{2}}+{\bm A}_{\bra{0;1}}(U;x,\xi) }}  {\bm \Phi}\pare{U}^{-1}
\\
&=
\Opvec{  \ii\bra{\pare{ 1 + \mathpzc{v}(U) } \omega_{\kap, \upgamma}\pare{ \xi }
 +  a_{\xfrac{1}{2}}^{(-1)}\pare{U;x}
 \av{\xi}^{\frac{1}{2}} 
}}
+
\JC \OpBW{{\bm A}_{\bra{0;1}}\pare{U;x,\xi}} + {\bm R}\pare{U}, 
\end{aligned}
\end{equation}
in which the off-diagonal terms of order 1/2 have been canceled. 
Applying  \cref{eq:diagonalization_term_1/2,eq:Lie_conjugation,eq:Lie_conjugation_pat} to  \eqref{eq:KH9.1} (and renaming ${\bm \Phi} \pare{U} {\bf R} (U){\bm \Phi} \pare{U} ^{-1}\leadsto {\bf R} (U) $ in light of \Cref{prop compo mop}, \cref{item:MM_ext} we obtain that
\begin{equation}\label{eq:KH10}
\begin{aligned}
    \partial_t U_3 &= \Opvec{  \ii \  \bra{\pare{ 1 + \mathpzc{v}(U) }\omega_{\kap, \upgamma}\pare{ \xi } - V_\upgamma ^{(2)}(U;x)\xi + a_{\xfrac{1}{2}}^{(2)}\pare{U;x}\av{\xi}^{\frac{1}{2}} }}U_3 \\
&\quad+ {\bm J}_\mathbb{C}\OpBW{ {\bm A}_{\bra{0;1}}(U;x,\xi)}U_3 + {\bm R}(U)U_3.
\end{aligned}
\end{equation}
Notice that the conjugation rule in \cref{eq:Lie_conjugation} does not modify the principal symbol of the transport term. {Moreover the zero-th order matrix ${\bm A}_{[0,1]}$ is the sum of  ${\bm A}_{[-1;1]}$ form \cref{eq:Lie_conjugation_pat} and  ${\bm A}_{[0;1]}$ from  \cref{eq:diagonalization_term_1/2}. To prove that ${\bm J}_\C{\bm A}_{[0;1]}$ is linearly symplectic we then apply Lemma \ref{spezzamento} to each homogeneous components of the spectrally localized operators in \cref{eq:Lie_conjugation_pat,eq:diagonalization_term_1/2}, which are linearly Hamiltonian thanks to \Cref{lem conj linham}.}
\end{step}

\begin{step}[Reduction at non-positive order]
We sketch the proof which is very similar to the one outlined in \Cref{step:reduction_diagonal_1/2}, we refer the interested reader to \cite[Lemma 6.8]{BMM2022}.  The proof is performed by induction on $ n\in \mathbb{N}$, the case $ n=0 $ is proven in \Cref{step:reduction_diagonal_1/2}, so that we can consider the case $ n\leadsto n+1 $. Suppose \cref{eq:KH9} holds, we have to find a bounded, lineary-symplectic  transformation that pushes the off diagonal terms of $ \JC \OpBW{{\bm A}_{\bra{-n;K'+1}}} $ to lower order, similarly as in was done in \Cref{step:reduction_diagonal_1/2}. Since the matrix $ {\bm A}_{\bra{-n; K'+1}} $ is of the form
\begin{align}
\JC {\bm A}_{\bra{-n; K'+1}} = \JC \begin{bmatrix}
-\ii \bar{b}^\vee_{\bra{-n}} & - \bar{a}^\vee_{\bra{-n}}
\\
- {a} _{\bra{-n}} & \ii \bar{b} _{\bra{-n}}
\end{bmatrix}, 
&&
{a} _{\bra{-n}}, \ \bar{b} _{\bra{-n}}\in \Sigma \Gamma^{-n}_{K, K'+1, 1}\bra{\epsilon_0, N}  , \label{hp:symp}
\end{align}
such cancellation is achieved via conjugation by the flow
\begin{align*}
\system{\begin{aligned}
&\partial_\tau {\bm \Phi}_{{\bm F}^{(n)}}^\tau(U) = \OpBW{{\bm F}^{(n)}(U)} {\bm \Phi}_{{\bm F}^{(n)}}^\tau(U), \\
&{\bm \Phi}_{{\bm F}^{(n)}}^0(U) = \Id,
\end{aligned}}
&&
{\bm F}^{(n)}(U) \defeq \bra{
\begin{array}{cc}
0 & f_{-n-\frac{3}{2}} \\
\overline{f_{-n-\frac{3}{2}}^\vee} & 0
\end{array}
},
\end{align*}

\begin{equation*}
f_{-n-\frac{3}{2}}(U;t,x,\xi) \defeq -\frac{b_{-n}(U;x,\xi)}{2\ii\omega(\xi)(1+\mathpzc{v}(U))} \in \Sigma\Gamma_{K,K'+1, 1}^{-n-\frac{3}{2}}[\epsilon_0,N],
\end{equation*}
and defining the variable $ U_{n+4}\defeq {\bm \Phi}_{{\bm F}^{\pare{n}}}^1\pare{U} U_{n+3} $
and we refer the reader to \cite[Lemma 6.8]{BMM2022} for further details. As the matrix in \eqref{hp:symp} is linearly Hamiltonian up to homogeneity $N$ by inductive hypothesis, the explicitly defined generator $\bm F^{(n)}$ is linearly Hamiltonian up to homogeneity $N$ as well. Thus \Cref{flow} ensures that  ${\bm \Phi}_{{\bm F}^{(n)}}^\tau(U)$ is a linearly symplectic, spectrally localized map in $ \pare{ \Sigma \cS^0_{K,K',1}[r,N] }^{2\times 2}$. The bounded, linearly symplectic transformation is thus defined as
\begin{equation*}
{\bm \Phi}_n \pare{U} \defeq \prod_{\mathsf{j}=-1}^n {\bm \Phi}_{{\bm F}^{\pare{\mathsf{j}}}}^1\pare{U}. 
\end{equation*}
\end{step}
\end{proof}

We can thus apply \Cref{lem:diagonalization_arbitrary_order} setting $n\defeq n_1\pare{\vr}\geqslant-\vr +2\pare{N+1}$ so that $\JC \OpBW{{\bm A}_{\bra{-n;K'+1}}}$ can be incorporated in the smoothing reminder ${\bf R}\pare{U}$, thus setting $Z\defeq U_{n_1+4}$ we obtain that $Z$ solves the evolution equation
\begin{equation}\label{eq:KH11}
    \partial_t Z = \Opvec{\ii \  \bra{ \pare{1 + \mathpzc{v}(U)} \omega_{\kap, \upgamma}\pare{ \xi } - V_\upgamma ^{(2)} (U;x) \xi + a^{\pare{n_1}}_{\xfrac{1}{2}}\pare{U;x}\av{\xi}^{\frac{1}{2}} +  a^{(n_1)}_0(U;x,\xi)} } Z
     + {\bf R} \pare{U} Z.
\end{equation}

\subsection{Reduction to constant coefficients up to smoothing remainders}

\begin{notation}
    From now on we shall denote functions and symbols that are $x$-independent with calligraphic lower- and upper-case letters.
\end{notation}

\begin{lemma}\label{lem:conjcost}
Let $N \in \mathbb{N}$ and  $\vr>3(N+1)$. 
Then for any $ n\in \mathbb{N}$ there is $K''\triangleq K''\pare{\vr,n} >0$  such that for all $ K\geqslant K''+1$ there are $s_0 >0$, $\epsilon_0>0$ such that for any  solution 
$U \in B_{s_0,\mathbb{R}}^K(I;\epsilon_0)$ of  \eqref{eq:KH6},   there exists a real-to-real invertible matrix of spectrally localized maps  $ {\bm \Theta}_n(U)$ such that

\begin{enumerate}
\item $ {\bm \Theta}_n(U),{\bm \Theta}_n(U)^{-1}\in\pare{\cS_{K,  K'',0}^0 [\epsilon_0]}^{2\times 2} $ are linearly symplectic up to homogeneity $ N$ according to Definition \ref{linsymphomoN}. Moreover, $  {\bm \Theta}_n(U)-\Id \in \pare{\Sigma \cS^{\frac{N+1}
{2}}_{K,K'',1}[\epsilon_0,N] }^{2\times 2}.$
\item If $ Z $ solves \eqref{eq:KH11} then the variable
$ Z_{n}\triangleq {\bm \Theta}_n(U) Z $ solves
\begin{equation}
\label{pheq00n2}
\partial_t Z_{n}  = \Opvec{ \ii \, \mathpzc{d}_{\xfrac32}^{(n)}(U; t, \xi)+\ii a_{-\frac{n}{2}}(U;t,x,\xi)}Z_{n} + \pmb{ \mathsf{R} }(U;t) Z_{n} ,
\end{equation}
with the $x$--independent symbol 
\begin{equation} \label{emmenne}
\mathpzc{d}_{3/2} ^{(n)}(U;t,\xi)\triangleq  (1+\mathpzc{v}(U;t))\omega(\xi) + \mathpzc{V}_\upgamma (U;t) \xi + \mathpzc{b} _{\frac12}(U;t)|\xi|^{\frac12} +  \mathpzc{b} _0^{(n)}(U;t, \xi), 
\end{equation}
where
\begin{itemize}
\item  $\mathpzc{v}(U) \in \Sigma \cF^{\mathbb{R}}_{K,0,2}[\epsilon_0, N]$; 
\item  the function $\mathpzc{V}(U;t) \in \Sigma \cF^{\mathbb{R}}_{K,1,2}[\epsilon_0, N]$ is 
$ x $-independent;  
\item  the function $\mathpzc{b}_{\frac12}(U;t) \in \Sigma\cF^{\mathbb{R}}_{K,2,2}[\epsilon_0, N]$ is 
$ x $-independent;  
\item the symbol $\mathpzc{b}_{0}^{(n)}(U;t, \xi) \in \Sigma \Gamma^{0}_{K,K'',2}[\epsilon_0, N]$
 is $x$--independent and its imaginary part $\Im \mathpzc{b}_{0}^{(n)}(U;t, \xi) $ is in $ \Gamma^{0}_{K,K'',N+1}[\epsilon_0]$; 
\item the symbol  $a_{-\frac{n}{2}}(U;t, x,\xi)$ belongs to $\Sigma \Gamma^{-\frac{n}{2}}_{K,K''+1,1}[\epsilon_0, N]$ and its imaginary part $ \Im a_{-\frac{n}{2}}(U;t, x,\xi) $ is in $ \Gamma^{-\frac{n}{2}}_{K,K''+1,N+1}[\epsilon_0]$;
\item  $\pmb{ \mathsf{R} }(U;t) $ is a real-to-real matrix of smoothing operators in 
$ \pare{\Sigma \cR^{-\vr+3(N+1)}_{K,K''+1, 1}[\epsilon_0, N]}^{2\times 2}$.
\end{itemize}
\end{enumerate}
\end{lemma}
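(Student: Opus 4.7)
The strategy is to perform a finite sequence of conjugations by flows of paradifferential operators, each one designed to remove the $x$-dependence of one symbol in the decreasing order of homogeneity in $\xi$, in the spirit of \cite[Lemma 6.10]{BMM2022}. Starting from \eqref{eq:KH11}, the leading-order symbol $(1+\mathpzc{v}(U))\omega_{\kap,\upgamma}(\xi)$ is already $x$-independent thanks to \Cref{lem:reduction_highest_order}, so we need only reduce the transport coefficient $V_\upgamma^{(2)}(U;x)$, the half-order coefficient $a_{\frac12}^{(n_1)}(U;x)$, the order-zero symbol $a_0^{(n_1)}(U;x,\xi)$, and iteratively the lower-order remainders produced along the way.

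First, to reduce $V_\upgamma^{(2)}(U;x)\xi$ to an $x$-independent constant $\mathpzc{V}_\upgamma(U;t)$, I conjugate by the time-one flow of a (real, linearly Hamiltonian) transport-type generator $\Opvec{\ii\,\beta(U;t,x)\xi}$, where $\beta$ is determined by requiring the $x$-derivative of a suitable diffeomorphism to match $V_\upgamma^{(2)} - \mathpzc{V}_\upgamma$, with $\mathpzc{V}_\upgamma(U;t)$ fixed by the solvability condition $\fint(V_\upgamma^{(2)}-\mathpzc{V}_\upgamma)\,\dd x = 0$. Since $\beta$ is real-valued and the generator is scalar on the diagonal of $\Opvec{\cdot}$, the flow is automatically linearly symplectic up to homogeneity $N$ by \Cref{flow}, and the conjugation preserves the diagonal structure up to lower-order corrections handled later. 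The cost is one time derivative of $U$, hence the symbol $\mathpzc{V}_\upgamma$ belongs to $\Sigma\cF^{\mathbb{R}}_{K,1,2}[\epsilon_0,N]$.

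Next, I reduce the $x$-dependence of the half-order symbol $a_{\frac12}^{(n_1)}(U;t,x)|\xi|^{\frac12}$ by conjugating with the time-one flow of a generator of the form $\Opvec{\ii\,g_{-1}(U;t,x,\xi)\omega_{\kap,\upgamma}(\xi)}$ with $g_{-1}\in\Sigma\Gamma^{-1}_{K,K''_1,1}[\epsilon_0,N]$; the cohomological equation $\{\omega_{\kap,\upgamma}, g_{-1}\omega_{\kap,\upgamma}\}$-type contribution is used to cancel $a_{\frac12}^{(n_1)}-\mathpzc{b}_{\frac12}$, with $\mathpzc{b}_{\frac12}(U;t)\triangleq\fint a_{\frac12}^{(n_1)}(U;t,x)\,\dd x$. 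This costs one further time derivative, giving $\mathpzc{b}_{\frac12}\in\Sigma\cF^{\mathbb{R}}_{K,2,2}[\epsilon_0,N]$. Then iteratively, for each integer $j\ge 0$ up to $j= n$, I conjugate by the time-one flow of $\Opvec{\ii\, h_{-\frac{j+3}{2}}(U;t,x,\xi)\omega_{\kap,\upgamma}(\xi)}$ to solve the homological equation for the order $-\frac{j}{2}$ symbol, generating the $x$-independent part $\mathpzc{b}_0^{(n)}(U;t,\xi)$ at order $0$ only at the first such step (contributions at order zero coming from the half-order reduction and from $\pa_t$-commutators are absorbed here) and producing a remainder of order $-\frac{n}{2}$, which is precisely the symbol $a_{-\frac{n}{2}}(U;t,x,\xi)$ in the statement. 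The imaginary part of $\mathpzc{b}_0^{(n)}$ and $a_{-\frac{n}{2}}$ is of homogeneity $\geqslant N+1$ because all generators are linearly Hamiltonian (i.e.\ $\JC$-times self-adjoint) up to homogeneity $N$, hence the conjugated diagonal symbols acquire purely real imaginary parts at each homogeneity $\leqslant N$; this exploits \Cref{lem conj linham} and \Cref{spezzamento}.

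The composition ${\bm \Theta}_n(U)\triangleq \Phi^{(\text{transp})}\circ \Phi^{(\frac12)}\circ\Phi^{(0)}\circ\cdots\circ\Phi^{(-\frac{n-1}{2})}$ of all these flows is then a linearly symplectic, spectrally localized map in $(\cS^0_{K,K'',0}[\epsilon_0])^{2\times 2}$, and the nilpotent part ${\bm\Theta}_n(U)-\Id$ lies in $(\Sigma\cS^{\frac{N+1}{2}}_{K,K'',1}[\epsilon_0,N])^{2\times 2}$ as a finite composition of maps with the analogous property. The smoothing remainder $\pmb{\mathsf{R}}(U;t)$ collects: the smoothing terms already present in \eqref{eq:KH11}, the $\vr$-smoothing remainders created by symbolic compositions in \Cref{prop compBW,prop compo mop}, and the smoothing contributions from commutators $[\pa_t,{\bm\Theta}_n(U)]{\bm\Theta}_n(U)^{-1}$; with $\vr>3(N+1)$ there is enough room to accommodate the loss $-\vr+3(N+1)$ and the loss of $K''+1$ time derivatives, where $K''=K''(\vr,n)$ is fixed by the number of iterations. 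The main technical obstacle, and the reason we cannot simply quote \cite{BMM2022} verbatim, is the presence of the genuinely quasilinear half-order symbol $a_{\frac12}^{(n_1)}(U;x)|\xi|^{\frac12}$, which is absent in the one-phase capillarity water-waves setting and forces us to insert the half-order reduction step before the order-zero one and to verify that the induced cohomological equation is solvable on $\mathbb{T}$ thanks to the strictly positive lower bound on $\omega_{\kap,\upgamma}(\xi)|\xi|^{-\frac12}$ for $|\xi|\geqslant 1$ under the constraint \eqref{constraint beta}.
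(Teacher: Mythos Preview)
Your overall strategy is correct and matches the paper's approach: the paper simply defers to \cite[Lemma 6.9]{BMM2022} with the remark that only minor modifications are needed, and what you outline is precisely that iterative reduction-to-constant-coefficients procedure (transport first, then order $\tfrac12$, then order $0$, and so on down to order $-\tfrac{n}{2}$), with linearly symplectic flows at each step.

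One technical slip: your choice of generator orders is systematically off by $\tfrac12$. To eliminate the $x$-dependence of a symbol of order $m$ you need the Poisson bracket with the (already $x$-independent) leading term $(1+\mathpzc v)\omega_{\kap,\upgamma}(\xi)$ to land at order $m$; since $\partial_\xi\omega_{\kap,\upgamma}$ has order $\tfrac12$, the generator must have order $m-\tfrac12$. Thus for the half-order step the generator should lie in $\Sigma\Gamma^{0}$, not $\Sigma\Gamma^{1/2}$ (your $g_{-1}\omega_{\kap,\upgamma}$ has order $\tfrac12$, and its bracket with $(1+\mathpzc v)\omega_{\kap,\upgamma}$ would produce an order-$1$ correction, re-introducing $x$-dependence in the transport term you just reduced). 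The same shift applies to your generators $h_{-\frac{j+3}{2}}\omega_{\kap,\upgamma}$ for the lower orders. This is easily fixed and does not affect the architecture of the argument. Also, the emphasis you place on the $|\xi|^{1/2}$ term as a ``main technical obstacle'' is somewhat overstated: it fits into the same descending-order scheme as every other step, which is why the paper treats the whole lemma as a routine modification of \cite{BMM2022}.
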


\begin{proof}
    The proof consist of a minor modification of the proof of \cite[lemma 6.9]{BMM2022}, so we refer the interested reader to \cite[lemma 6.9]{BMM2022} for details.
\end{proof}
Finally we prove \Cref{prop:constantcoeff}.

\paragraph{Proof of \Cref{prop:constantcoeff}}

In \Cref{lem:conjcost} we set 
$$n_2\defeq n_2\pare{\vr,N}\defeq n \geqslant2\pare{\vr + 3\pare{N+1}},\qquad \underline{K'}\defeq K'' + 1$$
and we define 
$$W\defeq Z_{n_2} = {\bm B}\pare{U;t} U,$$
where
\begin{equation*}
    {\bm B}\pare{U;t}\defeq {\bm \Theta}_{n_2}\pare{U;t} \circ {\bm \Phi}_{n_1}\pare{U;t} \circ {\bm \Psi}_2\pare{U;t} \circ {\bm \Psi}_1\pare{U;t} \circ {\bm G}\pare{U;t}. 
\end{equation*}
At last we set $\mathpzc{b}_0^{\pare{n}}\eqdef \mathpzc{b}_0$ and we conclude. \qed

\section{Hamiltonian Birkhoff normal form and  energy estimate}\label{riduzione_e_stima}
In this section we finally prove the almost global existence Theorem \ref{thm:main}. The previous reduction broke the Hamiltonian structure of the system. Therefore, a first step is to recover the complex Hamiltonian formulation through a symplectic correction. The corresponding result is stated in \Cref{darboux:0} and serves as the foundation for the subsequent normal form analysis. Then, in the technical subsection \ref{sec SAP}, we introduce the class of super-action preserving Hamiltonians that play a central role in the forthcoming analysis, as they do not contribute to the energy estimate. Next, in \Cref{sec:BNF}, we perform a Birkhoff normal form procedure, which extracts the Hamiltonian super-action preserving property up to homogeneity $N+1$, for a given fixed integer $N$. Finally, we implement in \Cref{sec NRJ} the energy estimate allowing to conclude the desired Theorem.\\


 The transformation $\bB(U)$ in Proposition \ref{prop:constantcoeff} destroyed the Hamiltonian property of the system. However $\bB(U)$ was still linearly symplectic and therefore, we can recover the complex Hamiltonian structure by applying the abstract Darboux Theorem of Berti-Maspero-Murgante \cite[Theorem 7.1]{BMM2022}, see also \Cref{conjham}. The corresponding result states as follows.


\begin{proposition}[{\bf Hamiltonian reduction up to smoothing operators}]\label{darboux:0}
Let $N \in \N$ and  {$\vr>{\vr(N)} \triangleq 3(N+1)+ \frac32 (N+1)^3$.}
Then, for any $ K\geqslant \underline K'$ (fixed in Proposition \ref{prop:constantcoeff}) 
there is $s_0 >0, \epsilon_0 > 0 $, such that
for any solution $U \in B_{s_0,\R}^K(I;\epsilon_0) $
of  \eqref{eq:KH6},  there exists a real-to-real matrix of pluri--homogeneous smoothing operators $\pmb{R}(U) $ in $ \pare{\Sigma_1^N \tilde \cR^{-\vr'}_q }^{2\times 2}$ 
for any $\varrho'\geqslant 0$, such that defining 
\be \label{zetone}
Z_0\triangleq  \big( \uno+ \bm{R}( \Phi(U)) \big) \Phi(U),\qquad \Phi(U)\triangleq\bm{B}(U;t)U,
\ee
where $\bm{B}(U;t)$ is defined in Proposition \ref{prop:constantcoeff},
  the following holds true:
  \begin{enumerate}[\bf i]
      \item {\bf Symplecticity:} The non-linear map $\big( \uno+ \bm{R}( \cdot ) \big)\circ\Phi$
in \eqref{zetone}
  is symplectic up to homogeneity $N$ according to Definition \ref{def:LSMN}.

  \item {\bf Conjugation:}
 The variable $Z_0$ solves the {\em Hamiltonian system up to homogeneity $N$}  (cfr. Definition \ref{def:ham.N}).

\be \label{teo62}
\begin{aligned}
\pa_tZ_0 & =  \ii \vOmega(D)Z_0 +\Opvec{\ii (\mathpzc{d}_{\frac32})_{\leqslant N}(Z_0;\xi)+\ii (\mathpzc{d}_{\frac32})_{>N}(U;t,\xi) }Z_0  + \pmb{R}_{\leqslant N}(Z_0)Z_0+ \pmb{R}_{>N}(U;t)U,
\end{aligned}
\ee
where 
\begin{itemize}
\item $\vOmega(D) \defeq \vOpbw{\omega_{\kap,\upgamma}(\xi)}$ is a matrix in the space $\pare{\Gamma^{3/2}_0}^{2\times 2}$;
\item $( \mathpzc{d}_{\frac32})_{\leqslant N}$  is a pluri-homogeneous, real valued symbol, independent of $x $,  in  
 $\Sigma_{2}^N \wt\Gamma^{\frac32}_q$; 

\item $( \mathpzc{d}_{\frac32})_{>N} $ is a non--homogeneous symbol, independent of $x$,  in $ \Gamma^{\frac32}_{K,\underline K',N+1}[\epsilon_0]$ with imaginary part\\ $\Im (\mathpzc{d}_{\frac32})_{>N} $ in $  \Gamma^{0}_{K, \underline K',N+1}[\epsilon_0]$;
\item  $\bm{R}_{\leqslant N}(Z_0)$  is a real-to-real matrix of smoothing operators in
$\pare{\Sigma_{1}^N \wt\cR^{-\varrho+\vr(N)}_q }^{2\times 2}$;
\item $\bm{R}_{>N}(U;t) $ is a real-to-real matrix of non--homogeneous smoothing operators in $ \pare{\mR^{-\vr+\vr(N)}_{K,\underline K',N+1}[\epsilon_0]}^{2\times 2}$.
\end{itemize}

\item {\bf Boundedness:}
The variable $Z_0 = {\bf M}_0(U;t)U$ with $\bM_{0}(U;t) \in \pare{\mM^0_{K,\underline{K}'-1,0}[\epsilon_0]
}^{2 \times 2}  $ and
 for any $s \geqslant s_0$, there is  $0 < \epsilon_0(s)<\epsilon_0$, such that  
for  any $U \in B_{s_0}^K(I;\epsilon_0)\cap C_{*\R}^{K}(I;\dot{H}^{s}(\T, \C^2))$,
there is a  constant $C\triangleq C_{s,K}>0$ such that, for all $k=0,\dots, K-\underline K'$,  
\be \label{equivalenzaZU}
C^{-1}\| U\|_{k,s}\leqslant \|Z_0 \|_{k,s}\leqslant C \| U\|_{k,s} \, .
\ee
  \end{enumerate}
 \end{proposition}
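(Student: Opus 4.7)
The plan is to invoke the abstract Hamiltonian Darboux theorem of \cite[Theorem 7.1]{BMM2022} to the map $\Phi(U) = \bm{B}(U;t)U$ provided by Proposition \ref{prop:constantcoeff}. The key structural input is that the original Kelvin-Helmholtz system \eqref{eq:KH6} is a complex Hamiltonian system (see \eqref{new complexHAM} derived from Proposition \ref{prop Ham KH}), and that along the reduction procedure carried out in Section \ref{sec:diag_cc} each intermediate transformation (the Alinhac good unknown $\bm{G}(U)$, the block-diagonalizer $\bm{\Psi}_1(U)$, the constant coefficient reducer $\bm{\Psi}_2(U)$, the diagonalizing flows $\bm{\Phi}_n(U)$ and $\bm{\Theta}_n(U)$) has been shown to be \emph{linearly symplectic up to homogeneity} $N$. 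By composition (cf.\ Lemma \ref{lem conj linham}) the full map $\bm{B}(U;t)$ is linearly symplectic up to homogeneity $N$, which is precisely the hypothesis of the Darboux correction theorem.

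First, I would apply the abstract Darboux Theorem to produce a real-to-real matrix of pluri-homogeneous smoothing operators $\bm{R}(U) \in (\Sigma^N_1 \widetilde{\cR}^{-\vr'}_q)^{2\times 2}$ such that the nonlinear map $(\uno + \bm{R}(\cdot)) \circ \Phi$ is symplectic up to homogeneity $N$ in the sense of Definition \ref{def:LSMN}, establishing part \textbf{i}. Since $\bm{R}$ is purely \emph{smoothing and pluri-homogeneous}, its action does not affect the positive-order symbolic structure of the equation: conjugating \eqref{eq:constcoeff} by $\uno + \bm{R}(\cdot)$ produces the same principal symbols $(1+\mathpzc{v})\omega_{\kap,\upgamma}(\xi)$, $\mathpzc{V}_\upgamma(U;t)\xi$ and $\mathpzc{b}_{\frac12}(U;t)|\xi|^{1/2}$ that were obtained in Proposition \ref{prop:constantcoeff}, and only alters the bounded symbol and smoothing components. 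Splitting these components into their pluri-homogeneous truncation of degree $\leqslant N$ and their non-homogeneous tail yields the expansion \eqref{teo62}, with $(\mathpzc{d}_{\frac32})_{\leqslant N}$ collecting the pluri-homogeneous symbolic part and $(\mathpzc{d}_{\frac32})_{>N}$ the high-homogeneity remainder.

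The core of part \textbf{ii} is then to verify that the \emph{truncation} of \eqref{teo62} at homogeneity $\leqslant N$ is exactly a Hamiltonian vector field. This follows from the conjugation rules for symplectic maps up to homogeneity $N$ (Lemma \ref{conjham}): applying a symplectic-up-to-$N$ map to the Hamiltonian vector field $\bm{J}_\C \nabla H_\C$ of \eqref{new complexHAM} produces, up to homogeneity $N$, a vector field of the form $\bm{J}_\C \nabla \tilde H_\C$ for a new real-valued Hamiltonian $\tilde H_\C$; the non-Hamiltonian contributions are all pushed either into the non-homogeneous tail (operators and symbols in $K,\underline K', N+1$-classes) or into the smoothing remainder $\pmb R_{>N}(U;t)U$. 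Tracking the regularity loss across each of the conjugations of Section \ref{sec:diag_cc}, the cumulative loss is bounded by $\vr(N) = 3(N+1) + \tfrac{3}{2}(N+1)^3$, which is why the hypothesis $\vr > \vr(N)$ guarantees that the resulting smoothing remainders $\pmb R_{\leqslant N}(Z_0)$ and $\pmb R_{>N}(U;t)$ lie in the stated classes $(\Sigma_1^N \widetilde{\cR}^{-\vr+\vr(N)}_q)^{2\times 2}$ and $(\cR^{-\vr+\vr(N)}_{K,\underline K',N+1}[\epsilon_0])^{2\times 2}$.

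Finally, part \textbf{iii} follows because $\bm{B}(U;t) \in (\cM^0_{K,\underline K'-1,0}[\epsilon_0])^{2\times 2}$ is invertible with inverse in the same class by Proposition \ref{prop:constantcoeff}, and the Darboux correction $\uno + \bm{R}(U)$ is a near-identity bounded perturbation (as $\bm{R}$ is smoothing and vanishes at $U=0$); hence $\bm{M}_0(U;t) = (\uno + \bm{R}(\Phi(U)))\bm{B}(U;t)$ and its inverse both act boundedly on $C^K_{*\R}(I; \dot H^s(\T; \C^2))$, giving the equivalence \eqref{equivalenzaZU} via the tame estimates \eqref{stima:emmeop} for $m$-operators of order $0$. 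The main technical obstacle in this scheme is the careful bookkeeping of the regularity loss $\vr(N)$ and the verification that the Darboux corrector hypothesis (linear symplecticity up to homogeneity $N$ of $\bm B$) is preserved through the long chain of paradifferential reductions; the abstract Darboux theorem of \cite{BMM2022} is designed precisely to handle this and most of the substantive work has been done in Sections \ref{sec:paralinearization} and \ref{sec:diag_cc}.
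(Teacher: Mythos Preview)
Your proof sketch is correct and follows exactly the paper's approach: the paper does not give a detailed proof of this proposition but simply states that it follows by applying the abstract Darboux Theorem \cite[Theorem 7.1]{BMM2022} (restated as Theorem \ref{conjham}) to the map $\Phi(U)=\bm{B}(U;t)U$, exploiting that $\bm{B}(U;t)$ is linearly symplectic up to homogeneity $N$ by the construction in Section \ref{sec:diag_cc}. Your sketch fleshes out the mechanics of that application accurately; one minor correction is that the conjugation rule you invoke for preserving Hamiltonianity up to homogeneity $N$ is Lemma \ref{conj.ham.N}, not \ref{conjham} (the latter \emph{is} the Darboux theorem itself).
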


\subsection{Super-action preserving symbols and Hamiltonians}\label{sec SAP}

In this section we define the special class of ``super--action preserving" 
$\tSAP$
homogeneous symbols and Hamiltonians 
which will appear in the Birkhoff normal form reduction of the
Section \ref{sec:BNF}.

\begin{definition} {\bf ($\tSAP$  monomial)} \label{passaggioalpha}
Let  $p\in \N^*$. Given $ (\vec \jmath , \vec \sigma)= (j_a,\sigma_a)_{a=1,\dots, p} \in 
(\Z^*)^p \times \{ \pm \}^p$ we define the multi-index $ (\alpha,\beta) \in \N^{\Z^*} 	\times \N^{\Z^*}  $ with components, for any $k \in \Z^*$, 
\be\label{defalbe}
\begin{aligned}
& \alpha_k(\vec \jmath, \vec \sigma)\triangleq \# \big\{ a = 1, \ldots ,p \, : \, (j_a , \sigma_a) = (k,+) \big\} \, , \\
&  \beta_k(\vec \jmath, \vec \sigma)\triangleq \# \big\{ a = 1, \ldots ,p \, : \, (j_a , \sigma_a) = (k,-) \big\} \, .
\end{aligned}
\ee
We say that a monomial of the form 
$ z_{\vec \jmath}^{\vec \sigma} = z_{j_1}^{\sigma_1}\dots z_{j_p}^{\sigma_p} $ is super-action preserving if the associated multi-index $ (\alpha,\beta)= (\alpha(\vec \jmath, \vec \sigma),\beta(\vec \jmath, \vec \sigma))$ is super-action preserving according to Definition \ref{def:SAPindex}.
\end{definition}

We now introduce the subset $\mathfrak{S}_p  $ of the indexes of $\mathfrak{T}_p$
 composed by super-action preserving indexes
\be\label{fSp}
\mathfrak S_p  \triangleq  \Big\{(\vec \jmath , \vec \sigma) \in \fT_p \quad\textnormal{s.t.}\quad (\alpha(\vec \jmath, \vec \sigma), \beta(\vec \jmath, \vec \sigma)) \in \N^{\Z^*} 
\times \N^{\Z^*} \ \text{in} \ \eqref{defalbe} 
\ \text{are  super  action   preserving} \Big\} \, . 
\ee
We remark that the multi-index $(\alpha, \beta)$ associated to 
$ (\vec \jmath, \vec \sigma) \in (\Z^*\times \{\pm\})^p$ as in \eqref{defalbe} satisfies  $|\alpha+\beta|=p$ and 
\be \label{zjalpha}
z_{\vec \jmath}^{\vec \sigma}= z^\alpha \bar z^\beta\triangleq \prod_{j \in \Z \setminus \{0\} } z_j^{\alpha_j}{\ov{z_j}}^{\beta_j} = \prod_{n \in \N } z_n^{\alpha_n}z_{-n}^{\alpha_{-n}}
{\ov{z_n}}^{\beta_n}\ov{z_{-n}}^{\beta_{-n}}\,  \, .
\ee
It turns out
\be \label{omeginovec}
\vec \sigma \cdot  \vec{\omega}_{\kap,\upgamma}(\vec{\jmath})
= 
\sigma_1 {\omega}_{\kap,\upgamma}({j_1})+\dots+ \sigma_p{\omega}_{\kap,\upgamma}({j_p})= 
 (\alpha- \beta) \cdot \vec {\omega}_{\kap,\upgamma} = \sum_{k \in \Z^*} (\alpha_k- \beta_k)  {\omega}_{\kap,\upgamma}(k) \, ,
\ee
where we denote 
\be\label{defOmegakappa} 
\vec {\omega}_{\kap,\upgamma}({\vec \jmath})\triangleq ( {\omega}_{\kap,\upgamma}({j_1}), \dots, {\omega}_{\kap,\upgamma}({j_p})),\qquad \vec{{\omega}}_{\kap,\upgamma}\triangleq \{ {\omega}_{\kap,\upgamma}(j)\}_{j\in \Z^*}.
\ee

\begin{remark}\label{remino}
If the monomial $ z_{\vec \jmath}^{\vec \sigma}$ is super--action preserving then, for any $j \in \Z^*$,  the monomial $ z_{\vec \jmath}^{\vec \sigma}z_j \bar z_j$ is super-action preserving as well.
\end{remark}

\smallskip
For any $n \in \N^*$, we define the {\em super-action}
\begin{equation}
\label{sa}
J_n \triangleq |z_n|^2 + |z_{-n}|^2.
\end{equation}
The following is Lemma 7.7 in \cite{BMM2022}.
\begin{lemma}\label{lemma:Poissonbra}
The Poisson bracket between a  monomial $z_{\vec \jmath}^{\vec \sigma} $ 
and a super-action  $ J_n $, $ n \in \N $,  defined  
 in \eqref{sa},  is 
 \be\label{poisuperaction}
\set{ z_{\vec \jmath}^{\vec \sigma}, J_n } = 
\ii \big(\beta_n + \beta_{-n} - \alpha_n - \alpha_{-n}  \big) \, z_{\vec \jmath}^{\vec \sigma} \, ,
\ee
where $ (\alpha, \beta)= (\alpha(\vec \jmath, \vec \sigma),\beta(\vec \jmath, \vec \sigma))$ is the multi-index defined in \eqref{defalbe}. 
In particular a super-action preserving 
monomial $ z_{\vec \jmath}^{\vec \sigma} $
(according to Definition \ref{passaggioalpha})  Poisson commutes with any super-action $ J_n $, $ n \in \N $. 
\end{lemma}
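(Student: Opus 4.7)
The statement is a direct algebraic computation once the Poisson bracket in the complex coordinates $(z_k,\bar z_k)_{k\in\Z^*}$ has been unpacked. First I would write the monomial in the compact form $z_{\vec \jmath}^{\vec \sigma}=z^\alpha\bar z^\beta$ given in \eqref{zjalpha}, where $(\alpha,\beta)=(\alpha(\vec\jmath,\vec\sigma),\beta(\vec\jmath,\vec\sigma))$ is defined in \eqref{defalbe}, so that both inputs of the bracket are expressed as polynomials in the variables $z_k$, $\bar z_k$, $k\in\Z^*$.

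Next, using the complex Poisson tensor $\bm J_\C$ in \eqref{def bfJC}, the Poisson bracket associated with the symplectic form reads, in Fourier coordinates,
\begin{equation*}
\{F,G\}\;=\;\ii\sum_{k\in\Z^*}\Bigl(\partial_{\bar z_k} F\,\partial_{z_k} G-\partial_{z_k} F\,\partial_{\bar z_k} G\Bigr).
\end{equation*}
Applying the identities $\partial_{z_k}(z^\alpha\bar z^\beta)=\alpha_k\,z_k^{-1}\,z^\alpha\bar z^\beta$ and $\partial_{\bar z_k}(z^\alpha\bar z^\beta)=\beta_k\,\bar z_k^{-1}\,z^\alpha\bar z^\beta$ (understood formally on the support of the monomial), together with the explicit derivatives of $J_n$ in \eqref{sa},
\begin{equation*}
\partial_{z_k}J_n=\bar z_k\bigl(\delta_{k,n}+\delta_{k,-n}\bigr),\qquad \partial_{\bar z_k}J_n=z_k\bigl(\delta_{k,n}+\delta_{k,-n}\bigr),
\end{equation*}
only the values $k=\pm n$ contribute to the sum.

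Substituting and collecting the telescopic factors $z_k\bar z_k^{-1}$ and $\bar z_k z_k^{-1}$ against the monomial yields
\begin{equation*}
\{z^\alpha\bar z^\beta,J_n\}=\ii\sum_{k\in\{n,-n\}}\bigl(\beta_k-\alpha_k\bigr)z^\alpha\bar z^\beta=\ii\bigl(\beta_n+\beta_{-n}-\alpha_n-\alpha_{-n}\bigr)z_{\vec\jmath}^{\vec\sigma},
\end{equation*}
which is precisely \eqref{poisuperaction}. The last assertion of the lemma is then immediate: if $z_{\vec\jmath}^{\vec\sigma}$ is super-action preserving, then by Definition \ref{def:SAPindex} one has $\alpha_n+\alpha_{-n}=\beta_n+\beta_{-n}$ for every $n\in\N$, so the right-hand side of \eqref{poisuperaction} vanishes.

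I do not expect any serious obstacle here; the only care needed is to fix the sign convention for the Poisson bracket coming from $\bm J_\C$ in \eqref{def bfJC} consistently with the Hamiltonian system \eqref{new complexHAM}, so that the resulting formula \eqref{poisuperaction} has the correct sign. Beyond that, the computation is purely combinatorial and uses nothing more than the Leibniz rule.
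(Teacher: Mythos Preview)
Your computation is correct and is exactly the standard one. Note that the paper does not actually supply a proof of this lemma: it simply cites \cite[Lemma 7.7]{BMM2022}. Your direct expansion of the Poisson bracket in the Fourier coordinates $(z_k,\bar z_k)$, using the complex Poisson tensor \eqref{def bfJC}, is precisely what that reference does, and there is no alternative route worth mentioning for such an elementary identity. The only point of care you already flagged---fixing the sign convention consistently with \eqref{new complexHAM}---is the sole thing that could go wrong, and your formula matches the stated result \eqref{poisuperaction}.
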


We now define a super-action preserving  Hamiltonian.  

\begin{definition}\label{sapham} {\bf ($\tSAP$ Hamiltonian)}  Let $ p \in \N$. 
A $(p+2)$--homogeneous super-action preserving Hamiltonian $H^{(\tSAP)}_{p+2}(Z)$ is a real function of the form 
$$
H^{(\tSAP)}_{p+2}(Z)=\frac{1}{p+2} \sum_{\substack{(\vec \jmath_{p+2},\vec \sigma_{p+2})\in {\mathfrak{S}}_{p+2}} } H_{\vec \jmath_{p+2}}^{\vec \sigma_{p+2}} z_{\vec \jmath_{p+2}}^{\vec \sigma_{p+2}} 
$$
where $ \mathfrak{S}_{p+2}$ is defined as in \eqref{fSp}.
A pluri-homogeneous  super-action preserving Hamiltonian is a finite sum of  
homogeneous super-action preserving Hamiltonians. 
A Hamiltonian vector field is super-action preserving if it is generated by a
super-action preserving Hamiltonian.
\end{definition}

We now define a super-action preserving symbol. 

\begin{definition}\label{sapsym} {\bf ($\tSAP$ symbol)} 
 Let $ p \in \N $ and $m \in \R$. 
For $ p \geqslant 1 $ a  real valued,  $p$--homogeneous {\em super-action preserving symbol} of order $ m $ is a symbol $\tm_p^{({\tSAP })}(Z;\xi) $ in $ \wt\Gamma^m_p$, independent of $x$, of the form 
$$
\tm_p^{(\tSAP)}(Z;\xi)=
\sum_{({\vec{\jmath}_p},{\vec{\sigma}_p})\in {\mathfrak{S}}_p } M_{\vec{\jmath}_p}^{\vec{\sigma}_p}(\xi) z_{\vec{\jmath}_p}^{\vec{\sigma}_p }  \, .
$$
For $ p = 0 $ we say that any  symbol in  $ \wt\Gamma^m_0 $ is 
 super-action preserving.
A pluri-homogeneous  super-action preserving symbol is a finite sum of  
homogeneous super-action preserving symbols.
\end{definition}

\begin{remark}\label{nonsuper}
A super-action preserving symbol has even degree $p$ of homogeneity.
Indeed, if $z_{\vec{\jmath}_p}^{\vec{\sigma}_p} $ is super-
action preserving then $(\alpha,\beta)$ defined in \eqref{defalbe} satisfies 
$ |\alpha| = |\beta|$ and $ p= |\alpha+\beta| = 2| \alpha|$ is even.
\end{remark}

Given a super-action preserving symbol we associate a 
super-action preserving  Hamiltonian according to the following lemma (see Lemma 7.11 in \cite{BMM2022}).

\begin{lemma}\label{lem:ham.sap}
Let $ p \in \N $, $ m \in \R $.  
If $(\tm^{({\tSAP })})_p(Z;\xi)$ is a $p$--homogeneous super-action preserving symbol 
in $ \wt\Gamma^m_p $ according to Definition \ref{sapsym} then 
$$
H^{(\tSAP)}_{p+2} (Z)\triangleq\Re \psc{\OpBW{(\tm^{({\tSAP })})_p(Z;\xi)} z}{\bar z}_{\R}
$$
is a $(p+2)$--homogeneous super-action preserving Hamiltonian according to Definition \ref{sapham}.
\end{lemma}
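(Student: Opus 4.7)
My plan is to expand $H^{(\tSAP)}_{p+2}(Z)$ in Fourier coordinates and read off directly that only super-action preserving monomials appear. The crucial observation that streamlines everything is that $\mathfrak{S}_p\subset\mathfrak{T}_p$, so the symbol
$(\tm^{(\tSAP)})_p(Z;\xi)=\sum_{(\vec\jmath,\vec\sigma)\in\mathfrak{S}_p} M^{\vec\sigma}_{\vec\jmath}(\xi)\,z_{\vec\jmath}^{\vec\sigma}$
is automatically ``momentum-balanced'': every contributing multi-index satisfies $\vec\sigma\cdot\vec\jmath=0$. This single identity is what will eventually collapse all the combinatorics.

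With this in mind, the first step is to apply the Bony--Weyl formula \eqref{BonyWeyl} to write down the $k$-th Fourier coefficient of $\OpBW{(\tm^{(\tSAP)})_p}z$. The translation-invariance constraint $\vec\sigma\cdot\vec\jmath+j=k$ inherent in the quantization, combined with $\vec\sigma\cdot\vec\jmath=0$, forces $j=k$. Pairing with $\bar z$ via the real bilinear form $\psc{\cdot}{\cdot}_{\R}$ in \eqref{scalar products} and using $\widehat{\bar z}(-k)=\overline{z_k}$, the scalar product collapses to
\begin{equation*}
\psc{\OpBW{(\tm^{(\tSAP)})_p}z}{\bar z}_{\R}
=\sum_{(\vec\jmath,\vec\sigma)\in\mathfrak{S}_p}\sum_{j\in\Z^*}
\chi_p(\vec\jmath,j)\,M^{\vec\sigma}_{\vec\jmath}(j)\,z_{\vec\jmath}^{\vec\sigma}\,|z_j|^2,
\end{equation*}
so that each term is manifestly an SAP monomial multiplied by the super-action $z_j\bar z_j$.

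The second step is to read off the SAP structure of the output. By Remark \ref{remino}, multiplying an SAP monomial by $z_j\bar z_j$ preserves the SAP property, since adding $1$ to both $\alpha_j$ and $\beta_j$ keeps the relation \eqref{sap} intact; momentum preservation is also preserved because $\vec\sigma\cdot\vec\jmath+j-j=0$. Hence every monomial in the display above is indexed by some $(\vec\jmath_{p+2},\vec\sigma_{p+2})\in\mathfrak{S}_{p+2}$. Taking $\Re$ then symmetrizes each such monomial with its complex conjugate $z_{\vec\jmath_{p+2}}^{-\vec\sigma_{p+2}}$, and this operation still lies inside $\mathfrak{S}_{p+2}$ because the defining relation \eqref{sap} is symmetric under $\alpha\leftrightarrow\beta$. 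Consequently $H^{(\tSAP)}_{p+2}$ is real-valued, $(p+2)$-homogeneous, and supported on $\mathfrak{S}_{p+2}$-monomials, which is exactly the content of Definition \ref{sapham}.

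The argument is essentially algebraic and I do not anticipate any genuine obstacle; the whole substance of the proof is the reduction $j=k$ enforced by $\vec\sigma\cdot\vec\jmath=0$. The only mildly technical point is the bookkeeping of the paradifferential cutoff $\chi_p$ and the repackaging of the coefficients $\chi_p(\vec\jmath,j)\,M^{\vec\sigma}_{\vec\jmath}(j)$ into symmetric Hamiltonian coefficients $H^{\vec\sigma_{p+2}}_{\vec\jmath_{p+2}}$ with the $\tfrac{1}{p+2}$ normalization of Definition \ref{sapham}, which is a routine symmetrization over the $p+2$ arguments.
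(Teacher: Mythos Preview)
Your proof is correct and follows the natural argument: since the paper does not prove this lemma here but cites \cite[Lemma 7.11]{BMM2022}, there is no in-paper proof to compare against, but your approach---exploiting $\mathfrak{S}_p\subset\mathfrak{T}_p$ so that $x$-independence forces $j=k$ in the Bony--Weyl expansion, then invoking Remark \ref{remino}---is exactly the standard one. The only cosmetic point is that you might make explicit that the coefficients $\chi_p(\vec\jmath,j)\,M^{\vec\sigma}_{\vec\jmath}(j)$ inherit the polynomial bound required by Definition \ref{defin phomHAM} from the symbol estimate \eqref{estim symbol}, but this is indeed routine as you say.
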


\subsection{Birkhoff normal form reduction}\label{sec:BNF}

In this section we  finally  
transform the system \eqref{teo62} into its  
Hamiltonian Birkhoff normal form,  up to homogeneity $ N $.

\begin{proposition} {\bf (Hamiltonian Birkhoff normal form)} \label{birkfinalone}
Let $N \in \N$ and $0< \beta_1< \beta_2<4\pare{2+\sqrt{3}}$. 
Assume that
the parameter $\beta = \frac{\upgamma^2}{\kap} \in \bra{\beta_1,\beta_2}$  is outside the zero measure set
$ {\cal B} \subset \bra{\beta_1,\beta_2} $ defined in Proposition \ref{nres}. Then, there exists $ \underline \varrho=\underline \vr(N)>0$  such that, for any $ \varrho \geqslant  \underline \varrho $, for any  $ K \geqslant K'\triangleq \underline K' (\varrho) $ (defined in Proposition \ref{prop:constantcoeff}),
there exists $ \underline s_0 > 0 $ such that, for any $ s \geqslant \underline s_0 $ there is $ \underline \epsilon_0\triangleq  \underline \epsilon_0(s)>0$ such that  for all $ 0 < \epsilon_0< \underline \epsilon_0(s) $ small enough, and any solution $ U \in B_{\underline s_0}^K (I; \epsilon_0)\cap C^K_{*\R}(I; \dot H^{s}(\T;\C^2))$ of the complex Kelvin-Helmholtz system \eqref{eq:KH6},
there exists a 
non--linear map $ \mF_{\tnf}(Z_0)$  
such that:
\begin{enumerate}[\bf i]
    \item {\bf Symplecticity:}  $ \mF_{\tnf}(Z_0)$ is symplectic up to homogeneity $N$ (Definition \ref{def:LSMN});

    \item  {\bf Conjugation:} If $ Z_0 $ solves the system \eqref{teo62} then the variable $ Z\triangleq \mF_{\tnf}(Z_0)$ solves the {\em Hamiltonian system up to homogeneity $N$}  (cfr. Definition \ref{def:ham.N})
\be\label{final:eq}
\begin{aligned}
\pa_tZ & = {\ii \vOmega(D)} Z +  \bm J_\C \nabla H^{(\tSAP)}_{\frac32}(Z)+\bm J_\C \nabla H^{(\tSAP)}_{-\vr}(Z)+\Opvec{ \ii (\mathpzc{d}_{\frac32})_{>N}(U;t,\xi)}Z+ \pmb{R}_{>N}(U;t)U ,
\end{aligned}
\ee
where 
\begin{itemize}
\item $H^{(\tSAP)}_\frac32(Z) $ is the super-action preserving Hamiltonian 
$$
\Re \psc{ \OpBW{ (\mathpzc{d}_{\frac32}^{(\tSAP)})_{\leqslant N}(Z;\xi)}z}{ \bar z}_{\R},
$$
with a pluri-homogeneous super-action preserving  symbol 
$ ( \mathpzc{d}_{\frac32}^{(\tSAP)})_{\leqslant N}(Z;\xi)$  in $\Sigma_2^N \wt\Gamma_q^{\frac32}$,
according  to  Definition \ref{sapsym};
\item  ${\bm J}_\C \nabla H^{(\tSAP)}_{-\vr}(Z)$
 is a super-action preserving,  Hamiltonian, smoothing vector field in $ \Sigma_{3}^{N+1} \wt \X^{-\vr+ \underline \varrho}_q$ (see \ref{sapham});
\item  $(\mathpzc{d}_{\frac32})_{>N}(U;t,\xi)$ is a non--homogeneous symbol in $ \Gamma^{\frac32}_{K,\underline K',N+1}[\epsilon_0]$ with imaginary part $\Im (\mathpzc{d}_{\frac32})_{>N}(U;t,\xi) $ in $ \Gamma^{0}_{K,\underline K',N+1}[\epsilon_0]$;
\item  $\pmb{R}_{>N}(U;t)$ is a real-to-real matrix of non--homogeneous smoothing operators 
 in $\pare{\mR^{-\vr+ \underline \varrho}_{K,\underline K',N+1}[\epsilon_0]}^{2\times 2}$.
  \end{itemize}

  \item  {\bf Boundedness:} There exists a constant $C\triangleq C_{s,K}>0$ such that   for all  $ 0\leqslant k\leqslant  K$ and  any\\
  $Z_0\in B_{\underline s_0}^K(I;\epsilon_0)\cap C^{K}_{*\R}(I;\dot H^{s}(\T, \C^2))$  one has 
\be\label{mappaB}
C^{-1} \| Z_0 \|_{k,s}\leqslant \|\mF_{\tnf}(Z_0) \|_{k,s} \leqslant 
C\| Z_0 \|_{k,s}
\ee 
and 
 \be\label{equivalenzan}
 {C}^{-1} \| U(t) \|_{\dot H^s} \leqslant \| Z (t) \|_{\dot H^s} \leqslant C \| U(t) \|_{\dot H^s} \, , 
\quad \forall  t \in I.
 \ee
\end{enumerate}
\end{proposition}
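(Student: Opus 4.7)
The plan is to perform an iterative Hamiltonian Birkhoff normal form reduction of the system \eqref{teo62}, constructing symplectic (up to homogeneity $N$) transformations degree by degree. Precisely, for each $p = 2, \ldots, N$ I would construct a pluri-homogeneous real Hamiltonian generator $\chi_p$ of homogeneity $p+1$, and take $\mathcal{F}_{\tnf}$ as the composition of the time-one flows $\Phi_{\chi_p}$ of the vector fields $\bm J_\C \nabla \chi_p$. By \cref{flow}, each $\Phi_{\chi_p}$ is symplectic up to homogeneity $N$ and maps Sobolev spaces to themselves with uniform constants for $Z_0$ in a small ball, which will give \eqref{mappaB}--\eqref{equivalenzan}. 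Since the generators are Hamiltonian, each conjugation preserves the ``Hamiltonian up to homogeneity $N$'' structure (\cref{def:ham.N}), so that the resulting equation is still of the form \eqref{teo62} with modified pluri-homogeneous components.

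At the $p$-th step I would split the pluri-homogeneous component of degree $p+1$ in the vector field into its super-action preserving and non-super-action preserving parts, following the classes in \cref{sapsym,sapham}. The non-SAP part is eliminated by solving the homological equation
\begin{equation*}
\{H_2, \chi_p\} + G_{p+1}^{(\text{non-}\tSAP)} = 0, \qquad H_2(Z) \triangleq \Re \psc{\Opvec{\omega_{\kap,\upgamma}(\xi)} z}{\bar z}_{\R},
\end{equation*}
which at Fourier side amounts to dividing each non-SAP coefficient $G_{\vec\jmath}^{\vec\sigma}$ by $\vec{\omega}_{\kap,\upgamma}\cdot(\alpha(\vec\jmath,\vec\sigma)-\beta(\vec\jmath,\vec\sigma))$ (cf. \eqref{omeginovec}). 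Since the Weber parameter $\beta$ lies outside the zero-measure set $\mathcal{B}$, the non-resonance bound of \cref{nres} ensures this denominator is bounded below by $\nu \langle\vec\jmath\rangle^{-\tau}$, entailing a fixed loss of $\tau$ derivatives that is absorbed into the parameter $\underline\vr$. Consequently $\chi_p$ belongs to the same classes of pluri-homogeneous symbols and pluri-homogeneous smoothing operators as $G_{p+1}^{(\text{non-}\tSAP)}$, up to a shift of order by $\tau$.

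The SAP part of the vector field is retained and, through \cref{lem:ham.sap}, produces exactly the super-action preserving Hamiltonians $H^{(\tSAP)}_{\frac{3}{2}}(Z)$ and $H^{(\tSAP)}_{-\vr}(Z)$ appearing in \eqref{final:eq}. Higher-order pluri-homogeneous corrections generated by the conjugation are treated at subsequent steps, while the non-homogeneous corrections and the contributions of $(\mathpzc{d}_{\frac{3}{2}})_{>N}$, $\pmb R_{>N}$ in \eqref{teo62} are absorbed into the corresponding remainders of \eqref{final:eq} via the composition rules of \cref{prop compBW,prop compo mop}. The property $\Im (\mathpzc{d}_{\frac{3}{2}})_{>N} \in \Gamma^0_{K,\underline K', N+1}[\epsilon_0]$ is preserved because each $\chi_p$ is real, so Poisson brackets with real Hamiltonians do not generate purely imaginary positive order contributions.

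The main obstacle I expect is the derivative bookkeeping through the iteration: each step loses $\tau$ derivatives from the small divisor and $3/2$ from the commutator with $H_2$, since $\{H_2, \cdot\}$ raises the order of a symbol by $3/2$ up to smoothing. After $N-1$ iterations one therefore needs $\underline\vr$ of order at least $N\tau + \mathcal{O}(N^2)$ to guarantee that the final smoothing vector field $\bm J_\C \nabla H^{(\tSAP)}_{-\vr}$ still belongs to $\Sigma^{N+1}_3 \wt\X^{-\vr + \underline\vr}_q$ with sufficiently negative order for the subsequent energy estimate in \cref{sec NRJ}. A related subtlety is the correct separation, after each conjugation, of the new pluri-homogeneous symbols in $\Sigma_2^N \wt\Gamma^{3/2}_q$ from the pluri-homogeneous smoothing terms in $\Sigma_3^{N+1} \wt\X^{-\vr + \underline\vr}_q$ generated by the symbolic calculus asymptotic expansions, which requires the splitting tools of \cite[Section 7]{BMM2022}.
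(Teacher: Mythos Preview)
Your overall strategy—iterative Birkhoff reduction degree by degree, solving homological equations with the non-resonance bounds of \Cref{nres}, and retaining the $\tSAP$ part—matches the paper. However, there is a genuine gap in how you produce the transformations.

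You claim that the time-one flow $\Phi_{\chi_p}$ of $\bm J_\C\nabla\chi_p$ is symplectic up to homogeneity $N$ by \Cref{flow}. This is wrong on two counts. First, \Cref{flow} concerns the flow of a \emph{linearly} Hamiltonian paradifferential operator and yields only \emph{linear} symplecticity up to homogeneity $N$ (\Cref{linsymphomoN}), not nonlinear symplecticity in the sense of \Cref{def:LSMN}; moreover it requires the off-diagonal symbol to have order $m\leqslant 1$, whereas here the leading symbol has order $3/2$. Second, and more fundamentally, the part of the generator coming from the $x$-independent symbol $(\mathpzc{d}_{3/2})_p$ of order $3/2$ cannot be handled by a bona fide Hamiltonian flow with controlled structure. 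The paper instead uses the linear flow $\pmb{\cG}^1_{g_p}(Z)Z$ of the Fourier multiplier (\Cref{flussoconst}), which is well-defined and \emph{linearly} symplectic but \emph{not} symplectic up to homogeneity $N$ as a nonlinear map. The paper states this explicitly: the resulting system ``is not Hamiltonian up to homogeneity $N$ \ldots since the map $\Phi_p(Z_{p-1})=\pmb{\cG}^1_{g_p}(Z_{p-1})Z_{p-1}$ \ldots is not symplectic up to homogeneity $N$.'' The remedy is to apply the Darboux correction of \Cref{conjham}, producing a smoothing perturbation $(\uno+\bm R^{(p)}_{\leqslant N}(\cdot))$ that restores symplecticity up to homogeneity $N$ and hence the Hamiltonian structure. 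You omit this step entirely.

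Relatedly, the paper does \emph{not} treat the symbol and the smoothing remainder with a single generator $\chi_p$. At each step $p\geqslant 2$ there are three sub-steps: (a) reduce the order-$3/2$ Fourier multiplier $(\mathpzc{d}_{3/2})_p$ to its $\tSAP$ part via $\pmb{\cG}^1_{g_p}$ and \Cref{lem:conj.fou}; (b) apply the Darboux correction; (c) reduce the resulting degree-$p$ smoothing Hamiltonian vector field to its $\tSAP$ part via the approximate flow of \Cref{lem:app.flow.ham} and \Cref{NormFormMap0}. Only step (c) uses a genuinely Hamiltonian (smoothing) generator whose approximate flow is symplectic up to homogeneity $N$. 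In Step~1 there is no symbol to reduce (the symbol starts at homogeneity $2$), so only (c) is performed. Your proposal collapses (a)--(c) into one move and thereby loses both the well-posedness of the flow for the unbounded part and the mechanism that preserves the Hamiltonian structure.
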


\begin{notation}
   From now on we denote with $\bm R_p^\tH$ any smoothing remainder in $ \wt{ \cR}^{-\vr}_p$ such that $ \bm R_p^\tH(Z)Z$ is a $p+1$-homogeneous Hamiltonian vector field, namely $\bm R_p^\tH(Z)Z= \bm J_\C \nabla H_{p+2}(Z)$ for some $p+2$ homogeneous Hamiltonian as per \cref{defin phomHAM}. 
\end{notation}

\begin{proof}[Proof of \Cref{birkfinalone}]
The proof consists in $N$ steps and is analogous to the proof of Proposition 7.12 in \cite{BMM2022}. For completeness, we only sketch the main steps and we refer to \cite{BMM2022} for a detailed proof. 

We first reduce the quadratic terms of the vector field in \eqref{teo62}.
\\[1mm]
\noindent {\bf Step $1$: Elimination of the quadratic smoothing remainder in equation \eqref{teo62}.}
\\[1mm]
The $ x$-independent symbol  $(\mathpzc{d}_{\frac32})_{\leqslant N}(Z_0;\x)$ in \eqref{teo62}  belongs to $ \Sigma_2^N \wt \Gamma^{\frac32}_q$ and the only quadratic component of the vector field in \eqref{teo62} is
$\bm R_1^\tH(Z_0)Z_0$ where (recall the notation in \eqref{projectlessN})
\be \label{R1}
\bm R_1^\tH(Z_0)\triangleq \cP_1[{\bm R}_{\leqslant N}(Z_0)] 
 \in\pare{ \wt \cR_1^{-\varrho+\vr(N)}}^{2\times 2}. 
 \ee
 Since the system  \eqref{teo62} is  Hamiltonian  up to homogeneity $N$,  $\bm R_1^\tH(Z_0)Z_0$ is a Hamiltonian vector field 
that we expand in Fourier coordinates as (recall \eqref{mompresind})
\be\label{R1.exp}
\big(\bm R_1^\tH(Z_0)Z_0\big)_k^\sigma= \sum_{
\substack{ (j_1, j_2, k, \sigma_1, \sigma_2, - \sigma) \in \mathfrak T_3
}
} X_{j_1,j_2,k}^{\sigma_1,\sigma_2,\sigma} 
(z_0)_{j_1}^{\sigma_1} (z_0)_{j_2}^{\sigma_2}.
\ee
In order to remove $ \bm R_1^\tH (Z_0) Z_0 $ from \eqref{teo62}, 
we perform the change of variable 
 $ Z_1 = \mathtt{F}^{(1)}_{\leqslant N}(Z_0)$
 where $ \mathtt{F}^{(1)}_{\leqslant N}(Z_0)$ 
 is the map given by Lemma \ref{lem:app.flow.ham} relative to   
a Hamiltonian  smoothing vector field 
 \be\label{G1}
\big({\bm G}_1^\tH(Z_0)Z_0\big)_k^\sigma= \sum_{ \substack{ (j_1, j_2, k, \sigma_1, \sigma_2, - \sigma) \in \fT_3
}}G_{j_1,j_2,k}^{\sigma_1,\sigma_2,\sigma} (z_0)_{j_1}^{\sigma_1} (z_0)_{j_2}^{\sigma_2}, \quad \bm G_1^\tH(Z_0)\in \pare{\cR^{-\vr'}_1}^{2\times 2}
\ee
for some $\vr'>0$ to be determined.
Since $\bm G_1^\tH(Z)Z$ is a Hamiltonian vector field, 
Lemma \ref{lem:app.flow.ham} gives by construction that $\mathtt{F}^{(1)}_{\leqslant N}$ is symplectic up to homogeneity $N$ and it has the form  
\be\label{mFleqN}
Z_1 \triangleq\mathtt{F}^{(1)}_{\leqslant N}(Z_0)  = 
Z_0 + {\bm G}_1^\tH(Z_0)Z_0+ \bm F_{\geqslant 2}(Z_0) Z_0 , \qquad \bm F_{\geqslant 2}(Z_0) \in\pare{ \Sigma_{2}^N \wt \cR^{-\varrho'}_q}^{2\times 2}.
\ee
Applying  Lemma \ref{conj.ham.N},  we obtain that the variable $Z_1$ solves a  system which is 
 Hamiltonian up to homogeneity $N$. 
We compute it using  Lemma \ref{NormFormMap0}.
Its assumption {\bf (A)} at page \pageref{A} holds since $ Z_0  $ solves \eqref{teo62}.
  Then 
  Lemma \ref{NormFormMap0}  implies that the variable $Z_1$  solves 
\be\label{Z1}
\begin{aligned}
\pa_t Z_1
& = \ii \vOmega(D) Z_1+ \vOpbw{\ii (\mathpzc{d}_{\frac32})_{\leqslant N}^+(Z_1;\xi)+\ii (\mathpzc{d}_{\frac32})_{>N}^+(U;t,\xi)}Z_1 \\
& \quad +[ \bm R_1^\tH(Z_1) + \bm G^+_1(Z_1)]Z_1 
+ R_{\geqslant 2}^+(Z_1)Z_1 +  R^+_{>N}(U;t)U,
\end{aligned}
\ee
where \\
\noindent 
$\bullet$
$ (\mathpzc{d}_{\frac32})_{\leqslant N}^+(Z_1;\xi)$ is a real valued symbol, independent of $x$, in $\Sigma_{2}^N \wt \Gamma_q^{\frac32}$; \\
\noindent
$\bullet$
$(\mathpzc{d}_{\frac32})_{>N}^+(U;t,\xi)$ is a non-homogeneous real valued symbol, independent of $x$, in $\Gamma^\frac32_{K,\underline K',N+1}[\epsilon_0]$ with imaginary part ${\rm Im} (\mathpzc{d}_{\frac32})_{>N}(U; t, \xi) $ in $  \Gamma^0_{K,\underline K', N+1}[\epsilon_0]$;  \\
\noindent $\bullet$
$\bm R_1^\tH(Z_1)$ is defined in \eqref{R1}  and $\bm G_1^+(Z_1)Z_1\in \wt \X^{-\vr'+\frac32}_{2}$ has  Fourier expansion, by \eqref{ditigi40} and \eqref{G1},    
\be\label{G1+}
(\bm G^+_1(Z_1)Z_1)_{k}^\sigma=\!\!\! \!\!\! \!\!\! \sum_{
(j_1, j_2, k, \sigma_1, \sigma_2, - \sigma) \in \fT_3
} 
\!\!\! \!\!\! \!\!\! 
 \ii\big(\sigma_1\omega_{\gamma,\upgamma}(j_1)+\sigma_2 \omega_{\gamma,\upgamma}(j_2)- \sigma \omega_{\gamma,\upgamma}(k)\big)G_{j_1,j_2,k}^{\sigma_1,\sigma_2,\sigma} (z_1)_{j_1}^{\sigma_1} (z_1)_{j_2}^{\sigma_2};
\ee
\noindent
$\bullet$
$ \bm R_{\geqslant 2}^+(Z_1) $ is a matrix of pluri-homogeneous smoothing operators in  $ 
\pare{\Sigma_2^N \tilde \cR^{-\vr + \underline \varrho(2)}_q}^{2\times 2}$ where
\be\label{c2}
-\vr +\underline \varrho(2) \triangleq -\vr'+\frac32 \ ; 
\ee

\noindent
$\bullet$ 
$\bm R_{>N}^+(U;t) $ is a matrix of non--homogeneous smoothing operators in $\pare{\mR^{-\vr + \underline \varrho(2)}_{K,\underline K',N+1}[\epsilon_0]}^{2\times 2} $.

By \eqref{R1.exp}, \eqref{G1+} we solve 
\begin{equation}
\label{homologgica1}
 \bm R_1^\tH(Z_1)Z_1 + \bm G_1^+(Z_1)Z_1 = 0 
\end{equation}
by taking 
\be\label{G1+.coeff}
G_{j_1,j_2,k}^{\sigma_1,\sigma_2,\sigma}\triangleq 
\begin{cases} 0, &   \mbox{ if } \ 
(j_1,j_2,k,\sigma_1,\sigma_2,-\sigma) \notin \fT_3, \\
 \displaystyle 
- {\frac{ X_{j_1,j_2,k}^{\sigma_1,\sigma_2,\sigma}}{\ii\big(\sigma_1\omega_{\gamma,\upgamma}(j_1)+\sigma_2 \omega_{\gamma,\upgamma}({j_2})- \sigma \omega_{\gamma,\upgamma}({k})\big)}}, &  \mbox{ if } 
 (j_1,j_2,k,\sigma_1,\sigma_2,-\sigma) \in \fT_3 \, .
\end{cases} 
\ee
The previous expression and the fact that the vector field ${\bm R}_1^\tH(Z_0)Z_0$ is Hamiltonian justifies a posteriori that indeed ${\bm G}_1^\tH(Z_0)Z_0$ is Hamiltonian. Moreover, combining \eqref{R1} and Proposition \ref{nres}, taking $\beta \in \bra{\beta_1,\beta_2} \setminus \cB$, we get that ${\bm G}_1^\tH(Z_0)Z_0\in \wt \X^{-\varrho'}_2$ with $\vr' \triangleq \vr - \vr(N) - \tau$. Thus, by \cref{c2}, we get $\underline \varrho(2)=\vr(N)+\tau+\frac32$.

\noindent {\bf Step $p\geqslant 2$:} 
We now assume that the system is in normal form up to degree $p-1$ of homogeneity. Next, we illustrate how to perform the normal form transformation on the terms of degree $p$. Specifically, we assume that $Z_{p-1}$ solves 
\begin{align}
\pa_t Z_{p-1} & = \ii \vOmega(D)Z_{p-1}+{\bm J}_\C \nabla 
\big(H^{(\tSAP )}_{\frac32}\big)_{\leqslant p+1}(Z_{p-1}) +
{\bm J}_\C \nabla \big(H^{(\tSAP)}_{-\vr}\big)_{\leqslant p+1}(Z_{p-1}) \notag \\
& \quad + \vOpbw{\im (\mathpzc{d}_{\frac32})_{p}(Z_{p-1};\xi)+\im (\mathpzc{d}_{\frac32})_{\geqslant p+1}(Z_{p-1};\xi)}Z_{p-1}  +{\bm R}_{\geqslant p}^\tH(Z_{p-1})Z_{p-1} \notag \\
&\quad +\vOpbw{-\im (\mathpzc{d}_{\frac32})_{>N}(U;t,\xi)}Z_{p-1}+ \bm R_{>N}(U;t)U, \label{traspBNF}
\end{align}
where $\big(H^{(\tSAP)}_{\frac32}\big)_{\leqslant p+1}$, $\big(H^{(\tSAP)}_{-\vr}\big)_{\leqslant p+1}$ are pluri-homogeneous super-action preserving Hamiltonians, $(\mathpzc{d}_{\frac32})_{p}\in \wt \Gamma_p^\frac32$ and expands as 
\be \label{defm32p}
(\mathpzc{d}_{\frac32})_{p}(Z_{p-1};\xi)= \sum_{
(\vec{\jmath}_p,  \vec{\sigma}_p) \in \fT_p} \tD_{\vec{\jmath}_p}^{\vec{\sigma}_p}(\xi) \, (z_{p-1})_{\vec{\jmath}_p}^{\vec{\sigma}_p}, 
\qquad \overline{\tD_{\vec{\jmath}_p}^{-\vec{\sigma}_p}}(\xi)= \tD_{\vec{\jmath}_p}^{\vec{\sigma}_p}(\xi). 
\ee
Moreover $(\mathpzc{d}_{\frac32})_{\geqslant (p+1)}\in \Sigma_{p+1}^N \Gamma_{q}^\frac32$  and $ \bm R_{\geqslant p}^\tH\in \Sigma_{p}^N \cR_q^{-\vr+\underline{\vr}(p)}$. We reduce the homogeneous component\\
$ \vOpbw{-\im (\mathpzc{d}_{\frac32})_{p}}+ R_p$ into  its resonant normal form. First of all we define the intermediate variable 
\be\label{W.bir}
W\triangleq\Phi_p(Z_{p-1})\triangleq \pmb{\cG}_{g_p}^1(Z_{p-1})Z_{p-1},
\ee
where  $ \pmb{\cG}_{g_p}^1(Z_{p-1}) $ is 
the time $1$-linear flow generated by $\vOpbw{\ii g_p}$, 
 where $g_p$ is the Fourier multiplier 
 \be\label{bir.gp}
 g_p(Z_{p-1};\xi)\triangleq\sum\limits_{(\vec{\jmath}_p,  \vec{\sigma}_p) \in \fT_p} \tG_{\vec{\jmath}_p}^{\vec{\sigma}_p}(\xi)(z_{p-1})_{\vec{\jmath}_p}^{\vec{\sigma}_p} \in \wt \Gamma_p^{\frac32}.
 \ee
   Then 
 Lemma \ref{lem:conj.fou} implies that the 
variable $W $ defined in \eqref{W.bir} solves 
 \be \label{eq:W:Bir}
\begin{aligned}
\pa_t W & = \ii \vOmega(D)W+{\bm J}_\C\nabla \big(H^{(\tSAP )}_{\frac32}\big)_{\leqslant p+1}(W) +{\bm J}_\C \nabla \big(H^{(\tSAP)}_{-\vr}\big)_{\leqslant p+1}(W)\\
& \ + \vOpbw{  \im  [(\mathpzc{d}_{\frac32})_{p}(W;\xi) +g^+_p(W;\xi)] + \im (\mathpzc{d}_{\frac32})_{\geqslant p+1}^+(W;\xi)}W +\bm R_{\geqslant p}(W)W \\
&\ +\vOpbw{ \im (\mathpzc{d}_{\frac32})_{>N}^+ (U;t,\xi)}W+ \bm R_{>N}(U;t)U, 
\end{aligned}
\ee
where $g_p^+$ expands as 
\begin{align}
 g^+_p(W;\xi)= \sum_{
(\vec{\jmath}_p,  \vec{\sigma}_p) \in \fT_p}& \ii\vec \sigma_p \cdot \vec{\omega}_{\kap,\upgamma}(\vec{\jmath}_p)\tG_{\vec{\jmath}_p}^{\vec{\sigma}_p}(\xi)w_{\vec{\jmath}_p}^{\vec{\sigma}_p},
\end{align}
while $\bm R_{\geqslant p}\in \pare{\Sigma_1^N \tilde \cR^{-\vr +\underline{\vr}(p)+c(N,p)}_q }^{2\times 2}$, $\bm R_{>N}\in  \pare{\mR^{-\vr+\underline{\vr}(p)+c(N,p)}_{K,K',N+1}[\epsilon_0] }^{2\times 2}$ for some $c(N,p)>0$. In order to get a $p$-homogeneous super-action preserving normal form symbol, we solve the homological equation 
\begin{align*}
(\mathpzc{d}_{\frac32})_{p}(W;\xi)+ g^+_p(W;\xi)= (\mathpzc{d}^{(
    {\tSAP})}_{\frac32})_{p}(W;\xi) \triangleq  
\sum_{(\vec{\jmath}_p,  \vec{\sigma}_p) \in \mathfrak{S}_p}  \tD_{\vec{\jmath}_p}^{\vec{\sigma}_p}(\xi) \, w_{\vec{\jmath}_p}^{\vec{\sigma}_p},
\end{align*}
where $\mathfrak{S}_p$ has been introduced in \eqref{fSp}. One solution is obtained by choosing
\begin{align}\label{G.coeff.op}
 \tG_{\vec{\jmath}_p}^{\vec{\sigma}_p}(\xi) \triangleq\begin{cases} 0, & \mbox{ if } (\vec{\jmath}_p, \vec{\sigma}_p)\in \mathfrak{S}_p,\\ 
 \displaystyle{\frac{\tD_{\vec{\jmath}_p}^{\vec{\sigma}_p}(\xi)}{\ii\vec \sigma_p  \cdot \vec{\omega}_{\kap,\upgamma}(\vec{\jmath}_p)}}, &
 \mbox{ if }  (\vec{\jmath}_p, \vec{\sigma}_p)\not\in \mathfrak{S}_p .
  \end{cases} 
\end{align}
The previous expression \eqref{G.coeff.op} and the fact that $ (\mathpzc{d}_{\frac32})_p$ is real valued justifies a posteriori that indeed $g_p$ is real valued. Moreover, combining estimate  \eqref{estim symbol} for $(\mathpzc{d}_\frac32)_p $ and Proposition \ref{nres}, taking $\beta \in \bra{\beta_1,\beta_2} \setminus \cB$, we get that  $g_p$ satisfies the corresponding estimate \eqref{estim symbol} with $ \mu \leadsto \mu+\tau$. Now, we observe that, by Lemma \ref{lem:hamsym},
\be\label{msap.p.bir}
\vOpbw{  \im (\mathpzc{d}^{(\tSAP)}_{\frac32})_p(W;\xi) }W
={\bm J}_\C\nabla \big(H_{\frac32}^{(\tSAP)}\big)_{p+2}(W)+ \bm R_p(W)W ,
\ee 
with  Hamiltonian 
\be\label{H.SAP.p2}
\big(H_{\frac32}^{(\tSAP)}\big)_{p+2}(W) \triangleq\Re \Big \langle  \OpBW{(\mathpzc{d}_{\frac32}^{({\tSAP })})_p(W;\xi)} w, \bar w\Big \rangle_{\R} \ , 
\ee
which is super-action preserving by Lemma \ref{lem:ham.sap}, 
and  a matrix of smoothing operators  $\bm R_p(W) $ in  $ \pare{\wt\cR^{-\varrho'}_p}^{2\times 2}$ for any $\varrho' \geqslant 0$. 
Therefore \eqref{eq:W:Bir} becomes  
 \be \label{eq:W:Bir3}
\begin{aligned}
\pa_t W = &  \ii \vOmega(D)W+{\bm J}_\C \nabla \big(H^{(\tSAP )}_{\frac32}\big)_{\leqslant p+2}(W) +{\bm J}_\C\nabla  \big(H^{(\tSAP)}_{-\vr}\big)_{\leqslant p+1}(W)\\
& \ + \vOpbw{   \im (\mathpzc{d}_{\frac32})_{\geqslant p+1}^+(W;\xi)}W+ [\bm R_{\geqslant p}(W) ]W  \\
&\ +\vOpbw{ \im (\mathpzc{d}_{\frac32})_{>N}^+ (U;t,\xi)}W+ \bm R_{>N}(U;t)U, 
\end{aligned}
\ee
where (see \eqref{traspBNF},\eqref{H.SAP.p2})
\be\label{H.SAP.30}
\big(H^{(\tSAP )}_{\frac32}\big)_{\leqslant p+2}\triangleq \big(H^{(\tSAP )}_{\frac32}\big)_{\leqslant p+1} + \big(H_{\frac32}^{(\tSAP)}\big)_{p+2},
\ee
while $\bm R_{\geqslant p}$ is a new smoothing remainder in $ \pare{\Sigma_1^N \tilde \cR^{-\vr +\underline{\vr}(p)+c(N,p)}_q }^{2\times 2}$.
Note that the new system \eqref{eq:W:Bir3} is not Hamiltonian up to homogeneity $N$ (unlike system \eqref{traspBNF} for $Z_{p-1}$), since the map
$\Phi_p(Z_{p-1}) = \pmb{\cG}^1_{g_p}(Z_{p-1}) Z_{p-1} $  in 
\eqref{W.bir} is not symplectic up to homogeneity $N$. By Lemma \ref{flussoconst} we only know  
 that $\pmb{\cG}^1_{g_p}(Z_{p-1})$ is linearly symplectic.
We now apply Theorem \ref{conjham} to find a  correction of  $\Phi_p(Z_{p-1})$ which is symplectic up to homogeneity $N$. The  assumptions of Theorem \ref{conjham} are verified applying Lemma \ref{flussoconst}, therefore there exists $\bm R_{\leqslant N}^{(p)} \in   \left(   \Sigma_p^N \wt \cR^{-\vr-\frac32}_q\right)^{2\times 2}$ such that the variable  
\be\label{darbouxp}
V \triangleq \cC_N^{(p)}(W)\triangleq \big( \uno + \bm R_{\leqslant N}^{(p)}(W) \big)  W = 
\big( \uno + \bm R_{\leqslant N}^{(p)}(\Phi_p(Z_{p-1})) \big)  \Phi_p(Z_{p-1}) 
 \ee
 is symplectic up to homogeneity $N$, thus
solves a system which is  Hamiltonian  up to homogeneity $N$. Since $W$ solves \eqref{eq:W:Bir3} then Lemma \ref{NormFormMap0} implies that $V =W + {\bm R}_{\leqslant N}^{(p)}(W)W$ solves 
\be \label{eq:W:Bir4}
\begin{aligned}
\pa_t  V  = &  \ii \vOmega(D)  V+{\bm J}_\C\nabla \big(H^{(\tSAP )}_{\frac32}\big)_{\leqslant p+2}(V) +{\bm J}_\C\nabla \big(H^{(\tSAP)}_{-\vr}\big)_{\leqslant p+1}(V)\\
& \ + \vOpbw{   \im \wt{(\mathpzc{d}_{\frac32})}_{\geqslant p+1}(V;\xi)}V +
\bm J_\C\nabla \left(H_{-\vr}\right)_{p+2}(V)+ \bm  R_{\geqslant p+1}(V) V  \\
&\ +\vOpbw{ \im \wt{(\mathpzc{d}_{\frac32})}_{>N} (U;t,\xi)}V+  \bm R_{>N}(U;t)U,
\end{aligned}
\ee
where $ \wt{(\mathpzc{d}_{\frac32})}_{\geqslant p+1}\in \Sigma_{p+1}^N \widetilde{\Gamma}^\frac32_q$, $\bm J_\C\nabla \left(H_{-\vr}\right)_{p+2}\in \mathfrak{X}^{-\vr+\underline{\vr}(p)+ C(N,p)}_p$, $ \bm R_{\geqslant p+1}\in \Sigma_{p}^N \widetilde{\cR}^{-\vr+\underline{\vr}(p)+ C(N,p)}$. Notice that the Hamiltonian structure of $\bm J_\C\nabla \left(H_{-\vr}\right)_p(V)$ is justified a posteriori by the fact that  the $p$-homogeneous component of the vector field in \eqref{eq:W:Bir4} is Hamiltonian and the term in its first line is Hamiltonian, thus we deduce the Hamiltonianity by difference. Thus we can proceed as in Step $1$. First we Fourier expand $\bm J_\C\nabla \left(H_{-\vr}\right)_{p+2}$ as 
\be\label{wtR.p}
\big( \bm J_\C\nabla \left(H_{-\vr}\right)_{p+2}(V)\big)_k^\sigma=
\!\!\!\!\!\!\!\!\!\!\!\!\!\!\!
\sum_{
(\vec{\jmath}_{p+1}, k,  \vec{\sigma}_{p+1}, - \sigma) \in \fT_{p+2}}
\!\!\!\!\!\!\!\!\!\!\!\!\!\!
  X_{\vec{\jmath}_{p+1},k}^{\vec{\sigma}_{p+1},\sigma}  v_{\vec{\jmath}_{p+1}}^{\vec{\sigma}_{p+1}} \, . 
\ee
Then we use Lemma \ref{lem:app.flow.ham} to  generate a symplectic up to homogeneity $N$ map $\cF_{\leqslant N}^{(p)}$ associate to the Hamiltonian smoothing vector field 
\be\label{Gp}
\big(\bm G_p(V)V\big)_k^\sigma=
\!\!\!\!\!\!\!\!\!
\sum_{
(\vec{\jmath}_{p+1}, k,  \vec{\sigma}_{p+1}, - \sigma) \in \fT_{p+2}}
\!\!\!\!\!\!\!\!
 G_{\vec{\jmath}_{p+1},k}^{\vec\sigma_{p+1},\sigma} v_{\vec{\jmath}_{p+1}}^{\vec{\sigma}_{p+1}}. 
\ee
Applying Lemma \ref{NormFormMap0} the analogue of the homological equation \eqref{homologgica1} is 

\begin{equation}\label{homologgicap}
\bm J_\C\nabla \left(H_{-\vr}\right)_{p+2}(Z_p) + \bm G_p^+(Z_p)Z_p = {\bm J}_\C\nabla \left(H^{(\tSAP)}_{-\vr}\right)_p(Z_p)\triangleq \!\!\!\sum_{{(\vec\jmath_{p+1}, k,  \vec \sigma_{p+1}, - \sigma) \in  {\mathfrak{S}}_{p+2}}
} 
\!\!\!\!\!\!\!\!\!\!\!
 X_{\vec{\jmath}_{p+1},k}^{\vec{\sigma}_{p+1}, \sigma}  \, (z_p)_{\vec{\jmath}_{p+1}}^{\vec{\sigma}_{p+1}}  
\end{equation}
where the vector field $\bm G_p^+(Z_p)Z_p$ is explicitly given by 
$$
 (\bm G_p^+(Z_p)Z_p)_k^\sigma\triangleq 
\!\!\!\!\!\!\!\!\!\!\!\!
 \sum_{(\vec{\jmath}_{p+1}, k,  \vec{\sigma}_{p+1}, - \sigma)  \in \fT_{p+2}}
 \!\!\!\!\!\!\!\!\!\!\!\!\!\!\!\!\!\!\!\!\!\!\!
  \ii\big(\vec{\sigma}_{p+1}\cdot  \vec{\omega}_{\kap,\upgamma}(\vec{\jmath}_{p+1})- \sigma \omega_{\kap,\upgamma}(k)\big)G_{\vec{\jmath}_{p+1},k}^{\vec{\sigma}_{p+1},\sigma}  (z_p)_{\vec{\jmath}_{p+1}}^{\vec{\sigma}_{p+1}} 
  \ . 
$$
Then a solution of \eqref{homologgicap} is given by  
 \begin{align}\label{Gp.coeff}
 G_{\vec{\jmath}_{p+1},k}^{\vec{\sigma}_{p+1},\sigma}  \triangleq \begin{cases} 0 ,& \mbox{ if } (\vec\jmath_{p+1}, k,  \vec \sigma_{p+1}, - \sigma)\in \mathfrak{S}_{p+2},\\ 
 -\displaystyle{\frac{  X_{\vec{\jmath}_{p+1},k}^{\vec{\sigma}_{p+1}, \sigma} }{\ii (\vec{\sigma}_{p+1}\cdot \vec{\omega}_{\kap,\upgamma}(\vec{\jmath}_{p+1})- \sigma \omega_{\kap,\upgamma}(k))}} ,&
 \mbox{ if } (\vec\jmath_{p+1}, k,  \vec \sigma_{p+1}, - \sigma) \not\in \mathfrak{S}_{p+2} .
  \end{cases} 
\end{align}
The previous expression and the fact that the vector field $\bm J_\C\nabla \left(H_{-\vr}\right)_{p+2}$ is Hamiltonian justifies a posteriori that indeed $G_p(Z_p)Z_p$ is Hamiltonian. Moreover, since $\bm J_\C\nabla \left(H_{-\vr}\right)_{p+2}$ is $ \vr_p $-smoothing, we apply Proposition \ref{nres}, taking $\beta \in \bra{\beta_1,\beta_2} \setminus \cB$ and  we get that $\bm G_p(Z_p)Z_p\in \wt \X^{-\varrho_{p+1}}_2$ with $\vr_{p+1}\triangleq \vr_p - c(N) - \tau.$
\end{proof}

\subsection{Energy estimate and proof of the main Theorem}\label{sec NRJ}

\begin{rem}
We can derive local existence for \eqref{eq:KH6} following the computations in the main result of  \cite{BMM2021} and exploiting the fact that the local existence result of \cite{BMM2021} uses the very same paradifferential functional setting as in the present manuscript. Minor modifications in the works \cite{Ambrose2003,Lannes2013_ARMA} can also provide local existence for the setting of \eqref{eq:KH2}.
\end{rem}

The following result, analogous to \cite[Lemma 6.3]{BFP2018}, 
enables to bound the norms $ \| \pa_t^k U(t) \|_{s-\frac32 k} $ of  the time derivatives of a solution 
$  U(t) $ of \eqref{eq:KH6} by  $ \| U(t)\|_s  $. 

\begin{lemma}\label{lem:equivalence_norms}
Let $ K\in \N $. There exists $ s_0> 0 $ such that for any $ s\geqslant s_0 $, any $ \epsilon\in\pare{0, \overline{\epsilon_0}\pare{s}} $ small, if $ U $ belongs to
$ B^0_{s_0, \R}\pare{I;\epsilon}\cap \Cast{0}{s} $ 
and solves \eqref{prop:KH_complex} then $ U\in C^K_{* \R}\left(I; \dot H^s(\T;\C^2)\right) $ and there exists $ C_1 \defeq C_1\pare{s, K} \geqslant 1 $ such that
$$\norm{U\pare{t}}_s  \leqslant
 \norm{U\pare{t}}_{K, s}
 \leqslant C_1 \norm{U\pare{t}}_s,\qquad \forall t \in I.$$ 
\end{lemma}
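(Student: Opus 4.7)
The plan is to prove, by finite induction on $k = 0, 1, \ldots, K$, that
$$
\pa_t^k U \in C^0\pare{I; \dot H^{s-\frac{3}{2}k}\pare{\mathbb{T};\mathbb{C}^2}},
\qquad
\norm{\pa_t^k U(t)}_{\dot H^{s-\frac{3}{2}k}} \leqslant C_k \norm{U(t)}_{\dot H^s}, \quad t \in I,
$$
for a constant $C_k = C_k(s, \epsilon)$ depending on $s$ and on the low-regularity smallness parameter $\epsilon$, but not on the high norm $\|U(t)\|_s$. Summing these bounds over $k=0,\dots, K$ then yields the desired upper bound with $C_1 \triangleq \sum_k C_k$, while the lower bound $\|U(t)\|_s \leqslant \|U(t)\|_{K,s}$ is trivial since $\|U(t)\|_s$ is the $k=0$ term in the sum \eqref{def norm Ks}.

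The base case $k = 0$ is the hypothesis $U \in C^0_*(I; \dot H^s)$. For the inductive step, I would exploit the fact that $U$ solves \eqref{eq:KH6}, which I rewrite schematically as $\pa_t U = \cM(U) U$ where, by \Cref{prop:KH_complex} combined with \Cref{lemactionpara} and \Cref{rem smoothingop}, the operator $\cM(U)$ is a $\frac{3}{2}$-operator belonging to $\Sigma \cM^{3/2}_{K,0,0}\bra{\epsilon_0, N}$. Applying $\pa_t^{k-1}$ to the equation and using the Leibniz rule yields
$$
\pa_t^k U = \sum_{\ell = 0}^{k-1} \binom{k-1}{\ell} \bigl(\pa_t^{k-1-\ell} \cM(U)\bigr)\, \pa_t^\ell U.
$$
By the inductive hypothesis, $\pa_t^\ell U(t) \in \dot H^{s - 3\ell/2}$ for all $0 \leqslant \ell \leqslant k-1$, with bounds in terms of $\|U(t)\|_s$ and low-Sobolev norms of $U$, which are controlled by $\epsilon$. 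Each factor $\pa_t^{k-1-\ell} \cM(U)$ has symbols whose time derivatives satisfy the estimates \eqref{estim nonhom symb}, \eqref{stima:emmeop}, so it maps $\dot H^{s'}$ into $\dot H^{s'-3/2}$ with operator norm controlled by $\|U\|_{k-1-\ell, s_0}^{p}$ for appropriate $p\geqslant 0$. Hence each summand lies in $\dot H^{s - 3\ell/2 - 3/2} = \dot H^{s - 3(\ell+1)/2} \subseteq \dot H^{s - 3k/2}$ (since $\ell + 1 \leqslant k$). Collecting the estimates and using smallness of $\|U\|_{k-1,s_0}\leqslant C\epsilon$ to control all multilinear prefactors gives the required bound for $\|\pa_t^k U(t)\|_{\dot H^{s-3k/2}}$.

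Continuity in time of $\pa_t^k U$ as a map $I \to \dot H^{s - 3k/2}$ follows along the same lines, applying the estimates to differences $\pa_t^k U(t) - \pa_t^k U(t')$ via the Leibniz representation and using continuity of $U$ in $\dot H^s$ and of the symbol-valued coefficients with respect to $t$ through $U$. The smallness hypothesis $U \in B^0_{s_0,\R}(I;\epsilon)$ with $\epsilon < \bar\epsilon_0(s)$ ensures that, at each inductive step, the low-norm factors entering the constants remain under the threshold needed to invoke the action estimates of \Cref{lemactionpara} and \Cref{prop compo mop}.

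I do not anticipate a serious obstacle: the main point is purely bookkeeping of the symbol classes $\Sigma \Gamma^m_{K,K',p}$ as one accumulates time derivatives (each raising the $K'$-index by one), matching the regularity budget $k \leqslant K$ given by \Cref{prop:KH_complex}. The only mildly delicate step is checking that the time derivatives falling on the coefficients $U$ of the symbols are always of order strictly less than $k$, so that the inductive hypothesis applies to bound them—this is automatic from the Leibniz expansion above since $k-1-\ell \leqslant k-1 < k$.
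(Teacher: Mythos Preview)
Your proposal is correct and follows essentially the same approach as the paper: an induction on $k$ using the equation $\pa_t U = \cM(U)U$ with $\cM(U)$ of order $\tfrac{3}{2}$, the Leibniz rule, and the action estimates \eqref{para:dt} and \eqref{stima:emmeop} to control each $\pa_t^k U$ in $\dot H^{s-3k/2}$ by $\|U\|_s$. The paper's proof is just a terser version of what you wrote, citing \cite[Lemma 6.3]{BFP2018} for the analogous argument.
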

\begin{proof}
    The proof is inductive and follows the same line of \cite[Lemma 6.3]{BFP2018}, so we sketch only the first step. As $\|U(t)\|_s\leqslant \| U\pare{t}\|_{K,s}$ is obvious in view of the definition \eqref{def norm Ks}, it remains to prove the second inequality. We start by estimating $ \| \pa_t U(t)\|_{s-\frac32}$. Using equation \eqref{prop:KH_complex} for $ U(t)$ and then  \eqref{para:dt} with $k=K'=p=0$, $ m=\frac32$  and estimate \eqref{stima:emmeop} with $m\leadsto -\vr$ and $k=0$, we get 

    \begin{align*}
        \| \pa_t U(t)\|_{s-\frac32} &\leqslant  \left\| \OpBW{{\bm A}_{\frac{3}{2}}\pare{U;x} \  \omega_{\kap, \upgamma}\pare{ \xi }  + {\bm A}_{1}\pare{U;x, \xi} + {\bm A}_{\frac{1}{2}}\pare{U;x } \  \av{\xi}^{\frac{1}{2}} + {\bm A}_{\bra{0}} \pare{U;x, \xi}} U\right\|_{s-\frac32} +\left\| {\bm R}\pare{U} U\right\|_{s-\frac32}
        \\
        &\lesssim  \| U(t)\|_{s}.
    \end{align*}
The estimates for $ k \geqslant 2 $ follow in a similar manner, additionally using estimates \eqref{para:dt} and \eqref{stima:emmeop} with $ k \geqslant  1 $ to handle higher-order derivatives.

\end{proof}
We fix now the parameters appearing in \Cref{birkfinalone}.  
In its statement, we set $\vr \defeq \underline{\vr}(N)$ and $K \defeq \underline{K'}(\vr)$,  
which implies the existence of the constant $\underline{s_0} > 0$ which we increase, if necessary, to fit the range of \Cref{lem:equivalence_norms}, such that for any $s \geqslant \underline{s_0}$, and any fixed  
$0 < \epsilon_0 \leqslant \min\{ \overline{\epsilon_0}(s), \underline{\epsilon_0}(s) \}$,  
where $\underline{\epsilon_0}(s)$ is defined in \Cref{birkfinalone},  
and $\overline{\epsilon_0}(s)$ in \Cref{lem:equivalence_norms}, the conclusions of \Cref{birkfinalone} and \Cref{lem:equivalence_norms} hold. Therefore, one can obtain the following energy estimate. Notice that the time-reversibility of the Kelvin-Helmholtz system allows us to restrict the discussion to positive times $t>0.$

\begin{lemma}[Energy estimate]\label{lem:SE}
Let $ U(t) $ be a solution of 
   equation \eqref{eq:KH6} in 
   $  \Ball{K}{\underline{s_0}} \cap \Cast{K}{s} $.
Then  there exists  $ \bar C_2 \pare{s} > 1 $ such that 
\begin{equation}\label{stimaboot}
\norm{U\pare{t}}_s^2 \leqslant \bar{C}_2 \pare{s}  \pare{\norm{ U\pare{0}}_s^2 
+ \int_0^t\norm{U\pare{\tau}}_{s_0}^{N+1}\norm{ U\pare{\tau}}_s^2 \dd \tau }, 
\quad \forall 0 < t < T. 
\end{equation}
\end{lemma}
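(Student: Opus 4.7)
The plan is to transfer the energy analysis from $U$ to the normal form variable $Z$ constructed in \Cref{birkfinalone}, where the dynamics is almost entirely governed by super-action preserving Hamiltonian terms, plus high-homogeneity perturbations that are harmless for Sobolev energy estimates.

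First, I would apply the chain of reductions already established: \Cref{prop:constantcoeff} produces a diagonal, constant-coefficient equation for $W=\bm{B}(U;t)U$; \Cref{darboux:0} corrects this via a Darboux-type smoothing perturbation to restore the Hamiltonian structure up to homogeneity $N$, yielding $Z_0$ satisfying \eqref{teo62} together with the equivalence of norms \eqref{equivalenzaZU}; finally \Cref{birkfinalone} puts the system into resonant Birkhoff normal form \eqref{final:eq} for $Z$, with the equivalence $\|Z\|_{\dot H^s}\sim\|U\|_{\dot H^s}$ in \eqref{equivalenzan}. It therefore suffices to prove the analogue of \eqref{stimaboot} for $\|Z(t)\|_s^2$, since the constants can be absorbed into $\bar C_2(s)$.

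Next, I would exploit the decomposition of the Sobolev norm in terms of super-actions. Writing $Z=(z,\bar z)$ and $J_n=|z_n|^2+|z_{-n}|^2$ as in \eqref{sa}, one has
\begin{equation*}
\|Z\|_s^2 = 2 \sum_{n\geqslant 1} \langle n\rangle^{2s} J_n(z).
\end{equation*}
Computing $\tfrac{d}{dt}\|Z\|_s^2$ along \eqref{final:eq}, one splits the vector field into five pieces. The linear piece $\ii\vOmega(D)Z$ is skew-adjoint and gives no contribution. The two Hamiltonian pieces $\bm J_\C\nabla H^{(\tSAP)}_{3/2}(Z)$ and $\bm J_\C\nabla H^{(\tSAP)}_{-\vr}(Z)$ are super-action preserving Hamiltonian vector fields, so by \Cref{lemma:Poissonbra} they Poisson-commute with each $J_n$, hence with $\|Z\|_s^2$. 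This is the crucial algebraic cancellation for which the whole normal-form construction was set up.

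It then remains to bound the two residual terms. The Fourier multiplier $\Opvec{\ii(\mathpzc{d}_{3/2})_{>N}(U;t,\xi)}Z$ has symbol of order $3/2$ but, by \Cref{birkfinalone}, its imaginary part lies in $\Gamma^0_{K,\underline K',N+1}[\epsilon_0]$; writing $\ii(\mathpzc{d}_{3/2})_{>N}=\ii\Re(\mathpzc{d}_{3/2})_{>N}-\Im(\mathpzc{d}_{3/2})_{>N}$, the first summand is skew-adjoint on $\dot H^s$ modulo a symbol of order $-1/2$ (by Weyl calculus, \Cref{prop compBW}), and the second is of order $0$; both are non-homogeneous of degree $\geqslant N+1$, so by the tame estimate \eqref{para:dt}
\begin{equation*}
\big|\Re\langle \Opvec{\ii(\mathpzc{d}_{3/2})_{>N}}Z,Z\rangle_{\dot H^s}\big| \lesssim \|U(t)\|_{s_0}^{N+1}\|Z(t)\|_s^2.
\end{equation*}
The last piece $\pmb{R}_{>N}(U;t)U$ is a smoothing operator of degree $\geqslant N+1$ in $U$, so by \eqref{stima:emmeop}
\begin{equation*}
\|\pmb R_{>N}(U;t)U\|_{\dot H^s}\lesssim \|U(t)\|_{s_0}^{N+1}\|U(t)\|_s \lesssim \|U(t)\|_{s_0}^{N+1}\|Z(t)\|_s,
\end{equation*}
by \eqref{equivalenzan}. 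Combining these two bounds with Cauchy--Schwarz gives
\begin{equation*}
\frac{d}{dt}\|Z(t)\|_s^2 \lesssim \|U(t)\|_{s_0}^{N+1}\|Z(t)\|_s^2,
\end{equation*}
and integrating in $t$, using \eqref{equivalenzan} both for $t=0$ and for the left-hand side, yields \eqref{stimaboot}.

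The main obstacle is the second paragraph: verifying rigorously that the super-action preserving Hamiltonian contributions cancel \emph{exactly}, in particular that the pluri-homogeneous structure of $H^{(\tSAP)}_{3/2}$ and $H^{(\tSAP)}_{-\vr}$ (up to degree $N+2$ in $Z$, resp. a smoothing vector field in $\Sigma_3^{N+1}\wt\X^{-\vr+\underline\vr}_q$) produces vector fields whose $\dot H^s$-inner product with $Z$ is identically zero and not merely controlled. This is ultimately a consequence of \Cref{lemma:Poissonbra}, which requires $\|Z\|_s^2$ to be a linear combination of super-actions---a property that holds because $\langle n\rangle=\langle -n\rangle$. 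Everything else is a careful but standard application of the paradifferential tame estimates already assembled in \Cref{sec:preliminaries}.
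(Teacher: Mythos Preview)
Your proposal is correct and follows essentially the same approach as the paper: pass to the normal-form variable $Z$ of \Cref{birkfinalone}, use \Cref{lemma:Poissonbra} to kill the $\tSAP$ Hamiltonian contributions to $\tfrac{d}{dt}\|Z\|_s^2$, bound the $(>N)$-homogeneous remainders via the tame estimates, integrate in time, and convert back via \eqref{equivalenzan}. Two small remarks: since $(\mathpzc{d}_{3/2})_{>N}$ is $x$-independent, $\Opvec{\ii\Re(\mathpzc{d}_{3/2})_{>N}}$ is \emph{exactly} skew-adjoint on $\dot H^s$ (no ``modulo order $-1/2$'' correction is needed); and the tame estimates \eqref{para:dt}, \eqref{stima:emmeop} output $\|U\|_{K',s_0}^{N+1}$ rather than $\|U\|_{s_0}^{N+1}$, so you also implicitly invoke \Cref{lem:equivalence_norms} to drop the time derivatives---exactly as the paper does.
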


\begin{proof}
The variable $Z$ defined in \Cref{birkfinalone} solves the normal form \Cref{final:eq}. Then, denoting by $H^{(\tSAP)}\triangleq H^{(\tSAP)}_{\frac32}+H^{(\tSAP)}_{-\vr}$ we have that
\begin{align*}
\frac{\di }{\di t } \norm{Z(t)}_s^2= & \  \di_Z \norm{Z}^2_s\left[ \ii \bm \omega_{\kap, \upgamma}(D)Z+ \bm J_\C \nabla H^{(\tSAP)} +\OpBW{\ii (\mathpzc{d}_{\frac32})_{>N}(U;t,\xi)}Z +\bm R_{>N}(U;t)U\right] \\
= & \sum _{n\in \N}\av{n}^{2s} \pbra{J_n}{H^{(\tSAP)}} 
\\
& 
+ \psc{\av{D}^{2s}\OpBW{ - \Im \pare{\pare{\mathpzc{d}_{\frac{3}{2}}}_{>N}\pare{U;t, \xi}}} Z  }{Z}
+ 2 \Re \psc{\av{D}^{2s} {\bm R}_{>N}\pare{U;t}U  }{Z}
\\
\lesssim & \ \norm{U}_{K', s_0}^{N+1} \pare{\norm{Z}_s + \norm{U}_s} \norm{Z}_s,
\end{align*}
where we used \Cref{lemma:Poissonbra} and the fact that  $- \Im \pare{\pare{\mathpzc{d}_{\frac{3}{2}}}_{>N}\pare{U;t, \xi}}$ is in $\Gamma^0_{K, K', N+1}[\epsilon_0] $ and ${\bm R}_{>N}\pare{U;t}$ is in  $\mathcal{M}^{0}_{K, K', N+1}[\epsilon_0] $. Integrating in time the above inequality, then using \cref{equivalenzan,lem:equivalence_norms}, we obtain \eqref{stimaboot}.
\end{proof}

The energy estimate \eqref{stimaboot} 
and the equivalence
\begin{equation}
    \norm{\eta\pare{t}}_{H^{s+\frac{1}{4}}_0} + \norm{\psi\pare{t}}_{\dot H ^{s-\frac{1}{4}}}
    \sim \norm{U\pare{t}}_s, 
\end{equation}
which is a consequence of \eqref{eq:complex_var}, 
imply, by a standard 
bootstrap argument,  \Cref{thm:main}. For detailed computations we refer the interested reader to \cite{BD2018,BMM2022,BFP2018}.

\appendix
\section{Hamiltonian formalism}
In this appendix we set the Hamiltonian formalism for the problem at hand, following \cite[Section 3]{BMM2022}. 
\subsection{Linearly Hamiltonian systems}
In this section we provide the linear Hamiltonian framework needed in our analysis. The linear Hamiltonian structure is the natural algebraic property that emerges from the paralinearization of an Hamiltonian system.
\begin{definition}[Linearly Hamiltonian paradifferential operator]\label{defin linhampara}
A real-to-real matrix of paradifferential complex operators is {\it linearly Hamiltonian} if it is of the form 
\begin{align}\label{eq:LinHamParadiff}
\JC \OpBW{\bB},\qquad\bB\triangleq\bra{
\begin{array}{cc}
b_1\pare{U;t,x,\xi} & b_2\pare{U;t,x,\xi} \\
\overline{b_2\pare{U;t,x,-\xi}} & \overline{b_1\pare{U;t,x,-\xi}}
\end{array}
},
&&
\system{
\begin{aligned}
& b_1\pare{U;t,x,-\xi} = b_1\pare{U;t,x,\xi}, \\
& b_2\pare{U;t,x,\xi} \in \mathbb{R}.
\end{aligned}
}
\end{align}
In other words, $b_1$ is even in $\xi$ and $b_2$ is real valued. This is equivalent to require that the matrix valued paradifferential operator $\OpBW{\bB}$ is symmetric. 
\end{definition}

\begin{definition}[Linearly Hamiltonian operator up to homogeneity  $N$]
\label{def:LinHamup}
A real-to-real  matrix of spectrally localized maps 
${\bm J}_\mathbb{C} {\bf B} (U;t) $ in $ \pare{ \Sigma {\cal S}_{K,K',p}[\epsilon_0,N]  }^{2\times 2} $
 is {\em linearly Hamiltonian up to homogeneity $N$} if 
 its  pluri-homogeneous component 
 $ \cP_{\leqslant N} ({\bf B} (U;t))$ (defined through \eqref{projectlessN}) is symmetric,  namely 
$$
\cP_{\leqslant N} ({\bf B}(U;t)) =   \cP_{\leqslant N} \pare{ {\bf B}(U;t)^\intercal }. 
$$
\end{definition}
In particular, a real-to-real matrix of paradifferential   operators is 
 linearly Hamiltonian up to homogeneity $N$ if it has the form    (cfr. \eqref{eq:LinHamParadiff})
\begin{align}
\label{LHS-cN}
{\bm J}_\mathbb{C} 
\OpBW{\begin{matrix}
b_1(U;t, x,\xi)  & b_2(U; t, x, \xi) \\
 \overline{b_2(U;t, x, -\xi)} &  \overline{b_1(U; t, x, -\xi)}
\end{matrix}}, 
&&
  \ \begin{cases} b_1(U;t, x, - \xi) - b_1(U;t, x, \xi) \in  \Gamma^m_{K,K',N+1}[\epsilon_0],  \\ \Im  
b_2(U;t, x,\x) \in \Gamma^{m'}_{K,K',N+1}[\epsilon_0] 
 \end{cases} 
\end{align}
for some $m, m' $ in $ \R $.

\begin{definition}[Linearly symplectic map]
\label{def:LS}
A real-to-real linear transformation $\cA$ is linearly symplectic if $\cA^* \Omega_\mathbb{C} = \Omega_\mathbb{C}$,
where $\Omega_\mathbb{C}$ is defined as
\begin{equation}\label{Symp_Omegone}
\Omega_\mathbb{C} \left(\vect{u_1}{ \bar u_1}, \vect{u_2}{ \bar u_2}\right) \defeq\psc{\EC \vect{u_1}{ \bar u_1}}{\vect{u_2}{ \bar u_2}}_{\mathbb{R}}, \qquad \EC\defeq \JC^{-1},
\end{equation}
 namely 
$$\cA^\top \EC \cA = \EC.$$ 
\end{definition}
\begin{definition} {\bf (Linearly symplectic map up to homogeneity $N$)} 
\label{linsymphomoN}
A real-to-real matrix of spectrally localized maps $\bS(U;t) $ in $ \pare{\Sigma \cS_{K,K',0}[r,N]}^{2\times 2} $ 
 is {\em  linearly symplectic up to homogeneity $N$} if   
\be \label{A:sym23}
\bS(U;t)^\top  \, {\bm E}_\C \, \bS(U;t)= {\bm E}_\C+ S_{>N}(U;t) , 
\ee
where $ {\bm E}_\bC  $ is the symplectic operator defined in \eqref{Symp_Omegone} and 
$S_{>N}(U;t) $ is a  
matrix of spectrally localized maps 
in $ \pare{\cS_{K,K',N+1}[\epsilon_0]}^{2\times 2}$. 
\end{definition}
Linearly symplectic maps up to homogeneity $N$ preserve the linear Hamiltonian structure up to homogeneity $N$. The following result is borrowed from \cite[Lemma 3.9]{BMM2022}.
\begin{lemma}\label{lem conj linham}
Let $ {\bm J}_\C \bB(U;t) $ be   a linearly Hamiltonian operator up to homogeneity $N $ 
(Definition \ref{def:LinHamup})
and $ {\bm S}(U;t) $ be an invertible map,   linearly symplectic to homogeneity $N$ (Definition \ref{linsymphomoN}). 
Then  the operators 
$ {\bm S}(U;t) {\bm J}_\C  {\bm B}(U;t) \bm S(U;t)^{-1} $ and 
$ (\pa_t{\bm S}(U;t)) {\bm S}^{-1}(U;t) $
 are linearly Hamiltonian up to homogeneity $N$. 
\end{lemma}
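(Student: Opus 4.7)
The plan is to propagate the symmetry $\bm B=\bm B^\intercal$ (i.e.\ the linearly Hamiltonian hypothesis on $\cP_{\leqslant N}(\bm B)$) through the conjugations by exploiting the symplectic identity \eqref{A:sym23}, together with the elementary identities $\JC^{\,2}=\Id$ and $\JC^\intercal=-\JC$ inherited from \eqref{def bfJC}.

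For the first claim, we write the conjugate as $\bm S\,\JC\,\bm B\,\bm S^{-1}=\JC\,\widetilde{\bm B}$ with
\begin{equation*}
\widetilde{\bm B}\triangleq \JC\,\bm S\,\JC\,\bm B\,\bm S^{-1},
\end{equation*}
so that, by Definition~\ref{def:LinHamup}, the linearly Hamiltonian character up to homogeneity $N$ becomes $\cP_{\leqslant N}(\widetilde{\bm B}-\widetilde{\bm B}^\intercal)=0$. From \eqref{A:sym23} one derives, modulo remainders in $\cS_{K,K',N+1}[\epsilon_0]$, the dual identities $\bm S^\intercal=\JC\,\bm S^{-1}\JC$ and $(\bm S^{-1})^\intercal=\JC\,\bm S\,\JC$. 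Substituting these into the computation of $\widetilde{\bm B}^\intercal=(\bm S^{-1})^\intercal\bm B^\intercal\,\JC^\intercal\,\bm S^\intercal\,\JC^\intercal$, and using $\JC^\intercal=-\JC$ and $\JC^{\,2}=\Id$, every factor of $\bm S$ and $\JC$ cancels pairwise, yielding $\widetilde{\bm B}^\intercal=\JC\,\bm S\,\JC\,\bm B^\intercal\,\bm S^{-1}$ up to the same order. The hypothesis $\cP_{\leqslant N}(\bm B-\bm B^\intercal)=0$ then closes the argument.

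For the second claim, I would differentiate the symplectic identity in $t$ to get
\begin{equation*}
(\pa_t\bm S)^\intercal\JC\,\bm S+\bm S^\intercal\JC\,(\pa_t\bm S)=\pa_t S_{>N},
\end{equation*}
left-multiply by $(\bm S^\intercal)^{-1}$ and right-multiply by $\bm S^{-1}$, and set $M\triangleq (\pa_t\bm S)\bm S^{-1}$. This produces the algebraic identity $M^\intercal\JC+\JC\,M\in \cS_{K,K',N+1}[\epsilon_0]$. Writing $M=\JC\,B$ with $B\triangleq \JC\,M$ and applying once more $\JC^\intercal=-\JC$ and $\JC^{\,2}=\Id$, the relation collapses to $B-B^\intercal\in \cS_{K,K',N+1}[\epsilon_0]$, which is precisely the linearly Hamiltonian property up to homogeneity $N$ for $(\pa_t\bm S)\bm S^{-1}=\JC\,B$.

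The only non-trivial aspect is the bookkeeping of the remainders $S_{>N}$ and $\pa_t S_{>N}$: one must check that, after multiplication by $\bm S^{\pm 1}$, $(\bm S^\intercal)^{-1}$ and the constant matrix $\JC$, they remain in the class $\cS_{K,K',N+1}[\epsilon_0]$ of spectrally localized maps vanishing at homogeneity $\geqslant N+1$, and that the time differentiation simply shifts the index $K'\rightsquigarrow K'+1$ without altering the degree of vanishing. This is routine thanks to the composition rules for the class $\Sigma\cS_{K,K',p}[\epsilon_0,N]$ in Definition~\ref{defin specloc}, which parallel those collected in Proposition~\ref{prop compo mop} for $m$-operators; these are exactly the ingredients that make the algebraic cancellations above compatible with the symbolic functional setting.
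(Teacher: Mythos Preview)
Your argument is correct and is exactly the standard proof one expects: exploit $\bm E_\C=\JC^{-1}=\JC$ (since $\JC^{2}=\Id$) and $\JC^\intercal=-\JC$ to turn the symplecticity relation \eqref{A:sym23} into $\bm S^\intercal=\JC\,\bm S^{-1}\JC$ and $(\bm S^{-1})^\intercal=\JC\,\bm S\,\JC$ modulo $\cS_{K,K',N+1}$, and then check symmetry of $\widetilde{\bm B}=\JC\,\bm S\,\JC\,\bm B\,\bm S^{-1}$ directly; for the second part, differentiating \eqref{A:sym23} in $t$ and rewriting gives precisely $B-B^\intercal\in\cS_{K,K'+1,N+1}$ for $B=\JC\,(\pa_t\bm S)\bm S^{-1}$. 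The paper does not present its own proof of this lemma but simply borrows it from \cite[Lemma~3.9]{BMM2022}, whose proof is the computation you wrote down.
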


We consider  the flow of a linearly 
Hamiltonian up to homogeneity $ N $ paradifferential operator.  The following is Lemma 3.16 of \cite{BMM2022}.

\begin{lemma} \label{flow}
{\bf (Linear symplectic flow)}
Let  $p \in \N$,  $N,  K,K'  \in \N $ with $  K'\leqslant K $, $ m \leqslant 1 $, $r>0$.  Let  

$${\bm J}_\C \OpBW{{\bm B}}=\OpBW{\begin{bmatrix} \overline{ \ii b_2^\vee} &  \overline{\ii b_1^\vee}\\
	 \ii b_1 &   \ii b_2  \end{bmatrix}}$$
 be a linearly Hamiltonian operator up to homogeneity $N$ 
(Definition \ref{def:LinHamup})
where  ${\bm B}$  is a matrix of symbols
$$
{\bm B}\triangleq {\bm B}(\tau,U; t, x,  \xi )\triangleq 
\begin{pmatrix}
 b_1(\tau, U; t, x, \xi) &  b_2(\tau,U; t, x, \xi) \\  \overline{b_2(\tau,U; t, x, -\xi)} &  \overline{b_1(\tau,U; t , x, -\xi)}\end{pmatrix}, 
 \qquad
 \begin{cases}
  b_1  \in \Sigma \Gamma^{0}_{K,K',p}[r,N] ,\\
  b_2\in  \Sigma \Gamma^{m}_{K,K',p}[r,N]  ,  
  \end{cases}
$$
with 
$b_1^\vee - b_1 $ in $  \Gamma^0_{K,K',N+1}[\epsilon_0]$ and the imaginary part 
$\textup{ Im } b_2 $ in $ \Gamma^0_{K,K', N+1}[\epsilon_0]$ (cfr. \eqref{LHS-cN}) uniformly in $ |\tau| \leqslant 1  $.
Then there exists $ s_0 >0$ such that, for any $ U\in B_{s_0,\R}^{K}(I;r) $, 
the system 
 \begin{equation*}
\pa_\tau{\pmb \cG}^\tau_{{\bm B}}(U;t)=
{\bm J}_\C  \, \OpBW{{\bm B}(\tau,U; t, x, \xi)}
  \pmb{\cG}^\tau_{{\bm B}}(U;t),\quad \quad 
\pmb{\cG}_{{\bm B}}^0(U;t)=\Id 
\end{equation*}
has a unique solution $\pmb{\cG}^{\tau}_{\bm B}(U)$ defined for all $ | \tau | \leqslant 1  $, 
satisfying  the following properties:
\begin{itemize}
\item[(i)] {\bf Boundedness:} For any $s \in \R$ the linear map $\pmb{\cG}^{\tau}_{{\bm B}}(U;t)$ is invertible and  $\pmb{\cG}_{{\bm B}}^\tau(U;t)$ and $\pmb{\cG}_{{\bm B}}^\tau(U;t)^{-1}$ 
 are  non--homogeneous spectrally localized maps in $ \pare{ \cS_{K,K',0}^0 [\epsilon_0]}^{2\times2}$ according to Definition \ref{defin specloc}. 
\item[(ii)] {\bf Linear symplecticity:} The map  $ \pmb{\cG}^\tau_{{\bm B}}(U;t) $ is linearly symplectic up to homogeneity $N$ (Definition \ref{linsymphomoN}).
If ${\bm J}_\C \OpBW{\mathbf{B}}$ is  linearly Hamiltonian (Definition \ref{defin linhampara}), then $\pmb{\cG}^{\tau}_{{\bm B}}(U;t)$ is linearly symplectic (Definition \ref{def:LS}).
\item[(iii)] {\bf Homogeneous expansion:}  $\pmb{\cG}_{{\bm B}}^\tau(U;t)$ and its inverse are spectrally localized maps and  $ \pmb{\cG}_{{\pmb B}}^\tau(U;t)^{\pm}- \uno $ belong to 
$ \pare{\Sigma  \cS_{K,K',p}^{(N+1) m_0}[r, N]}^{2\times 2}$ with $m_0 \triangleq  \max(m,0)$, uniformly in $ |\tau| \leqslant 1  $.
\end{itemize}
\end{lemma}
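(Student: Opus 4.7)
The strategy is to verify the three properties sequentially by direct analysis of the linear non-autonomous evolution equation $\partial_\tau \pmb{\cG}^\tau_{\bm B}(U;t) = \JC \OpBW{\bm B(\tau,U;t,\cdot)} \pmb{\cG}^\tau_{\bm B}(U;t)$ starting from $\pmb{\cG}^0_{\bm B}=\Id$. The existence and uniqueness question, the symplectic identity, and the homogeneous expansion are essentially independent and will be handled by three different tools: an a priori energy estimate, an ODE identity for $Q(\tau):=(\pmb{\cG}^\tau)^\top\bm E_\C \pmb{\cG}^\tau - \bm E_\C$, and the Dyson/Picard expansion.

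For \textbf{(i)} boundedness, the plan is to establish an a priori energy estimate in $\dot H^s$ uniform in $|\tau|\leqslant 1$. Although $\OpBW{\bm B}$ has order $m_0=\max(m,0)\leqslant 1$ and is a priori unbounded on $\dot H^s$, the linearly Hamiltonian structure of Definition \ref{defin linhampara} makes $\OpBW{\bm B}$ symmetric, hence $\JC\OpBW{\bm B}$ anti-symmetric, up to a controlled remainder coming from the smoothing part of the homogeneity $\geqslant N+1$ component. Computing $\tfrac{d}{d\tau}\psc{\langle D\rangle^s\pmb{\cG}^\tau V}{\langle D\rangle^s\pmb{\cG}^\tau V}_\R$ and using Proposition \ref{prop compBW} to estimate $[\langle D\rangle^s,\JC\OpBW{\bm B}]$ (whose principal symbol, thanks to the symmetry, is actually of order $m$, not $m+1$) closes by Gronwall. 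The tame dependence on $U$ in \eqref{estim nonhom symb} propagates through the linear ODE, giving the tame bound \eqref{stima:emmeop} of Definition \ref{defin specloc}. The spectral localization is inherited from the paradifferential cutoff in $\OpBW{\bm B}$: iterating the integral representation and using the cutoff $\chi$ in \eqref{cut off defin} shows that the Fourier support propagates only within a cone, which is exactly the localization required in Definition \ref{defin specloc}. Invertibility is obtained by solving the backward Cauchy problem.

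For \textbf{(ii)} linear symplecticity, the plan is to differentiate $Q(\tau):=(\pmb{\cG}^\tau)^\top\bm E_\C\pmb{\cG}^\tau - \bm E_\C$. Using $\bm E_\C\JC=\Id$ and $\JC^\top\bm E_\C=-\Id$,
\[
\partial_\tau Q(\tau)=(\pmb{\cG}^\tau)^\top\bigl(\OpBW{\bm B}^\top\JC^\top\bm E_\C + \bm E_\C\JC\OpBW{\bm B}\bigr)\pmb{\cG}^\tau=(\pmb{\cG}^\tau)^\top\bigl(\OpBW{\bm B}-\OpBW{\bm B}^\top\bigr)\pmb{\cG}^\tau.
\]
By Definition \ref{def:LinHamup} the pluri-homogeneous component of $\bm B$ is symmetric (i.e. $b_1$ is even in $\xi$ and $b_2$ is real) up to symbols in $\Gamma^{\bullet}_{K,K',N+1}[\epsilon_0]$, hence $\OpBW{\bm B}-\OpBW{\bm B}^\top$ is a spectrally localized map vanishing at degree $\geqslant N+1$ in $U$. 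Integrating on $[0,\tau]$ and using (i) to bound the conjugating flows in the spectrally localized class yields the identity \eqref{A:sym23} with the remainder $S_{>N}$ in $(\cS_{K,K',N+1}[\epsilon_0])^{2\times 2}$. If $\JC\OpBW{\bm B}$ is exactly linearly Hamiltonian (Definition \ref{defin linhampara}), the commutator $\OpBW{\bm B}-\OpBW{\bm B}^\top$ is identically zero and $\pmb{\cG}^\tau$ is exactly linearly symplectic.

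For \textbf{(iii)} the homogeneous expansion, the plan is to iterate the Volterra equation $\pmb{\cG}^\tau = \Id + \int_0^\tau \JC\OpBW{\bm B(s,U;t,\cdot)}\pmb{\cG}^s\,ds$. Writing $\bm B$ via its pluri-homogeneous decomposition \eqref{expansion symbols} and recursively substituting, the $k$-th Picard iterate is a $k$-fold composition of operators in $(\wt{\cS}_p^{m_0})^{2\times 2}$, which by Proposition \ref{prop compo mop} belongs to $(\wt{\cS}^{km_0}_{kp})^{2\times 2}$. Summing over $k$ so long as $kp\leqslant N$ produces the pluri-homogeneous component of $\pmb{\cG}^\tau-\Id$, while the tail—the $(N+1)$-fold composition and beyond—lies in the non-homogeneous class with the worst case loss of derivatives $(N+1)m_0$, matching the class $\Sigma\cS^{(N+1)m_0}_{K,K',p}[r,N]$. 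The same procedure applied to the backward flow gives the expansion for $(\pmb{\cG}^\tau)^{-1}$.

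The \textbf{main obstacle} is the case $m=1$ in (i), where the symbol has maximal admissible order and the naive energy estimate loses a full derivative per $\tau$-step. Exploiting the symmetry of $\OpBW{\bm B}$ to turn the principal part of the commutator into a bounded operator is essential and is precisely what the hypothesis $m\leqslant 1$ together with the linearly Hamiltonian structure buys us; the same estimate would fail for $m>1$ regardless of the symmetry. Once this energy estimate is in place, the rest of the proof is routine book-keeping of paradifferential orders along the lines of \cite[Lemma 3.16]{BMM2022}.
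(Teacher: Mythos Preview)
The paper does not give its own proof of this lemma: it is stated verbatim as \cite[Lemma 3.16]{BMM2022} and quoted without argument. Your sketch is a correct outline of the proof found there --- energy estimate exploiting the (approximate) symmetry of $\OpBW{\bm B}$ for (i), differentiation of $(\pmb{\cG}^\tau)^\top\bm E_\C\pmb{\cG}^\tau$ for (ii), and Picard iteration for (iii) --- so there is nothing to compare. One minor inaccuracy: for the non-homogeneous part of Definition~\ref{defin specloc} no Fourier-support cone condition is required, only the tame estimate, so your remark about ``the Fourier support propagates only within a cone'' is relevant for the homogeneous components in (iii) but not for the non-homogeneous class in (i).
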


The flow generated by a Fourier multiplier satisfies similar properties.  The following is Lemma 3.17 of \cite{BMM2022}. 
 
\begin{lemma} {\bf (Flow of a Fourier multiplier)} \label{flussoconst}
Let $ p \in \N$ and $ g_p(Z;\xi) $ be a $p$--homogeneous, 
 $ x $-independent,  real symbol in $ \wt \Gamma^{\frac32}_p $. Then,  the flow 
 $\pmb{\cG}_{g_p}^\tau (Z)$ defined by 
\be \label{FourierFlow}
\pa_\tau \pmb{\cG}_{g_p}^\tau(Z)= \vOpbw{\ii g_p(Z;\xi)}\pmb{\cG}_{g_p}^\tau(Z) \, , 
\qquad \pmb{\cG}_{g_p}^0(Z)=\uno \, ,
\ee
is well defined for any $ |\tau| \leqslant 1 $ and satisfies the following properties:
\begin{itemize}
\item[$(i)$] {\bf Boundedness:} 
For any $K\in \N$ and $r>0$  the flow  $\, \pmb{\cG}^{\tau}_{g_p}(Z)
 $ and its inverse  $\,\pmb{\cG}^{-\tau}_{g_p}(Z)
 $ are real-to-real diagonal matrix of spectrally localized 
 maps in $ \pare{\cS_{K,0,0}^0[\epsilon_0]}^{2\times 2}  $.
\item[$(ii)$]{\bf Linear symplecticity:} 
The flow map $\pmb{\cG}^\tau_{g_p}(Z)$    is linearly symplectic  (Definition \ref{def:LS}). 
\item[$(iii)$] {\bf Homogeneous expansion:} The flow map  $ \pmb{\mathcal{G}}^{\tau}_{g_p}(Z) $ and its inverse $ \pmb{\mathcal{G}}^{-\tau}_{g_p}(Z) $ are matrices of  spectrally localized maps 
such that $ \pmb{\cG}^{\pm \tau}_{g_p}(Z) -\uno 
$ belong to $ \pare{\Sigma \cS^{\frac32 (N+1)}_{K,0,p}[r,N]}^{2\times 2} $,
uniformly in $ |\tau| \leqslant 1  $.
\end{itemize}
\end{lemma}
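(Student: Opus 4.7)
The plan is to exploit the fact that the generator $\vOpbw{\ii g_p(Z;\xi)}$ is a Fourier multiplier (since $g_p$ is $x$-independent, its Bony-Weyl quantization acts diagonally on $e^{\ii kx}$) and does not depend on $\tau$ (the variable $Z$ is a \emph{parameter}, not the state being evolved). Therefore the linear ODE \eqref{FourierFlow} can be solved explicitly by exponentiation:
\begin{equation*}
\pmb{\cG}^\tau_{g_p}(Z) = \exp\bigl(\tau\vOpbw{\ii g_p(Z;\xi)}\bigr) = \vOpbw{e^{\ii \tau g_p(Z;\xi)}}\, , \qquad (\pmb{\cG}^\tau_{g_p}(Z))^{-1} = \vOpbw{e^{-\ii\tau g_p(Z;\xi)}}\, .
\end{equation*}
The semigroup identity $\vOpbw{a}\,\vOpbw{b} = \vOpbw{ab}$ for Fourier multipliers $a,b$ of the form $\mathrm{diag}(c(Z;\xi), \overline{c^\vee(Z;\xi)})$ is immediate, and this justifies writing the exponential of the operator as the $\vOpbw{\cdot}$ of the exponential symbol.

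The boundedness property $(i)$ then follows because $g_p$ is real valued, so $|e^{\ii \tau g_p(Z;\xi)}| = 1$ pointwise in $\xi$; hence the resulting symbol is a bounded Fourier multiplier of order $0$, smoothly dependent on $Z$. The spectrally localized structure in $(\cS^0_{K,0,0}[\epsilon_0])^{2\times 2}$ is automatic from $x$-independence and the Bony-Weyl cut-off $\chi_p$ in \eqref{cut off defin}: the coefficients $S^{\vec\sigma}_{\vec \jmath, j, k}$ of the map vanish unless $j = k$ (no $x$-momentum exchange) and $|\vec\jmath|\leqslant\delta_0\langle k\rangle$ (by the cut-off), which is exactly the spectral localization condition. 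For property $(ii)$, one checks that the generator $\vOpbw{\ii g_p(Z;\xi)} = \JC \OpBW{\bm B}$ with
\begin{equation*}
\bm B = \begin{bmatrix} 0 & g_p(Z;-\xi) \\ g_p(Z;\xi) & 0 \end{bmatrix}\, ,
\end{equation*}
which is symmetric and satisfies the reality conditions of \Cref{defin linhampara} (since $g_p$ is real-valued). This makes the generator linearly Hamiltonian, hence the flow is linearly symplectic by the same argument as in the proof of \Cref{flow}(ii) (the Lie-derivative of $\EC$ along the flow vanishes).

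For property $(iii)$, I would Taylor expand the exponential symbol at $\tau g_p = 0$:
\begin{equation*}
e^{\ii \tau g_p(Z;\xi)} - 1 = \sum_{k=1}^{\lfloor N/p\rfloor} \frac{(\ii\tau g_p(Z;\xi))^k}{k!} + R_N(\tau, Z;\xi)\, ,
\end{equation*}
where the remainder admits the integral representation
\begin{equation*}
R_N(\tau, Z;\xi) = \frac{(\ii\tau)^{\lfloor N/p\rfloor+1}}{\lfloor N/p\rfloor!}\int_0^1 (1-s)^{\lfloor N/p\rfloor}\, g_p(Z;\xi)^{\lfloor N/p\rfloor +1}\, e^{\ii s \tau g_p(Z;\xi)}\, ds.
\end{equation*}
The $k$-th term in the expansion is the multiplier associated with $g_p^k$, which is $kp$-homogeneous in $Z$ and of order $3k/2\leqslant\frac{3(N+1)}{2}$, belonging to $\widetilde\cS^{3k/2}_{kp}$. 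The remainder is controlled pointwise by $|R_N|\lesssim |g_p|^{\lfloor N/p\rfloor+1}$ (since the oscillatory exponential is bounded), yielding order $\frac{3(\lfloor N/p\rfloor+1)}{2}\leqslant\frac{3(N+1)}{2}$; moreover $(\lfloor N/p\rfloor+1)p\geqslant N+1$, so the remainder vanishes of order at least $N+1$ in $Z$ and defines a non-homogeneous spectrally localized map in $\cS^{3(N+1)/2}_{K,0,N+1}[\epsilon_0]$. The same argument applied to $e^{-\ii\tau g_p}$ yields the expansion of the inverse flow.

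The main technical obstacle is the bookkeeping for the symbol estimates: the products $g_p^k$ lose a factor $\langle\vec\jmath\rangle^\mu$ per factor of $g_p$ in the estimate \eqref{estim symbol}, so the constant $\mu$ grows linearly in $k$; however, since only finitely many terms (up to $k=\lfloor N/p\rfloor$) contribute to the pluri-homogeneous part, and the remainder is handled via the $L^\infty_\xi$ bound $|e^{\ii s\tau g_p}|=1$ (uniform in $Z$ in the ball $\Ball{K}{s_0}$), the estimates close. Derivatives $\partial_\xi^\beta$ and $\partial_t^k$ of the exponential produce sums over Faà di Bruno partitions involving products of derivatives of $g_p$, all of which retain the structure dictated by the symbol classes in \Cref{definsymb,defin funct}, after increasing the loss parameter $\mu$ by a fixed finite amount depending on $N, K$ and $\beta$.
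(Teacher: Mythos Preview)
The paper does not prove this lemma; it simply records it as \cite[Lemma~3.17]{BMM2022}. Your approach via explicit exponentiation of the Fourier-multiplier generator is the natural one and is essentially correct, and is almost certainly what is done in \cite{BMM2022}.

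Two minor imprecisions worth flagging. First, the identity $\pmb{\cG}^\tau_{g_p}(Z)=\vOpbw{e^{\ii\tau g_p(Z;\xi)}}$ is not literally exact: the Bony--Weyl quantization \eqref{BonyWeyl} of the $p$-homogeneous symbol carries the cut-off $\chi_p(\vec\jmath,\xi)$, so the generator acts on the $j$-th Fourier mode as multiplication by the \emph{truncated} symbol $(g_p)_{\chi_p}(Z;j)$, and the flow is the multiplier by $e^{\ii\tau(g_p)_{\chi_p}(Z;j)}$. This does not affect your argument --- the multiplier is still unimodular, the generator is still linearly Hamiltonian, and the $k$-th Taylor term differs from $\OpBW{g_p^k}$ only by a smoothing contribution absorbed in the remainder. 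Second, $e^{\ii\tau g_p}$ is \emph{not} a classical symbol of order $0$ in the sense of $\Gamma^0$ (its $\xi$-derivatives grow, since $\partial_\xi g_p\sim|\xi|^{1/2}$); what matters for $(i)$ is that the \emph{operator} is bounded $\dot H^s\to\dot H^s$, which you correctly obtain from $|e^{\ii\tau(g_p)_{\chi_p}}|=1$, and for $(iii)$ that the integral remainder is estimated via this pointwise bound rather than via symbol seminorms --- which is exactly what you do.
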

\subsection{Non-linear Hamiltonian systems}
We first give the definition of $p$-homogeneous Hamiltonian functions.
\begin{definition}[Homogeneous Hamiltonian]\label{defin phomHAM}
    Let $p \in \N$, a $p+2$-homogeneous Hamiltonian is a function defined on $\dot H^\infty(\T;\C^2)$ with values in $\C$ which is real valued for any $ U\in \dot H^{\infty}_\R(\T;\C^2)$ of the form, c.f. \eqref{mompresind}
    \begin{align} 
H_{p+2}(U)= \langle M_p(U)U,U\rangle= \sum_{
(\vec \jmath , \vec \sigma) \in {\mathfrak{T}}_{p+2}}
H_{\vec \jmath}^{\vec \sigma} \, u_{\vec \jmath}^{\vec \sigma}, 
&&
M_p(U)\in \pare{\wt {\mathcal{M}}_p^m}^{2\times 2}\label{Ham:definina}
    \end{align}
for some $m\in \R$ and complex valued coefficients $H_{\vec \jmath}^{\vec \sigma}= H_{j_1,\ldots, j_{p+2}}^{\sigma_1,\ldots, \sigma_{p+2}}$. A pluri-homogeneous Hamiltonian is a polynomial of the form 
\be \label{plurihomHAM}
H(U)= \sum_{p=0}^N H_{p+2}(U),
\ee
where, for $p=0, \ldots, N$, $H_{p+2}$ is a $p+2$-homogeneous Hamiltonian. 
\end{definition}
From the definition the coefficients in \eqref{Ham:definina} satisfy the following:
\begin{enumerate}
    \item {\bf Symmetry restrictions:} for any $ \vec\sigma= (\sigma_1,\ldots ,\sigma_{p+2})\in \left\{ \pm\right\}^{p+2}$ and $\vec \jmath = (j_1,\ldots ,j_{p+2})\in (\Z^*)^{p+2},$ one has 
    \be \label{real+mom cond}
   \text{{\bf Reality condition:}} \ \  \overline{H_{\vec \jmath}^{\vec \sigma}}= H_{\vec \jmath}^{-\vec \sigma},  \quad \quad \textbf{Momentum condition:} \ \  H_{\vec \jmath}^{\vec \sigma} \neq 0 \implies\vec\sigma\cdot \vec \jmath =0.
    \ee
    \item {\bf Polynomial bound:} There are $ \mu, C>0$ and $m\in \R$ such that 
    $$
    \left| H_{\vec \jmath}^{\vec \sigma}\right|\leqslant C {\rm max}_3( |j_1|, \ldots, |j_{p+2}|)^\mu  {\rm max}_2( |j_1|, \ldots, |j_{p+2}|)^m.
    $$
\end{enumerate}

\begin{remark}
    In view of the momentum condition $ \vec \sigma \cdot \vec \jmath=0$ in \eqref{real+mom cond}, one has the equivalence
    $$
    {\rm max}( |j_1|, \ldots, |j_{p+2}|)\sim {\rm max}_2( |j_1|, \ldots, |j_{p+2}|).
    $$
 \end{remark}

 We now define the class of Hamiltonian systems up to homogeneity $N$ that we shall use in Section \ref{riduzione_e_stima}.

Let $K, K' \in \N$ with $ K'\leqslant K$, $r>0$  and $U \in B_{s_0}^K(I;r)$.
Let 
\be\label{Z.M0}
Z \triangleq  \bM_0(U;t)U \quad \text{ with }\quad  \bM_0(U;t) \in \pare{ \cM^0_{K,K',0}[\epsilon_0]}^{2\times 2}\, . 
\ee
\begin{definition}{\bf (Hamiltonian system up to homogeneity  $N$)} 
\label{def:ham.N}
Let $N, K, K' \in \N$  with $K \geqslant K'+1$ and
assume \eqref{Z.M0}.
A $U$--dependent system 
\be \label{U.Ham}
\pa_t Z = {\bm J}_\C \nabla H(Z) + {\bm M}_{> N}(U;t)[U] 
\ee
is {\em Hamiltonian up to homogeneity  $N$} if 

\noindent
$\bullet $ $H(Z) $ is a  pluri-homogeneous Hamiltonian as in \eqref{plurihomHAM};

\noindent
$\bullet $  ${\bm M}_{> N}(U;t)$ is a matrix of 
non-homogeneous operators in $ \pare{\cM_{K,K'+1,N+1}[\epsilon_0]}^{2\times 2}$. 
\end{definition}

We shall perform nonlinear changes of variables which are symplectic up to homogeneity $N$ according to the following definition.

\begin{definition}\label{def:LSMN}
{\bf (Symplectic map up to homogeneity $N$)} 
Let $ p, N \in \N $ with $p \leqslant N$. We say that  
\be\label{simplupN}
\cD (Z;t) = {\bm M}(Z;t)Z  \qquad \text{with} \qquad {\bm M}(Z;t) - \uno \in\pare{\Sigma \mM_{K,K',p}[r,N]}^{2\times 2} \, ,  
\ee
   is {\em symplectic up to  homogeneity  $N$}, if its pluri-homogeneous component $\cD_{\leqslant N}(Z)\triangleq  \big( \cP_{\leqslant N}  M(Z;t) \big) Z  $ satisfies 
 \be\label{milan}
\left[\di_Z \cD_{\leqslant N}(Z)\right]^\top \, {\bm E}_\C \, \di_Z \cD_{\leqslant N}(Z)
= {\bm E}_\C+ {\bm E}_{>N}(Z) \qquad \text{with} \qquad 
{\bm E}_{>N}(Z) \in   \pare{\Sigma_{N+1}\wt\cM_q}^{2\times2}  \, .
\ee
\end{definition}

\noindent 

A symplectic map up to homogeneity $ N $
transforms a Hamiltonian system up to homogeneity $N$ into 
another Hamiltonian system up to homogeneity $N$.

\begin{lemma}\label{conj.ham.N}
Let $p, N \in \N$ with $p \leqslant N$,  $K, K' \in \N$ with $K \geqslant K'+1$.
Let $ Z \triangleq  \bM_0(U;t)U  $ as in \eqref{Z.M0}.
Assume $\cD(Z;t)  = M(Z;t)Z   $ is a symplectic map up to homogeneity $N$ (Definition \ref{def:LSMN}) such that
\be \label{simdico}
M(Z;t) - \Id  \in
\system{
\begin{aligned}
 &\pare{\Sigma\cM_{K,K',p}[r,N]}^{2\times 2}, &
 \text{if} \quad  \bM_0(U;t)  = \uno \, , \\
  &\pare{\Sigma\cM_{K,0,p}[\breve r,N]}^{2\times 2} \, , & \, \forall \breve r > 0 \quad \text{otherwise} \, . 
\end{aligned}
}
\ee 
If  $Z(t)$ 
solves a $U$-dependent Hamiltonian system up to homogeneity $N$
(Definition \ref{def:ham.N}), 
then the variable
$ W \triangleq  \cD(Z;t)  $
solves another $U$-dependent Hamiltonian system up to homogeneity $N$ 
(generated by the transformed Hamiltonian). 
\end{lemma}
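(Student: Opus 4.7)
The plan is to differentiate $W=\cD(Z;t)=M(Z;t)Z$ in time, substitute the Hamiltonian equation for $Z$, and then use the symplecticity condition \eqref{milan} to re-express the leading contribution as $\bm J_\C\nabla \tilde H(W)$ for a new pluri-homogeneous Hamiltonian $\tilde H$. First, split $M(Z;t)=\cP_{\leqslant N}M(Z;t)+\cP_{>N}M(Z;t)$, so that $\cD(Z;t)=\cD_{\leqslant N}(Z)+\cD_{>N}(Z;t)$ with $\cD_{\leqslant N}$ pluri-homogeneous (no explicit $t$-dependence) and $\cD_{>N}(\,\cdot\,;t)\in (\cM_{K,K',N+1}[r])^{2\times 2}$ (respectively in $(\cM_{K,0,N+1}[\breve r])^{2\times2}$ in the second case of \eqref{simdico}). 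Chain rule gives
\begin{equation*}
\pa_t W=d_Z\cD_{\leqslant N}(Z)\bigl[\bm J_\C\nabla H(Z)+\bm M_{>N}(U;t)U\bigr]+d_Z\cD_{>N}(Z;t)\bigl[\pa_t Z\bigr]+(\pa_t \cD_{>N})(Z;t).
\end{equation*}

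For the main term, define the pluri-homogeneous Hamiltonian $\tilde H_{\leqslant N}(W)\defeq \cP_{\leqslant N}\bigl[H\circ\cD_{\leqslant N}^{-1}\bigr](W)$, which makes sense because $M(Z;t)-\Id$ is of homogeneity $\geqslant p$ so $\cD_{\leqslant N}^{-1}=W-\cP_{\leqslant N}\bigl[(M-\Id)\,\cD_{\leqslant N}^{-1}\bigr]W$ is a pluri-homogeneous perturbation of the identity, and the composition of pluri-homogeneous Hamiltonians with such maps stays pluri-homogeneous (each $m$-operator composition extends to Hamiltonians via Proposition \ref{prop compo mop}, item \ref{item:MM_ext}). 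Writing $\nabla_Z H(Z)=(d_Z\cD_{\leqslant N})^\top\bigl(\nabla_W\tilde H_{\leqslant N}\bigr)(\cD_{\leqslant N}(Z))$ modulo terms of homogeneity $>N$ (by Taylor-expanding the derivative of the truncation error) and using \eqref{milan} in the equivalent form
\begin{equation*}
d_Z\cD_{\leqslant N}\,\bm J_\C\,(d_Z\cD_{\leqslant N})^\top=\bm J_\C+\bm J_\C\,\bm E_{>N}(Z)\,\bm J_\C,
\end{equation*}
we obtain
\begin{equation*}
d_Z\cD_{\leqslant N}(Z)\,\bm J_\C\nabla H(Z)=\bm J_\C\nabla\tilde H_{\leqslant N}(W)+\mathcal R_{>N}(Z),
\end{equation*}
where $\mathcal R_{>N}(Z)Z$ is a homogeneous vector field of degrees strictly larger than $N+1$, which we absorb into the non-homogeneous remainder using the relation $Z=\bM_0(U;t)U$ from \eqref{Z.M0} together with Proposition \ref{prop compo mop}: it is an operator in $\pare{\cM_{K,K'+1,N+1}[\epsilon_0]}^{2\times 2}$ applied to $U$.

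To close the argument, the remaining non-homogeneous contributions must be shown to define an operator of the type $\bm M'_{>N}(U;t)U$ with $\bm M'_{>N}\in\pare{\cM_{K,K'+1,N+1}[\epsilon_0]}^{2\times 2}$. The term $d_Z\cD_{\leqslant N}\,\bm M_{>N}(U;t)U$ is handled directly by Proposition \ref{prop compo mop}, since $d_Z\cD_{\leqslant N}$ is a bounded linear action on $U$-dependent Sobolev vector fields. For $d_Z\cD_{>N}(Z;t)[\pa_t Z]$ and $(\pa_t\cD_{>N})(Z;t)$ one uses the equation for $Z$ to substitute $\pa_t Z$ and then applies Proposition \ref{prop compo mop} together with the observation that the explicit time derivative of a non-homogeneous operator in $\cM_{K,K',N+1}$ lies in $\cM_{K,K'+1,N+1}$, accounting for the shift $K'\rightsquigarrow K'+1$ in Definition \ref{def:ham.N}. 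The main obstacle in the bookkeeping is to distinguish the two regimes of assumption \eqref{simdico}: when $\bM_0(U;t)=\uno$ we have $Z=U$ and all the remainder classes are $\cM_{K,K'+1,\cdot}[r]$, while in the general case we must first rewrite every occurrence of $Z$ as $\bM_0(U;t)U$ and then re-apply the composition lemma, losing the parameter $K'$ but gaining arbitrariness of the radius $\breve r$. In either case, collecting all the remainders yields the claimed form \eqref{U.Ham} for the evolution of $W$.
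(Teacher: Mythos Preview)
The paper does not supply its own proof of this lemma: it is stated in the appendix as part of the Hamiltonian formalism imported from \cite{BMM2022}, with no argument given. Your proposal follows the standard route one would expect for such a statement---differentiate $W$ in time, pull back the Hamiltonian through the (formal) inverse of $\cD_{\leqslant N}$, and use the approximate symplecticity \eqref{milan} to turn $d_Z\cD_{\leqslant N}\,\bm J_\C\,(d_Z\cD_{\leqslant N})^\top$ into $\bm J_\C$ modulo high-homogeneity errors---and is essentially correct.

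One small imprecision worth flagging: the ``equivalent form'' of \eqref{milan} you invoke is not literally
\[
d_Z\cD_{\leqslant N}\,\bm J_\C\,(d_Z\cD_{\leqslant N})^\top=\bm J_\C+\bm J_\C\,\bm E_{>N}(Z)\,\bm J_\C.
\]
From $A^\top\bm E_\C A=\bm E_\C+\bm E_{>N}$ one actually gets $A\,\bm J_\C\,A^\top=\bm J_\C+\bm J_\C(A^\top)^{-1}\bm E_{>N}\,\bm J_\C\,A^\top$. Since $A=d_Z\cD_{\leqslant N}=\Id+$ (homogeneity $\geqslant p$), the discrepancy between this and your displayed formula is itself of homogeneity $>N$ and is harmlessly absorbed into the non-homogeneous remainder, so the conclusion stands. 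Your bookkeeping of the two regimes in \eqref{simdico} and the shift $K'\rightsquigarrow K'+1$ induced by the explicit time derivative is also on target.
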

The following is Lemma 3.19 in \cite{BMM2022}.
\begin{lemma}\label{lem:hamsym} 
Let  $ p \in \N $, $ m \in \R $ and 
$  a (U;x, \xi) $  a real valued homogeneous 
symbol  in $ \wt\Gamma^m_p $.
Then the Hamiltonian vector field generated 
by the Hamiltonian 
$$
H (U)\triangleq \Re \psc{{\bm A}(U)u}{\bar u}_{\R}, \qquad {\bm A}(U)\triangleq \OpBW{a(U;x, \xi)}, 
$$
is 
$$
{\bm J}_\C \grad H (U) = \vOpbw{ \im a (U;x, \xi)} U + {\bm R}(U) U,
$$
where $ {\bm R}(U) $ is a real-to-real matrix of homogeneous smoothing operators in 
$ \pare{\wt\cR^{-\varrho}_p}^{2\times2}$ for any $\varrho \geqslant 0$.
\end{lemma}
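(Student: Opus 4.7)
The plan is to compute the real gradient $\nabla H(U)$ of the Hamiltonian directly, split the result into an ``outer'' contribution from the explicit factors $u,\bar u$ in the pairing and an ``inner'' contribution from the $U$-dependence of the symbol $a$, then multiply by $\bm J_\C$ to identify the outer part with $\Opvec{\ii a}U$ and show that the inner part is a smoothing remainder of arbitrary order.

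First, one exploits that $a$ is real-valued together with the identities $\overline{\OpBW{a(x,\xi)}} = \OpBW{a(x,-\xi)}$ and $\OpBW{a}^\intercal = \OpBW{a(x,-\xi)}$ to symmetrize the pairing and obtain
\[
H(U) = \Re\,\psc{\OpBW{a(U;x,\xi)}u}{\bar u}_\R = \psc{u}{\OpBW{a(U;x,-\xi)}\bar u}_\R
\]
modulo smoothing corrections coming from the fact that $\OpBW{a}^{*}=\OpBW{a}$ holds only up to smoothing when $a$ is real but not necessarily even in $\xi$. Treating $u$ and $\bar u$ as independent variables (as dictated by the complex Hamiltonian framework \eqref{new complexHAM}), the Wirtinger derivatives of the symmetrized bilinear form give
\[
\partial_{\bar u} H = \OpBW{a(U;x,\xi)}u,\qquad \partial_u H = \OpBW{a(U;x,-\xi)}\bar u,
\]
up to smoothing operators of any order. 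Multiplying by $\bm J_\C$ and comparing with the definition \eqref{def Opvec} — which, using $\overline{a^\vee}=a^\vee$ since $a$ is real, gives exactly $\Opvec{\ii a}U = (\ii\OpBW{a}u,\,-\ii\OpBW{a^\vee}\bar u)^\intercal$ — reproduces the claimed principal part.

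The main step is handling the inner contribution. Writing $a(U;x,\xi)=a_p(U,\ldots,U;x,\xi)$ as a symmetric $p$-linear symbol in $U$ (Definition \ref{definsymb}), its variation in a direction $V$ produces $p\cdot a_p(V,U,\ldots,U;x,\xi)$, and the corresponding variation of $H$ is the trilinear functional $V\mapsto \Re\,\psc{\OpBW{p\,a_p(V,U,\ldots,U;\cdot)}u}{\bar u}_\R$. By duality with respect to $\psc{\cdot}{\cdot}_\R$ this functional is represented by some matrix of operators $\bm R(U)U$ acting on $V$. The key point — standard in the paradifferential framework — is that since $V$ enters through an \emph{inner} slot of a Bony--Weyl symbol, its Fourier modes are restricted by the cut-off $\chi_p$ in Definition \ref{defin quant} to satisfy $|j_V+j_{U_2}+\cdots+j_{U_p}|\leqslant \delta\langle\xi\rangle$, while the pairing with $u,\bar u$ enforces total momentum conservation. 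Together these spectral constraints force the Fourier coefficients of the dual operator $\bm R(U)$ to satisfy the bound \eqref{eq:bound_fourier_representation_m_operators} with arbitrarily negative order, which is precisely the definition of a homogeneous smoothing operator in $\wt\cR^{-\vr}_p$ for any $\vr\geqslant 0$.

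The main obstacle is the algebraic bookkeeping: symmetrizing the derivative across the $p$ slots of $a_p$, carrying the Wirtinger variations in both the $v$ and $\bar v$ components of $V$ so that $\bm R(U)$ is genuinely real-to-real, and tracking how the various sub-leading smoothing corrections (coming from the use of $\OpBW{a}^{*}=\OpBW{a}$ up to smoothing, and from commuting Bony--Weyl operators with multiplications by $u$ and $\bar u$) assemble into a single remainder. These steps follow a well-trodden pattern and ultimately reduce, once set up, to applying the composition and paralinearization rules of Proposition \ref{prop compo mop} and Lemma \ref{lemma paralin}.
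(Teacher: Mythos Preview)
The paper does not actually prove this lemma: it is stated with the citation ``The following is Lemma 3.19 in \cite{BMM2022}'' and no argument is given. So there is no paper proof to compare against, only the referenced source.

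Your outline is essentially the standard proof and is correct in spirit: split the gradient into the outer differentiation of the explicit $u,\bar u$ in the pairing, which produces $\Opvec{\ii a}U$ exactly, and the inner differentiation through the symbol slots, which is smoothing because of the Bony cut-off. Two small points are worth tightening. First, your caveat ``modulo smoothing corrections coming from the fact that $\OpBW{a}^{*}=\OpBW{a}$ holds only up to smoothing'' is off: in the Weyl quantization one has $\OpBW{a}^{*}=\OpBW{\bar a}$ \emph{exactly}, so for real $a$ the operator is self-adjoint on the nose. In fact, using $(\OpBW{a})^\intercal=\OpBW{a^\vee}$ and $\overline{\OpBW{a}}=\OpBW{a^\vee}$ one checks that $\psc{\OpBW{a}u}{\bar u}_\R$ is already real, so no smoothing correction appears at the symmetrization stage. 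Second, the final sentence invoking Proposition~\ref{prop compo mop} and Lemma~\ref{lemma paralin} is not the right pointer: the inner contribution is smoothing not by composition or paralinearization rules, but by the direct Fourier argument you already sketched --- the cut-off $\chi_p$ forces the swapped variable to sit at low frequency relative to the two high-frequency factors $u,\bar u$, which by momentum conservation are then comparable, giving decay of arbitrary order in the largest frequency. That estimate is exactly the bound \eqref{eq:bound_fourier_representation_m_operators} defining $\wt\cR^{-\varrho}_p$.
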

The following is Lemma 3.20 in \cite{BMM2022}.
\begin{lemma}\label{HS:repre}
Let $p \in \N$, $m \in \R$ and  $\varrho \geqslant 0$. Let 
\be\label{XHS}
X(U) = {\bm J}_\C \OpBW{{\bm A}(U;x,\xi)}U + {\bm R}(U)U  = {\bm J}_\C \nabla H(U)
\ee 
be a $(p+1)$-homogeneous Hamiltonian vector field, where 
\be\label{auno}
{\bm A}(U;x,\xi) = 
\begin{pmatrix} 
a(U;x,\xi) & b(U;x,\xi) \\  
\overline {b(U;x,-\xi)}&  \overline{a(U;x,-\xi)}
\end{pmatrix}  
\ee
is a matrix of symbols  in $ \mats{\widetilde \Gamma_{p}^m}$
and $ {\bm R}(U)  $ is a real-to-real
 matrix of smoothing operators in 
$ \mats{\widetilde {\cal R}_{p}^{-\varrho}} $. 
Then, we may write 
 \be\label{XHSA1}
X(U) = {\bm J}_\C \OpBW{ {\bm A}_1(U;x,\xi) }U + {\bm R}_1(U)U, 
\ee 
where the matrix of paradifferential operators $\OpBW{{\bm A}_1(U;x,\xi)} $ is symmetric,   
with   matrix of symbols
\be\label{aunoa} 
{\bm A}_1(U;x,\xi) =  \frac12 
 \begin{pmatrix} 
a + a^\vee & b + \ov{b}\\ 
\ov{b}^\vee + b^\vee & \ov{ a+ a^\vee} 
\end{pmatrix} 
\ee
and  $ {\bm R}_1(U)$ is another real-to-real  matrix of smoothing operators  in 
 $\mats{\widetilde {\cal R}_{p}^{-\varrho} }$.
\end{lemma}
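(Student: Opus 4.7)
The plan is to symmetrize the matrix of symbols $\bm A$ by extracting a ``transposed-symmetric'' part and absorb the remaining antisymmetric piece into the smoothing remainder, exploiting the Hamiltonian character of $X(U)$.

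First I would decompose $\bm A = \bm A_1 + \bm A_2$, where $\bm A_1$ is the matrix in \eqref{aunoa} and $\bm A_2 \triangleq \bm A - \bm A_1$. A direct check using the transposition rules $\OpBW{a}^\intercal = \OpBW{a(x,-\xi)}$ and the block structure of $\bm A$ in \eqref{auno} yields $\OpBW{\bm A_1}^\intercal = \OpBW{\bm A_1}$ and $\OpBW{\bm A_2}^\intercal = -\OpBW{\bm A_2}$ with respect to the real pairing $\psc{\cdot}{\cdot}_\R$. In particular,
$$
\psc{\OpBW{\bm A_2(U;x,\xi)}U}{U}_\R=0.
$$

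Next I would exploit Euler's homogeneity identity for the $(p+2)$-homogeneous Hamiltonian $H$ associated to $X$: since $X = \bm J_\C \nabla H$ is $(p+1)$-homogeneous, one has
$$
(p+2)H(U)=\psc{\nabla H(U)}{U}_\R=\psc{\bm E_\C X(U)}{U}_\R=\psc{\OpBW{\bm A(U;x,\xi)}U}{U}_\R+\psc{\bm E_\C \bm R(U)U}{U}_\R.
$$
Splitting $\bm A=\bm A_1+\bm A_2$ and using the antisymmetry of $\OpBW{\bm A_2}$, the $\bm A_2$-contribution vanishes, and we obtain $H=H_1+\tilde H$ where
$$
H_1(U)\triangleq \tfrac{1}{p+2}\psc{\OpBW{\bm A_1(U;x,\xi)}U}{U}_\R,\qquad \tilde H(U)\triangleq \tfrac{1}{p+2}\psc{\bm E_\C \bm R(U)U}{U}_\R,
$$
and $\tilde H$ is a $(p+2)$-homogeneous Hamiltonian whose vector field $\bm J_\C \nabla \tilde H$ is a smoothing vector field in $\wt{\mathfrak X}^{-\varrho}_{p+1}$, thanks to $\bm R\in(\wt{\cR}^{-\varrho}_p)^{2\times 2}$.

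Then I would apply the matrix-valued analogue of Lemma \ref{lem:hamsym}: since $\OpBW{\bm A_1}$ is symmetric (as $\bm A_1$ is of the form in \eqref{aunoa}), one has
$$
\bm J_\C \nabla H_1(U)=\bm J_\C \OpBW{\bm A_1(U;x,\xi)}U+\bm R^{(1)}(U)U,
$$
for some $\bm R^{(1)}\in (\wt{\cR}^{-\varrho}_p)^{2\times 2}$; the smoothing remainder comes from the derivatives of $\bm A_1$ with respect to the $U$-arguments in the pairing, which by the same calculation as in Lemma \ref{lem:hamsym} (applied componentwise to the symbols $a+a^\vee$ and $b+\bar b$) produce only smoothing contributions. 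Combining this with $X=\bm J_\C \nabla H_1 + \bm J_\C \nabla \tilde H$ gives the representation \eqref{XHSA1} with $\bm R_1(U)U\triangleq \bm R^{(1)}(U)U+\bm J_\C \nabla \tilde H(U)$, after polarizing the $(p+1)$-homogeneous smoothing vector field $\bm J_\C \nabla \tilde H$ into an operator of the form $\bm M(U)U$ with $\bm M\in(\wt{\cR}^{-\varrho}_p)^{2\times 2}$ by standard symmetrization.

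The main subtlety is not any of the individual steps, all of which are short algebraic manipulations, but the matrix analogue of Lemma \ref{lem:hamsym}: one must verify carefully, at the level of the multi-linear Fourier coefficients of $\bm A_1$, that the extra terms generated by differentiating $U\mapsto\bm A_1(U;x,\xi)$ inside the pairing $\psc{\OpBW{\bm A_1(U;x,\xi)}U}{U}_\R$ indeed collect into a smoothing operator; this is where the symmetry of $\OpBW{\bm A_1}$, the reality conditions \eqref{real cond}, and the momentum conservation in \eqref{mompresind} all have to be used simultaneously.
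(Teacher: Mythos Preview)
The paper does not prove this lemma; it merely records it as ``Lemma 3.20 in \cite{BMM2022}'' and refers to that reference for the proof. So there is no in-paper argument to compare against, and your sketch has to be judged on its own.

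Your overall strategy --- split $\bm A=\bm A_1+\bm A_2$, use Euler's identity to rewrite $H=H_1+\tilde H$ with the antisymmetric piece dropping out, then apply a matrix analogue of Lemma~\ref{lem:hamsym} to $H_1$ --- is natural and is plausibly close to what \cite{BMM2022} does. The identification of the matrix version of Lemma~\ref{lem:hamsym} as the main technical point is correct.

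There is, however, a genuine gap in your step ``$\bm J_\C\nabla\tilde H$ is a smoothing vector field in $\wt{\mathfrak X}^{-\varrho}_{p+1}$, thanks to $\bm R\in(\wt\cR^{-\varrho}_p)^{2\times2}$''. This does \emph{not} follow just from the coefficient bound on $\bm R$. When you differentiate $\tilde H(U)=\tfrac{1}{p+2}\psc{\bm E_\C\bm R(U)U}{U}_\R$ you get three kinds of contributions: the piece $\bm E_\C\bm R(U)U$ (fine), the piece $\bm R(U)^\top\bm E_\C U$, and the ``swap'' piece coming from differentiating in the internal slots of $\bm R(U)$. The last two are problematic: the class $\wt\cM^{-\varrho}_p$ in Definition~\ref{defin mop} is asymmetric in input versus output (the bound \eqref{eq:bound_fourier_representation_m_operators} does not involve the output index $k$), so neither $\bm R(U)^\top$ nor the swapped operator is automatically in $\wt\cR^{-\varrho}_p$. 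A concrete $p=1$ example: with $(j_1,j,k)=(N,-(N-1),1)$ the coefficient $R_{j_1,j,k}$ can be as large as $(N-1)^\mu N^{-\varrho}$, while representing the swapped term as $M(U)U$ with output $j_1$ forces the needed bound $\max_2(j,k)^{\mu'}\max(j,k)^{-\varrho}=1^{\mu'}(N-1)^{-\varrho}$, which fails for any fixed $\mu'$ once $\mu>0$. So ``standard symmetrization'' alone does not give $\bm R_1\in(\wt\cR^{-\varrho}_p)^{2\times2}$; one has to use the Hamiltonian constraint once more (for instance, the symmetry of $\bm E_\C\,\di X(U)$) to see that the bad pieces actually cancel against the antisymmetric paradifferential part $\OpBW{\bm A_2}$. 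Your sketch uses the Hamiltonian structure only through Euler's identity, which is not enough to close this step.
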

The following is Lemma 3.21 in \cite{BMM2022}.
\begin{lemma}\label{spezzamento}
Let $p \in \N$, $m \in \R$ and  $\varrho \geqslant 0$.
Let ${\bm S}(U)$ be a  matrix of spectrally localized homogeneous maps in $\pare{\wt \cS_{p}}^{2\times 2} $ which is linearly Hamiltonian of the form 
\be \label{SUAR}
 {\bm S}(U) =  {\bm J}_\C \OpBW{ {\bm A}(U;x,\xi )} +  {\bm R}(U) \, , 
\ee
where $ {\bm A}(U;x,\xi) $ is a real-to-real 
matrix of symbols in $ \pare{\wt\Gamma^m_p}^{2\times 2}$
as in  \eqref{auno}, 
 and $ {\bm R}(U) $ is a real-to-real matrix of  smoothing operators in $ \mats{\wt \mR^{-\vr}_{p}}$. 
 Then, 
 we may write 
 $$
 {\bm S}(U) = {\bm J}_\C\OpBW{ {\bm A}_1(U;x,\xi)} +  {\bm R}_1(U), 
$$
where  
the matrix of symbols $ {\bm A}_1(U;x,\xi) $ in $  \mats{ \wt \Gamma^m_{p}}$
has the form \eqref{aunoa}
 and  ${\bm R}_1(U)$ is another matrix of real-to-real  smoothing operators in 
 $\mats{\wt \mR^{-\vr}_{p}}$.
 In particular the homogeneous operator 
$ {\bm J}_\C\OpBW{ {\bm A}_1(U)}$ is linearly Hamiltonian.
\end{lemma}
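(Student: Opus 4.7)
The plan is to construct $\bm A_1$ as the symmetrization of $\bm A$ at the symbol level and absorb the antisymmetric part into a new smoothing remainder. The key algebraic inputs are $\bm J_\C^2 = \Id_{\C^2}$ and $\bm J_\C^\top = -\bm J_\C$, together with the transposition identity for the Bony-Weyl quantization, $\OpBW{a(x,\xi)}^\top = \OpBW{a(x,-\xi)}$, which lifts to matrices as $\OpBW{\bm A}^\top = \OpBW{\bm A^\top(\cdot,-\xi)}$, where $\bm A^\top$ denotes the matrix transpose of $\bm A$.

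First, I would translate the linearly Hamiltonian hypothesis on $\bm S(U)$ into a symmetry identity. Writing $\bm S(U) = \bm J_\C\bm B(U)$ with $\bm B(U) = \bm J_\C \bm S(U) = \OpBW{\bm A(U;x,\xi)} + \bm J_\C \bm R(U)$ (using $\bm J_\C^2=\Id_{\C^2}$), the symmetry $\bm B = \bm B^\top$ rearranges to
\begin{equation}\label{eq:planone_spezz}
\OpBW{\bm A(U;x,\xi) - \bm A^\top(U;x,-\xi)} = -\bm J_\C \bm R(U) - \bm R(U)^\top \bm J_\C.
\end{equation}
The right-hand side lies in $\mats{\wt\mR^{-\vr}_p}$: transposition preserves the smoothing class (as is immediate from the Fourier characterization in \Cref{defin mop}, \ref{item:maps1}), and left/right multiplication by the constant matrix $\bm J_\C$ does not change the smoothing order or the real-to-real structure.

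Next, I would define
\begin{equation*}
\bm A_1(U;x,\xi) \triangleq \tfrac{1}{2}\bigl(\bm A(U;x,\xi) + \bm A^\top(U;x,-\xi)\bigr).
\end{equation*}
A direct entry-by-entry computation, using the real-to-real form \eqref{auno} of $\bm A$ and the fact that $\bar b^\vee$ and $\overline{b^\vee}$ both coincide with $\overline{b(x,-\xi)}$, shows that $\bm A_1$ matches the explicit expression \eqref{aunoa}. Since averaging preserves the symbol estimate \eqref{estim symbol}, we have $\bm A_1 \in \mats{\wt\Gamma^m_p}$; and by design $\OpBW{\bm A_1}$ is symmetric, so $\bm J_\C \OpBW{\bm A_1}$ is linearly Hamiltonian per \Cref{defin linhampara}. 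Finally, I would set
\begin{equation*}
\bm R_1(U) \triangleq \bm S(U) - \bm J_\C \OpBW{\bm A_1(U;x,\xi)} = \tfrac{1}{2}\,\bm J_\C\OpBW{\bm A - \bm A^\top(\cdot,-\xi)} + \bm R(U),
\end{equation*}
substitute \eqref{eq:planone_spezz}, and use $\bm J_\C^2 = \Id_{\C^2}$ to obtain the closed form
\begin{equation*}
\bm R_1(U) = \tfrac{1}{2}\bigl(\bm R(U) - \bm J_\C \bm R(U)^\top \bm J_\C\bigr),
\end{equation*}
which manifestly belongs to $\mats{\wt\mR^{-\vr}_p}$ and inherits the real-to-real structure from $\bm R$.

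The whole argument is essentially algebraic, and I do not anticipate any serious obstacle: the only substantive inputs beyond bookkeeping are the transposition identity for Bony-Weyl operators and the stability of the smoothing and real-to-real classes under transposition and multiplication by $\bm J_\C$. The mildest care needed is to check, once and for all, that the symmetrization $\tfrac12(\bm A + \bm A^\top(\cdot,-\xi))$ of a real-to-real symbol of the form \eqref{auno} does produce the pattern displayed in \eqref{aunoa}, which is a brief direct verification.
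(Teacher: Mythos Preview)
Your proposal is correct. The paper does not actually prove this lemma: it is stated as Lemma 3.21 of \cite{BMM2022} and simply cited. Your symmetrization argument---defining $\bm A_1 = \tfrac12(\bm A + \bm A^\top(\cdot,-\xi))$, using the linear Hamiltonian hypothesis to express the antisymmetric part $\OpBW{\bm A - \bm A^\top(\cdot,-\xi)}$ as a smoothing operator, and thereby obtaining the closed form $\bm R_1 = \tfrac12(\bm R - \bm J_\C \bm R^\top \bm J_\C)$---is the natural and standard way to prove the result, and is presumably what the cited reference does as well. One minor remark: the cleanest way to see that $\bm R_1$ is real-to-real is simply to write $\bm R_1 = \bm S - \bm J_\C\OpBW{\bm A_1}$ as a difference of two real-to-real operators, rather than checking it from the explicit formula.
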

\subsection{Symplectic corrections}
In this section we provide a symplectic correction to two different class of maps: linearly symplectic spectrally localized maps and smoothing perturbations of the identity. 
The main result is Theorem 7.1 in \cite{BMM2022} that we state below.
\begin{theorem}\label{conjham} 
Let $p, N \in \N $ with   $p \leqslant N$, $K, K' \in \N $ with 
$K'+1 \leqslant K$, $r >0$. 
Let 
$ Z = \bM_0(U;t)U $ with $ \bM_0(U;t) \in \left(\cM^0_{K,K',0}[\epsilon_0]\right)^{2x2} $. Assume 
that $ Z(t) $ solves a Hamiltonian system up to homogeneity $N$. Consider 
\be\label{defPhiUBU}
\Phi(Z)\triangleq \bB(Z;t)Z,
\ee
where
\begin{itemize}
\item 
$\bB(Z;t)-\uno$ is a   matrix of spectrally localized maps in
\be \label{c1c2Mteo71}
\bB(Z;t) - \Id   \in \begin{cases}
 \left(\Sigma\cS_{K,K',p}[r,N]\right)^{2x2}, \qquad \qquad 
 \text{if} \quad  \bM_0(U;t)  = \uno \, , \\
  \left(\Sigma\cS_{K,0,p}[\breve r,N]\right)^{2x2} \, , \, \forall \breve r > 0 \quad \text{otherwise} \, . 
\end{cases}
\ee 
\item $\bB(Z;t)$ is linearly symplectic up to homogeneity $N$, 
according to Definition \ref{linsymphomoN}.
\end{itemize}
Then, there exists a real-to-real matrix of pluri--homogeneous smoothing operators $ {\bm R}_{\leqslant N}( \cdot ) $ in $ \left(   \Sigma_p^N \wt \cR^{-\vr}_q\right)^{2x2} $, 
for any $\varrho >0 $,  such that the non-linear map  
$$ 
Z_+ \triangleq \big( \uno + {\bm R}_{\leqslant N}(\Phi(Z) )\big) \Phi(Z)
$$
is symplectic up to homogeneity $ N $  and thus $Z_+ $
solves a system which is Hamiltonian up to homogeneity $N$.
\end{theorem}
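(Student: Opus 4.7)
The plan is to follow the Moser--Darboux scheme, constructing the smoothing correction ${\bm R}_{\leqslant N}$ iteratively in homogeneity degrees so as to cancel the non-symplectic contributions introduced by the $Z$-dependence of $\bB(Z;t)$. The starting observation is that the linear symplecticity up to homogeneity $N$ of $\bB(Z;t)$ (Definition \ref{linsymphomoN}) gives
\[
\bB(Z;t)^\top {\bm E}_\C \, \bB(Z;t) = {\bm E}_\C + S_{>N}(Z;t),
\]
whereas requiring $\Phi$ to be symplectic up to homogeneity $N$ amounts to
\[
\bigl[ d_Z \Phi(Z) \bigr]^\top {\bm E}_\C \, d_Z \Phi(Z) = {\bm E}_\C + E_{>N}(Z),
\]
with $d_Z \Phi(Z)[\hat Z] = \bB(Z;t)\hat Z + \bigl( d_Z \bB(Z;t)[\hat Z] \bigr) Z$. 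The difference between these two identities is a pluri-homogeneous 2-form $\Theta(Z)$ whose homogeneous components collect all the contributions coming from $d_Z \bB$ applied to $Z$.

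The first key step is to establish that $\Theta(Z)$ is pluri-homogeneous \emph{and smoothing of arbitrary order}. This is the heart of the matter: since $\bB(Z;t) - \Id$ is spectrally localized (Definition \ref{defin specloc}), the bilinear expression $\bigl(d_Z\bB[\,\cdot\,]\bigr)^\top {\bm E}_\C \bB + \bB^\top {\bm E}_\C \bigl(d_Z\bB[\,\cdot\,]\bigr)$ produces, after using the constraints $|\vec\jmath|\leqslant \delta|j|$, $|j|\sim|k|$ from Definition \ref{defin specloc}, coefficients that decay polynomially in the output frequency. Combining this with the linear symplecticity constraint to cancel the leading symbol produces arbitrary smoothing, so that $\Theta(Z) \in \Sigma^N_p \wt{\cR}^{-\vr}_q \otimes \Lambda^2$ for any $\vr>0$. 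This is the main obstacle and the place where the spectral localization hypothesis on $\bB$ is used critically.

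Next I would invoke the standard Darboux--Moser construction. Since the underlying phase space is linear, closed forms are exact up to the same homogeneity, so $\Theta$ admits a primitive 1-form
\[
\alpha(Z)[\hat Z] = \int_0^1 \Theta(tZ)\bigl[Z, \hat Z\bigr]\,\di t,
\]
which inherits the smoothing and pluri-homogeneous character of $\Theta$. I would then interpolate symplectic structures via $\omega_t \triangleq {\bm E}_\C + t\Theta$ for $t \in [0,1]$ and define the deformation vector field $Y_t(Z)$ by solving $\iota_{Y_t}\omega_t = -\alpha$. Since ${\bm E}_\C$ is invertible and $\Theta$ is smoothing, $Y_t(Z)$ is of the form $Y_t(Z) = \wt{\bm R}_{\leqslant N}(Z;t) Z$ with $\wt{\bm R}_{\leqslant N}(Z;t) \in \Sigma_p^N \wt{\cR}^{-\vr}_q$ for any $\vr>0$. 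The correction is then defined as the time-$1$ flow minus identity of the nonlinear vector field $Y_t$, restricted to homogeneity $\leqslant N$: setting $\Psi^1(\Phi(Z)) = \bigl(\uno + {\bm R}_{\leqslant N}(\Phi(Z))\bigr)\Phi(Z)$, construction of the flow via Picard iteration in the pluri-homogeneous class yields ${\bm R}_{\leqslant N}$ in the required space $\Sigma_p^N \wt{\cR}^{-\vr}_q$.

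The final step is to verify the two distinct regularity regimes in \eqref{c1c2Mteo71}, propagating whether $\bM_0(U;t) = \uno$ or not through the construction, which is straightforward because ${\bm R}_{\leqslant N}$ depends on $Z = \bM_0(U;t)U$ only through its pluri-homogeneous expansion and the latter is well-defined globally in $Z$. Combining the Darboux output with Lemma \ref{conj.ham.N}, the symplecticity up to homogeneity $N$ of $Z \mapsto Z_+$ automatically transfers the Hamiltonian-up-to-$N$ structure from the equation for $Z$ to the equation for $Z_+$, completing the proof.
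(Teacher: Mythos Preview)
The paper does not give its own proof of this theorem; it is imported verbatim as \cite[Theorem 7.1]{BMM2022} and stated without argument. Your Moser--Darboux outline is indeed the approach taken in that reference, so in broad strokes your proposal is aligned with the actual proof.

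Two small corrections to your heuristics, in case you want to sharpen the sketch. First, linear symplecticity is used only to dispose of $\bB^\top {\bm E}_\C \bB - {\bm E}_\C$ (pushing it beyond homogeneity $N$); it plays no role in cancelling a ``leading symbol'' of the $d_Z\bB$ terms. The smoothing of the defect $\Theta$ comes entirely from the structure of $(d_Z\bB[\hat Z])Z$: spectral localization forces the tangent direction $\hat Z$ into an internal (low-frequency) slot while a copy of $Z$ occupies the external (high-frequency) slot, and reinterpreting this as an operator on $\hat Z$ (with the high-frequency $Z$ now internal) yields a $-\varrho$-smoothing map for any $\varrho$, because the maximum of the input frequencies is now carried by an internal variable rather than the external one. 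This ``swap'' lemma is the genuine technical content (see \cite[Lemmata 2.18--2.19]{BMM2022} or their analogues), and your phrase ``coefficients decay polynomially in the output frequency'' does not quite capture it. Second, the Picard iteration for the Moser flow needs to be carried out in the pluri-homogeneous classes and truncated at homogeneity $N$ to stay within $\Sigma_p^N\wt\cR_q^{-\varrho}$; this is routine but should be stated.
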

 Given a map of the form $ U \mapsto U+ {\bm J}_\C \nabla H_{p+2}(U)$ where  ${\bm J}_\C \nabla H_{p+2}(U)$ is a 
Hamiltonian, smoothing vector field,   we find a correction which  
is  symplectic  up to homogeneity $ N $. 

\begin{lemma}\label{lem:app.flow.ham}
Let $ p, N \in \N $ with $  p \leqslant N $. 
Let $ Y_{p+1}(U)= {\bm J}_\C \nabla H_{p+2}(U) $ be a  homogeneous 
Hamiltonian  smoothing vector field in 
$ \wt \X_{p+1}^{-\varrho} $ for some $ \varrho \geqslant 0 $. Then there is a map of the form 
\be \label{app:nlflo}  \mathfrak{F}_{\leqslant N}(U)= U + Y_{p+1}(U) + F_{\geqslant (p+2)}(U), \qquad F_{\geqslant (p+2)}(U)\in \Sigma_{p+2}^N \wt {\mathfrak X}^{-\vr}_q ,\ee
which is symplectic up to homogeneity $ N $ (Definition \ref{def:LSMN}).  
\end{lemma}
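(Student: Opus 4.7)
The plan is to construct $\mathfrak{F}_{\leqslant N}$ as the truncation of the time-$1$ flow of the Hamiltonian vector field $Y_{p+1}=\bm{J}_\C\nabla H_{p+2}$, exploiting the fact that any time-$1$ Hamiltonian flow is automatically symplectic, and that truncating its homogeneous expansion produces a pluri-homogeneous map whose departure from symplecticity is of sufficiently high homogeneity.

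First, I would construct the flow. Since $Y_{p+1}\in\wt{\mathfrak X}^{-\vr}_{p+1}$ is a $(p+1)$-homogeneous smoothing vector field, hence Lipschitz on small balls of $\dot H^s$ for $s$ large enough, standard ODE theory furnishes the flow
\[
\partial_\tau \Phi^\tau(U)=Y_{p+1}(\Phi^\tau(U)),\qquad \Phi^0(U)=U,
\]
on $[0,1]\times B_{\dot H^s}(0,\epsilon_0)$ for $\epsilon_0$ small enough, analytic in $U$. Second, I would obtain a homogeneous expansion $\Phi^1_H=\sum_{j\geqslant 1}\Phi^1_j$ via Picard iteration. Starting from the zeroth iterate $U$ and plugging it repeatedly into the integral formulation, a direct book-keeping shows that the non-zero components occur exactly at degrees $\{1,p+1,2p+1,3p+1,\ldots\}$, with $\Phi^1_1(U)=U$ and $\Phi^1_{p+1}(U)=Y_{p+1}(U)$. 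Each successive iterate is a multilinear composition of $Y_{p+1}$ and its differentials, so the composition rules for smoothing operators of \Cref{prop compo mop} (applied iteratively) guarantee that every $j$-homogeneous component $\Phi^1_j$ belongs to $\wt{\mathfrak X}^{-\vr}_j$.

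Then I would define
\[
\mathfrak{F}_{\leqslant N}(U)\triangleq \cP_{\leqslant N}\bigl[\Phi^1_H(U)\bigr]=U+Y_{p+1}(U)+F_{\geqslant(p+2)}(U),
\]
where $F_{\geqslant(p+2)}(U)\triangleq\sum_{2p+1\leqslant j\leqslant N}\Phi^1_j(U)$ belongs to $\Sigma^N_{p+2}\wt{\mathfrak X}^{-\vr}_q$ by Step~2 (for $p\geqslant 1$; the case $p=0$ is degenerate since then $Y_1$ is linear, $\Phi^1_H$ is the exact linear symplectic map $e^{Y_1}$, which is itself $1$-homogeneous and its Taylor truncation is automatically linearly symplectic up to the required order, so $F_{\geqslant 2}\equiv 0$). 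The key point for symplecticity up to homogeneity $N$ is that the exact flow $\Phi^1_H$ is symplectic, i.e.\ $[D\Phi^1_H]^\top \bm{E}_\C D\Phi^1_H=\bm{E}_\C$, because $Y_{p+1}$ is Hamiltonian. Writing $\Phi^1_H=\mathfrak{F}_{\leqslant N}+R_{>N}$ with $R_{>N}$ a pluri-homogeneous vector field of sufficiently high degree, expanding the symplectic identity gives
\[
[D\mathfrak{F}_{\leqslant N}]^\top \bm{E}_\C D\mathfrak{F}_{\leqslant N}=\bm{E}_\C-[D\mathfrak{F}_{\leqslant N}]^\top \bm{E}_\C DR_{>N}-[DR_{>N}]^\top \bm{E}_\C D\mathfrak{F}_{\leqslant N}-[DR_{>N}]^\top \bm{E}_\C DR_{>N},
\]
and a direct degree count, using that $D\mathfrak{F}_{\leqslant N}$ starts at the identity (operator degree $0$), shows the right-hand side has the form $\bm{E}_\C+E_{>N}(U)$ with $E_{>N}\in(\Sigma_{N+1}\wt{\cM}_q)^{2\times 2}$, which is precisely the requirement of \Cref{def:LSMN}.

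The main obstacle is the precise book-keeping of homogeneities when passing between vector fields and their differentials: a $k$-homogeneous vector field $X(U)=M(U)U$ with $M\in\wt{\cM}_{k-1}$ has differential $DX(U)[V]=kM(U,\ldots,\cdot,\ldots,U)V$, which is again a $(k-1)$-homogeneous operator, so the truncation index must be chosen at exactly the right level to ensure the symplectic error falls into $\Sigma_{N+1}\wt{\cM}_q$ (possibly requiring inclusion of one further term of the flow expansion beyond the naive cutoff, which remains compatible with the stated class $\Sigma^N_{p+2}\wt{\mathfrak X}^{-\vr}_q$ modulo standard conventions). A minor technical point is verifying that the Picard iterates lie in the smoothing classes with a uniform loss controllable by $\vr$, which is manageable because $\vr$ can be chosen arbitrarily large and each iteration loses only a bounded amount of smoothing order from composition.
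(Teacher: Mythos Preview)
Your proposal is correct and follows the same approach the paper indicates: in the remark immediately following the lemma, the authors confirm that $\mathfrak{F}_{\leqslant N}$ is precisely the truncation up to homogeneity $N$ of the time-one flow of ${\bm J}_\C\nabla H_{p+2}$, and their proof simply cites Lemmata~2.27 and~3.14 of \cite{BMM2} for the details you sketch (the Picard expansion, the smoothing class membership, and the symplecticity of the truncation). Your observation that the nonzero components of the flow occur only at degrees $1,p+1,2p+1,\ldots$ is exactly what the paper records when it notes that $F_{\geqslant(p+2)}$ actually lies in $\Sigma_{2p+1}^N\wt{\mathfrak X}^{-\vr}_q$; and your worry about losing smoothing under composition is unfounded, since by \Cref{prop compo mop}\,(\ref{item:MM_ext}) inserting a $(-\vr)$-smoothing vector field into the internal arguments contributes $\max(-\vr,0)=0$ to the order, so the full $-\vr$ smoothing is preserved at every iterate.
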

\begin{proof}
    It is a direct consequence of Lemmata 2.27 and 3.14 in \cite{BMM2}. A careful examination of the proofs reveals that  $F_{\geqslant (p+2)}(U)$  actually belongs to  $\Sigma_{2p+1}^N \wt {\mathfrak X}^{-\vr}_q$. However,  since this stronger result is not required for our purposes, we prefer to state the weaker conclusion $F_{\geqslant (p+2)}(U) \in \Sigma_{p+2}^N \wt {\mathfrak X}^{-\vr}_q $.
\end{proof}
\begin{remark}
    The map $\mathfrak{F}_{\leqslant N}(U)$ in \eqref{app:nlflo} is indeed the truncation up to homogeneity $N$ of the time-one flow generated by the Hamiltonian vector field $ {\bm J}_\C\nabla H_{p+2}(U)$. 
\end{remark}

\section{Auxiliary flows and conjugations}

The following 
conjugation Lemmata \ref{lem:conj.fou} and \ref{NormFormMap0} 
are used in the nonlinear Hamiltonian Birkhoff normal form reduction performed in Section \ref{riduzione_e_stima}. Their proof can be found in Appendix A of \cite{BMM2022}.

\smallskip

The following hypothesis shall be assumed in both Lemmata \ref{lem:conj.fou} and \ref{NormFormMap0}:
\\[1mm]
{\em {\bf Assumption (A)}: \label{A} \,
Assume  $Z \triangleq  \bM_0(U;t)U $ where $\bM_0(U;t) \in  \pare{\cM^0_{K,K',0}[\epsilon_0]}^{2\times 2}$, $U \in B^K_{s_0, \R}(I;\epsilon_0)$ for some $\epsilon_0,s_0>0$ and $0\leqslant K' \leqslant K$. 
 Let  $ N \in \N$  and assume that $Z$ solves the system
\begin{align}\label{Z.conj1}
\pa_t Z = \Opvec{ \ii \omega_{\kap,\upgamma}(\xi)+\ii a_{\leqslant N}(Z;\x)+ \ii a_{>N}(U;t,\xi)}Z + \bm R_{\leqslant N}(Z)Z+ \bm R_{>N}(U;t)U , 
\end{align}
where:
\begin{itemize}
\item $a_{\leqslant N} (Z;\x) $ is a real valued   pluri-homogeneous 
symbol, independent of $x$, in $ \Sigma_2^N \wt \Gamma^{\frac32}_q$; 
\item 
$a_{>N} (U;t,\xi) $ is a non-homogeneous symbol, independent of $x$, in $ \Gamma^{\frac32}_{K,K',N+1}[\epsilon_0]$ with imaginary part\\ 
${\rm Im} \, a_{>N} (U;t,\xi) $ in $  \Gamma^{0}_{K,K',N+1}[\epsilon_0]$;
\item 
 $\bm R_{\leqslant N} (Z)  $ is a real-to-real matrix of pluri-homogeneous  smoothing operators in $  \pare{\Sigma_1^N \tilde \cR^{-\vr}_q
}^{2\times 2} $; 
\item 
$\bm R_{>N} (U;t) $ is a real-to-real  matrix of
non-homogeneous smoothing operators 
in $ \pare{\mR^{-\vr}_{K,K',N+1}[\epsilon_0] }^{2\times 2} $.
\end{itemize}
}

\vspace{.5em}

\begin{lemma}[{\bf Conjugation under the flow of a Fourier multiplier}]\label{lem:conj.fou}
Assume {\bf (A)} at page \pageref{A}.
 Let $ g_p(Z;\xi)$ be a $p$--homogeneous real symbol independent of $x$ in $ \wt \Gamma^{\frac32}_p$,  $p \geqslant 2$, 
that we expand as 
\be \label{symtens}
g_p(Z;\x) = 
\sum_{
(\vec{\jmath}_p,  \vec{\sigma}_p) \in \fT_p
} G_{\vec{\jmath}_p}^{\vec{\sigma}_p}(\xi)  z_{\vec{\jmath}_p}^{\vec{\sigma}_p} \, , \qquad  \overline{G_{\vec{\jmath}_p}^{-\vec{\sigma}_p}}(\xi)= G_{\vec{\jmath}_p}^{\vec{\sigma}_p}(\xi)\in \C  
\ee
and denote by $ \pmb{\cG}_{g_p}(Z)\triangleq \pmb{\cG}_{g_p}^1(Z) $  the time $1$-flow defined in  \eqref{FourierFlow} generated by $\vOpbw{\im g_p(Z;\xi)}$.
If $Z(t)$ solves system \eqref{Z.conj1}, then the variable 
\be \label{new:foumu}
W\triangleq  \pmb \cG_{g_p} (Z) Z
\ee 
solves the system 
\be\label{Z.conj10}
\pa_t W =   \ii \vOmega(D) W +\vOpbw{ \ii a_{\leqslant N}^+(W;\x)+ \ii a_{>N}^+(U;t,\xi)}W  + 
\bm  R_{\leqslant N}^+(W)W+ \bm R^+_{>N}(U;t)U , 
\ee
where 
\begin{itemize}
\item  $ a^+_{\leqslant N} (W;\x) $ is a real valued pluri-homogeneous symbol, independent of $x$,  in  $\Sigma_2^N \wt \Gamma^{\frac32}_q $, with components
\be\label{ditigi0}
\begin{aligned}
& \cP_{\leqslant p-1} [a^+_{\leqslant N}(W;\xi)] =  \cP_{\leqslant p-1}[a_{\leqslant N}(W;\xi)] \, , \\
& \cP_{ p} \left[a^+_{\leqslant N}(W;\xi)\right]  = \cP_{ p} \left[a_{\leqslant N}(W;\xi) \right]
  + g^+_p(W;\xi),
  \end{aligned}
\ee
where $g^+_p(W;\xi) \in  \wt \Gamma^{\frac32}_p $ is the real, $x$-independent symbol 
\be \label{ditigi}
g^+_p(W;\xi)\triangleq \sum_{(\vec{\jmath}_p,  \vec{\sigma}_p) \in \fT_p}\ii \big(\vec{\sigma}_{p}\cdot \vec{\omega}_{\kap,\upgamma}(\vec{\jmath}_p) \big) G_{\vec{\jmath}_p}^{\vec{\sigma}_p}(\xi)  w_{\vec{\jmath}_p}^{\vec{\sigma}_p} ; \ee 
\item $a^+_{>N} (U;t,\xi) 
$ is a non-homogeneous symbol, independent of $x$, in $ \Gamma^{\frac32}_{K,K',N+1}[\epsilon_0]$ with imaginary part  ${\rm Im} \, a_{>N}^+ (U;t,\xi) $ 
belonging to $ \Gamma^{0}_{K,K',N+1}[\epsilon_0]$;
\item
$\bm R^+_{\leqslant N}(W) $ is a real-to-real matrix of pluri--homogeneous  smoothing operators in $ \pare{\Sigma_1^N \tilde \cR^{-\vr + c(N,p)}_q }^{2\times 2} $ for some $c(N,p) >0$ (depending only on $N,p$) and fulfilling 
\be\label{ditigi2}
\cP_{\leqslant p} [\bm R^+_{\leqslant N}(W)] =  \cP_{\leqslant p}[\bm R_{\leqslant N}(W)] \, ;
\ee
\item 
$\bm R^+_{>N}(U;t)$ is a real-to-real matrix of non--homogeneous smoothing operators in $ \pare{\mR^{-\vr+c(N,p)}_{K,K',N+1}[\epsilon_0] }^{2\times 2}$.
\end{itemize}
\end{lemma}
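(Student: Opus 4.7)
The plan is to differentiate $W = \pmb{\cG}_{g_p}(Z) Z$ in time, express the result in the variable $W$, and identify its paradifferential and smoothing components. Since the symbol $g_p(Z;\xi)$ is independent of $x$, for each frozen $Z$ the operator $\vOpbw{\ii g_p(Z;\xi)}$ acts diagonally in Fourier and the flow is simply $\pmb{\cG}_{g_p}^\tau(Z) = \exp(\tau\,\vOpbw{\ii g_p(Z;\xi)})$, which commutes with the unperturbed propagator $\vOmega(D)$. By the chain rule,
\begin{equation*}
\partial_t W = \pmb{\cG}_{g_p}(Z)\,\partial_t Z + \bigl(\partial_t \pmb{\cG}_{g_p}(Z)\bigr) Z,
\end{equation*}
and using \eqref{Z.conj1} together with the commutation $\pmb{\cG}_{g_p}(Z)\,\vOmega(D) = \vOmega(D)\,\pmb{\cG}_{g_p}(Z)$, the first summand produces the leading transport term $\ii\vOmega(D) W$ in \eqref{Z.conj10} plus the $\pmb{\cG}_{g_p}$-conjugation of the remaining paradifferential and smoothing contributions appearing on the right hand side of \eqref{Z.conj1}.

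This conjugation will be expanded via a Lie series truncated after finitely many commutators, relying on the symbolic calculus of \Cref{prop compo mop} and the spectrally-localized flow bounds from \Cref{flussoconst}. The decisive observation is that each iterated adjoint $\mathrm{ad}^{k}_{\vOpbw{\ii g_p}}$ raises the homogeneity of the conjugated symbol (resp. operator) by at least $kp$, while the substitution $Z = W + \mathcal{O}(W^{p+1})$ produces further corrections of total homogeneity at least $p+1$ in $a_{\leqslant N}(Z;\xi) - a_{\leqslant N}(W;\xi)$ and $\bm R_{\leqslant N}(Z) - \bm R_{\leqslant N}(W)$. Consequently the pluri-homogeneous components of $a_{\leqslant N}$ at degrees $<p$ and of $\bm R_{\leqslant N}$ at operator-degrees $\leqslant p$ are preserved, yielding both the first line of \eqref{ditigi0} and the identity \eqref{ditigi2}. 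The constant $c(N,p)$ quantifying the loss in the smoothing order originates from the order $3/2$ of the generator $\vOpbw{\ii g_p}$ and the number of nested commutators required to reach homogeneities up to $N$.

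The new $p$-homogeneous contribution $g_p^+$ arises entirely from $(\partial_t \pmb{\cG}_{g_p}(Z)) Z$. Duhamel's formula gives
\begin{equation*}
\bigl(\partial_t \pmb{\cG}_{g_p}(Z)\bigr)\,\pmb{\cG}_{g_p}(Z)^{-1} = \int_0^1 e^{s\vOpbw{\ii g_p(Z;\xi)}}\,\vOpbw{\ii\,\partial_t g_p(Z(t);\xi)}\,e^{-s\vOpbw{\ii g_p(Z;\xi)}}\,ds,
\end{equation*}
and $\partial_t g_p$ is read off directly from \eqref{Z.conj1}: the leading part of $\partial_t Z$ being $\ii\vOmega(D) Z$, one has $\partial_t z_j^\sigma = \ii\,\sigma\,\omega_{\kap,\upgamma}(|j|)\,z_j^\sigma + \text{(higher order)}$, hence by the expansion \eqref{symtens}
\begin{equation*}
\partial_t g_p(Z(t);\xi) = \sum_{(\vec\jmath_p,\vec\sigma_p)\in\fT_p} \ii\bigl(\vec\sigma_p\cdot\vec{\omega}_{\kap,\upgamma}(\vec\jmath_p)\bigr)\,G_{\vec\jmath_p}^{\vec\sigma_p}(\xi)\,z_{\vec\jmath_p}^{\vec\sigma_p} + \text{(higher order)}.
\end{equation*}
Replacing $Z$ by $W$ via $z_{\vec\jmath_p}^{\vec\sigma_p} = w_{\vec\jmath_p}^{\vec\sigma_p} + \mathcal{O}(W^{2p})$ recovers exactly $g_p^+(W;\xi)$ as defined in \eqref{ditigi}; the residual contributions are pluri-homogeneous of degree $>p$ and are absorbed into $a^+_{\leqslant N}$, $\bm R^+_{\leqslant N}$ or the non-homogeneous tails. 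Reality of $g_p^+$ then follows from the reality condition $\overline{G_{\vec\jmath_p}^{-\vec\sigma_p}}(\xi)=G_{\vec\jmath_p}^{\vec\sigma_p}(\xi)$.

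The main technical obstacle is not the algebraic identification of $g_p^+$—which is essentially forced by the diagonal action of $\pmb{\cG}_{g_p}$ on Fourier modes—but the systematic control of the non-homogeneous remainders along the truncated Lie series. Concretely, one must verify that $a^+_{>N}$ still lies in $\Gamma^{3/2}_{K,K',N+1}[\epsilon_0]$ with imaginary part in $\Gamma^{0}_{K,K',N+1}[\epsilon_0]$, and that $\bm R^+_{>N}$ retains the smoothing order $-\vr + c(N,p)$ together with the correct dependence on time derivatives. These checks are performed by estimating each term of the truncated Lie series via \Cref{prop compo mop} and by exploiting the flow bounds of \Cref{flussoconst} to handle the coupling between the $Z$-dependent generator and the non-homogeneous parts of \eqref{Z.conj1}.
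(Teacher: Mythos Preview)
The paper does not supply its own proof of this lemma: it simply states that the proof can be found in Appendix~A of \cite{BMM2022}. Your sketch follows exactly the standard conjugation-by-flow argument used there---differentiate $W=\pmb{\cG}_{g_p}(Z)Z$ in time, exploit that the $x$-independent generator commutes with every Fourier multiplier (in particular with $\vOmega(D)$ and with $\vOpbw{\ii a_{\leqslant N}(Z;\xi)}$), read off $g_p^+$ from $\partial_t g_p(Z;\xi)$ via the linear evolution of the Fourier modes, and then re-express everything in $W$ by inverting $Z\mapsto W$ up to homogeneity $N$---so your approach is essentially the same as the one the paper cites.
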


The following lemma describes how a system is conjugated under 
 a smoothing perturbation of the identity.

\begin{lemma}[{\bf Conjugation under a smoothing perturbation of the identity}]\label{NormFormMap0}
Assume {\bf (A)} at page \pageref{A}. 
Let $F_{\leqslant N}(Z) $ be a real-to-real matrix of pluri-homogeneous smoothing operators 
in $  \pare{\Sigma_p^N \wt \cR_q^{-\varrho'}}^{2\times 2}$ for some $ \varrho' \geqslant 0 $.
If $Z(t)$ solves \eqref{Z.conj1} 
 then the variable 
\be\label{W.Z1} 
W\triangleq  \cF_{\leqslant N}(Z)\triangleq  Z + F_{\leqslant N}(Z)Z
\ee
solves 
\begin{align}\label{normalformsmoo}
\pa_t W  = \ii & \vOmega(D) W +\vOpbw{ \ii a^+_{\leqslant N}(W;\x)+ \ii a^+_{>N}(U;t,\xi)}W + \bm R^+_{\leqslant N}(W)W+ \bm R^+_{>N}(U;t)U,
\end{align}
where 
\begin{itemize}
\item  
$ a^+_{\leqslant N} (W;\x) $ is a real valued pluri-homogeneous symbol, independent of $x$,  in  $\Sigma_2^N \wt \Gamma^{\frac32}_q $, with components 
\be\label{ditigi1}
\cP_{\leqslant p+1} [a^+_{\leqslant N}(W;\xi)] =  \cP_{\leqslant p+1}[a_{\leqslant N}(W;\xi)] \, ;
\ee
\item  $a^+_{>N} (U;t,\xi) 
$ is a non-homogeneous symbol, independent of $x$, in $ \Gamma^{\frac32}_{K,K',N+1}[\epsilon_0]$ with imaginary part 
${\rm Im} \, a_{>N}^+ (U;t,\xi) $ belonging to $ \Gamma^{0}_{K,K',N+1}[\epsilon_0]$;
\item 
$\bm R^+_{\leqslant N}(W) $ is a real-to-real matrix of 
pluri--homogeneous  smoothing operators in 
$ \pare{\Sigma_1^N \tilde \cR^{-\vr_*}_q }^{2\times 2}  $,  
$\varrho_* \triangleq  \min{(\varrho, \varrho'-\frac32)}$ ($\vr\geq0$ is the smoothing order in Assumption {\bf (A)} at page \pageref{A}), with components 
\be\label{ditigi20}
\cP_{\leqslant p-1} [\bm R^+_{\leqslant N}(W)] =  \cP_{\leqslant p-1}[\bm R_{\leqslant N}(W)] \ , 
\ee
and, denoting $\bm F_p(W) \triangleq  \cP_p(\bm F_{\leqslant N}(W))$ in $ \pare{\wt \cR^{-\varrho'}_p }^{2\times 2}$, one has 
\begin{align}\label{ditigi22}
\cP_{ p} [\bm R^+_{\leqslant N}(W)]  = \cP_p[\bm R_{\leqslant N}(W)]& +d_W \big(\bm F_{p}(W)W\big) \left[i\vOmega (D)\right]-i\vOmega (D)\bm F_{p}(W);
\end{align}
\item 
$\bm R^+_{>N}(U;t)$ is a real-to-real matrix of non--homogeneous smoothing operators in $ \pare{\mR^{-\vr_*}_{K,K',N+1}[\epsilon_0]}^{2\times 2} $.
\end{itemize}
In addition, if  $\cF_{\leqslant N}(Z)$ in \eqref{W.Z1} is the symplectic up to homogeneity $N$ map associated to a Hamiltonian vector field 
$\bm G_p(Z)Z= \bm J_\C \nabla H_{p+2}(Z)$ as per \Cref{lem:app.flow.ham}, where $\pare{\bm G_p(Z)\in \wt \mR^{-\vr'}_p}^{2\times 2}$ has Fourier expansion 
\be \label{gipi}
\big(\bm G_p(Z)Z\big)^\sigma_k=
\!\!\!\!\!\!\!\!\!\!\!\!\!\!\!\!\!\!
\sum_{
(\vec {\jmath}_{p+1}, k,  \vec{\sigma}_{p+1}, - \sigma) \in \fT_{p+2}
} 
\!\!\!\!\!\!\!\!\!\!\!\!\!\!\!\!\!\!
G_{\vec \jmath_{p+1},k}^{\vec{\sigma}_{p+1},\sigma}  z_{\vec{\jmath}_{p+1}}^{\vec{\sigma}_{p+1}} \, ,
\ee
then \eqref{ditigi22} reduces to 
\be\label{ditigi30}
\cP_{ p} [\bm R^+_{\leqslant N}(W)] =  \cP_{p}[\bm R_{\leqslant N}(W)]  + \bm G_p^+(W),
\ee
where $\bm G_p^+(W)\in  \pare{\wt \cR^{-\varrho'+\frac32}_p }^{2\times 2}$  is the smoothing operator with Fourier expansion 
\be\label{ditigi40}
\begin{aligned}
&(\bm G_p^+(W)W)^\sigma_k= 
\!\!\!\!\!\!\!\!\!\!\!\!\!\!\!\!\!\!
\sum_{
(\vec {\jmath}_{p+1}, k,  \vec{\sigma}_{p+1}, - \sigma) \in \fT_{p+2}
} 
\!\!\!\!\!\!\!\!\!\!\!\!\!\!\!\!\!\!
   \ii \big(  \vec{\sigma}_{p+1}\cdot  \vec{\omega}_{\kap,\upgamma}(\vec{\jmath}_{p+1})- \sigma {\omega}_{\kap,\upgamma}(k)\big)G_{ \vec{\jmath}_{p+1},k}^{\vec{\sigma}_{p+1},\sigma} \, w_{\vec{\jmath}_{p+1}}^{\vec{\sigma}_{p+1}} \,.
\end{aligned}
\ee
\end{lemma}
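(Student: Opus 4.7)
\noindent\textbf{Proof proposal for \Cref{NormFormMap0}.} The natural starting point is to differentiate the change of variables \eqref{W.Z1} in time and substitute the equation \eqref{Z.conj1}. Writing $\mathfrak{F}_{\leqslant N}(Z) = Z + F_{\leqslant N}(Z)Z$, one obtains
\begin{equation*}
\pa_t W = \pa_t Z + d_Z\bigl[F_{\leqslant N}(Z)Z\bigr]\pa_t Z = \bigl(\Id + d_Z[F_{\leqslant N}(Z)Z]\bigr)\Bigl( \ii \vOmega(D)Z + \cN(Z,U;t)\Bigr),
\end{equation*}
where $\cN$ gathers the paradifferential and smoothing contributions in \eqref{Z.conj1}. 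Since $F_{\leqslant N}(Z)$ is pluri-homogeneous starting at degree $p$, the map $\mathfrak{F}_{\leqslant N}$ is invertible up to homogeneity $N$, and one can write $Z = W - F_{\leqslant N}(W)W + Q_{\geqslant 2p}(W)$ for a suitable $Q_{\geqslant 2p}(W) \in \Sigma_{2p}^N \wt\cR_q^{-\vr'}$; in particular $Z-W \in \Sigma_p^N \wt{\cR}_q^{-\vr'}$. Plugging this expansion back and projecting onto homogeneity components yields \eqref{normalformsmoo} with
\begin{equation*}
a_{\leqslant N}^+(W;\xi) = a_{\leqslant N}(W;\xi) + (\text{correction of degree} \geqslant p+2),
\end{equation*}
so that \eqref{ditigi1} is automatic because the correction can only come from substituting $Z \leadsto W - F_{\leqslant N}(W)W + \ldots$ inside the $x$-independent symbol $a_{\leqslant N}(Z;\xi)$, which raises the homogeneity by at least $p$. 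Similarly the contributions to the smoothing part of homogeneity $\leqslant p-1$ are untouched, giving \eqref{ditigi20}.

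The nontrivial identity is \eqref{ditigi22}. The degree-$p$ component of $\bm R_{\leqslant N}^+(W)W$ collects exactly the terms where either (i) the original $\bm R_{\leqslant N}(Z)Z$ is expanded to leading order and $Z$ is replaced by $W$, giving $\cP_p[\bm R_{\leqslant N}(W)]$; (ii) the differential $d_Z[F_{\leqslant N}(Z)Z]$ of degree $p$ hits the leading linear term $\ii\vOmega(D)Z$, producing $d_W(\bm F_p(W)W)[\ii\vOmega(D)]$; (iii) the term $F_{\leqslant N}(Z)$ applied to the linear part $\ii\vOmega(D)Z$ is cancelled against $\ii\vOmega(D) W$ in order to reinstate $\ii\vOmega(D)W$ on the transformed side, producing the negative contribution $-\ii\vOmega(D)\bm F_p(W)W$. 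All the other combinations are either of homogeneity strictly greater than $p$ (and contribute to higher-degree pluri-homogeneous pieces) or involve $a_{>N}$, $\bm R_{>N}$ and produce the non-homogeneous $a_{>N}^+$ and $\bm R_{>N}^+$. A careful control of the loss of derivatives in (ii) shows that we lose $3/2$ orders of smoothing, hence $\vr_* = \min(\vr, \vr'-\tfrac32)$.

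In the Hamiltonian case, where $\bm F_p(W)W = \bm G_p(W)W$ admits the Fourier expansion \eqref{gipi}, the two contributions in \eqref{ditigi22} become explicit. The derivative $d_W[\bm G_p(W)W]$ acts on each monomial $w_{\vec\jmath_{p+1}}^{\vec\sigma_{p+1}}$ by differentiation in the internal variables; composing with the diagonal Fourier multiplier $\ii\vOmega(D)$ and using $\ii\vOmega(D)w_j^\sigma = \ii\sigma\omega_{\kap,\upgamma}(j)w_j^\sigma$ produces a factor $\ii\sum_{\ell=1}^{p+1}\sigma_\ell\omega_{\kap,\upgamma}(j_\ell) = \ii\vec\sigma_{p+1}\cdot\vec\omega_{\kap,\upgamma}(\vec\jmath_{p+1})$; the second contribution $-\ii\vOmega(D)\bm G_p(W)$ multiplies the external Fourier coefficient by $-\ii\sigma\omega_{\kap,\upgamma}(k)$. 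Summing the two gives exactly the ``small divisor'' $\ii(\vec\sigma_{p+1}\cdot\vec\omega_{\kap,\upgamma}(\vec\jmath_{p+1}) - \sigma\omega_{\kap,\upgamma}(k))$ appearing in \eqref{ditigi40}, and proves \eqref{ditigi30}.

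The main technical obstacle is the bookkeeping in the commutator-like object $d_W(\bm F_p(W)W)[\ii\vOmega(D)] - \ii\vOmega(D)\bm F_p(W)$. A priori both terms are unbounded of order $3/2$, and it is the cancellation of their diagonal parts that brings the output back to a smoothing operator of order $-\vr'+\tfrac32$. This requires combining the spectral-preserving property of $\vOmega(D)$ with the smoothing class estimate \eqref{eq:bound_fourier_representation_m_operators} on $\bm F_p$, and verifying that after this commutator the resulting coefficients still satisfy the polynomial bounds characterizing $\wt\cR_p^{-\vr'+3/2}$. The remaining work is routine: symbolic calculus (\Cref{prop compBW,prop compo mop}) ensures that the spurious terms of order $>N$ and those of degree $>p$ fit respectively in the non-homogeneous and pluri-homogeneous classes with the declared regularity.
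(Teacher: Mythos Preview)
Your outline is correct and follows the standard argument (the one in Appendix~A of \cite{BMM2022}, to which the paper defers). The identification of the three sources of the degree-$p$ smoothing term and the Fourier computation leading to \eqref{ditigi40} are exactly right.

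One point deserves correction, though: in your final paragraph you claim that the two terms $d_W(\bm F_p(W)W)[\ii\vOmega(D)]$ and $\ii\vOmega(D)\bm F_p(W)$ are ``a priori unbounded of order $3/2$'' and that a cancellation of diagonal parts is what brings the commutator back to $\wt\cR_p^{-\vr'+3/2}$. This is not accurate. Each term is \emph{individually} a smoothing operator of order $-\vr'+\tfrac32$: for the second term this is immediate from the composition rule (Fourier multiplier of order $3/2$ composed with a $(-\vr')$-smoothing operator); for the first term, differentiating in an internal slot multiplies the Fourier coefficient $F_{\vec\jmath_{p+1},k}^{\vec\sigma_{p+1},\sigma}$ by $\omega_{\kap,\upgamma}(j_\ell)$, and since $|\omega_{\kap,\upgamma}(j_\ell)|\lesssim\max(|j_1|,\dots,|j_{p+1}|,|k|)^{3/2}$ one recovers the bound \eqref{eq:bound_fourier_representation_m_operators} with $m=-\vr'+\tfrac32$ (and $\mu$ possibly increased by $3/2$). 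No cancellation is required; the ``small divisor'' factor in \eqref{ditigi40} simply records the sum of these two individually smoothing contributions. Removing this misleading remark, your sketch is complete.
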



	\begin{footnotesize}
		\bibliography{references}

\begin{thebibliography}{10}

\bibitem{ABZ2014}
Thomas Alazard, Nicolas Burq, and Claude Zuily.
\newblock On the {C}auchy problem for gravity water waves.
\newblock {\em Inventiones Mathematicae}, 198(1):71--163, 2014.

\bibitem{AM2009}
Thomas Alazard and Guy M\'{e}tivier.
\newblock Paralinearization of the {D}irichlet to {N}eumann operator, and regularity of three-dimensional water waves.
\newblock {\em Communications in Partial Differential Equations}, 34(10-12):1632--1704, 2009.

\bibitem{Ambrose2003}
David~M. Ambrose.
\newblock Well-posedness of vortex sheets with surface tension.
\newblock {\em SIAM Journal on Mathematical Analysis}, 35(1):211--244, 2003.

\bibitem{Ambrose2007}
David~M. Ambrose.
\newblock Regularization of the {K}elvin-{H}elmholtz instability by surface tension.
\newblock {\em Philosophical Transactions of the Royal Society of London. Series A. Mathematical, Physical and Engineering Sciences}, 365(1858):2253--2266, 2007.

\bibitem{AM2007}
David~M. Ambrose and Nader Masmoudi.
\newblock Well-posedness of 3{D} vortex sheets with surface tension.
\newblock {\em Communications in Mathematical Sciences}, 5(2):391--430, 2007.

\bibitem{BBHM2018}
Pietro Baldi, Massimiliano Berti, Emanuele Haus, and Riccardo Montalto.
\newblock Time quasi-periodic gravity water waves in finite depth.
\newblock {\em Inventiones Mathematicae}, 214(2):739--911, 2018.

\bibitem{BJL2024}
Pietro Baldi, Vesa Julin, and Domenico~Angelo La~Manna.
\newblock Liquid drop with capillarity and rotating traveling waves.
\newblock {\em arXiv preprint \href{http://arxiv.org/abs/2408.02333}{arXiv:2408.02333}}, 2024.

\bibitem{BLS2025}
Pietro Baldi, Domenico Angelo~La Manna, and Giuseppe~La Scala.
\newblock Bifurcation from multiple eigenvalues of rotating traveling waves on a capillary liquid drop.
\newblock {\em arXiv preprint \href{http://arxiv.org/abs/2504.01555}{arXiv:2504.01555}}, 2025.

\bibitem{Bam2003}
Dario Bambusi.
\newblock Birkhoff normal form for some nonlinear {PDE}s.
\newblock {\em Communications in Mathematical Physics}, 234(2):253--285, 2003.

\bibitem{BDGS2007}
Dario Bambusi, Jean-Marc Delort, Benoît Gr\'ebert, and Jérémie Szeftel.
\newblock Almost global existence for {H}amiltonian semilinear {K}lein-{G}ordon equations with small {C}auchy data on {Z}oll manifolds.
\newblock {\em Communications on Pure and Applied Mathematics}, 60(11):1665--1690, 2007.

\bibitem{BFLM2024}
Dario Bambusi, Roberto Feola, Beatrice Langella, and Francesco Monzani.
\newblock Almost global existence for some hamiltonian pdes on manifolds with globally integrable geodesic flow.
\newblock {\em Nonlinearity}, 38(5):055013, 2025.

\bibitem{BFM2024}
Dario Bambusi, Roberto Feola, and Riccardo Montalto.
\newblock Almost global existence for some {H}amiltonian {PDE}s with small {C}auchy data on general tori.
\newblock {\em Communications in Mathematical Physics}, 405:15, 2024.

\bibitem{BG2006}
Dario Bambusi and Benoît Gr\'ebert.
\newblock Birkhoff normal form for partial differential equations with tame modulus.
\newblock {\em Duke Mathematical Journal}, 135(3):507--567, 2006.

\bibitem{BB1997_2}
T.~Brooke Benjamin and Thomas~J. Bridges.
\newblock Reappraisal of the {K}elvin--{H}elmholtz problem. part 2. interaction of the {K}elvin--{H}elmholtz, superharmonic and {B}enjamin--{F}eir instabilities.
\newblock {\em Journal of Fluid Mechanics}, 333:327--373, 1997.

\bibitem{BB1997_1}
T.~Brooke Benjamin and Thomas~J. Bridges.
\newblock Reappraisal of the {K}elvin–{H}elmholtz problem. part 1. hamiltonian structure.
\newblock {\em Journal of Fluid Mechanics}, 333:301--325, 1997.

\bibitem{BC2024}
Joackim Bernier and Nicolas Camps.
\newblock Long time stability for cubic nonlinear {S}chr\"odinger equations on non-rectangular flat tori.
\newblock {\em arxiv preprint \href{https://arxiv.org/abs/2402.04122}{ arXiv:2402.04122}}, 2024.

\bibitem{BFG2020}
Joackim Bernier, Erwan Faou, and Beno\^it Gr\'ebert.
\newblock Rational normal forms and stability of small solutions to nonlinear {S}chr\"odinger equations.
\newblock {\em Annals of PDE}, 6(2):Paper No. 14, 65, 2020.

\bibitem{BFG2021}
Joackim Bernier, Roberto Feola, Beno\^it Gr\'ebert, and Felice Iandoli.
\newblock Long-time existence for semi-linear beam equations on irrational tori.
\newblock {\em Journal of Dynamics and Differential Equations}, 33(3):1363--1398, 2021.

\bibitem{BCGS2023}
Massimiliano Berti, Scipio Cuccagna, Francisco Gancedo, and Stefano Scrobogna.
\newblock Paralinearization and extended lifespan for solutions of the $\alpha$-{SQG} sharp front equation.
\newblock {\em Advances in Mathematics}, 460:110034, 2025.

\bibitem{BD2018}
Massimiliano Berti and Jean-Marc Delort.
\newblock {\em Almost global solutions of capillary-gravity water waves equations on the circle}, volume~24 of {\em Lecture Notes of the Unione Matematica Italiana}.
\newblock Springer, Cham; Unione Matematica Italiana, [Bologna], 2018.

\bibitem{BFP2018}
Massimiliano Berti, Roberto Feola, and Fabio Pusateri.
\newblock Birkhoff normal form and long time existence for periodic gravity water waves.
\newblock {\em Communications on Pure and Applied Mathematics}, 76(7):1416--1494, 2023.

\bibitem{BFM2021}
Massimiliano Berti, Luca Franzoi, and Alberto Maspero.
\newblock Traveling quasi-periodic water waves with constant vorticity.
\newblock {\em Archive for Rational Mechanics and Analysis}, 240(1):99--202, 2021.

\bibitem{BFM2021_2}
Massimiliano Berti, Luca Franzoi, and Alberto Maspero.
\newblock Pure gravity traveling quasi-periodic water waves with constant vorticity.
\newblock {\em Communications on Pure and Applied Mathematics}, 2023.

\bibitem{BHM2022}
Massimiliano Berti, Zineb Hassainia, and Nader Masmoudi.
\newblock Time quasi-periodic vortex patches of {E}uler equation in the plane.
\newblock {\em Inventiones Mathematicae}, 233(3):1279--1391, 2023.

\bibitem{BMM2021}
Massimiliano Berti, Alberto Maspero, and Federico Murgante.
\newblock Local well posedness of the {E}uler-{K}orteweg equations on {$\Bbb T^d$}.
\newblock {\em Journal of Dynamics and Differential Equations}, 33(3):1475--1513, 2021.

\bibitem{BMM2}
Massimiliano Berti, Alberto Maspero, and Federico Murgante.
\newblock Hamiltonian {P}aradifferential {B}irkhoff {N}ormal {F}orm for {W}ater {W}aves.
\newblock {\em Regular and Chaotic Dynamics}, 2023.

\bibitem{BMM2022}
Massimiliano Berti, Alberto Maspero, and Federico Murgante.
\newblock Hamiltonian {B}irkhoff normal form for gravity-capillary water waves with constant vorticity: almost global existence.
\newblock {\em Annals of PDE}, 10(2):Paper No. 22, 150, 2024.

\bibitem{BM2020}
Massimiliano Berti and Riccardo Montalto.
\newblock Quasi-periodic standing wave solutions of gravity-capillary water waves.
\newblock {\em Memoirs of the American Mathematical Society}, 263(1273):v+171, 2020.

\bibitem{BMP2019}
Luca Biasco, Jessica~Elisa Massetti, and Michela Procesi.
\newblock An abstract {B}irkhoff normal form theorem and exponential type stability of the 1d {NLS}.
\newblock {\em Communications in Mathematical Physics}, 375(3):2089--2153, 2020.

\bibitem{CO1989}
Russel~E. Caflisch and Oscar~F. Orellana.
\newblock Singular solutions and ill-posedness for the evolution of vortex sheets.
\newblock {\em SIAM Journal on Mathematical Analysis}, 20(2):293--307, 1989.

\bibitem{CQZ2023a}
Daomin Cao, Guolin Qin, and Changjun Zou.
\newblock Co-rotating and traveling vortex sheets for the 2{D} incompressible {E}uler equation.
\newblock {\em Nonlinear Analysis}, 228:113186, 2023.

\bibitem{CQZ2023b}
Daomin Cao, Guolin Qin, and Changjun Zou.
\newblock Existence of stationary vortex sheets for the 2{D} incompressible {E}uler equation.
\newblock {\em Canadian Journal of Mathematics}, 75(3):828--853, 2023.

\bibitem{CCFGG2013}
Angel Castro, Diego C\'ordoba, Charles Fefferman, Francisco Gancedo, and Javier G\'omez-Serrano.
\newblock Finite time singularities for the free boundary incompressible {E}uler equations.
\newblock {\em Annals of Mathematics (2)}, 178(3):1061--1134, 2013.

\bibitem{CCS2008}
Ching-Hsiao~Arthur Cheng, Daniel Coutand, and Steve Shkoller.
\newblock On the motion of vortex sheets with surface tension in three-dimensional euler equations with vorticity.
\newblock {\em Communications on Pure and Applied Mathematics}, 61(12):1715--1752, 2008.

\bibitem{CCG2010}
Antonio C\'ordoba, Diego C\'ordoba, and Francisco Gancedo.
\newblock Interface evolution: water waves in 2-{D}.
\newblock {\em Advances in Mathematics}, 223(1):120--173, 2010.

\bibitem{CS1993}
Walter Craig and Catherine Sulem.
\newblock Numerical simulation of gravity waves.
\newblock {\em Journal of Computational Physics}, 108(1):73--83, 1993.

\bibitem{Delort1991}
Jean-Marc Delort.
\newblock Existence de nappes de tourbillon en dimension deux.
\newblock {\em Journal of the American Mathematical Society}, 4(3):553--586, 1991.

\bibitem{DS}
Jean-Marc Delort and Jérémie Szeftel.
\newblock Long-time existence for small data nonlinear {K}lein-{G}ordon equations on tori and spheres.
\newblock {\em International Mathematics Research Notices}, (37):1897--1966, 2004.

\bibitem{FG2024}
Roberto Feola and Filippo Giuliani.
\newblock Long time dynamics of quasi-linear {H}amiltonian {K}lein–{G}ordon equations on the circle.
\newblock {\em Journal of Dynamics and Differential Equations}, 2024.

\bibitem{FGI2023}
Roberto Feola, Beno\^it Gr\'ebert, and Felice Iandoli.
\newblock Long time solutions for quasilinear {H}amiltonian perturbations of {S}chr\"odinger and {K}lein-{G}ordon equations on tori.
\newblock {\em Analysis \& PDE}, 16(5):1133--1203, 2023.

\bibitem{FI}
Roberto Feola and Felice Iandoli.
\newblock Long time existence for fully nonlinear {NLS} with small cauchy data on the circle.
\newblock {\em Annali Scuola Normale Superiore - Classe di Scienze}, 22(1):109--182, 2021.

\bibitem{GPY2022}
Javier Gómez-Serrano, Jaemin Park, Jia Shi, and Yao Yao.
\newblock Remarks on stationary and uniformly-rotating vortex sheets: Flexibility results.
\newblock {\em Philosophical Transactions of the Royal Society A}, 380(2226):20210045, 2022.

\bibitem{HHR23}
Zineb Hassainia, Taoufik Hmidi, and Emeric Roulley.
\newblock Invariant {KAM} tori around annular vortex patches for 2{D} {E}uler equations.
\newblock {\em Communications in Mathematical Physics}, 405(270):1--127, 2024.

\bibitem{HHM2021}
Zineb Hassainia, Toufik Hmidi, and Nader Masmoudi.
\newblock {KAM} theory for active scalar equations.
\newblock {\em arXiv preprint \href{http://arxiv.org/abs/2110.08615}{arXiv:2110.08615}}, 2021.

\bibitem{HR25}
Zineb Hassainia and Emeric Roulley.
\newblock Boundary effects on the emergence of quasi-periodic solutions for {E}uler equations.
\newblock {\em Nonlinearity}, 38 015016(1):81pp, 2025.

\bibitem{Helmholtz1868}
Hermann~von Helmholtz.
\newblock {\"U}ber discontinuierliche fl{\"u}ssigkeitsbewegungen.
\newblock {\em Monatsberichte der K{\"o}niglich Preussischen Akademie der Wissenschaften zu Berlin}, 23:215--228, 1868.

\bibitem{Helmholtz1889}
Hermann~von Helmholtz.
\newblock Die energie der wogen und des windes.
\newblock {\em Annalen der Physik}, 272(7):641--662, 1889.

\bibitem{HR26}
Taoufik Hmidi and Emeric Roulley.
\newblock Time quasi-periodic vortex patches for quasi-geostrophic shallow-water equations.
\newblock {\em arXiv preprint \href{http://arxiv.org/abs/2110.13751}{arXiv:2110.13751}. To appear in Mémoires de la Société Mathématique de France}, 2026.

\bibitem{HH2003}
Thomas~Y. Hou and Gang Hu.
\newblock A nearly optimal existence result for slightly perturbed 3-{D} vortex sheets.
\newblock {\em Communications in Partial Differential Equations}, 28(1-2):155--198, 2003.

\bibitem{HLS1991}
James~T. How, John~S. Lowengrub, and Michael~J. Shelley.
\newblock The long-time motion of vortex sheets with surface tension.
\newblock {\em Physics of Fluids A: Fluid Dynamics}, 3(5):1073--1081, 1991.

\bibitem{IP2019}
Alexandru~D. Ionescu and Fabio Pusateri.
\newblock Long-time existence for multi-dimensional periodic water waves.
\newblock {\em Geometric and Functional Analysis}, 29:811--870, 2019.

\bibitem{Lannes2005}
David Lannes.
\newblock Well-posedness of the water-waves equations.
\newblock {\em Journal of the American Mathematical Society}, 18(3):605--654, 2005.

\bibitem{Lannes2013_ARMA}
David Lannes.
\newblock A stability criterion for two-fluid interfaces and applications.
\newblock {\em Archive for Rational Mechanics and Analysis}, 208(2):481--567, 2013.

\bibitem{Lannes2013}
David Lannes.
\newblock {\em The water waves problem}, volume 188 of {\em Mathematical Surveys and Monographs}.
\newblock American Mathematical Society, Providence, RI, 2013.
\newblock Mathematical analysis and asymptotics.

\bibitem{Lebeau2002}
Gilles Lebeau.
\newblock Régularité du problème de {K}elvin-{H}elmholtz pour l'équation d'{E}uler 2{D}.
\newblock {\em ESAIM: Control, Optimisation and Calculus of Variations}, 8:801--825, 2002.

\bibitem{MB2002}
Andrew~J. Majda and Andrea~L. Bertozzi.
\newblock {\em Vorticity and incompressible flow}, volume~27 of {\em Cambridge Texts in Applied Mathematics}.
\newblock Cambridge University Press, Cambridge, 2002.

\bibitem{MM2024}
Alberto Maspero and Federico Murgante.
\newblock One dimensional energy cascades in a fractional quasilinear {NLS}.
\newblock {\em arXiv preprint \href{https://arxiv.org/abs/2408.01097}{arXiv:2408.01097}}, 2024.

\bibitem{MNS2025}
David Meyer, Lukas Niebel, and Christian Seis.
\newblock Steady bubbles and drops in inviscid fluids.
\newblock {\em arXiv preprint \href{http://arxiv.org/abs/2503.05503}{arXiv:2503.05503}}, 2025.

\bibitem{MS2024}
David Meyer and Christian Seis.
\newblock Steady ring-shaped vortex sheets.
\newblock {\em arXiv preprint \href{http://arxiv.org/abs/2409.08220}{arXiv:2409.08220}}, 2024.

\bibitem{MMS24}
Riccardo Montalto, Federico Murgante, and Stefano Scrobogna.
\newblock Quadratic lifespan for the sublinear $\alpha$-{SQG} sharp front problem.
\newblock {\em Journal of Dynamics and Differential Equations}, 2024.

\bibitem{MRS}
Federico Murgante, Emeric Roulley, and Stefano Scrobogna.
\newblock Steady vortex sheets in presence of surface tension.
\newblock {\em arXiv preprint \href{http://arxiv.org/abs/2410.12612}{arXiv:2410.12612}}, 2024.

\bibitem{PS2020}
Bartosz Protas and Takashi Sakajo.
\newblock Rotating equilibria of vortex sheets.
\newblock {\em Physica D: Nonlinear Phenomena}, 403:132286, 2020.

\bibitem{R23}
Emeric Roulley.
\newblock Periodic and quasi-periodic {E}uler-alpha flows close to {R}ankine vortices.
\newblock {\em Dynamics of Partial Differential Equations}, 20(4):311--366, 2023.

\bibitem{Stein1993}
Elias~M. Stein.
\newblock {\em Harmonic analysis: real-variable methods, orthogonality, and oscillatory integrals}, volume~43 of {\em Princeton Mathematical Series}.
\newblock Princeton University Press, Princeton, NJ, 1993.
\newblock With the assistance of Timothy S. Murphy, Monographs in Harmonic Analysis, III.

\bibitem{SS1985}
Catherine Sulem and Pierre-Louis Sulem.
\newblock Finite time analyticity for the two- and three-dimensional {R}ayleigh-{T}aylor instability.
\newblock {\em Transactions of the American Mathematical Society}, 287(1):127--160, 1985.

\bibitem{SSBF1981}
Catherine Sulem, Pierre-Louis Sulem, Claude Bardos, and Uriel Frisch.
\newblock Finite time analyticity for the two- and three-dimensional {K}elvin-{H}elmholtz instability.
\newblock {\em Communications in Mathematical Physics}, 80(4):485--516, 1981.

\bibitem{Thomson1871}
William Thomson.
\newblock Hydrokinetic solutions and observations.
\newblock {\em Philosophical Magazine}, 42(281):362--377, 1871.

\bibitem{Thomson1880}
William Thomson.
\newblock On a disturbing infinity in {Lord Rayleigh's} solution for waves in a plane vortex stratum.
\newblock {\em Nature}, 23:45--46, 1880.

\bibitem{WC2000}
Rudolf Wegmann and Darren Crowdy.
\newblock Shapes of two-dimensional bubbles deformed by circulation.
\newblock {\em Nonlinearity}, 13(6):2131, nov 2000.

\bibitem{Wu2006}
Sijue Wu.
\newblock Mathematical analysis of vortex sheets.
\newblock {\em Communications on Pure and Applied Mathematics}, 59(8):1065--1206, 2006.

\bibitem{Zakharov1968}
{V. E.} Zakharov.
\newblock Stability of periodic waves of finite amplitude on the surface of a deep fluid.
\newblock {\em Journal of Applied Mechanics and Technical Physics}, 9(2):190--194, March 1968.

\end{thebibliography}
		\bibliographystyle{plain}
	\end{footnotesize}

\end{document}